\documentclass{article}
\usepackage[OT2,OT1]{fontenc}
\usepackage{amsfonts,amsmath, amssymb,latexsym,mathrsfs,array,multirow}
\usepackage{graphicx}% http://ctan.org/pkg/graphicx
\usepackage[all,cmtip]{xy}
\usepackage{float}
\usepackage{hyperref}
\usepackage{color}
\setlength{\textheight}{8.75in}
\setlength{\textwidth}{6.5in}
\setlength{\topmargin}{0.0in}
\setlength{\headheight}{0.0in}
\setlength{\headsep}{0.0in}
\setlength{\leftmargin}{0.0in}
\setlength{\oddsidemargin}{0.0in}
\setlength{\parindent}{3pc}

\def\Sha{\mbox{\fontencoding{OT2}\selectfont\char88}}

\def\Z{{\mathbb Z}}
\def\A{{\mathbb A}}
\def\bbz{{\mathbb Z}}
\def\bbg{{\mathbb G}}

\newcommand{\udots}{\mathinner{\mskip1mu\raise1pt\vbox{\kern7pt\hbox{.}}\mskip2mu\raise4pt\hbox{.}\mskip2mu\raise7pt\hbox{.}\mskip1mu}}

\def\SL{{\rm SL}}
\def\GL{{\rm GL}}
\def\Rlk{{{\rm Res}_{L/k}}}
\newcommand{\lrg}[1] {\langle#1\rangle}
\def\tStab{{\rm Stab}}
\def\SO{{\rm SO}}
\def\PSO{{\rm PSO}}
\def\tSpec{{\rm Spec}}

\def\sol{{\rm sol}}
\def\Stab{{\rm Stab}}
\def\Sel{{\rm Sel}}
\def\Inv{{\rm Inv}}

\def\Gal{{\rm Gal}}

\def\P{{\mathbb P}}

\def\disc{{\rm disc}}

\def\al{{\alpha}}

\def\irr{{\rm irr}}
\def\bbp{{\mathbb P}}
\def\sco{{\mathscr O}}
\def\dim{{\rm dim}}
\def\red{{\rm red}}
\def\dcup{{\,\dot\cup}}
\def\pico{{{\rm Pic}^1}}
\def\tTr{{\rm Tr}}
\def\tSpan{{\rm Span}}
\def\be{{\beta}}

\def\Vol{{\rm Vol}}
\def\R{{\mathbb R}}
\def\F{{\mathbb F}}
\def\scj{{\mathcal J}}
\def\tEnd{{\rm End}}
\def\tdiv{{\rm div}}
\def\bbr{{\mathbb R}}
\newcommand{\w}[1]{\widetilde{#1}}
\def\bbq{{\mathbb Q}}
\def\FF{{\mathcal F}}

\def\Q{{\mathbb Q}}
\def\bbq{{\mathbb Q}}
\def\sco{{\mathcal O}}
\def\H{{\mathcal H}}
\def\J{{\mathcal J}}

\def\Z{{\mathbb Z}}

\def\P{{\mathbb P}}
\newcommand{\gl}[2]{{^{#2\!}#1}}
\def\F{{\mathbb F}}
\def\Q{{\mathbb Q}}

\def\H{{\mathcal H}}

\def\wzn2{{W_{\Z,+}^{(2-)}}}

\def\fz1{{F_{\Z,1}}}
\def\tO{{\rm O}}
\def\tSO{{\rm SO}}
\def\tPSO{{\rm PSO}}
\def\PO{{\rm PO}}

\def\SO{{\rm SO}}
\def\PSO{{\rm PSO}}
\hypersetup{colorlinks=true, linkcolor=black}

\newcommand{\qu}[2]{\langle#1,#2\rangle_Q}

\def\quu{{\qu{\;}{\hspace{1pt}}}}

\newtheorem{theorem}{Theorem}[section]
\newtheorem{corollary}[theorem]{Corollary}
\newtheorem{cor}[theorem]{Corollary}
\newtheorem{lemma}[theorem]{Lemma}

\newtheorem{proposition}[theorem]{Proposition}
\newtheorem{defn}[theorem]{Definition}
\newenvironment{proof}{\noindent {\bf Proof:}}{$\Box$ \vspace{2 ex}}

\begin{document}

\title{Average size of the $2$-Selmer group of Jacobians of monic even hyperelliptic~curves}

\author{Arul Shankar and Xiaoheng Wang}
\maketitle
\begin{abstract}
  In \cite{BG3}, Manjul Bhargava and Benedict Gross considered the
  family of hyperelliptic curves over $\Q$ having a fixed genus and a
  marked rational Weierstrass point. They showed that the average size of the
  $2$-Selmer group of the Jacobians of these curves, when ordered by
  height, is $3$. In this paper, we consider the family of
  hyperelliptic curves over $\Q$ having a fixed genus and a marked rational
  non-Weierstrass point. We show that when these curves are ordered by
  height, the average size of the $2$-Selmer group of their Jacobians
  is $6$. This yields an upper bound of $5/2$ on the average rank of
  the Mordell-Weil group of the Jacobians of these hyperelliptic
  curves.

  Finally using an equidistribution result, we modify the techniques of
  \cite{PoSto} to conclude that as $g$ tends to infinity, a proportion
  tending to 1 of these monic even-degree hyperelliptic curves having genus $g$ have
  exactly two rational points---the marked point at infinity and its hyperelliptic conjugate.
\end{abstract}
\tableofcontents

\setcounter{tocdepth}{4}

%\tableofcontents

\section{Introduction}
In \cite{BG3}, Manjul Bhargava and Benedict Gross studied
hyperelliptic curves over $\bbq$ with a rational Weierstrass
point. Any such curve of genus $g$ can be given as the smooth projective model of
the affine curve defined by $$y^2 =
x^{2n+1}+c_2x^{2n-1}+\cdots+c_{2n+1},$$ where $c_i\in \bbq$ and the
given rational Weierstrass point lies above $x=\infty.$ If we further
assume that $c_i\in \bbz$ and that there is no prime $p$ such that
$p^{2i}$ divides $c_i$ for all $i$, then such an expression is
unique. The height $H$ of such a curve $C$ is defined by
$$H(C) = \max\{|c_k|^{2n(2n+1)/k}\}_{k=2}^{2n+1}.$$
Bhargava and Gross showed:

\begin{theorem}{\rm (\cite[Theorem 1.1]{BG3})}\label{thm:oddmainBG3}
  When all hyperelliptic curves of fixed genus $n\geq1$ over $\bbq$
  having a rational Weierstrass point are ordered by height, the
  average size of the 2-Selmer group of their Jacobians is 3.
\end{theorem}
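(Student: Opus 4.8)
The plan is to reduce the theorem to an orbit-counting problem and then attack the latter by the geometry of numbers, following the method introduced by Bhargava and Shankar for the $2$-Selmer groups of elliptic curves. Fix a $(2n+1)$-dimensional $\Q$-vector space $U$ with a split nondegenerate symmetric bilinear form, let $G=\SO(U)$ be its special orthogonal group, and let $V$ be the $G$-representation consisting of trace-zero self-adjoint operators $T\colon U\to U$, with $G$ acting by conjugation. The coefficients of the characteristic polynomial of $T$ are the invariants $c_2,\dots,c_{2n+1}$, so a $T\in V(\Q)$ with separable characteristic polynomial $f$ determines the genus-$n$ curve $C\colon y^2=f(x)$ together with its rational Weierstrass point at infinity, and $\Stab_G(T)\cong J[2]$ for $J=\mathrm{Jac}(C)$. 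The first step is the algebraic parametrization: regarding $T$ as recording a pencil of quadrics on $U$ and invoking the classical correspondence between such pencils and line bundles on the associated hyperelliptic curve, one shows that the $G(\Q)$-orbits on $\{T\in V(\Q): \det(xI-T)=f\}$ are classified by a subset of $H^1(\Q,J[2])$; that this subset always contains the distinguished (``reducible'') class corresponding to $0\in\Sel_2(J)$; and that the orbits whose class lies in $\Sel_2(J)\subseteq H^1(\Q,J[2])$ are exactly the ones that are soluble over every completion $\Q_v$. After checking that each such everywhere-locally-soluble orbit has a representative in $V(\Z)$, the problem of computing the average of $|\Sel_2(J)|$ becomes that of counting $G(\Z)$-orbits on $V(\Z)$ of bounded-height invariant subject to a family of local conditions.

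The next step is to count $G(\Z)$-orbits on $V(\Z)$ with $H(f_T)<X$. The method is Bhargava's averaging trick: rather than sum over a single complicated non-compact fundamental domain $\mathcal F$ for $G(\Z)\backslash G(\R)$, one averages such counts over the translates $g\mathcal F$ as $g$ ranges over a fixed compact subset of $G(\R)$; after unfolding, this realizes the orbit count as the number of lattice points of $V(\Z)$ in a bounded region $\RR_X\subseteq V(\R)$, whose main term is $\Vol(\RR_X)$, computed by a change of variables from $V$ into the invariant coordinates $c_2,\dots,c_{2n+1}$ together with coordinates on the group. The step I expect to be the principal obstacle is the analysis of the cusp: the fundamental domain is unbounded in the direction of a maximal split torus of $G$, and one must show that the integral points of $V(\Z)$ lying deep in the cuspidal region either have characteristic polynomial of negligibly small height, or are ``reducible'' in the sense that $T$ stabilizes a rational isotropic subspace of a prescribed type --- in which case the orbit is the distinguished one, and must be excluded from the count of nontrivial $\Sel_2$-classes. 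Carrying this out requires a stratification of the cusp by the flags of isotropic subspaces that degenerate at infinity, together with sharp point counts on each stratum; this is considerably more delicate than in the elliptic-curve case because of the richer isotropic geometry of $U$ once $n>1$.

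It remains to impose the two families of congruence conditions: the minimality condition defining the family (no prime $p$ with $p^{2i}\mid c_i$ for all $i$), and the everywhere-local-solubility condition cutting out $\Sel_2(J)$. Both are handled by a sieve whose essential input is a uniformity estimate: the number of integral orbits of bounded-height invariant that violate the relevant condition at \emph{some} prime $p>M$ is negligible compared to the main term, uniformly as $M\to\infty$. This is the second main technical hurdle, proved by bounding, for a large prime $p$, the integral points that are non-maximal or not locally soluble at $p$ using the geometry of the reducible locus modulo $p$. One then evaluates the Euler product $\prod_p\mu_p$ of local densities, where $\mu_p$ is the $p$-adically weighted count of everywhere-locally-soluble $G(\Z_p)$-orbits of a given invariant; the local arithmetic of the Jacobian identifies each $\mu_p$, up to normalizations coming from the volume computation and the archimedean place, with $|J(\Q_p)/2J(\Q_p)|$, and the resulting product collapses --- a reflection of the fact that the Tamagawa number of $\SO_{2n+1}$ equals $2$ --- to show that the average number of \emph{nontrivial} everywhere-locally-soluble orbits is $2$. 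Adding the distinguished orbit, which is present for every curve in the family, then gives that the average size of $\Sel_2(\mathrm{Jac}(C))$ equals $1+2=3$.
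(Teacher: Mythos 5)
Your sketch is a faithful outline of the Bhargava--Gross argument in \cite{BG3}: the correct orthogonal group is the split $\SO_{2n+1}$ (which, having trivial center, needs no passage to a projective quotient), the stabilizer of a regular self-adjoint operator is identified with $J[2]\simeq(\mbox{Res}_{L/\Q}\mu_2)_{N=1}$, one distinguished orbit exists for every $f$, and the count of irreducible locally soluble orbits yields the Tamagawa number $\tau(\SO_{2n+1})=2$, giving $1+2=3$. This is exactly the strategy this paper mirrors in the even-degree case (with $\PSO_{2n+2}$, $\tau_G=4$, and two distinguished orbits), so your proposal matches the intended proof.
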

As an immediate corollary, they obtained that the average rank of the
Mordell-Weil groups of the Jacobians of such curves is at most
3/2. For a concise summary of their results and the techniques used in the proofs, see \cite{GHanoi}.

In this paper, we consider hyperelliptic curves of genus $n\geq2$ over
$\bbq$ with a marked rational non-Weierstrass point that we will
denote by $\infty$. Any such curve $C$ also has a second rational
point $\infty'$, namely the conjugate of $\infty$ under the
hyperelliptic involution. In other words, $\infty'$ is the unique
point in $C(\bar{\bbq})$ such that $h^0(\sco_C(\infty+\infty'))=2.$ By
studying $H^0(C, k\cdot(\infty+\infty')),$ one can show that $C$ can
be given as the smooth projective model of the affine curve defined
by
\begin{equation}\label{eqevenhyperformula}
y^2 = x^{2n+2}+c_2x^{2n}+\cdots+c_{2n+2}
\end{equation}
where $c_i\in \bbq$
and the points $\infty,\infty'$ lie above $x=\infty.$ If we further
assume that $c_i\in \bbz$ and that there is no prime $p$ such that
$p^{2i}$ divides $c_i$ for all $i$, then such an expression is
unique. We analogously define the height $H$ of $C$ by
$$H(C) = \max\{|c_k|^{(2n+1)(2n+2)/k}\}_{k=2}^{2n+2}.$$

Recall that the 2-Selmer group $\Sel_2(J)$ of the Jacobian
$J=\text{Jac}(C)$ of $C$ is a finite subgroup of the Galois cohomology
group $H^1(\bbq,J[2]),$ which is defined by local conditions and fits
into an exact sequence
$$0\rightarrow J(\bbq)/2J(\bbq)\rightarrow\Sel_2(J)\rightarrow \Sha_2(\bbq,J)\rightarrow0,$$
where $\Sha_2(\bbq,J)$ denotes the Tate-Shafarevich group of $J$ over
$\bbq.$

The main result of this paper is:

\begin{theorem}\label{thm:evenmainssss}
  When all hyperelliptic curves of fixed genus $n\geq2$ over $\bbq$
  having a marked rational non-Weierstrass point are ordered by
  height, the average size of the 2-Selmer group of their Jacobians is
  6.
\end{theorem}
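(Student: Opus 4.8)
The plan is to follow the orbit-counting paradigm of Bhargava, as developed for $2$-Selmer groups by Bhargava--Shankar and, in the odd-degree hyperelliptic case, by Bhargava--Gross (Theorem~\ref{thm:oddmainBG3}), but run through the coregular representation attached to the monic even-degree family. The first task is to pin down this representation and the arithmetic-invariant-theory dictionary. The relevant pair $(G,V)$ should be: $G$ a suitable integral model of $\SO_{2n+2}$ (with the quadratic form chosen split and of the discriminant that makes the two points at infinity rational), acting on the space $V$ of self-adjoint operators $T$ on the standard $(2n+2)$-dimensional quadratic space; the ring of $G$-invariants is generated by the coefficients of $\det(xI-T)$, and after quotienting by the translation action $T\mapsto T+aI$ (equivalently, restricting to trace-zero operators) these are exactly the coefficients $c_2,\dots,c_{2n+2}$ of \eqref{eqevenhyperformula}, so $V$ fibers $G$-equivariantly over the space of curves. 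The key inputs one must establish are: (i) for $T$ with separable characteristic polynomial $f$ one has $\Stab_G(T)\cong J[2]$ with $J=\mathrm{Jac}(C_f)$, $C_f\colon y^2=f(x)$; and (ii) the set of $G(\bbq)$-orbits on a nonempty fiber $V_f(\bbq)$ is, together with a bounded set of ``distinguished'' reducible orbits, a torsor-like set closely tied to $H^1(\bbq,J[2])$. Imposing local conditions and proving the relevant local-to-global statement then identifies the locally soluble orbits with $\Sel_2(J)$. The conceptual reason the answer will be $6=2\cdot3$ rather than the $3$ of the odd case is that the two rational points at infinity furnish a canonical class $[\infty]-[\infty']\in J(\bbq)/2J(\bbq)\subseteq\Sel_2(J)$ that is, for a density-$1$ subfamily, of order $2$; splitting off this $\Z/2$ leaves a quantity that the rest of the argument should compute to average $3$, exactly as in the odd case. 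A substantive point here, and where the ``monic'' hypothesis enters, is showing that every $2$-Selmer class is represented by a locally soluble \emph{integral} orbit away from a fixed finite set of primes --- an existence-of-integral-representatives argument in the spirit of Bhargava--Gross.

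The second task is the geometry of numbers: counting $G(\Z)$-orbits on $V(\Z)$ of bounded height that are locally soluble. One equips $V(\Z)$ with the height compatible with $H(C)$, constructs a fundamental domain $\mathcal{F}$ for $G(\Z)$ acting on $V(\R)$ via Borel--Harish-Chandra reduction theory for $\SO_{2n+2}$, and runs Bhargava's averaging trick: the number of integer points of bounded height in a translate $h\cdot\mathcal{F}$, averaged over a compact part of $G(\R)$, converts the orbit count into an archimedean volume, producing a main term equal to a positive power of $X$ (the height cutoff) times a constant. The delicate step --- and the one I expect to be the main obstacle --- is the analysis of the cusp. One must stratify the integral points in the cuspidal region of $\mathcal{F}$ and show that, apart from the finitely many distinguished reducible orbits (whose bounded contribution is tracked separately), every stratum contributes only to the error term; in the even-degree case both the cusp geometry of $(G,V)$ and the inventory of reducible strata are more complicated than in the odd-degree situation, so one needs a careful separate bound for each stratum --- in particular ruling out a main-order contribution from integral orbits whose ``leading'' invariants are small --- and this is where the bulk of the technical work lies. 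One must also check that the boundary of the expanding region is negligible.

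The third task is to cut down to locally soluble orbits and evaluate the main term. Counting integral orbits of bounded height subject to congruence conditions modulo $p^k$ at every $p$, then summing over the conditions defining local solubility, requires a uniformity estimate --- a squarefree-type sieve bounding, uniformly in $p$, the number of bounded-height integral orbits that are ``non-generic'' at $p$ --- which is a second technically heavy ingredient. Granting it, the average number of locally soluble irreducible orbits is a convergent Euler product $\prod_v\mu_v$ of local masses: each $\mu_p$ is a $p$-adic orbit-count/volume on $V(\Z_p)$ and $\mu_\infty$ an archimedean volume, computed via a Tamagawa-number/mass-formula argument. As in the odd case this product telescopes to a product of zeta values times an explicit geometric constant; this constant together with the bounded average contribution of the distinguished reducible orbits should total $6$, and dividing by the elementary count of curves of bounded height gives average $|\Sel_2(J)|=6$. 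Finally one verifies that all error terms --- cusp, sieve tail, small primes, and boundary of the counting region --- are $o$ of the main term uniformly in $X$, and notes that $n\ge2$ is used to keep the rank of $G$ large enough for the reduction theory and the cusp estimates to function, the excluded case $n=1$ (that of monic binary quartics) being genuinely different.
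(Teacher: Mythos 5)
Your overall strategy --- the Bhargava orbit-counting paradigm (parameterization of Selmer elements by locally soluble orbits, integral representatives, Borel reduction theory with the averaging trick, separate cusp and reducible-in-main-body estimates, squarefree sieve, and a Tamagawa/mass evaluation of the main term) --- is exactly the route the paper takes, and you have correctly identified the two technically heavy ingredients (cusp analysis and the uniformity estimate). However, there are two substantive errors that would derail the computation.

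First, the acting group must be $\PSO_{2n+2}$, not $\SO_{2n+2}$. The center $\mu_2\subset\SO(U)$ acts trivially on self-adjoint operators, so the action is only faithful after passing to $\PSO(U)=\SO(U)/\mu_2$. This is not a cosmetic change: $\Stab_{\SO(U)}(T)\cong(\Rlk\mu_2)_{N=1}$, which is a $\mu_2$-extension of $J[2]$ and is \emph{not} isomorphic to $J[2]$, whereas $\Stab_{\PSO(U)}(T)\cong(\Rlk\mu_2)_{N=1}/\mu_2\cong J[2]$, which is what the parameterization requires. Moreover, the quotient by $\mu_2$ introduces the connecting map $\PSO(U)(k)\to k^\times/k^{\times 2}$ (a spinor-norm-type phenomenon) that governs when a $\PO(U)(k)$-orbit splits into two $\PSO(U)(k)$-orbits, and this is precisely what produces the \emph{two} distinguished orbits in the even case. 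Finally, the Tamagawa number enters the main term, and $\tau(\PSO_{2n+2})=4$ while $\tau(\SO_{2n+2})=2$; with your choice of group the numerics come out wrong.

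Second, your heuristic decomposition $6=2\cdot 3$ (``split off the $\Z/2$ generated by $(\infty')-(\infty)$ and the rest averages $3$ as in the odd case'') is not how the count actually resolves, and trying to run it would send you after the wrong intermediate quantity. The paper's structure is \emph{additive}: for $100\%$ of curves there are exactly two distinguished (reducible) orbits, corresponding to $0$ and $(\infty')-(\infty)$ in $J(\bbq)/2J(\bbq)$, and these are thrown out before the geometry-of-numbers count. The count of irreducible locally soluble orbits then yields, after the sieve and the local mass computations, exactly $\tau_G=\tau(\PSO_{2n+2})=4$ on average per curve, so $\mathbb{E}[\#\Sel_2(J)-2]=4$ and hence $\mathbb{E}[\#\Sel_2(J)]=2+4=6$. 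Compare the odd case: one distinguished orbit and $\tau(\SO_{2n+1})=2$, giving $1+2=3$. The quotient $\Sel_2(J)/\langle(\infty')-(\infty)\rangle$ is not the object that gets parameterized or counted, and I do not see an argument that its average is $3$. (A smaller point: the restriction $n\ge2$ is because a genus-$1$ curve has no meaningful notion of a non-Weierstrass point; it is not imposed for reduction-theoretic reasons.)

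Everything else --- the invariant ring generated by the $c_i$, the need to first show $V_f(\bbq)\neq\emptyset$ and that locally soluble orbits admit integral representatives after scaling to clear denominators, the Borel domain and averaging over a compact $G_0$, the stratification of the cusp and identification of the reducible strata, the use of a uniformity estimate to justify the infinite Euler product, and the product formula for the local masses $a_\nu=|J(\bbq_\nu)/2J(\bbq_\nu)|/|J(\bbq_\nu)[2]|$ --- matches the paper's proof in outline.
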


More precisely, we show that
$$\lim_{X\rightarrow\infty}\frac{\displaystyle\sum_{H(C)<X}\#\Sel_2(\text{Jac}(C))}{\displaystyle\sum_{H(C)<X}1}=6,$$
where $C$ ranges over all hyperelliptic curves of the form \eqref{eqevenhyperformula}.
In fact, we prove that the same result remains true even when we
average over any subset of hyperelliptic curves $C$ defined by a
finite set of congruence conditions on the coefficients
$c_2,c_3,\ldots,c_{2n+2}.$

We impose the condition that $n\geq2$ because every point on a genus 1
curve is a Weierstrass point.
%\add{The case when $n=1$ is studied by
%  Bhargava and Ho \cite{BH13}...something about Poonen-Rains.}
We will show in Proposition \ref{lem:100distinguish} that the class
$(\infty')-(\infty)$ is not divisible by 2 in $J(\bbq)$ for a 100\% of
hyperelliptic curves with a marked rational non-Weierstrass point.
Therefore we expect the 2-Selmer groups of these Jacobians to have, on
average, one extra generator compared to the Jacobians of
hyperelliptic curve with one marked Weierstrass point. In other words:
given Theorem \ref{thm:oddmainBG3}, we expect Theorem \ref{thm:evenmainssss} to
be true.  Now
when $(\infty')-(\infty)$ is not divisible by 2 in $J(\bbq)$, the
average 2-rank of the 2-Selmer group minus 1 is at most 3/2. This
follows because $|\Sel_2(J)|/2$ is at least 1 and the average is 3 as
$C$ runs through hyperelliptic curves with a marked rational
non-Weierstrass point. Therefore we obtain the following result.

\begin{cor}\label{cor:evenrankbound}
  When all hyperelliptic curves of fixed genus $n\geq2$ over $\bbq$
  having a marked rational non-Weierstrass point are ordered by
  height, the average rank of the 2-Selmer group of their Jacobians is
  at most 5/2. Thus the average rank of the Mordell-Weil groups of
  their Jacobians is at most 5/2.
\end{cor}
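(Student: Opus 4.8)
The plan is to derive the rank bound from the average-size statement (Theorem~\ref{thm:evenmainssss}) together with the non-$2$-divisibility of $(\infty')-(\infty)$ for a density-one subfamily (Proposition~\ref{lem:100distinguish}), using only an elementary inequality between the dimension and the cardinality of a finite $\F_2$-vector space. First, since $2$-descent gives an injection $J(\bbq)/2J(\bbq)\hookrightarrow\Sel_2(J)$ for $J=\mathrm{Jac}(C)$, one has $\mathrm{rank}_{\Z}\,J(\bbq)\le\dim_{\F_2}J(\bbq)/2J(\bbq)\le\dim_{\F_2}\Sel_2(J)$ for every curve $C$ in the family; hence it suffices to bound by $5/2$ the average of the ``$2$-rank'' $\dim_{\F_2}\Sel_2(J)$, after which the Mordell-Weil bound follows for free.

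\emph{Using the distinguished Selmer class.} By Proposition~\ref{lem:100distinguish}, for $100\%$ of the curves $C$ the image of $(\infty')-(\infty)$ in $\Sel_2(J)$ is nonzero, hence spans a line $\ell_C\subset\Sel_2(J)$; writing $G(C):=\Sel_2(J)/\ell_C$ gives $\#\Sel_2(J)=2\,\#G(C)$ and $\dim_{\F_2}\Sel_2(J)=1+\dim_{\F_2}G(C)$. Now I would invoke the elementary bound $\dim_{\F_2}V\le\tfrac12\#V$, valid for every finite $\F_2$-vector space $V$ (for $\dim V=d\ge1$ it reads $d\le2^{d-1}$, an immediate induction, and it is trivial for $d=0$); applied to $V=G(C)$ it yields, for $100\%$ of the curves,
$$\dim_{\F_2}\Sel_2(J)\ \le\ 1+\tfrac12\#G(C)\ =\ 1+\tfrac14\#\Sel_2(J).$$
Summing this over the curves of height $<X$ in the density-one subfamily and dividing by the total number $N(X)$ of curves of height $<X$, Theorem~\ref{thm:evenmainssss} then shows (granting the negligibility of the excluded curves, addressed below) that the average of $\dim_{\F_2}\Sel_2(J)$ is at most $1+\tfrac14\cdot6=\tfrac52$; by the reduction above, the average Mordell-Weil rank is then at most $5/2$ as well.

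\emph{The point needing care.} The one step that is not purely formal is showing that the $0\%$ of curves for which $(\infty')-(\infty)$ \emph{is} divisible by $2$ in $J(\bbq)$ (so that the distinguished class vanishes in $\Sel_2(J)$) do not affect the average. On those curves only the trivial bound $\dim_{\F_2}\Sel_2(J)\le\tfrac12\#\Sel_2(J)$ is available, whose average would be $3$ rather than $5/2$, so one genuinely needs $\sum_{H(C)<X,\ C\ \mathrm{exceptional}}\#\Sel_2(\mathrm{Jac}(C))=o(N(X))$. I expect this to be the only real obstacle, and to be handled exactly as in the proof of Theorem~\ref{thm:evenmainssss} itself: the uniformity (tail) estimates established there, together with the congruence-conditioned form of that theorem, bound the $\#\Sel_2$-weighted count of any sufficiently sparse subfamily of these curves, which is precisely what is needed to make the averaging above rigorous and to conclude the stated bound.
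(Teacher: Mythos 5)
Your proof is correct and takes essentially the same route as the paper: reduce the Mordell--Weil rank bound to bounding the average of $\dim_{\F_2}\Sel_2(J)$, use the distinguished class $(\infty')-(\infty)$ to split off a line, and apply the elementary inequality $\dim_{\F_2}V\le\tfrac12\#V$ to the quotient, yielding $\dim_{\F_2}\Sel_2(J)\le 1+\tfrac14\#\Sel_2(J)$ and hence the average bound $1+6/4=5/2$ from Theorem~\ref{thm:evenmainssss}.

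However, the ``point needing care'' in your final paragraph is not actually a gap. The inequality you obtain, $\dim_{\F_2}\Sel_2(J)\le 1+\tfrac14\#\Sel_2(J)$, is equivalent to the purely numerical statement $d\le 1+2^{d-2}$ with $d=\dim_{\F_2}\Sel_2(J)$, and this holds for \emph{every} integer $d\ge 0$: writing $e=d-1$ it is exactly the inequality $e\le 2^{e-1}$ you already invoked for $e\ge 0$, while for $d=0$ it reads $-1\le\tfrac14$, which is trivially true. Your geometric derivation via the quotient $G(C)=\Sel_2(J)/\ell_C$ does presuppose $\ell_C\ne 0$, but once the numerical inequality is in hand one may simply verify it outright and sum it over \emph{all} curves of height $<X$, exceptional ones included, without any sieve. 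Dividing by the total count and invoking Theorem~\ref{thm:evenmainssss} immediately gives the average of $\dim_{\F_2}\Sel_2(J)$ is at most $1+6/4=5/2$. The role of Proposition~\ref{lem:100distinguish} here is therefore motivational --- it explains why using the sharper bound $1+\tfrac14\#\Sel_2(J)$ rather than the cruder $\tfrac12\#\Sel_2(J)$ (which would only give $3$) is the right move, since the ``$1+$'' term is genuinely realized for almost every curve --- rather than a hypothesis one must re-establish for the averaging; and the extra tail estimate you propose for the exceptional locus is unnecessary.
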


In \cite{BG3}, Bhargava and Gross also used a method of Chabauty
\cite{Cha}, \cite{ChCo} to show that when $g\geq2$, a positive
proportion of hyperelliptic curves of genus $g$ with a rational
Weierstrass point have at most 3 rational points; and when $g\geq3,$ a
majority of such curves have at most 20 rational points. (These
hyperelliptic curves having genus $g$ correspond to the affine
equation $y^2=x^{2g+1}+\cdots+c_{2g+1}$.)  In \cite{PoSto}, Poonen and
Stoll used Chabauty's method and the results of \cite{BG3} to show that
a positive proportion of odd degree hyperelliptic curves having a
fixed genus $g\geq3$ have exactly one rational point -- the
Weierstrass point at infinity -- and that this proportion tends to 1
as $g$ tends to infinity. Analogously, we show that in our case, a
positive proportion of even degree hyperelliptic curves of genus
$g\geq 10$ have exactly two rational points -- the marked
non-Weierstrass point $\infty$ at infinity and its image $\infty'$
under the hyperelliptic involution. We also show that as $g$ tends to
infinity, this proportion tends to 1. More precisely, we prove the
following theorem:
\begin{theorem}\label{thchab}
  The proportion of monic even degree hyperelliptic curves having genus $g\geq 4$
  that have exactly two rational points is at least
  $1-(48g+120)2^{-g}$.
\end{theorem}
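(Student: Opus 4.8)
The approach is to adapt the Chabauty-at-$2$ argument of Poonen and Stoll \cite{PoSto} to the even-degree family, substituting Theorem~\ref{thm:evenmainssss} for \cite[Theorem 1.1]{BG3} as the arithmetic input. Fix $g$ and a curve $C\colon y^2=f(x)$ in the family, with $f$ monic of degree $2g+2$, and put $J=\mathrm{Jac}(C)$. Since $\infty$ is not a Weierstrass point we have $\infty\neq\infty'$, so $C(\Q)\supseteq\{\infty,\infty'\}$ always; embed $C\hookrightarrow J$ by the Abel--Jacobi map $\iota$ based at $\infty$, so that $\iota(\infty)=0$ and $\iota(\infty')=[(\infty')-(\infty)]$. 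The starting point is the inclusion $\iota(C(\Q))\subseteq\overline{J(\Q)}\cap\iota(C(\Q_2))$ inside $J(\Q_2)$, where $\overline{J(\Q)}$ is the closure of $J(\Q)$. If $r:=\mathrm{rank}\,J(\Q)<g$, then the image of $\overline{J(\Q)}$ under the $2$-adic logarithm $J(\Q_2)\to\Q_2^g$ (which vanishes on torsion) spans a subspace of dimension $\le r<g$, so some nonzero $\omega\in H^0(C,\Omega^1)\cong H^0(J,\Omega^1)$ has formal antiderivative (Coleman integral) vanishing on $\overline{J(\Q)}$, hence on $\iota(C(\Q))$; thus $C(\Q)$ lies in the zero set of a nonzero $2$-adic analytic function on $C(\Q_2)$. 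Working at the prime $2$ is what makes Theorem~\ref{thm:evenmainssss} directly available and what dovetails with the $2$-descent used below.

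Next comes the residue-disk analysis. Passing to the minimal proper regular model of $C$ over $\Z_2$, the set $C(\Q_2)$ decomposes into residue disks indexed by the smooth $\F_2$-points of the special fibre --- of which there are $O(g)$, bounded via the number of components --- and on each disk the Chabauty function is a convergent power series whose zeros are bounded by its Newton polygon (Strassman), together with the correction term forced by division by factorials when $p=2$. One combines this with the $2$-adic part of the $2$-descent on $J$: for $Q=(a,b)\in C(\Q)$ the class $\iota(Q)$ maps, under the ``$x-\theta$'' map into the \'etale algebra $\Q[x]/(f)$, into $\Sel_2(J)$, which forces $a-\theta$ into a subgroup of size at most $\#\Sel_2(J)$ inside the group of local square classes attached to $f$. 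Following \cite{PoSto}, one packages all of this into an explicit sufficient condition $\mathcal C=\mathcal C(C)$ --- controlled reduction at $2$, $r\le g-1$, a bound on $\#\Sel_2(J)$, and non-degeneracy of the local descent and Chabauty data away from the disks of $\infty,\infty'$ --- and checks that $\mathcal C(C)$ implies $C(\Q)=\{\infty,\infty'\}$. The genuinely new feature is that $\infty$ and $\infty'$ may reduce to the \emph{same} point modulo $2$ (the branches $y/x^{g+1}=\pm1+\cdots$ become congruent, and the model is always singular at infinity modulo $2$), so that residue disk must be allowed to carry two rational points; this local analysis at infinity is carried out separately and is what enlarges the final constant relative to \cite{PoSto}.

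Finally one bounds the density of curves for which $\mathcal C$ fails. Curves with $\mathrm{rank}\,J(\Q)\ge g$, or more generally with $\#\Sel_2(J)$ above the threshold built into $\mathcal C$, have density $\le 6\cdot 2^{-g}$ by Theorem~\ref{thm:evenmainssss} and Markov's inequality. The remaining failures of $\mathcal C$ are cut out by a congruence condition on the coefficient vector $(c_2,\dots,c_{2g+2})$ modulo a power of $2$ --- essentially, the existence of a $2$-adic point outside the disks of $\infty,\infty'$ that is simultaneously compatible with the descent constraint and with the Chabauty function having a zero there --- and here one invokes an equidistribution result: when the curves in the family are ordered by height, their coefficients $(c_2,\dots,c_{2g+2})$ equidistribute modulo powers of $2$, so the density of each such failure equals the corresponding $2$-adic volume. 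Because ``$a-\theta$ compatible with the descent'' is a condition of codimension roughly $g$ in the local data, each such volume is $O(2^{-g})$; summing over the $O(g)$ residue disks and the finitely many local shapes of a putative extra point yields the bound $(48g+120)\,2^{-g}$, which indeed tends to $0$. The main obstacle is precisely this bookkeeping: making the $p=2$ Newton-polygon count sharp enough that the descent constraint removes each irrelevant disk, treating the singular (and possibly non-split) reduction at $x=\infty$ special to the even model, and estimating the $2$-adic volumes with constants linear in $g$ --- it is the interaction of these three, not any one of them alone, that pins down both the constant $48g+120$ and the range $g\ge 4$ in which the argument is run.
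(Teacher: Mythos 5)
Your sketch correctly identifies the broad framework (Chabauty at $2$ as in \cite{PoSto}, with Theorem~\ref{thm:evenmainssss} replacing \cite[Theorem 1.1]{BG3}, plus an equidistribution statement for Selmer elements), but it misses the structural obstruction that actually drives the even-degree argument. The issue is not, as you suggest, merely that $\infty$ and $\infty'$ may share a residue disk modulo $2$. The real problem is that $(\infty')-(\infty)$ is a nontrivial class in $J(\Q)$ --- in fact not divisible by $2$ for $100\%$ of curves (Proposition~\ref{lem:100distinguish}) --- so its image $v_0$ under the $2$-adic logarithm spans a distinguished $\Z_2$-line inside $\Z_2^g$, and no Chabauty-type argument based on $\Sel_2(J)$ can see rational points whose $\log$ lies on that line. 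The paper handles this by systematically passing to the quotient by $v_0$: it replaces $\log$ by $\rho'\log$ valued in $\P^{g-2}(\F_2)$, $\sigma$ by $\sigma'$ landing in $\F_2^g/\langle\overline{v_0}\rangle$, and shows (Lemma~\ref{lemchabautycovered}, Theorem~\ref{thm:chabauty}) that under conditions (1)--(3) every point $P\in C(\Q)\setminus\{\infty,\infty'\}$ is merely \emph{bad}, meaning $m((P)-(\infty))=n((\infty)-(\infty'))$ for some integers $m,n$ not both zero. The density bound $1-(48g+120)2^{-g}$ is the bound on curves satisfying those three conditions; it is not yet a bound on curves with exactly two rational points.

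The second half of the proof --- entirely absent from your proposal --- is needed precisely to dispose of bad points. The paper proves (Theorems~\ref{thm:hilbert} and~\ref{thm:hilbertp}, Lemma~\ref{badpair}) that $100\%$ of monic even degree hyperelliptic curves of genus $g\geq4$ have no bad points at all. This is not a congruence/equidistribution argument: it is a $p$-adic analytic density argument using the theory of subanalytic sets \cite{DD}, where one shows that the set of bad pairs $(P,Q)$ on a fixed curve is finite (via a determinant computation on the derivatives of the $\log$ map, using $g\geq4$ to get enough independent coordinates), and then that the locus of curves carrying a bad point is a subanalytic set of positive codimension, hence of measure zero. Without this step the Chabauty argument only bounds rational points outside the $v_0$-direction, and the theorem as stated would not follow. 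Your ``one checks that $\mathcal C(C)$ implies $C(\Q)=\{\infty,\infty'\}$'' is therefore where the gap sits: no such direct implication is available in the even case, and a genuinely different analytic input is required to close the proof.
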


%\add{Something about average $\text{Sel}_2(\bbq,J^0\dcup J^1)$
%  ($\text{PO}$ orbits, average is 3), $\text{Sel}_2(\bbq,J_{\mf{m}})$
%  ($\text{SO}$-orbits, average is 4), and
%  $\text{Sel}_2(\bbq,J_{\mf{m}}\dcup J^1_{\mf{m}})$
%  ($\text{O})$-orbits, average is 2.}

To prove Theorem \ref{thm:evenmainssss}, we follow the same strategy
as \cite{BS}, \cite{BS3} and \cite{BG3}. Let $(U,Q)$ denote the split
quadratic space of dimension $2n+2$ over $\bbq$ and let $V$ denote the
space of operators $T$ on $U$ self-adjoint with respect to $Q$. For
any monic separable polynomial $f(x)$ of degree $2n+2$, let $J_f$
denote the Jacobian of the hyperelliptic curve defined by the affine
equation $y^2 = f(x),$ and let $V_f$ denote the subscheme of $V$
consisting of self-adjoint operators $T$ with characteristic
polynomial $f(x)$. In Section \ref{sec:orbitparameterization}, we
obtain a bijection between $\Sel_2(J_f)$ and locally
soluble orbits of the conjugation action of $\PSO(U)$ on $V_f.$ This
parameterization step can be viewed as an example of Arithmetic
Invariant Theory. Although not strictly needed, the arithmetic theory
of pencils of quadrics as developed in \cite{Jerrythesis} can be used to
give a very nice geometric interpretation of solubility. More
precisely, a self-adjoint operator $T\in V_f(\bbq)$ is soluble if and
only if there exists a rational $n$-plane $X$ that is isotropic with
respect to the following two quadrics:
\begin{eqnarray*}
Q(v) &=& \qu{v}{v}\\
Q_T(v) &=& \qu{v}{Tv},
\end{eqnarray*}
where $\quu$ is the bilinear form associated to $Q$. A self-adjoint
operator $T\in V_f(\bbq)$ is locally soluble if and only if such an
$n$-plane exists locally everywhere.

In Section \ref{sec:orbitcounting}, we count the number of locally
soluble orbits using techniques of Bhargava developed in
\cite{dodqf}. We count first the number of integral orbits soluble at
$\bbr$ by counting the number of integral points inside a fundamental
domain for the action of $\PSO(U)(\R)$ on $V(\R)$. We break up this
fundamental domain into a compact part and a cusp region where
separate estimations are required. The compact part of the fundamental
domain will contribute to, on average, four Selmer elements. The cusp
region corresponds to the two ``obvious'' classes: $0$ and
$(\infty')-(\infty)$.  The second step is a sieve to the locally
soluble orbits by imposing infinitely many congruence conditions. For
this the uniformity estimates of \cite{geosieve} are needed.

In Section 5, we combine the results from previous sections to prove
Theorems \ref{thm:evenmainssss}. Finally in Section~6, we modify the
methods of \cite{PoSto} to prove Theorem \ref{thchab}.
\section{Orbit parameterization}
\label{sec:orbitparameterization}

%%% Setting notation G,V,... explain the goal of studying orbits, outline the section

Let $k$ be a field of characteristic not 2 and let $(U,Q)$ be the
(unique) split quadratic space over $k$ of dimension $2n+2$ and discriminant 1. Let
$f(x)$ be a monic polynomial of degree $2n+2$ with no repeated roots
and splitting completely over $k^s$. In this section, we study the
action of $\PSO(U)$ on self-adjoint operators of $U$
with characteristic polynomial $f(x)$ via conjugation. More precisely, let
$\qu{v}{w}=Q(v+w)-Q(v)-Q(w)$ denote the bilinear form associated to
$Q$. For any linear operator $T:U\rightarrow U$, its adjoint $T^*$ is
defined via the following equation:
$$\qu{Tv}{w}=\qu{v}{T^*w},\quad\forall\; v,w\in U.$$
Let $V$ denote the $k$-scheme
$$
V = \{T:U\rightarrow U|T = T^*\},
$$
and $V_f$ the $k$-scheme
$$V_f = \{T:U\rightarrow U|T = T^*,\det(xI - T)=f(x)\}.$$
The group scheme $$\SO(U):=\{g\in \text{GL}(U)|gg^*=I,\det(g)=1\}$$
acts on $V_f$ via $g\cdot T = gTg^{-1}.$ The center
$\mu_2\leq\SO(U)$ acts trivially. Hence we obtain a faithful action
of $$G=\PSO_{2n+2}:=\PSO(U)=\SO(U)/\mu_2.$$

%As a quadratic space
%of discriminant 1, the two connected components, or rulings, of the
%Lagrangian variety $L_Q$ of $n+1$ dimensional isotropic subspaces of
%$U$ are defined over $k$. The group $\text{O}(U)$ acts on $L_Q$ and
%$\SO(U)$ is the stabilizer of the rulings. We fix one of these two
%rulings and denote it by $Y_0$.

To study the orbits of these actions, we first work over the separable
closure $k^s$ of $k$ in \S2.1 and show that $G(k^s)$ acts transitively
on $V_f(k^s)$ for separable polynomials $f$. In \S2.2, we work over
$k$ and classify the $G(k)$-orbits on $V_f(k)$ using Galois
cohomology. In \S2.3, we consider the Jacobian $J$ of the
hyperelliptic curve given by the equation $y^2=f(x)$ and obtain a
bijection between $G(k)\backslash V_f(k)$ and a subset of
$H^1(k,J[2])$. The most difficult part of this section will be to show
that this subset contains the image of $J(k)/2J(k)$ in
$H^1(k,J[2])$. Finally, in \S2.4, we work over $\Z_p$ and describe the
integral orbits $G(\Z_p)\backslash V(\Z_p)$.
%Instead of viewing $(U)$ as merely a
%quadratic space, we view it as a quadratic space with a fixed ruling.

\subsection{Geometric orbits}
\begin{proposition}\label{prop:geoorbit}
  The group $G(k^s)$ acts transitively on $V_f(k^s).$ For any $T\in
  V_f(k),$ the stabilizer subscheme $\tStab_G(T)$ is isomorphic to
  $(\text{Res}_{L/k}\mu_2)_{N=1}/\mu_2,$ where $L=k[x]/f(x)$ is an
  etale $k$-algebra of dimension $2n+2$.
\end{proposition}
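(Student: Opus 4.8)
The plan is to reduce everything to linear algebra over $k^s$, then descend the stabilizer computation to $k$.

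The plan is to reduce the transitivity statement to linear algebra over $k^s$, and to compute the stabilizer directly over $k$ by exploiting the $L$-module structure on $U$ that $T$ provides. For transitivity, first note $V_f(k^s)\neq\emptyset$, e.g.\ the operator $\mathrm{diag}(\lambda_1,\dots,\lambda_{2n+2})$ in an orthonormal $k^s$-basis, where the $\lambda_i$ are the roots of $f$. Given $T,T'\in V_f(k^s)$, separability of $f$ makes each of them diagonalizable with $2n+2$ distinct eigenvalues; if $U_{k^s}=\bigoplus_i U_i$ is the eigenspace decomposition of $T$, then $\lambda_i\qu{v}{w}=\qu{Tv}{w}=\qu{v}{Tw}=\lambda_j\qu{v}{w}$ forces $U_i\perp U_j$ for $i\neq j$, so each eigenline $U_i$ is nondegenerate and admits a vector $e_i$ with $\qu{e_i}{e_i}$ equal to a fixed nonzero scalar. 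Matching the resulting bases of $U$ for $T$ and $T'$ by eigenvalue yields $g\in\tO(U)(k^s)$ with $gTg^{-1}=T'$; since replacing $e_1$ by $-e_1$ leaves the conjugate unchanged but flips $\det g$, we may take $g\in\SO(U)(k^s)$. Hence $\SO(U)(k^s)$, and a fortiori $G(k^s)$, acts transitively on $V_f(k^s)$.

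For the stabilizer, fix $T\in V_f(k)$. Since $\det(xI-T)=f(x)$ with $f$ squarefree, $k[T]\cong k[x]/f(x)=L$, making $U$ an $L$-module with $\dim_kU=\dim_kL$; factoring $L=\prod_j L_j$ into fields and $U=\prod_j U_j$ correspondingly and comparing characteristic polynomials shows $\dim_{L_j}U_j=1$, so $U$ is free of rank one over $L$. Fix an $L$-basis and identify $U\cong L$; then the centralizer of $T$ in $\End(U)$ is $\End_L(L)=L$, acting by multiplication. Self-adjointness of $T$ gives $\qu{x^i\cdot1}{x^j\cdot1}=\qu{x^{i+j}\cdot1}{1}=\qu{1}{x^{i+j}\cdot1}$, whence $\qu{v}{w}=\qu{1}{vw}=\ell(vw)$ (product in $L$) for $\ell:=\qu{1}{-}\in\mathrm{Hom}_k(L,k)$; nondegeneracy of $\quu$ forces $\ell=\tTr_{L/k}(\lambda\,\cdot\,)$ for a unique $\lambda\in L^\times$, since the trace form is nondegenerate as $f$ is separable, so $\qu{v}{w}=\tTr_{L/k}(\lambda vw)$. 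In this normal form the adjoint of multiplication by $g\in L$ is again multiplication by $g$, because $\tTr_{L/k}(\lambda(gv)w)=\tTr_{L/k}(\lambda v(gw))$; hence for $g\in L^\times$ the operator $m_g$ satisfies $m_gm_g^*=m_{g^2}$, so $m_g\in\SO(U)$ iff $g^2=1$ and $N_{L/k}(g)=\det_k(m_g)=1$. Therefore $\tStab_{\SO(U)}(T)=(\Rlk\mu_2)_{N=1}$. Since the central scalars $\pm1=\mu_2\le\SO(U)$ correspond to $\pm1\in L$, which lie in $(\Rlk\mu_2)_{N=1}$ as $(\pm1)^{2n+2}=1$, and $\mu_2$ acts trivially on $T$, we conclude $\tStab_G(T)=\tStab_{\SO(U)}(T)/\mu_2=(\Rlk\mu_2)_{N=1}/\mu_2$.

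The only point requiring real care is the scheme-theoretic strengthening: one must run the last paragraph functorially, checking for an arbitrary $k$-algebra $R$ that $U\otimes_k R$ remains free of rank one over $L\otimes_k R$ (immediate by base change), that the centralizer of $T$ in $\End_R(U\otimes_k R)$ is $L\otimes_k R$, and that the conditions $g^2=1$, $N(g)=1$ cut out $(\Rlk\mu_2)_{N=1}(R)$ inside $(L\otimes_k R)^\times$; one then identifies the fppf quotient by the central $\mu_2$ with the stated group scheme. All the linear-algebra inputs—orthogonality of eigenspaces, the $\tTr_{L/k}(\lambda\,\cdot\,)$ shape of the form, self-adjointness of $m_g$—are routine once arranged, so I expect no genuine obstacle beyond this bookkeeping.
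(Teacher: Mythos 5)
Your proposal is correct and follows essentially the same strategy as the paper's: transitivity via matching orthonormal eigenbases and flipping a single sign to land in $\SO(U)(k^s)$, and the stabilizer computation via the $L$-module structure on $U$ together with the observation that any $g$ commuting with $T$ is automatically self-adjoint, forcing $g^*g=1$ to collapse to $g^2=1$. Your write-up is more explicit — you realize $U\cong L$ concretely, put the form in the trace normal form $\tTr_{L/k}(\lambda\,\cdot\,)$, and verify $m_g^*=m_g$ by hand — whereas the paper gets $g^*=g$ more quickly from ``$g$ is a polynomial in $T$ and $T=T^*$'' and cites that the $\GL(U)$-stabilizer of a regular semisimple element is the maximal torus $\Res_{L/k}\mathbb{G}_m$; both are sound, and the scheme-theoretic bookkeeping you flag at the end is exactly what the paper handles by writing the functor of points $\tStab_{\tO(U)}(T)(K)=\{g\in(K[T]/f(T))^\times: g^*g=1\}$ for an arbitrary $k$-algebra $K$.
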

\begin{proof}
  Fix any $T$ in $V_f(k).$ Since $T$ is regular semi-simple, its
  stabilizer scheme in $\GL(U)$ is a maximal torus. It contains and
  hence equals to the maximal torus $\mbox{Res}_{L/k}\bbg_m.$ For any
  $k$-algebra $K$, we have
$$\tStab_{\tO(U)}(T)(K) = \{g\in(K[T]/f(T))^\times|g^*g = 1\}.$$
Since $T=T^*$ and $g$ is a polynomial in $T$, we have $g=g^*$. Thus,
\begin{eqnarray*}
\tStab_{\tO(U)}(T) &\simeq& \tStab_{\GL(U)}(T)[2]\simeq\mbox{Res}_{L/k}\mu_2,\\
\tStab_{\tSO(U)}(T) &\simeq& (\mbox{Res}_{L/k}\mu_2)_{N=1},\\
\tStab_{\tPSO(U)}(T) &\simeq& (\mbox{Res}_{L/k}\mu_2)_{N=1}/\mu_2.
\end{eqnarray*}

Since $T$ is self-adjoint, there is an orthonormal basis
$\{u_1,\ldots,u_{2n+2}\}$ for $U$ consisting of eigenvectors of $T$
with eigenvalues $\lambda_1,\ldots,\lambda_{2n+2}$. If $T'$
is another elements of $V_f(k^s),$ then there is an orthonormal basis
$\{u'_1,\ldots,u'_{2n+2}\}$ of $U$ consisting of eigenvectors of $T'$
with eigenvalues $\lambda_1,\ldots,\lambda_{2n+2}$. Let $g\in
\SL(U)(k^s)$ be an operator sending $u_i$ to $\pm u'_i$. Then $g\in
\SO(U)(k^s)$ and the image of $g$ in $\PSO(U)(k^s)$sends $T$ to $T'$.
\end{proof}

\subsection{Rational orbits via Galois cohomology}
Our first aim is to show that $V_f(k)$ is non-empty. Indeed, one can view $L=k[x]/f(x)$ as
a $2n+2$ dimensional $k$-vector space with a power basis
$\{1,\beta,\ldots,\be^{2n+1}\}$ where $\be\in k[x]/f(x)$ is the image
of $x$. We define the binear form $<,>$ on $L$ as follows:
$$<\lambda,\mu>:=\mbox{ coefficient of }\be^{2n+1}\mbox{ in }\lambda\mu=\tTr_{L/k}(\lambda\mu/f'(\be)).$$
This form is split since the $n+1$ plane
$Y=\tSpan\{1,\be,\ldots,\be^{n}\}$ is isotropic. Its discriminant is 1 as one can readily compute using the above power basis. By the uniqueness of
split quadratic spaces of fixed dimension and discriminant 1, there exists an
isometry between $(L,<,>)$ and $(U,\quu)$, well defined up to post
composition by elements in $\tO(U)(k)$.
%We fix one such isomotry and use it to
%identify $(L,<,>)$ with $(U,Q)$.
Let $\cdot\be:L\to L$ denote the linear map given by multiplication by $\be$.
%and let $X_0$ denote the
%image of the $n$-plane $\tSpan\{1,\be,\ldots,\be^{n-1}\}$
Then $\cdot\be$ is self-adjoint with characteristic polynomial $f(x)$,
and hence yields an element in $V_f(k)$ well-defined up to $\tO(U)(k)$
conjugation. In what follows, we fix an isometry $\iota:L\to U$ thus
yielding a fixed element $T_f\in V_f(k)$.

%Since $(L,<,>)$ and $(U,\QUU)$ are isomorphic as quadratic
%spaces, the map $\cdot\be$

Given $T\in V_f(k)$ there exists $g\in G(k^s)$ such that $T =
gT_fg^{-1}$, since there is a unique geometric orbit (see Proposition
\ref{prop:geoorbit}).  For any $\sigma\in\Gal(k^s/k),$ the element
$\gl{g}{\sigma}$ also conjugates $T_f$ to $T$ and hence
$g^{-1}\gl{g}{\sigma}\in \tStab_G(T_f)(k^s).$ The 1-cochain $c_T$
given by $(c_T)_\sigma=g^{-1}\gl{g}{\sigma}$ is a 1-cocycle whose
image in $H^1(k,G)$ is trivial.
This defines a bijection
\begin{eqnarray}\label{eqgtoh}
G(k)\backslash V_f(k)&\leftrightarrow& \ker(H^1(k,\tStab_G(T_f))\rightarrow H^1(k, G))\\
T&\mapsto& c_T.
\end{eqnarray}
See \cite[Proposition 1]{AIT} for more details.

\subsubsection*{Distinguished orbits}
We call a self-adjoint operator $T\in V_f(k)$ \textbf{distinguished}
if it is $\PO(U)(k)$-equivalent to $T_f$. Since the $\PO(U)(k)$-orbit
of $T_f$ might break up into two $\PSO(U)(k)$-orbits, there might
exist two distinguished $\PSO(U)(k)$-orbits in contrast to the odd
hyperelliptic case.  As $\tStab_{PO(U)}(T_f)\simeq
\mbox{Res}_{L/k}\mu_2/\mu_2$, we have the following diagram of exact
rows:
\begin{displaymath}
\xymatrix{
\bigl(\mbox{Res}_{L/k}\mu_2/\mu_2\bigr)(k) \ar[r]^{\hspace{20pt}N}& \mu_2(k)\ar[r]\ar[d]^{\sim}& H^1(k, \tStab_{{\rm PSO}(U)}(T_f))\ar[r]\ar[d] & H^1(k, \tStab_{{\rm PO}(U)}(T_f))\ar[d]\\
\PO(U)(k)\ar@{->>}[r]&\mu_2(k)\ar[r]&H^1(k, \PSO(U))\ar[r]&H^1(k, \PO(U)).}
\end{displaymath}
Therefore a self-adjoint operator $T\in V_f(k)$ is distinguished if and only if
$$c_T\in\ker(H^1(k, \tStab_{{\rm PSO}(U)}(T_f))\rightarrow H^1(k, \tStab_{{\rm PO}(U)}(T_f))).$$
Since $H^1(k, \PSO(U))\rightarrow H^1(k, \PO(U))$ is injective, every class in the above kernel corresponds to a $\PSO(U)(k)$-orbit.

Distinguished $\PSO(U)(k)$-orbits in $V_f(k)$ are unique if
and only if the norm map $N:\mbox{Res}_{L/k}\mu_2/\mu_2(k) \rightarrow
\mu_2(k)$ is surjective. Therefore, \cite[Lemma 11.2]{PoonenShaefer}
immediately implies the following result.

\begin{proposition}\label{prop:poonenover2}
  The set of distinguished elements in $V_f(k)$ consists of a single
  $\PSO(U)(k)$-orbit if and only if one of the following conditions is
  satisfied:
\begin{itemize}
\item[{\rm (1)}] $f(x)$ has a factor of odd degree in $k[x]$.
\item[{\rm (2)}] $n$ is even and $f(x)$ factors over some quadratic extension $K$
  of $k$ as $h(x)\bar{h}(x)$, where $h(x)\in K[x]$ and $\bar{h}(x)$ is
  the $\Gal(K/k)$-conjugate of $h(x).$
\end{itemize}
Otherwise, the set of distinguished elements in $V_f(k)$ consists of
two $\PSO(U)(k)$-orbits. Condition $(2)$ is equivalent to saying that $n$ is
even, and $L$ contains a quadratic extension $K$ of $k$.
\end{proposition}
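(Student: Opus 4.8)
The plan is to reduce the statement to a purely algebraic question about the norm map $N \colon (\mathrm{Res}_{L/k}\mu_2/\mu_2)(k) \to \mu_2(k)$ and then invoke \cite[Lemma 11.2]{PoonenShaefer}. As was already observed in the paragraph preceding the statement, the diagram of exact rows identifies the set of distinguished $\PSO(U)(k)$-orbits with $\ker\bigl(H^1(k,\tStab_{\PSO(U)}(T_f)) \to H^1(k,\tStab_{\PO(U)}(T_f))\bigr)$, and a diagram chase shows this kernel is a single class (equivalently, the distinguished orbit is unique) precisely when the connecting map $\mu_2(k) \to H^1(k,\tStab_{\PSO(U)}(T_f))$ is zero, i.e.\ when $N$ is surjective onto $\mu_2(k)$. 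So the first step I would carry out is to state this reduction cleanly: uniqueness of the distinguished orbit $\iff$ $N$ surjective.

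The second step is to make the norm map explicit. Writing $L = \prod_i L_i$ as a product of finite separable field extensions $L_i/k$ corresponding to the irreducible factors of $f$, we have $(\mathrm{Res}_{L/k}\mu_2)(k) = \prod_i \mu_2(L_i) = \{\pm 1\}^{(\text{number of factors})}$, the subgroup $\mu_2 \le \mathrm{Res}_{L/k}\mu_2$ sits diagonally, and the norm to $\mu_2(k)$ sends $(\varepsilon_i)_i$ to $\prod_i \varepsilon_i^{[L_i:k]}$. Hence $N$ on the quotient $(\mathrm{Res}_{L/k}\mu_2/\mu_2)(k)$ is surjective unless every $[L_i:k]$ is even AND the only way to get $-1$ in the product would require using the diagonal class — one then checks the surjectivity fails exactly when $[L_i:k]$ is even for all $i$ and moreover $\sum_i 1 \equiv$ (parity obstruction), which is precisely the content of \cite[Lemma 11.2]{PoonenShaefer}. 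I would therefore simply quote that lemma: $N$ is surjective iff (1) some $L_i$ has odd degree over $k$, or (2) $n$ is even and $f = h\bar h$ over a quadratic extension $K/k$.

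The third and last step is to verify the claimed equivalence ``condition (2) $\iff$ $n$ even and $L$ contains a quadratic extension $K$ of $k$.'' If $f = h(x)\bar h(x)$ with $h \in K[x]$, $\deg h = n+1$, and $h$ irreducible over $K$ (the general case follows by taking the product over the $\Gal(K/k)$-orbits of factors), then $K[x]/h(x)$ is a field of degree $n+1$ over $K$, hence of degree $2(n+1)$ over $k$, and it is a factor of $L$; since $2(n+1) = 2n+2 = \dim_k L$, in fact $L = K[x]/h(x)$ contains $K$. Conversely, if $L$ is a field containing a quadratic $K/k$ then $L/K$ has degree $n+1$; the minimal polynomial of $\beta$ over $K$ is a degree-$(n+1)$ factor $h(x)$ of $f$ in $K[x]$, and applying the nontrivial element of $\Gal(K/k)$ gives $f = h\bar h$. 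One should also note that in this situation $[L:k] = 2n+2$ must be such that no odd-degree factor exists, forcing us into case (2) rather than (1); and the parity bookkeeping from \cite[Lemma 11.2]{PoonenShaefer} is what pins down the ``$n$ even'' hypothesis. The main obstacle is really just the careful bookkeeping in the second step — matching the explicit norm computation to the precise hypotheses of \cite[Lemma 11.2]{PoonenShaefer} — but since that lemma is exactly tailored to this norm map, the argument is short once the reduction in step one is in place.
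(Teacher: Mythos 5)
Your overall strategy matches the paper exactly: the paper's ``proof'' consists of a single sentence reducing the claim to the surjectivity of $N\colon(\mathrm{Res}_{L/k}\mu_2/\mu_2)(k)\to\mu_2(k)$ and then citing \cite[Lemma 11.2]{PoonenShaefer}. Your Step~1 is that reduction, and since you ultimately defer Step~2 to the cited lemma, the formal chain of reasoning closes. That said, there is a conceptual error in your attempt to make the norm explicit that you should be aware of. You compute $(\mathrm{Res}_{L/k}\mu_2)(k)=\prod_i\mu_2(L_i)=\{\pm1\}^{\#\{i\}}$ and then reason about its image under $N$, but the relevant domain is $(\mathrm{Res}_{L/k}\mu_2/\mu_2)(k)$, and taking $k$-points does not commute with the quotient. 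The exact sequence $1\to\mu_2\to\mathrm{Res}_{L/k}\mu_2\to\mathrm{Res}_{L/k}\mu_2/\mu_2\to1$ shows that $(\mathrm{Res}_{L/k}\mu_2/\mu_2)(k)$ is an extension of $(k^\times\cap L^{\times2})/k^{\times2}$ by $(\mathrm{Res}_{L/k}\mu_2)(k)/\{\pm1\}$, and the first of these groups is nontrivial exactly when $L$ contains a quadratic extension of $k$. These ``extra'' $k$-points, represented by $\alpha\in L^\times$ with $\alpha^2\in k^\times\setminus k^{\times2}$, are precisely what make condition (2) produce surjectivity: if every $[L_i:k]$ is even then, as you observe, every $(\varepsilon_i)\in\prod\mu_2(L_i)$ has trivial norm, so if the quotient had only those $k$-points the norm could never be surjective and condition (2) would be irrelevant. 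Your remark about ``using the diagonal class'' does not resolve this, since the diagonal $\mu_2$ is killed by the quotient. So the correct computation (which is what \cite[Lemma 11.2]{PoonenShaefer} encodes) must track $N([\alpha])=N_{L/k}(\alpha)/(\alpha^2)^{n+1}$ for these extra classes; this is where the ``$n$ even'' hypothesis enters. Your Step~3 is fine for $L$ a field, but you should note that when $L=\prod_j L_j$ is a proper product, ``$K$ embeds in $L$'' means $K$ embeds into every factor $L_j$ (a $k$-algebra map to a product is a tuple of $k$-algebra maps), and each $p_j$ then factors over $K$; otherwise the equivalence would be false (consider $f=(x^2+1)(x^4-2)$ over $\Q$, where neither $\Q(i)$ nor $\Q(\sqrt{2})$ embeds into both factors).
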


To give a more explicit description of distinguished orbits, we have the following result, the proof of which is deferred to Section \ref{sec:pencil}.

\begin{proposition}\label{prop:Wt}
A self-adjoint operator $T\in V_f(k)$ is distinguished if and only if there exists a $k$-rational $n$-plane $X\subset U$ such that $\tSpan\{X,TX\}$ is an isotropic $n+1$ plane.
\end{proposition}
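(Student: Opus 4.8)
The plan is to characterize distinguished operators via the Galois-cohomological kernel established in Section 2.2 and translate this, using the theory of pencils of quadrics, into the geometric statement about isotropic $n$-planes. Recall that $T \in V_f(k)$ is distinguished precisely when $c_T$ lies in the kernel of $H^1(k,\tStab_{\PSO(U)}(T_f)) \to H^1(k,\tStab_{\PO(U)}(T_f))$, equivalently when the $\PO(U)(k)$-orbit of $T$ equals that of $T_f$. So the heart of the matter is to produce, for the reference operator $T_f$ coming from multiplication by $\be$ on $L = k[x]/f(x)$, an explicit $k$-rational $n$-plane $X_0$ with $\tSpan\{X_0, T_f X_0\}$ isotropic of dimension $n+1$, and then to show that the existence of such a plane for a general $T$ is an $\PO(U)(k)$-conjugation invariant that detects exactly the distinguished orbit(s).

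First I would recall from Section 2.2 that under the isometry $\iota \colon (L,<,>) \xrightarrow{\sim} (U,\quu)$, the form $<\lambda,\mu>$ is the coefficient of $\be^{2n+1}$ in $\lambda\mu$, and that $Y = \tSpan\{1,\be,\ldots,\be^n\}$ is a maximal isotropic $(n+1)$-plane. Observe that $X_0 := \tSpan\{1,\be,\ldots,\be^{n-1}\}$ is an $n$-plane with $T_f X_0 = \be \cdot X_0 = \tSpan\{\be,\ldots,\be^n\}$, so that $\tSpan\{X_0, T_f X_0\} = Y$ is isotropic of dimension $n+1$. This exhibits $T_f$ as having the desired property. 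Conversely, given any $T \in V_f(k)$ with such an $n$-plane $X$, I would pass to the pencil of quadrics $\{\lambda Q + \mu Q_T\}$ (notation as in the introduction, with $Q_T(v) = \qu{v}{Tv}$): the condition that $\tSpan\{X, TX\}$ is isotropic for $Q$ is exactly the condition that $X$ is a common isotropic $n$-plane for $Q$ and $Q_T$, since $\qu{x}{x} = 0$, $\qu{x}{Tx'} = \qu{Tx}{x'}$, and $\qu{Tx}{Tx'} = \qu{x}{T^2 x'}$ all vanish on $\tSpan\{X,TX\}$ being isotropic forces both $Q(x+Tx') = 0$ and $Q_T$-isotropy of $X$; I would make this linear-algebra equivalence precise. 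The arithmetic theory of pencils of quadrics in \cite{Jerrythesis} then identifies $\PSO(U)(k)$-orbits carrying a common isotropic $n$-plane, and comparing with Proposition \ref{prop:poonenover2} shows these are precisely the distinguished orbits: the $\PO(U)$-orbit of $T_f$ is distinguished-by-definition and contains $X_0$, and the obstruction to a rational common isotropic subspace is governed by the same norm map $N$.

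The main obstacle will be the converse direction — showing that possessing a rational isotropic $n$-plane $X$ with $\tSpan\{X,TX\}$ isotropic forces $T$ into the $\PO(U)(k)$-orbit of $T_f$, rather than merely into the $\PO(U)(k^s)$-orbit. Over $k^s$ this is immediate from Proposition \ref{prop:geoorbit}, so the content is a descent/rationality argument: one must check that the presence of the rational $(n+1)$-plane $\tSpan\{X,TX\}$ trivializes the relevant cohomology class $c_T$ in $H^1(k,\tStab_{\PO(U)}(T_f)) \cong H^1(k, \Rlk\mu_2/\mu_2)$. I expect this to follow by relating $\tSpan\{X,TX\}$ to the Fano scheme of the associated pencil and invoking the results of \cite{Jerrythesis}, which is exactly why the authors defer the proof to Section \ref{sec:pencil}; I would structure the argument to isolate this single cohomological vanishing, treating everything else as the routine linear algebra above.
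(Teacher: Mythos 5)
Your explicit construction of $X_0=\tSpan\{1,\be,\ldots,\be^{n-1}\}$ for $T_f$ is correct and handles the representative $T_f$ of the distinguished orbit. But there is a genuine conceptual gap in the middle of your argument, at exactly the point that carries all the weight.

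You claim that the condition ``$\tSpan\{X,TX\}$ is isotropic for $Q$'' is \emph{equivalent} to ``$X$ is a common isotropic $n$-plane for $Q$ and $Q_T$.'' This is false: the first condition unpacks to three families of equations, $\qu{x}{x'}=0$, $\qu{x}{Tx'}=0$, and $\qu{Tx}{Tx'}=\qu{x}{T^2x'}=0$ for $x,x'\in X$. The first two say $X$ is common isotropic for $Q$ and $Q_T$; the third, $Q_{T^2}$-isotropy of $X$, is a strictly stronger extra condition. The distinction matters: the set of common isotropic $n$-planes is the Fano variety $F_T$, which is a torsor of $J$ of dimension $n$, while the set of $n$-planes $X$ with $\tSpan\{X,TX\}$ isotropic of dimension $n+1$ is the $0$-dimensional finite subscheme $F_T[2]_\infty\sqcup F_T[2]_{\infty'}$ (a pair of $J[2]$-torsors). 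Having a rational common isotropic $n$-plane, i.e.\ $F_T(k)\neq\emptyset$, is exactly the definition of $T$ being \emph{soluble}, not distinguished. So if your equivalence were taken at face value, you would prove a different (and false) statement.

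The actual proof hinges on a cohomological identity that your proposal acknowledges deferring to \cite{Jerrythesis} but does not pin down: Theorem~\ref{thm:theoremfrompencils} states $[F_T[2]_\infty]=c_T$ in $H^1(k,J[2])$. From this, the existence of a rational $X$ with $\tSpan\{X,TX\}$ isotropic is equivalent to $[F_T[2]_\infty]$ or $[F_T[2]_{\infty'}]$ being trivial; since these two classes differ by the image of $(\infty')-(\infty)$ under the descent map, this holds iff $c_T$ lies in the image of $\langle(\infty')-(\infty)\rangle$, which (via the top-left square of diagram~\eqref{eq:HUGEdiagram}) is the same as $c_T$ vanishing in $H^1(k,\tStab_{\PO(U)}(T_f))$, i.e.\ $T$ distinguished. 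Your proposal identifies the right characterization of distinguished (kernel of $\PSO\to\PO$ cohomology) but does not supply this chain, and the linear-algebra ``equivalence'' it relies on as a bridge is incorrect. The explicit $X_0$ construction, while valid, does not appear in the paper and by itself only handles one orbit, not the general $T$.
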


After a change of basis, we may take the matrix $A$ with 1's on the anti-diagonal and 0's elsewhere as a Gram matrix for $Q$. We express this basis as $$\{e_1,\ldots,e_{n+1},f_{n+1},\ldots,f_1\}$$
where
\begin{equation}\label{eq:stdbasis}
\qu{e_i}{f_j}=\delta_{ij},\quad \qu{e_i}{e_j}=0=\qu{f_i}{f_j}.
\end{equation}
We call this the standard basis. Then the above proposition yields the
following explicit description of distinguished elements which will be
useful in Section \ref{sec:orbitcounting}.

\begin{proposition}\label{prop:distexplicit}
A self-adjoint operator in $V_f(k)$ is distinguished if and only if its $\PSO(U)(k)$-orbit contains an element $T$ whose matrix $M$, with respect to the standard basis, satisfies
\begin{equation}\label{eq:distinguished}
AM=
\begin{pmatrix}
0&0&\cdots&0&0&*&*\\
0&0&\cdots&0&*&*&*\\
\vdots&\vdots&\udots&\udots&\vdots&\vdots&\vdots\\
0&0&\udots&\cdots&\vdots&\vdots&\vdots\\
0&*&\cdots&\cdots&*&*&*\\
*&*&\cdots&\cdots&*&*&*\\
*&*&\cdots&\cdots&*&*&*
\end{pmatrix}.
\end{equation}
\end{proposition}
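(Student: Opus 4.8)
The plan is to deduce this from Proposition \ref{prop:Wt}, which characterizes distinguished operators as those $T\in V_f(k)$ for which there is a $k$-rational $n$-plane $X\subset U$ with $\tSpan\{X,TX\}$ an isotropic $(n+1)$-plane. First I would observe that since distinguishedness is a property of the $\PSO(U)(k)$-orbit (and indeed the condition in Proposition \ref{prop:Wt} is manifestly $\PSO(U)(k)$-invariant, as $g\in\PSO(U)(k)$ carries the pair $(X,TX)$ to $(gX,(gTg^{-1})(gX))$ and preserves isotropy), it suffices to show that \emph{some} element of the orbit has a matrix of the stated shape, and conversely that any $T$ whose matrix has that shape satisfies the criterion of Proposition \ref{prop:Wt}.

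For the forward direction, suppose $T$ is distinguished and pick the $n$-plane $X$ and the isotropic $(n+1)$-plane $Z=\tSpan\{X,TX\}$ furnished by Proposition \ref{prop:Wt}. Since the quadratic space $(U,Q)$ is split of dimension $2n+2$, any isotropic $(n+1)$-plane is a maximal isotropic subspace, and all maximal isotropic subspaces are conjugate under $\tO(U)(k)$; moreover $X\subset Z$ is an $n$-dimensional subspace of it. I would choose a standard basis $\{e_1,\dots,e_{n+1},f_{n+1},\dots,f_1\}$ as in \eqref{eq:stdbasis} adapted to this flag, so that $Z=\tSpan\{e_1,\dots,e_{n+1}\}$ and $X=\tSpan\{e_1,\dots,e_n\}$; this change of basis is implemented by an element of $\tO(U)(k)$, and one checks it can be taken in $\SO(U)(k)$ (or, since we only need the $\PSO$-orbit, adjusted by the center), so that the resulting matrix of $T$ is in the $\PSO(U)(k)$-orbit of the original. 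Now I translate the two geometric conditions into matrix entries. Writing $M$ for the matrix of $T$ and $A$ for the anti-diagonal Gram matrix, the self-adjointness $T=T^*$ is exactly $AM=(AM)^t$, i.e. $AM$ is symmetric. The condition $TX\subset Z$, i.e. that $Te_1,\dots,Te_n$ lie in $\tSpan\{e_1,\dots,e_{n+1}\}$, says that in the columns $1,\dots,n$ of $M$ the coefficients of $f_{n+1},\dots,f_1$ all vanish; equivalently the bottom-left $(n+1)\times n$ block of $M$ is zero. The condition that $Z$ is isotropic, combined with $X\subset Z$ and $TX\subset Z$, further forces $Q(e_i+te_j)$-type pairings and the pairings $\qu{e_i}{Te_j}=\qu{Te_i}{e_j}$ for $i,j\le n$ to vanish; translating through $A$, this kills an additional triangular block of entries of $AM$. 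Carrying out this bookkeeping — the vanishing of $\qu{e_i}{e_j}$, $\qu{e_i}{Te_j}$, and $\qu{Te_i}{Te_j}$ for the relevant index ranges, together with symmetry of $AM$ — is precisely what produces the staircase pattern of zeros in \eqref{eq:distinguished}, with the lower-right ``staircase'' of $*$'s being the entries left unconstrained.

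For the converse, if $M$ has the shape \eqref{eq:distinguished}, I would simply read off that $X:=\tSpan\{e_1,\dots,e_n\}$ is isotropic (it lies in the span of the $e_i$'s, which is isotropic) and that $TX\subset\tSpan\{e_1,\dots,e_{n+1}\}$ because the zero block in the lower-left of $M$ says $Te_1,\dots,Te_n$ have no $f$-components; hence $\tSpan\{X,TX\}\subseteq\tSpan\{e_1,\dots,e_{n+1}\}$, which is isotropic, and it has dimension $n+1$ as soon as $TX\not\subseteq X$ — and $TX\subseteq X$ is impossible because $T$ is regular (its characteristic polynomial $f$ is separable, so $T$ has no proper invariant subspace that is $T$-stable of that small a dimension unless... ) — more carefully, if $TX\subseteq X$ then $X$ would be a $T$-invariant $n$-plane, hence $\tSpan\{X,TX\}=X$ has dimension $n<n+1$, contradicting the last two $*$-columns of the staircase forcing $Te_n$ to have a nonzero $e_{n+1}$-component for generic such $M$; I would instead argue that the $(n+1)$-plane $\tSpan\{e_1,\dots,e_{n+1}\}$ itself is isotropic and contains $\tSpan\{X,TX\}$, and that Proposition \ref{prop:Wt} only requires the existence of \emph{some} isotropic $(n+1)$-plane containing $X$ and $TX$, so taking that coordinate plane suffices. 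Then Proposition \ref{prop:Wt} gives that $T$ is distinguished.

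The main obstacle I anticipate is the careful index bookkeeping in the forward direction: one must verify that the flag $X\subset Z$ of the right dimensions can always be brought to the \emph{standard} coordinate flag by an element of $\PSO(U)(k)$ (not merely $\tO(U)(k)$), using Witt's theorem for split spaces, and then that the combination of (i) symmetry of $AM$, (ii) $TX\subseteq Z$, and (iii) isotropy of $Z$ yields exactly the displayed pattern — neither more nor fewer forced zeros. A secondary subtlety is that when the distinguished set breaks into two $\PSO(U)(k)$-orbits (Proposition \ref{prop:poonenover2}), one should check that both orbits contain a representative of the form \eqref{eq:distinguished}; this follows because the whole $\PO(U)(k)$-orbit does, and conjugating a staircase matrix by the fixed involution in $\PO(U)(k)\setminus\PSO(U)(k)$ — which can be chosen to preserve the standard flag — again has staircase shape.
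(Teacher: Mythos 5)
Your forward direction is essentially in line with the paper (which cites the analogous argument in \cite[Proposition 4.4]{BG3}), and your handling of the $\PO$-versus-$\PSO$ subtlety is correct: conjugating by a flag-preserving reflection such as the one swapping $e_{n+1}$ and $f_{n+1}$ carries one $\PSO(U)(k)$-orbit in the distinguished $\PO(U)(k)$-orbit onto the other while preserving the staircase shape.

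The backward direction, however, has a genuine gap. You read off from the staircase that $TX\subseteq\tSpan\{e_1,\dots,e_{n+1}\}$ with $X=\tSpan\{e_1,\dots,e_n\}$, and then need $\tSpan\{X,TX\}$ to have dimension \emph{exactly} $n+1$ in order to apply Proposition \ref{prop:Wt}. Your three attempted escapes all fail. First, the claim that ``$T$ is regular, so it has no proper invariant subspace of that dimension'' is false as stated: a regular semisimple operator has plenty of proper invariant subspaces (spans of eigenvectors), so regularity alone rules out nothing. Second, ``for generic $M$'' is not a proof — you must handle every $M$ of the displayed shape. Third, and most seriously, Proposition \ref{prop:Wt} does not say ``there is an isotropic $(n+1)$-plane \emph{containing} $\tSpan\{X,TX\}$''; it requires $\tSpan\{X,TX\}$ to \emph{be} an isotropic $(n+1)$-plane, so retreating to containment does not let you invoke it. The missing ingredient is the observation (used by the paper) that \emph{no isotropic subspace is $T$-stable}: since $T$ is self-adjoint, its eigenspaces over $k^s$ are pairwise orthogonal, and since $f$ is separable they are one-dimensional; nondegeneracy of $Q$ then forces each eigenvector to be anisotropic, so a $T$-stable isotropic subspace would contain an isotropic eigenvector, a contradiction. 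In particular $X$ cannot be $T$-stable, hence $TX\not\subseteq X$ and $\tSpan\{X,TX\}$ has dimension $n+1$, completing the argument. Without this step your proof of the converse does not go through.
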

\begin{proof}
  The forward direction follows from an argument identical to the
  proof of \cite[Proposition 4.4]{BG3}. For the backwards direction,
  suppose $AM$ has the form in \eqref{eq:distinguished}. Then
\begin{equation}\label{eq:Tei}
Te_i\in\tSpan\{e_1,\ldots,e_{n+1}\}^\perp=\tSpan\{e_1,\ldots,e_{n+1}\},\quad\mbox{for }i=1,\ldots,n.
\end{equation}
Let $X$ be the $n$-plane $\tSpan\{e_1,\ldots,e_n\}.$ Since $T$ is
self-adjoint, its eigenspaces are pairwise orthogonal. Since $Q$ is
non-degenerate, none of the eigenvectors of $T$ is isotropic. As a result,
no isotropic linear space is $T$-stable. Therefore by
\eqref{eq:Tei}, $$\tSpan\{X,TX\}=\tSpan\{e_1,\ldots,e_{n+1}\}.$$ By
Proposition \ref{prop:Wt}, $T$ is distinguished.
\end{proof}

\subsubsection*{Remaining orbits}
We start by describing the set of $\tO(U)(k)$-orbits on $V_f(k)$.
Recall that $\tStab_{{\rm O}(U)}(T_f)\simeq\Rlk\mu_2$. The set
$$\ker(H^1(k,\tStab_{{\rm O}(U)}(T_f))\rightarrow H^1(k,O(U)))$$
consists of elements $\al\in H^1(k,\Rlk\mu_2)\simeq L^\times/L^{\times2}$ whose image in
$H^1(k,O(U))$ is trivial. For any $\al\in L^\times/L^{\times2},$ lift
it arbitrarily to $L^\times$ and consider the following bilinear form
on $L$:
$$<\lambda,\mu>_\al = \mbox{ coefficient of }\be^{2n+1}\mbox{ in }\al\lambda\mu = \tTr_{L/k}(\al\lambda\mu/f'(\be)).$$
We claim that $\al$ maps to $0$ in $H^1(k, \tO(U))$ if and only if
$<,>_\al$ is split with discriminant 1. Indeed, let $\iota:(L,<,>)\rightarrow (U,\quu)$ denote the
isometry used to define $T_f$.
%Consider the following sequence of
%isometries
Now $<,>_\al$ is split with discriminant 1 if and only if there exists $g\in O(U)(k^s)$ such that the following composite map is defined over $k$:
%above composite map is defined over $k$ for some
%$g\in\text{O}(U,Q)(k^s).$
\begin{equation}\label{eq:isoG}
  (L,<,>_\al)\xrightarrow{\sqrt{\al}}_{k^s} (L,<,>)\xrightarrow{\iota}_k (U,\quu)\xrightarrow{g}_{k^s} (U,\quu),
\end{equation}
where the subscripts below the arrows indicate the fields of
definition and where the last map is the standard action of
$g\in\text{O}(U)(k^s).$ Unwinding the definitions (\cite[Proposition
2.13]{Jerrythesis}), we see that this is equivalent to the image of
$\al$ mapping to $0$ in $H^1(k, \tO(U))$. We have therefore shown
the following result.

\begin{theorem}\label{thm:Oorbits}
  There is a bijection between $\text{O}(U)(k)$-orbits on $V_f(k)$
  and classes $\al\in(L^\times/L^{\times2})_{N=1}$ such that $<,>_\al$
  is split.
\end{theorem}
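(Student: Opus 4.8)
The statement is essentially the summary of the discussion preceding it, so the plan is to assemble that discussion into three steps: (i) set up the $\text{O}(U)$-version of the descent bijection \eqref{eqgtoh}; (ii) identify the stabilizer cohomology $H^1(k,\tStab_{\text{O}(U)}(T_f))$ with $L^\times/L^{\times 2}$ and pin down the image of a class $\al$ in $H^1(k,\text{O}(U))$; (iii) use a discriminant computation to match the resulting index set with $\{\al\in(L^\times/L^{\times 2})_{N=1}: <,>_\al\text{ split}\}$.

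For (i): since $V_f(k)\neq\emptyset$ (it contains $T_f$, built from multiplication by $\be$ on $L$) and $\text{O}(U)(k^s)$ acts transitively on $V_f(k^s)$ (because $\PSO(U)(k^s)$ already does by Proposition~\ref{prop:geoorbit}, acting through $\SO(U)\subseteq\text{O}(U)$), the twisting argument behind \eqref{eqgtoh} applies verbatim with $\text{O}(U)$ in place of $G$, using \cite[Proposition~1]{AIT}: for $T\in V_f(k)$ choose $g\in\text{O}(U)(k^s)$ with $T=gT_fg^{-1}$ and send $T\mapsto[\sigma\mapsto g^{-1}\,\gl{g}{\sigma}]\in H^1(k,\tStab_{\text{O}(U)}(T_f))$. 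The usual checks (independence of $g$, the cocycle is killed in $H^1(k,\text{O}(U))$, every class in the kernel arises from $T=gT_fg^{-1}$) give
\[
\text{O}(U)(k)\backslash V_f(k)\;\xrightarrow{\,\sim\,}\;\ker\!\bigl(H^1(k,\tStab_{\text{O}(U)}(T_f))\to H^1(k,\text{O}(U))\bigr).
\]
By the computation in the proof of Proposition~\ref{prop:geoorbit}, $\tStab_{\text{O}(U)}(T_f)\simeq\Rlk\mu_2$, so Shapiro's lemma and Kummer theory give $H^1(k,\Rlk\mu_2)\simeq L^\times/L^{\times 2}$; concretely, a lift $\al\in L^\times$ corresponds to the cocycle $\epsilon$ with $\epsilon_\sigma=\sqrt\al/\gl{\sqrt\al}{\sigma}\in(k^s\otimes_k L)^\times[2]$ for a fixed compatible choice of square roots.

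For (ii)'s crux: using the fixed $k$-isometry $\iota\colon(L,<,>)\to(U,\quu)$ we realize $\tStab_{\text{O}(U)}(T_f)(k^s)$ as multiplication operators $\cdot\,\epsilon$ on $k^s\otimes L$, so the image of $\al$ in $H^1(k,\text{O}(U))$ is represented by $\sigma\mapsto\iota\circ(\cdot\,\epsilon_\sigma)\circ\iota^{-1}$. Now $\phi:=\iota\circ(\cdot\sqrt\al)$ is a $k^s$-isometry $(L,<,>_\al)\otimes k^s\to(U,\quu)\otimes k^s$, since $\langle\iota(\sqrt\al\lambda),\iota(\sqrt\al\mu)\rangle_Q=<\sqrt\al\lambda,\sqrt\al\mu>$ is the coefficient of $\be^{2n+1}$ in $\al\lambda\mu$, i.e. $<\lambda,\mu>_\al$. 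A one-line computation gives $\phi\circ\gl{\phi}{\sigma}{}^{-1}=\iota\circ(\cdot\,\epsilon_\sigma)\circ\iota^{-1}$, so the image of $\al$ in $H^1(k,\text{O}(U))$ is precisely the class of $(L,<,>_\al)$ as a twisted form of $(U,\quu)$ (this is the ``unwinding'' via \cite[Proposition~2.13]{Jerrythesis}). Hence it is trivial iff $(L,<,>_\al)\cong(U,\quu)$ over $k$, i.e. iff $(L,<,>_\al)$ is split of discriminant $1$. Combined with (i), $\text{O}(U)(k)\backslash V_f(k)$ is in bijection with $\{\al\in L^\times/L^{\times 2}: <,>_\al\text{ split of discriminant }1\}$.

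For (iii): multiplication by $\al$ is self-adjoint for $<,>$, so the Gram matrix of $<,>_\al$ is the Gram matrix of $<,>$ times the matrix of $\cdot\al$, whence $\disc(<,>_\al)=\disc(<,>)\cdot N_{L/k}(\al)=N_{L/k}(\al)$ in $k^\times/k^{\times 2}$ (as $\disc(<,>)=1$). A split quadratic space of even dimension is hyperbolic, hence in this normalization has discriminant $1$; therefore ``$<,>_\al$ split of discriminant $1$'' is equivalent to ``$<,>_\al$ split and $\al\in(L^\times/L^{\times 2})_{N=1}$'', and since $<,>_\al$ split already forces $N_{L/k}(\al)$ to be a square this is exactly the condition in the statement. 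The only genuinely non-formal point is the identification in step (ii) of the image of the Kummer class with the isometry class of $(L,<,>_\al)$, where one must track cocycle conventions (left versus right cosets and inverses); the rest, including the routine verification that the $\text{O}(U)$-analogue of \eqref{eqgtoh} holds, is bookkeeping.
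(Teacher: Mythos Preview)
Your proposal is correct and follows essentially the same approach as the paper: both use the $\text{O}(U)$-analogue of the descent bijection \eqref{eqgtoh}, identify $H^1(k,\Rlk\mu_2)\simeq L^\times/L^{\times 2}$, and show that a class $\al$ dies in $H^1(k,\text{O}(U))$ precisely when $(L,<,>_\al)$ is $k$-isometric to $(U,\quu)$, using the $k^s$-isometry $\iota\circ(\cdot\sqrt\al)$ in \eqref{eq:isoG}. Your step (iii), computing $\disc(<,>_\al)=N_{L/k}(\al)$ to reconcile ``split of discriminant $1$'' with the stated condition $\al\in(L^\times/L^{\times 2})_{N=1}$, makes explicit a point the paper leaves to the reader; and where the paper appeals to \cite[Proposition~2.13]{Jerrythesis} for the cocycle identification, you carry out that unwinding directly.
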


To study $\SO(U)(k)$- and $\PO(U)(k)$-orbits, we note that all the maps in the following diagram are injections.
\begin{displaymath}
\xymatrix{
H^1(k, \SO(U))\ar[r]\ar[d]&H^1(k, \text{O}(U))\ar[d]\\
H^1(k, \PSO(U))\ar[r]&H^1(k, \PO(U))}
\end{displaymath}
The horizontal maps are injective because
$\det:{\rm{O}}(U)(k)\rightarrow\mu_2(k)$ is surjective. The vertical maps
are injective because the connecting homomorphism
$\PSO(U)(k)\rightarrow k^\times/k^{\times2}$ is surjective. Indeed,
for any $c\in k^\times,$ the element in $\PSO(U)(k)$ mapping to $c$
is the operator
$$e_i\mapsto\sqrt{c}\,e_i,\quad f_i\mapsto\sqrt{c}f_i,\quad\forall i=1,\ldots,n+1.$$
Recall that $\Stab_\SO(T_f)\simeq (\Rlk\mu_2)_{N=1}$. From the exact sequence $$1\rightarrow (\Rlk\mu_2)_{N=1}\rightarrow \Rlk\mu_2\xrightarrow{N}\mu_2\rightarrow 1,$$ we obtain the isomorphism $$\ker\bigl(H^1(k, (\Rlk\mu_2)_{N=1})\rightarrow H^1(k,
\Rlk\mu_2)\bigr)\simeq\text{coker}\bigl(\mu_2(L)\xrightarrow{N}\mu_2(k)\bigr).$$ We
see that each $O(U)(k)$-orbits breaks up into one or two
$\SO(U)(k)$-orbit depending on whether $f(x)$ has an odd degree
factor or not, respectively.

We next describe the set of $\PO(U)(k)$-orbits on $V_f(k)$. Each such orbit
breaks up into either one or two $\PSO(U)(k)$-orbits depending on
whether the norm map $N:\mbox{Res}_{L/k}\mu_2/\mu_2(k) \rightarrow
\mu_2(k)$ is surjective or not, respectively (see Proposition
\ref{prop:poonenover2} for a more descriptive criterion).
As the stabilizer subscheme of $T_f$ in $\PO(U)$ is $\Rlk\mu_2/\mu_2,$
we have the following diagram of exact rows:
\begin{displaymath}
\xymatrix{
H^1(k,\mu_2)\ar[r]\ar[d]^{=}&H^1(k,\Rlk\mu_2)\ar[r]\ar[d]&H^1(k,\Rlk\mu_2/\mu_2)\ar[r]\ar[d]&H^2(k,\mu_2)\ar[d]^{=}\\
H^1(k,\mu_2)\ar[r]& H^1(k,\tO(U))\ar@{^{(}->}[r] & H^1(k,\PO(U))\ar[r] & H^2(k,\mu_2).}
\end{displaymath}
Suppose $$c'_T\in\ker(H^1(k, \Rlk\mu_2/\mu_2)\rightarrow H^1(k,
\PO(U))).$$ Since $c'_T$ maps to 0 in $H^2(k,\mu_2),$ it is the
image of some $\al\in L^\times/L^{\times2}$ well-defined up to
$k^\times/k^{\times2}.$ Since the map $H^1(k,\tO(U))\rightarrow
H^1(k,\PO(U))$ is injective, the image of $\al$ in $H^1(k,\tO(U))$ is
trivial. By Theorem \ref{thm:Oorbits}, this is equivalent to the form
$<,>_\al$ being split with discriminant 1. Therefore, we have the following
characterization of $\PO(U)(k)$-orbits.

\begin{theorem}\label{thm:POorbits}
  There is a bijection between $\PO(U)(k)$-orbits and classes
  $\al\in(L^\times/L^{\times2}k^\times)_{N=1}$ such that $<,>_\al$ is
  split. The distinguished orbit corresponds to $\al=1$.  Two ${\rm
    O}(U)(k)$-orbits corresponding to $\alpha_1,\alpha_2\in
  (L^\times/L^{\times 2})_{N=1}$ are $\PO(U)(k)$-equivalent if and
  only if $\alpha_1$ and $\alpha_2$ have the same image in
  $(L^\times/L^{\times2}k^\times)_{N=1}$.
\end{theorem}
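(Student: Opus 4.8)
The plan is to work through the diagram of exact rows displayed just before the statement, which compares the stabilizer sequence for $T_f$ in $\PO(U)$ with the analogous sequence for $\PO(U)$ itself. Since $\tStab_{\PO(U)}(T_f)\simeq\Rlk\mu_2/\mu_2$, a $\PO(U)(k)$-orbit corresponds to a class $c'_T$ in $\ker\bigl(H^1(k,\Rlk\mu_2/\mu_2)\to H^1(k,\PO(U))\bigr)$. First I would note that the two right-hand vertical maps into $H^2(k,\mu_2)$ are the identity (as indicated), so $c'_T$ dies in $H^2(k,\mu_2)$ and therefore lifts to some $\al\in H^1(k,\Rlk\mu_2)\simeq L^\times/L^{\times2}$. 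This lift is well-defined only up to the image of $H^1(k,\mu_2)\simeq k^\times/k^{\times2}$, which accounts for the quotient by $k^\times$ in the statement. The norm condition $N(\al)=1$ in $(L^\times/L^{\times2}k^\times)_{N=1}$ comes from the fact that $c'_T$ lies in $H^1$ of the norm-one subscheme $\Rlk\mu_2/\mu_2$ rather than the full $\Rlk\mu_2/\mu_2$ — more precisely, one chases the commutativity with the norm map exactly as was done in the $\SO(U)$ discussion above.

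Next I would pin down the splitting condition. Because the middle-bottom horizontal map $H^1(k,\tO(U))\hookrightarrow H^1(k,\PO(U))$ is injective (already established in the excerpt via surjectivity of $\det$), the vanishing of $c'_T$ in $H^1(k,\PO(U))$ is equivalent to the vanishing in $H^1(k,\tO(U))$ of the image of the chosen lift $\al$. By Theorem \ref{thm:Oorbits}, that image vanishes precisely when the twisted form $<,>_\al$ is split with discriminant $1$; and one checks that the discriminant-$1$ part is automatic once $N(\al)$ is a square in $k^\times$, so the only substantive requirement is that $<,>_\al$ be split. This gives the forward direction of the claimed bijection: a $\PO(U)(k)$-orbit produces a well-defined class $\al\in(L^\times/L^{\times2}k^\times)_{N=1}$ with $<,>_\al$ split. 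For the reverse direction and injectivity, I would run the same diagram chase backwards: given such an $\al$, Theorem \ref{thm:Oorbits} yields an $\tO(U)(k)$-orbit, hence an element of $H^1(k,\tO(U))$ trivial there, whose preimage in $H^1(k,\Rlk\mu_2/\mu_2)$ lands in the kernel defining $\PO(U)(k)$-orbits; two choices of $\al$ differing by $k^\times/k^{\times2}$ give the same $c'_T$ by exactness of the top row, and conversely two $\al$'s with the same $c'_T$ differ by such a class, which is exactly the last sentence of the theorem comparing $\tO(U)(k)$-orbits under $\PO(U)(k)$-equivalence. The claim that the distinguished orbit corresponds to $\al=1$ is immediate since $T_f$ itself is the orbit attached to the trivial class.

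The main obstacle I anticipate is bookkeeping the well-definedness: showing that the class $\al$ attached to a $\PO(U)(k)$-orbit does not depend on the choice of representative $T$ within the orbit, nor on the various non-canonical lifts, and that all of these ambiguities are exactly absorbed by the quotient by $k^\times/k^{\times2}$ and no more. This requires carefully tracking how changing $T$ by a $\PO(U)(k)$-element alters the cocycle $c'_T$ and hence its lift $\al$, and confirming compatibility of the horizontal connecting maps in the two rows of the diagram. The cohomological input (injectivity of the relevant maps, identification of $H^1(k,\Rlk\mu_2)$ with $L^\times/L^{\times2}$, the long exact sequence from $1\to\mu_2\to\Rlk\mu_2\to\Rlk\mu_2/\mu_2\to1$) is all standard or already recorded above, so once the diagram chase is organized cleanly the argument should close without further difficulty; the subtlety is purely in not double-counting or under-counting the orbits, which is precisely why the even-degree case differs from the odd-degree case treated in \cite{BG3}.
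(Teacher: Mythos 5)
Your diagram chase matches the paper's own proof step for step: lift $c'_T$ to some $\al\in L^\times/L^{\times2}$ well-defined modulo $k^\times/k^{\times2}$ using exactness of the top row and vanishing in $H^2(k,\mu_2)$, then use injectivity of $H^1(k,\tO(U))\hookrightarrow H^1(k,\PO(U))$ together with Theorem \ref{thm:Oorbits} to obtain the splitness condition, and run the chase backwards for the converse. One small correction to the justification you give for the norm condition: the stabilizer of $T_f$ in $\PO(U)$ is the full $\Rlk\mu_2/\mu_2$, not a norm-one subscheme (that is the $\PSO(U)$-stabilizer), so $N(\al)\in k^{\times 2}$ does not arise from any norm-one structure on the $\PO(U)$-stabilizer. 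Rather it, like the splitness, falls out of Theorem \ref{thm:Oorbits} applied to the lift $\al$ --- equivalently, it is exactly the discriminant-one condition on $<,>_\al$ --- so the remark that discriminant one is ``automatic given the norm condition'' has the implication running the wrong way. Otherwise the route is the same as the paper's.
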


\subsection{Connection to hyperelliptic curves}
Let $C$ be the hyperelliptic curve of genus $n$ given by the affine
equation $y^2=f(x)$, and let $J$ denote its Jacobian.
%Recall that we have the following equact sequence:
%$$
%0\to J(k)/2J(k)\to H^1(k,J[2])\to H^1(k,J)[2]\to 0.
%$$
The curve $C$ has two rational points above infinity, denoted by
$\infty$ and $\infty'$. Let $P_1,\ldots,P_{2n+2}$ denote
the Weierstrass points of $C$ over $k^s$. These form the ramification
locus of the map $x:C\rightarrow \bbp^1.$ Let $D_0$ denote the hyperelliptic class obtained as
the pullback of $\sco_{\bbp^1}(1).$ Then the group $J[2](k^s)$
is generated by the divisor classes $(P_i)+(P_j)-D_0$ for $i\neq j$
subject only to the condition
that $$\sum_{i=1}^{2n+2}(P_i)-(n+1)D_0\sim 0.$$ We have the following isomorphisms of group schemes over
$k$:
\begin{equation}\label{eq:Sentence1}
J[2]\simeq(\mbox{Res}_{L/k}\mu_2)_{N=1}/\mu_2\simeq \tStab_G(T_f).
\end{equation}
An explicit formula for this identification is given in \cite[Remark 2.6]{Jerry}.

In conjunction with (\ref{eqgtoh}), this identification yields a bijection
$$
G(k)\backslash V_f(k)\longrightarrow \ker(H^1(k,J[2])\to H^1(k,G)).
$$
Thus $G(k)$-orbits on $V_f(k)$ can be identified with a subset of
$H^1(k,J[2])$. Recall that we have the following descent exact sequence:
\begin{equation}\label{eq:descentsequencetwo}
1\to J(k)/2J(k)\to H^1(k,J[2])\to H^1(k,J)[2]\to 1.
\end{equation}

A $G(k)$-orbit in $V_f(k)$ is said to be {\it soluble} if it corresponds to
a class in $H^1(k,J[2])$ which is in the image of the map from $J(k)/2J(k)$. The
following theorem states that there is a bijection between soluble
$G(k)$-orbits in $V_f(k)$ and elements of $J(k)/2J(k)$.
\begin{theorem}\label{thm:Sentence2}
  The following composite map is trivial:
  \begin{equation}
J(k)/2J(k)\to H^1(k,J[2])\to H^1(k,G).
  \end{equation}
Therefore, there is a bijection between soluble
$G(k)$-orbits in $V_f(k)$ and elements of $J(k)/2J(k)$.
\end{theorem}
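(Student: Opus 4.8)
The second assertion is a formal consequence of the first. Composing the bijection \eqref{eqgtoh} with the isomorphism $\tStab_G(T_f)\simeq J[2]$ of \eqref{eq:Sentence1} identifies $G(k)\backslash V_f(k)$ with $\ker\bigl(H^1(k,J[2])\to H^1(k,G)\bigr)$, while the descent sequence \eqref{eq:descentsequencetwo} embeds $J(k)/2J(k)$ into $H^1(k,J[2])$. If the composite $J(k)/2J(k)\to H^1(k,J[2])\to H^1(k,G)$ is trivial, then this copy of $J(k)/2J(k)$ lies inside the above kernel, so each of its classes is the cocycle class of exactly one $G(k)$-orbit on $V_f(k)$; by definition these are precisely the soluble orbits, and the asserted bijection follows. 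So the whole content is the triviality of that composite.

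To prove it, I would fix $P\in J(k)$ and show that the associated class $c_P\in H^1(k,\tStab_G(T_f))=H^1(k,J[2])$ dies in $H^1(k,G)=H^1(k,\PSO(U))$. Since $H^1(k,\PSO(U))\hookrightarrow H^1(k,\PO(U))$ (as noted above, and in the diagram preceding Theorem \ref{thm:POorbits}), it suffices to kill the image of $c_P$ in $H^1(k,\PO(U))$. The key external input is explicit $2$-descent for even-degree hyperelliptic curves in the style of \cite{PoonenShaefer}: for the rational point $P$ it produces a representative $\alpha_P\in L^\times$ with $N_{L/k}(\alpha_P)\in k^{\times2}$, given by an explicit product of factors $x(P_i)-\be$ corrected at the Weierstrass points and at $\infty,\infty'$, whose class in $L^\times/L^{\times2}$, taken modulo $k^\times$, is the image of $c_P$ under $H^1(k,J[2])\to H^1(k,\Rlk\mu_2/\mu_2)$. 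In particular this shows $c_P$ maps to $0$ in $H^2(k,\mu_2)$, so by the analysis preceding Theorem \ref{thm:POorbits} the image of $c_P$ in $H^1(k,\PO(U))$ vanishes if and only if the class of $\alpha_P$ already dies in $H^1(k,\tO(U))$, i.e.\ (Theorem \ref{thm:Oorbits}) if and only if the twisted trace form $\langle\lambda,\mu\rangle_{\alpha_P}=\tTr_{L/k}\bigl(\alpha_P\lambda\mu/f'(\be)\bigr)$ is split of discriminant $1$. Its discriminant is $N_{L/k}(\alpha_P)$ up to squares, hence automatically $1$; so the theorem reduces to showing that $\langle\,,\,\rangle_{\alpha_P}$ is \emph{split} whenever $\alpha_P$ comes from a rational point.

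I would prove this by an ideal-theoretic construction of an honest operator $T_P\in V_f(k)$ realizing $\alpha_P$, mirroring the odd-degree case of \cite{BG3} and Section 2 of \cite{Jerrythesis}; concretely the task is to exhibit a $k$-rational Lagrangian. Represent $P$ by a $k$-rational divisor of degree $0$; after subtracting a multiple of the hyperelliptic class $D_0=(\infty)+(\infty')$, adjusting by $(\infty)-(\infty')$, and changing by a principal divisor, we may assume $P$ is represented by a reduced divisor $D=E-mD_0$ with $E$ effective and $k$-rational and, in the generic case, supported away from the Weierstrass points and from $\{\infty,\infty'\}$. Attach to $E$ a fractional $k[\be]$-submodule $I\subseteq L$ built from the corresponding Riemann--Roch data; then multiplication by $\be$ is self-adjoint on $I$ with characteristic polynomial $f$, the suitably rescaled ``coefficient of $\be^{2n+1}$'' form $\langle\,,\,\rangle_I$ on $I$ is non-degenerate of discriminant $1$, and---the crucial point---it is split: the Riemann--Roch space $H^0\bigl(C,\,nD_0-D\bigr)$ is defined over $k$, has dimension exactly $n+1$ (since $nD_0-D$ has degree $2n>2g-2$, so $h^1$ vanishes), and, transported into $I$, is isotropic for $\langle\,,\,\rangle_I$ because the product of two of its elements has pole along $(\infty)+(\infty')$ too small to reach the $\be^{2n+1}$ coefficient, exactly as the plane $\tSpan\{1,\be,\ldots,\be^{n}\}$ is isotropic for the untwisted form. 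Fixing an isometry $(I,\langle\,,\,\rangle_I)\simeq(U,Q)$ carries $\cdot\be$ to the desired $T_P\in V_f(k)$, and unwinding the cohomology as in \cite{Jerrythesis} identifies its class with $\alpha_P$, completing the argument.

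I expect the splitness step---constructing the $k$-rational Lagrangian and verifying the dimension count---to absorb almost all of the work, with a careful case analysis of the degenerate divisors (support meeting the Weierstrass locus or $\{\infty,\infty'\}$), just as in \cite{BG3}. Two features special to the even-degree case deserve attention: first, there are two rational points above infinity and the $\PO(U)(k)$-orbit of $T_f$ may break into two $\PSO(U)(k)$-orbits (Proposition \ref{prop:poonenover2}), so one must know that the $T_P$ produced lies in the $\PSO(U)(k)$-orbit matching $\alpha_P$---but this is automatic once the $\PO(U)(k)$-statement holds, by injectivity of $H^1(k,\PSO(U))\hookrightarrow H^1(k,\PO(U))$; second, the norm and discriminant bookkeeping (triviality of $N_{L/k}(\alpha_P)$ modulo squares, matching the $N=1$ condition, and the normalization of the trace form) must be tracked together with the explicit descent formula. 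Finally, although logically unnecessary here, one can read solubility off Proposition \ref{prop:Wt}: through the parameterization of isotropic $n$-planes of the pencil $\{Q,Q_T\}$ by $\Pic$ of the genus-$n$ curve developed in Section \ref{sec:pencil}, a rational point of $J$ furnishes the required $k$-rational isotropic $n$-plane, giving an alternative and more geometric route to the same conclusion.
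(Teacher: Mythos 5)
Your reduction of the second assertion to the first is correct and matches the paper. Your reduction of the first assertion to the splitness of $\langle\,,\,\rangle_{\alpha_P}$ is also correct, using Theorem~\ref{thm:Oorbits} and the injectivity of $H^1(k,\PSO(U))\hookrightarrow H^1(k,\PO(U))$, and you have correctly noted that the discriminant condition is automatic because $N_{L/k}(\alpha_P)\in k^{\times2}$.

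Where you diverge from the paper is in how the Lagrangian is produced. The paper actually gives two proofs, neither of which coincides with your route. The proof in \S2.3 is stated only for local fields: it invokes~\cite[Lemma 3.8]{Jerrythesis} to represent any $[D]\in J(k)/2J(k)$ by $(Q_1)+\cdots+(Q_m)-m(\infty)$ with $m\leq n+1$ and the $Q_i$ non-Weierstrass, non-infinity, and then writes down the isotropic $(n+1)$-plane $Y$ via an explicit interpolation formula in the $g_j(\be)$'s. That representability assumption is a local-field fact; over a general $k$ the paper instead defers to the pencils-of-quadrics argument at the end of \S\ref{sec:pencil}, where solubility is read off from $F_\al(k)\neq\emptyset$ and an isotropic $(n+1)$-plane is produced as $\tSpan\{X,(\infty)-X\}$ for $X\in F_\al(k)$. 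Your proposal instead builds the Lagrangian directly from the Riemann--Roch space $H^0(C,nD_0-D)$ attached to a reduced representative of $[D]$; this is a third route, much closer to the way~\cite{BG3} handles the odd-degree case, and if carried out it would give a self-contained, field-agnostic proof avoiding both the local-field restriction and the pencils machinery.

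The genuine gap is the step you describe as ``transported into $I$.'' Elements of $H^0(C,nD_0-D)$ are functions on $C$, not on $\bbp^1$, so they are not literally elements of $L=k[x]/f(x)$; passing from one to the other requires splitting functions into their even and odd parts under the hyperelliptic involution (the odd part is $y$ times a polynomial in $x$), rescaling by the relevant $U$-polynomial of the Mumford representative, and then verifying both that multiplication by $\be$ is self-adjoint for the resulting twisted trace form and that the form has discriminant $1$ under the correct normalization. You are right that the dimension count $h^0(nD_0-D)=n+1$ is clean (degree $2n>2g-2$ kills $h^1$), and right that the isotropy should follow from a pole-order bound at $\infty,\infty'$, but that transport and those verifications absorb essentially all of the content of the explicit formula the paper cites from~\cite[Lemma 2.44]{Jerrythesis}, and they are not routine; in particular the degenerate cases (divisors meeting the Weierstrass locus or $\{\infty,\infty'\}$, and classes in $\langle(\infty')-(\infty)\rangle$, where $\alpha_P$ lies in $k^\times L^{\times2}$ and one must instead check triviality through the $\mu_2$-quotient in diagram~\eqref{eq:HUGEdiagram}) each require a separate argument. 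So the sketch is sound in outline and plausibly completable, but as written it defers the heart of the proof to the one step you label ``crucial.''
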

\begin{proof}
We only prove the theorem in the case when $k$ is a local field. For a complete proof, see \S 3.
  Combining the descent sequence \eqref{eq:descentsequencetwo} and the
  long exact sequence obtained by taking Galois cohomology of the
  short exact sequence
$$1\rightarrow J[2]\rightarrow \Rlk\mu_2/\mu_2\xrightarrow{N} \mu_2\rightarrow 1,$$
we get the following commutative diagram.
\begin{equation}\label{eq:HUGEdiagram}
  \xymatrix{
    \lrg{(\infty')-(\infty)}\ar[r]\ar[d]^{\sim} & J(k)/2J(k) \ar@{^{(}->}[d]^{\delta} \ar[r]^{\delta'} &L^\times/L^{\times2}k^\times \ar@{^{(}->}[d]\ar[r]^{\hspace{20pt}N}& k^\times/k^{\times2}\ar[d]^{\sim}\\
    \frac{\mu_2(k)}{N(Res_{L/k}\mu_2/\mu_2(k))}\ar[r] & H^1(k, J[2])\ar[r]& H^1(k, \Rlk\mu_2/\mu_2)\ar[r]^{\hspace{20pt}N} & H^1(k, \mu_2)}
\end{equation}
The map $\delta'$ is defined in \cite{PoonenShaefer} by evaluating
$(x-\be)$ on a given divisor class. As shown in \cite{PoonenShaefer},
the first row is not exact: the image of $\delta'$ lands inside,
generally not onto, $(L^\times/L^{\times2}k^\times)_{N=1}$ with kernel
the subgroup generated by the class $(\infty')-(\infty).$ Note that $(\infty')-(\infty)\in
2J(k)$ if and only if the norm map $N:\mbox{Res}_{L/k}\mu_2/\mu_2(k) \rightarrow
\mu_2(k)$ is surjective which happens when there is a unique distinguished orbit.

To prove Theorem \ref{thm:Sentence2}, it suffices to show that if
$\al\in(L^\times/L^{\times2}k^\times)_{N=1}$ lies in the image of
$\delta',$ then $<,>_\al$ is split. We will prove this by explicitly
writing down a $k$-rational $n+1$ dimensional isotropic subspace in the special case when $k$ is a local field. For
a complete and more conceptual proof using pencils of quadrics, see Section
\ref{sec:pencil}. Suppose $\al=\delta'([D])$ for some $[D]\in
J(k)/2J(k)$ of the form
$$[D]=(Q_1)+\cdots+(Q_m)-m(\infty)\mod{2J(k)\cdot\langle(\infty')-(\infty)\rangle},$$
where $Q_1,\ldots,Q_m\in C(k^s)$ are non-Weierstrass
non-infinity points and $m\leq n+1$. When $k$ is a local field, every $[D]\in J(k)/2J(k)$ can be written in this form (\cite[Lemma 3.8]{Jerrythesis}).
If we write $Q_i=(x_i,y_i)$, then $\al = (x_1-\be)\cdots(x_m-\be)$ and
$$<\lambda,\mu>_\al = \tTr_{L/k}((x_1-\be)\cdots(x_m-\be)\lambda\mu/f'(\be)).$$
Write $$V = \prod_{1\le i<j\le m}(x_i-x_j)$$ for the Vandermonde polynomial, and for each $i=1,\ldots,m$, define
$$q_i := \prod_{1\le j\le m,j\neq i}(x_j-x_i),\quad a_i := V / q_i,\quad h_i(t) := \frac{f(t)-f(x_i)}{t-x_i}.$$
For any $j\geq0,$ we define $$g_j(t) = \sum_{i=1}^m x_i^j a_i \frac{h_i(t)}{y_i}.$$
Then the $n+1$ plane $Y$ defined below is $k$-rational and isotropic (\cite[Lemma 2.44]{Jerrythesis}):
%\begin{eqnarray}
%  Y &=& \tSpan\{1,\be,\ldots,\be^n\},\quad\mbox{if }m=1\label{eq:m=1}\\
%  &=& \tSpan\{1,\be,\ldots,\be^{n-m'},g_0(\be),\ldots,g_{m'-1}(\be)\},\quad\mbox{if }m=2m'\mbox{ or }m=2m'+1.\label{eq:mgeneral}
%\end{eqnarray}
$$
Y:=\left\{\begin{array}{cl}
\tSpan\{1,\be,\ldots,\be^n\},&\quad\mbox{if }m=1;\\[0.1in]
\tSpan\{1,\be,\ldots,\be^{n-m'},g_0(\be),\ldots,g_{m'-1}(\be)\},&\quad\mbox{if }m=2m'\mbox{ or }m=2m'+1.
\end{array}\right.
$$
This completes the proof of Theorem \ref{thm:Sentence2} in this special case.
\end{proof}

Suppose that $k$ is a number field. Then the 2-Selmer group
$\text{Sel}_2(k,J)$ is the subgroup of $H^1(k, J[2])$ consisting of
elements whose images in $H^1(k_\nu,J[2])$ lie in the image of
$J(k_\nu)/2J(k_\nu)$ for all completions $k_\nu$ of $k$. Since the
group $G=\PSO_{2n+2}$ satisfies the Hasse principle, Theorem
\ref{thm:Sentence2} implies that the following composite is also
trivial.
$$\text{Sel}_2(k,J)\rightarrow H^1(k, J[2])\rightarrow H^1(k, G).$$
A self-adjoint operator $T\in V_f(k)$ is said to be {\it locally
  soluble} if $T$ is soluble in $V_f(k_\nu)$ for all completions
$k_\nu$ of $k$.  Equivalently, $c_T$ lies in $\text{Sel}_2(k,J).$
We have thus proven the following theorem:
\begin{theorem}[\cite{Jerrythesis}]\label{theorem:imp}
  Let $k$ be a number field, and $f$ a monic separable polynomial of degree
  $2n+2$ over $k$. There is a bijection between locally soluble
  $G(k)$-orbits on $V_f(k)$ and elements in ${\rm Sel}_2(k,J)$, where
  $J$ is the Jacobian of the hyperelliptic curve given by the equation
  $y^2=f(x)$.
\end{theorem}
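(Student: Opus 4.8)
The statement should follow formally from the orbit parameterization together with the local case of Theorem~\ref{thm:Sentence2} (already established above) and the Hasse principle for $G=\PSO_{2n+2}$. Concretely, recall from \eqref{eqgtoh} and the $k$-isomorphism $J[2]\simeq\tStab_G(T_f)$ of \eqref{eq:Sentence1} that $T\mapsto c_T$ gives a bijection
$$G(k)\backslash V_f(k)\;\xrightarrow{\ \sim\ }\;\ker\bigl(H^1(k,J[2])\to H^1(k,G)\bigr),$$
and that the same construction over every completion $k_\nu$ of $k$ yields the analogous bijection. Since the isomorphism \eqref{eq:Sentence1} is defined over $k$, the class $c_T$ attached to $T\in V_f(k)$ restricts, at each place $\nu$, to the class attached to the image of $T$ in $V_f(k_\nu)$; this compatibility is what links the global and local pictures.

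First I would record the local dictionary. Applying Theorem~\ref{thm:Sentence2} over the local field $k_\nu$: the $G(k_\nu)$-orbit of an element of $V_f(k_\nu)$ is soluble if and only if its class in $H^1(k_\nu,J[2])$ lies in the image of $J(k_\nu)/2J(k_\nu)$, and that image is contained in $\ker\bigl(H^1(k_\nu,J[2])\to H^1(k_\nu,G)\bigr)$. Combined with the restriction-compatibility noted above, this gives, for $T\in V_f(k)$: $T$ is locally soluble $\iff$ $c_T|_{k_\nu}$ lies in the image of $J(k_\nu)/2J(k_\nu)$ for every $\nu$ $\iff$ $c_T\in\Sel_2(k,J)$, where the last equivalence is the definition of the $2$-Selmer group. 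Since $T\mapsto c_T$ is injective on all of $G(k)\backslash V_f(k)$, this already produces an injection from locally soluble orbits into $\Sel_2(k,J)$.

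For surjectivity, take $\sigma\in\Sel_2(k,J)\subseteq H^1(k,J[2])$. For each $\nu$ the restriction $\sigma|_{k_\nu}$ lies in the image of $J(k_\nu)/2J(k_\nu)$, hence maps to the trivial class in $H^1(k_\nu,G)$ by the local dictionary. As $G=\PSO_{2n+2}$ satisfies the Hasse principle, the kernel of $H^1(k,G)\to\prod_\nu H^1(k_\nu,G)$ is trivial, so $\sigma$ maps to the trivial class in $H^1(k,G)$; that is, $\sigma\in\ker\bigl(H^1(k,J[2])\to H^1(k,G)\bigr)$. By the global parameterization there is a $G(k)$-orbit in $V_f(k)$ with $c_T=\sigma$, and by the previous paragraph this orbit is locally soluble. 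Hence $T\mapsto c_T$ restricts to a bijection between locally soluble $G(k)$-orbits on $V_f(k)$ and $\Sel_2(k,J)$, which is the assertion. (Nonemptiness of $V_f(k)$ and of each $V_f(k_\nu)$ is automatic, since $T_f$ is defined over $k$.)

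I expect the only delicate points to be the two external inputs. The first is the local form of Theorem~\ref{thm:Sentence2}, which is proved above for local fields (a conceptual proof via pencils of quadrics is deferred to \S\ref{sec:pencil}). The second, which governs the surjectivity step, is the Hasse principle for the split adjoint group $G=\PSO_{2n+2}$; this can be reduced to Hasse--Minkowski for $H^1(\,\cdot\,,\SO(U))$ together with the Hasse principle for $H^2(\,\cdot\,,\mu_2)=\mathrm{Br}(k)[2]$ via the central isogeny $\SO(U)\to\PSO(U)$ with kernel $\mu_2$, after a twist to treat its fibers uniformly. Everything else is bookkeeping with the functoriality of \eqref{eqgtoh} and \eqref{eq:Sentence1} under base change to the $k_\nu$.
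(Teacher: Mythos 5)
Your argument matches the paper's: it proves Theorem~\ref{thm:Sentence2} over the local fields $k_\nu$, uses the compatibility of the cocycle map $T\mapsto c_T$ with restriction to each completion, and invokes the Hasse principle for $G=\PSO_{2n+2}$ to upgrade local triviality in $H^1(k_\nu,G)$ to global triviality in $H^1(k,G)$. Your write-up is simply a more explicit unpacking of the paper's terse two-sentence justification, so no discrepancies to report.
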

%In view of the general counting strategy in \cite{BS},
%\cite{BS3}, \cite{BG3}, this identification of the stabilizer
%subschemes with the $m$-torsion subgroups of Jacobians of curves is
%the first major, though simple, ingredient. The second major input is
%that every class in $H^1(k, J[2])$ lying in the Kummer image of
%$J(k)/2J(k)$ gives rise to orbits.

%\subsubsection*{Soluble orbits}

\subsection{Integral orbits}
Let $f(x)\in \Q[x]$ be a degree $2n+2$ monic separable polynomial, let
$C$ be the corresponding hyperelliptic curve, and $J$ its
Jacobian. We have seen that elements in the $2$-Selmer group of $J$
are in bijection with locally soluble $G(\Q)$-orbits in $V_f(\Q)$. In
this section, our aim is to show that when $f$ has integral
coefficients, every locally soluble $G(\Q)$-orbit in $V_f(\Q)$
contains an integral representative.

We do this by working over the field $\Q_p$ and the ring
$\Z_p$. Specifically, we prove the following result:
\begin{proposition}\label{prop:soluble->integral}
  Let $p$ be a prime and let
  $f(x)=x^{2n+2}+c_1x^{2n+1}+\cdots+c_{2n+2}$ be a monic separable polynomial in
  $\Z_p[x]$ such that $2^{4i}|c_i$ in $\Z_p$ for
  $i=1,\ldots,2n+2$. Then every soluble $G(\Q_p)$-orbit in
  $V_f(\Q_p)$ contains an integral representative.
\end{proposition}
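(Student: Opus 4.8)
The plan is to reduce the proposition to an explicit construction over $\Z_p$, following the strategy of \cite{BG3} and \cite{Jerrythesis} but paying attention to the two rational points at infinity. Fix $T\in V_f(\Q_p)$ soluble, so by Theorem \ref{thm:Sentence2} the class $c_T$ is in the image of $J(\Q_p)/2J(\Q_p)$ under $\delta$. By \cite[Lemma 3.8]{Jerrythesis}, every such class is represented by a divisor of the form $D = (Q_1)+\cdots+(Q_m)-m(\infty)$ (modulo $\langle(\infty')-(\infty)\rangle$ and $2J$) with $Q_i=(x_i,y_i)\in C(\bar\Q_p)$ non-Weierstrass non-infinity points and $m\leq n+1$. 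First I would recall the explicit $k$-rational isotropic $(n+1)$-plane $Y$ attached to such a $D$ (the one written down in the proof of Theorem \ref{thm:Sentence2}, using the polynomials $g_j$ and the Vandermonde data), together with a complementary isotropic $(n+1)$-plane; these two planes together with the operator $\cdot\beta$ on $L=\Q_p[x]/f(x)$ give a concrete model of $(U,Q,T)$ in which $T$ is visibly self-adjoint with characteristic polynomial $f$.

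The key step is then to exhibit, inside this model, a $\Z_p$-lattice $\Lambda\subset U$ that is (i) self-dual with respect to $Q$ and (ii) stable under $T$. Condition (ii) holds automatically for the lattice $\Z_p[\beta]\cdot(\text{something})$ coming from the order $\Z_p[x]/f(x)\subset L$, so the real content is arranging self-duality of the quadratic form on the nose while keeping $T$-stability. Here the hypothesis $2^{4i}\mid c_i$ enters: scaling the coordinates $x_i$ (equivalently, rescaling $\beta$) by an appropriate power of $2$, and clearing denominators in the $g_j(\beta)$, one can ensure that the Gram matrix of $Q$ restricted to the natural integral span of $\{1,\beta,\ldots,\beta^n, g_0(\beta),\ldots\}$ has unit determinant — the $2$-adic divisibility is exactly what is needed to absorb the powers of $2$ appearing in the Vandermonde denominators $a_i$ and in $f'(\beta)$. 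Once such a $\Lambda$ is found, choosing a basis of $\Lambda$ that is a symplectic-type basis for $Q$ (possible since $(U,Q)$ is split and $\Lambda$ is self-dual, so $\Lambda$ is a split quadratic $\Z_p$-lattice) transports $T$ to an integral matrix in the standard basis, i.e. an element of $V_f(\Z_p)$ in the $G(\Q_p)$-orbit of $T$. The fact that the change of basis can be taken in $\SO$ rather than just $O$, and hence descends to $G=\PSO$, follows because $O(\Lambda)\to O(U)(\Q_p)$ and the determinant can always be corrected using an integral reflection — or one simply notes that $(\infty')-(\infty)$ and its translate already account for the ambiguity between the (at most two) distinguished orbits.

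The main obstacle I anticipate is \textbf{controlling the lattice at $p=2$ and at primes where $f$ is not squarefree mod $p$} — precisely the situation the hypothesis $2^{4i}\mid c_i$ is designed to handle, and the reason the bound is $2^{4i}$ rather than something milder. When $\disc(f)$ is a $p$-adic unit the construction is essentially that of \cite{BG3} and the lattice $\Z_p[\beta]$ (suitably normalized) is already self-dual; the work is all in the ramified and wild cases, where one must check that after the rescaling $\beta\mapsto 2^{2}\beta$ (so that $c_i\mapsto 2^{4i}c_i$ becomes integral with room to spare) the form $\langle\lambda,\mu\rangle = \tTr_{L/\Q_2}(\lambda\mu/f'(\beta))$ has a self-dual integral structure compatible with multiplication by $\beta$. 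I would handle this by a direct valuation computation on the Gram matrix in the power basis, showing its determinant is a unit and that it is even (type I), hence splits; the needed inequality on valuations is where all the routine but essential arithmetic lives. Finally, solubility of $T$ over $\Q_p$ guarantees $Y$ is genuinely $\Q_p$-rational (not just $\bar\Q_p$-rational), so the construction produces a $G(\Q_p)$-, not merely $G(\bar\Q_p)$-, conjugate, completing the argument.
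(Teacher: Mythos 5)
Your proposal begins on the right track (invoking Theorem~\ref{thm:Sentence2} and \cite[Lemma 3.8]{Jerrythesis} to write $[D]=(Q_1)+\cdots+(Q_m)-m(\infty)$), but from there it departs from the actual proof in a way that leaves a real gap. The paper does not try to make the span $\{1,\beta,\ldots,\beta^n,g_0(\beta),\ldots\}$ plus a complement into a self-dual $T$-stable lattice by rescaling $\beta$; that construction is not carried out and would be hard to make work. Instead the proof goes through the classification of integral orbits by pairs $(I,\alpha)$ from Theorem~\ref{thm:intorbits}: one sets $\alpha=(x_1-\beta)\cdots(x_m-\beta)$, lets $r(x)\in\Q_p[x]$ be the interpolation polynomial of degree $\le m-1$ with $r(x_i)=y_i$, and forms the fractional ideal $I=(1,r(\beta)/\alpha)$ of $R=\Z_p[x]/f(x)$. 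If $r(x)\in\Z_p[x]$, one checks $\alpha I^2\subset R$ and $N(I)^2=N(\alpha)^{-1}$, which gives the integral representative directly.

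The missing idea in your write-up is what to do when $r(x)\notin\Z_p[x]$, and this is where the hypothesis $2^{4i}\mid c_i$ actually enters — not, as you suggest, to absorb powers of $2$ in the Vandermonde denominators $a_i$ and in $f'(\beta)$ so that the Gram matrix of the ``natural'' lattice has unit determinant. When $r$ is non-integral, a Newton polygon analysis on $f(x)-r(x)^2$ shows that $\tdiv(y-r(x))-[D]$ decomposes as $D^*+E$, where $D^*$ can be written with $m-2$ points and $E$ is supported on points with $x$-coordinate of negative valuation; the divisibility hypothesis is precisely what forces $E\in 2J(\Q_p)\cdot\langle(\infty')-(\infty)\rangle$ (equivalently $(x-\beta)(E)\in L^{\times 2}\Q_p^\times$), allowing an induction on $m$. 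Without this induction your construction simply has no input when the interpolation data fails to be $p$-integral. Note also that since the hypothesis reads $2^{4i}\mid c_i$ \emph{in $\Z_p$}, it is vacuous for odd $p$ and active only at $p=2$; your remark that it is designed for ``primes where $f$ is not squarefree mod $p$'' is off — it has nothing to do with ramification at $p$, only with the $2$-adic Newton polygon step just described.
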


Since the group $G$ has class number $1$ over $\Q$, we immediately
obtain the following corollary:
\begin{corollary}\label{cor:imp}
  Let $f(x)=x^{2n+2}+c_1x^{2n+1}+\cdots+c_{2n+2}$ be a monic separable
  polynomial in $\Z[x]$ such that $2^{4i}|c_i$ for $i=1,\ldots,2n+2$.
  Then every locally soluble $G(\Q)$-orbit in $V_f(\Q)$ contains an
  integral representative.
\end{corollary}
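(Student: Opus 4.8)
The plan is to reduce Proposition~\ref{prop:soluble->integral} to a purely local statement about a single soluble orbit over $\Q_p$, and then to produce an integral representative by a careful analysis of the $\Z_p$-lattice structure. First I would fix a soluble $T\in V_f(\Q_p)$ and, using Theorem~\ref{thm:Sentence2} (more precisely its local incarnation and the explicit isotropic subspace $Y$ constructed in its proof), record that solubility of $T$ is witnessed by a $\Q_p$-rational $(n+1)$-plane $Y\subset U$ that is isotropic for both $Q$ and $Q_T$. The key geometric input is Proposition~\ref{prop:Wt}/Proposition~\ref{prop:distexplicit}: after conjugating by a suitable element of $G(\Q_p)$ I may assume $T$ is presented by a matrix $M$ with $AM$ of the ``distinguished-type'' staircase shape adapted to the flag determined by $Y$ — that is, $Te_i\in\Span\{e_1,\ldots,e_{n+1}\}$ for $i\le n$. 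This normal form is the right starting point because it already forces many matrix entries to vanish, and the remaining entries are exactly the coefficients $c_i$ (up to signs and lower-order corrections), which by hypothesis are highly $2$-adically divisible.

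Next I would choose an explicit $\Z_p$-lattice $\Lambda\subset U$ on which $T$ acts integrally. Concretely, relative to the standard basis $\{e_1,\ldots,e_{n+1},f_{n+1},\ldots,f_1\}$, one rescales: replace $e_i$ by $2^{a_i}e_i$ and $f_i$ by $2^{-a_i}f_i$ for a suitable increasing sequence of exponents $a_i$, which preserves $Q$ (since $\qu{e_i}{f_i}=1$ is unchanged) and hence keeps us inside $V$, while conjugating $T$ by a diagonal element of $\SO(U)(\Q_p)$ — note this diagonal element has determinant $1$ and lies in $G(\Q_p)$, so the orbit is unchanged. The point of the rescaling is to absorb the denominators: the entries of $AM$ above the anti-diagonal are (up to units) among the $c_i$, so with $2^{4i}\mid c_i$ one can pick the $a_i$ so that every entry of the conjugated matrix lands in $\Z_p$. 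One must check simultaneously that (i) $gg^*=I$ is preserved — automatic for the anti-diagonal-preserving diagonal torus — and (ii) the conjugated $T$ genuinely has all entries in $\Z_p$, not merely the generic ones; this is where the staircase shape of Proposition~\ref{prop:distexplicit} does the work, since the potentially non-integral entries are confined to the lower-triangular block whose entries are governed by $f$'s coefficients.

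The subtlety — and the step I expect to be the main obstacle — is that a general soluble $T$ is not itself distinguished, so Proposition~\ref{prop:distexplicit} does not apply to $T$ directly; it applies only to the distinguished orbit(s). What is true (and what I would actually use) is that $T$ becomes ``block-distinguished'' with respect to the partial flag $Y\supset (\text{lower pieces})$ coming from the soluble structure: the existence of the isotropic $(n+1)$-plane $Y$ with $Y\cap TY$ an isotropic $n$-plane (the condition extracted from $\Span\{X,TX\}$ isotropic) still forces a large block of zeros in $AM$ after choosing a basis adapted to $Y$. Making this precise requires reproving the relevant part of \cite[Proposition 4.4]{BG3} in the even-degree setting, i.e.\ translating ``$T$ soluble via $Y$'' into an explicit vanishing pattern for the Gram-type matrix $AM$ with respect to a $\Q_p$-basis in which $Y=\Span\{e_1,\ldots,e_{n+1}\}$ and the intersection flag is in standard position. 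Once that vanishing pattern is in hand, the rest is the rescaling bookkeeping of the previous paragraph together with the hypothesis $2^{4i}\mid c_i$ (the factor $4$ rather than $2$ giving the slack needed because the exponents $a_i$ grow linearly and each entry of $AM$ of ``weight'' $i$ acquires a shift of size at most $4i$).

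Finally, Corollary~\ref{cor:imp} follows formally: an element of a locally soluble $G(\Q)$-orbit is soluble over every $\Q_p$, so by Proposition~\ref{prop:soluble->integral} applied at each $p$ (the hypothesis $2^{4i}\mid c_i$ being vacuous for $p$ odd and imposed globally) the orbit has a $\Z_p$-representative for all $p$; since $G$ has class number $1$ over $\Q$ — there is a single genus of $G$-lattices — these local integral representatives patch to a single $G(\Q)$-conjugate lying in $V_f(\Z)$. I would state the class-number-one input as a black box (it is standard for $\PSO$ of a split form) and spend no further effort on it.
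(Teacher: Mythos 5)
Your final paragraph correctly derives the corollary: apply the local proposition at every prime, then invoke class number one for $G=\PSO_{2n+2}$ over $\Q$ to glue the local integral representatives into a single $G(\Q)$-conjugate in $V_f(\Z)$. That matches the paper exactly, and if the proposal had stopped there it would be fine.

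However, the bulk of the proposal is a sketch of Proposition~\ref{prop:soluble->integral}, and that sketch has a genuine gap built on a conflation of \emph{soluble} and \emph{distinguished}. Solubility of $T$ over $\Q_p$ means $F_T(\Q_p)\neq\emptyset$, i.e.\ there is a rational $n$-plane $X$ isotropic for \emph{both} $Q$ and $Q_T$. That gives $\qu{v}{Tw}=0$ for $v,w\in X$, hence $TX\subset X^\perp$, but it does \emph{not} give an isotropic $(n+1)$-plane, nor does it make $\Span\{X,TX\}$ isotropic. The latter condition (Proposition~\ref{prop:Wt}) is precisely the \emph{distinguished} criterion, and distinguished is a single $\PO(U)(\Q_p)$-orbit; a generic soluble $T$ cannot be brought to the staircase form of Proposition~\ref{prop:distexplicit} at all, block-adapted or otherwise. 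Likewise, the ``isotropic $(n+1)$-plane $Y$'' in the proof of Theorem~\ref{thm:Sentence2} is a subspace of $L=\Q_p[x]/f(x)$ isotropic for the twisted form $\langle\cdot,\cdot\rangle_\al$; it is used to show that $\langle\cdot,\cdot\rangle_\al$ is split and hence that a rational orbit exists, and is not a flag in $(U,Q)$ adapted to the operator $T$. So the proposed normal form does not exist. There is a second, independent problem: the rescaling by powers of $2$ acts trivially on $V(\Z_p)$ for odd $p$, yet Proposition~\ref{prop:soluble->integral} must be proved for all $p$ (indeed at odd $p$ the $2$-divisibility hypothesis is vacuous, so the claim is simply that every soluble orbit over $\Q_p$ is integral, and rescaling by a $2$-power cannot be the mechanism). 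The paper's actual proof proceeds by the classification of integral orbits by equivalence classes of pairs $(I,\al)$ with $I$ a fractional ideal of $R=\Z_p[x]/f(x)$ (Theorem~\ref{thm:intorbits}): one writes $[D]$ as a sum of $m\le n+1$ points, sets $\al=(x_1-\be)\cdots(x_m-\be)$, and tries the ideal $I=(1,\,r(\be)/\al)$ with $r$ the interpolating polynomial; when $r$ is not integral, a Newton polygon analysis together with the $2^{4i}\mid c_i$ hypothesis reduces $m$ by two. This is an entirely different mechanism from a matrix-entry rescaling argument, and the divisibility hypothesis enters through the Newton polygon, not through bounding entries of some normal form.
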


We will also prove the following result, which will be important to us in \S\ref{sieve}:
\begin{proposition}\label{prop:soluble=integral}
  Let $p$ be any odd prime, and let $f(x)\in\Z_p[x]$ be a degree
  $2n+2$ monic separable polynomial such that $p^2\nmid\Delta(f)$. Then the
  $G(\Z_p)$-orbits in $V_f(\Z_p)$ are in bijection with soluble
  $G(\Q_p)$-orbits in $V_f(\Q_p)$. Furthermore, if $T\in V_f(\Z_p)$,
  then $\Stab_{G(\Z_p)}(T)=\Stab_{G(\Q_p)}(T)$.
\end{proposition}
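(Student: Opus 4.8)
The plan is to establish the two claims (the orbit bijection and the stabilizer equality) by exploiting that $p$ is odd and $p^2 \nmid \Delta(f)$, which forces $L = \Q_p[x]/f(x)$ to be a product of unramified extensions of $\Q_p$ together with at most one ramified extension whose discriminant has valuation exactly $1$; in particular $\Z_p[\beta]$ is ``almost maximal'' and the relevant $2$-cohomology is as simple as possible. First I would recall from \S\ref{sec:orbitparameterization} that over $\Q_p$ the soluble orbits in $V_f(\Q_p)$ that come from the distinguished construction are governed by the bijection \eqref{eqgtoh} together with Theorems \ref{thm:Oorbits}--\ref{thm:POorbits}: a soluble orbit corresponds to a class $\alpha \in (L^\times/L^{\times 2}k^\times)_{N=1}$ with $\langle,\rangle_\alpha$ split, lying in the image of $\delta'$. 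The task is to produce an integral model $T' \in V_f(\Z_p)$ in each such orbit, and to show distinct $\Z_p$-orbits remain distinct over $\Q_p$.

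For the integral representative, I would construct, as in \S2.2, the self-adjoint operator directly from the algebra: multiplication by $\beta$ on $L$ equipped with the form $\langle \lambda,\mu\rangle_\alpha = \tTr_{L/k}(\alpha\lambda\mu/f'(\beta))$, where now $\alpha$ is chosen to be a unit (or a uniformizer in the ramified component) representing the class. Because $p$ is odd and $\Z_p[\beta]$ has index prime to $p$ in the maximal order $\O_L$ away from the single tamely ramified prime, the lattice $\Z_p[\beta] \subset L$ (or a suitable modification at the ramified place) is self-dual for $\langle,\rangle_\alpha$ up to a unit scalar, and multiplication by $\beta$ preserves it. This gives an element of $V_f(\Z_p)$ in the prescribed $G(\Q_p)$-orbit; one then checks, using the uniqueness-of-split-quadratic-lattice statement over $\Z_p$ for $p$ odd, that the isometry to the standard lattice can be taken in $G(\Z_p)$, so the orbit map $G(\Z_p)\backslash V_f(\Z_p) \to \{\text{soluble }G(\Q_p)\text{-orbits}\}$ is surjective. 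Injectivity follows from the same index-coprime-to-$p$ input: two integral operators that are $G(\Q_p)$-conjugate have conjugating element automatically lying in $G(\Z_p)$, because it must preserve both lattices $\Z_p[\beta]$-modules and these determine the integral structure. This last point is essentially the stabilizer claim in disguise.

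For the stabilizer equality $\Stab_{G(\Z_p)}(T) = \Stab_{G(\Q_p)}(T)$, I would use Proposition \ref{prop:geoorbit}: the stabilizer of $T$ in $G$ is $(\Rlk\mu_2)_{N=1}/\mu_2$, and a $\Q_p$-point of it is a unit $u \in (L^\times)$ with $u^2 = 1$ and $N_{L/k}(u) = 1$, modulo $\pm 1$. Such $u$ lies in $(\O_L^\times)$ automatically since $u^2 = 1$, hence it acts integrally on the lattice $\Z_p[\beta]$ (again because that lattice is an $\O_L$-module up to bounded index prime to $p$, and $u \in \O_L^\times$); therefore it already lies in $G(\Z_p)$. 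The main obstacle I anticipate is handling the ramified component of $L$ cleanly: there $\Z_p[\beta]$ need not equal the maximal order, so I would need the hypothesis $p^2 \nmid \Delta(f)$ to pin down that the ramification is tame with discriminant exponent $1$, guaranteeing $[\O_L : \Z_p[\beta]]$ is a $p$-adic unit and that the trace form $\langle,\rangle$ (and its twist by a unit $\alpha$) remains unimodular on $\Z_p[\beta]$. Once this local-ring bookkeeping is in place, everything else — surjectivity, injectivity, and the stabilizer identity — follows formally from the fact that over $\Z_p$ with $p$ odd there is a unique self-dual lattice in the split quadratic space and $G(\Z_p)$ acts transitively on appropriate frames.
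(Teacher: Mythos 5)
Your overall strategy (explicit integral models via the maximality of $\Z_p[\beta]$) is in the same spirit as the paper's and could in principle be made to work, but there are two genuine gaps. The more serious one is in the stabilizer argument: you assert that a $\Q_p$-point of $\Stab_G(T)\simeq(\Rlk\mu_2)_{N=1}/\mu_2$ is simply ``a unit $u\in L^\times$ with $u^2=1$ and $N(u)=1$, modulo $\pm1$,'' but this is false. Since $G=\PSO(U)$ is a quotient by $\mu_2$, the group $\Stab_{\PSO(U)}(T)(\Q_p)$ is strictly larger than $\Stab_{\SO(U)}(T)(\Q_p)/\{\pm1\}$ in general; there are extra $\Q_p$-points which map non-trivially to $H^1(\Q_p,\mu_2)=\Q_p^\times/\Q_p^{\times2}$ and do not lift to $\SO(U)(\Q_p)$. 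The paper handles this via the exact sequence \eqref{eq:stabZp}, whose right-hand factor is $(R^{\times2}\cap\Z_p^\times)/\Z_p^{\times2}$ over $\Z_p$ and $(L^{\times2}\cap\Q_p^\times)/\Q_p^{\times2}$ over $\Q_p$. Comparing these requires the observation that under $p^2\nmid\Delta(f)$ any quadratic subextension $K'$ of $L$ over $\Q_p$ must be unramified, hence $K'=\Q_p(\sqrt{u})$ for a unit $u$, so both groups equal $\langle\bar u\rangle$ (or are both trivial). Your argument, which only treats the $u^2=1$ part, does not address this component at all and therefore does not establish $\Stab_{G(\Z_p)}(T)=\Stab_{G(\Q_p)}(T)$.

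The second gap concerns the word ``soluble'' in the orbit bijection. You correctly set up that a soluble $\PO(U)(\Q_p)$-orbit corresponds to a class $\alpha$ in the image of $\delta'$, and you sketch how to produce an integral model by taking $\O_L$ with form $\langle,\rangle_\alpha$, but you never show that the set of classes $\alpha$ realizable by integral operators coincides exactly with the set lying in the image of $\delta':J(\Q_p)/2J(\Q_p)\to(L^\times/L^{\times2}\Q_p^\times)_{N=1}$. This is the heart of the matter: the paper establishes it by noting that regularity of the minimal model of $C$ over $\Z_p$ (which follows from $p^2\nmid\Delta(f)$) forces the N\'eron model $\scj$ to be fiberwise connected, then uses diagram \eqref{eq:HUGEdiagram} over $\Z_p$ together with the N\'eron mapping property $\scj(\Z_p)/2\scj(\Z_p)=J(\Q_p)/2J(\Q_p)$ to show $\delta'$ maps onto precisely $(R^\times/R^{\times2}\Z_p^\times)_{N=1}$, which in turn is identified with $\PO(U)(\Z_p)$-orbits via flat cohomology of $1\to\mu_2\to\tO(U)\to\PO(U)\to1$ over $\tSpec(\Z_p)$. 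Without some substitute for this argument, your proposal shows that integral orbits exist but not that they match up exactly with the soluble $\Q_p$-orbits and no others. Finally, a small inaccuracy: under $p^2\nmid\Delta(f)$ the ring $\Z_p[\beta]$ is literally the maximal order in $L$ (the index is a power of $p$), not merely ``of index prime to $p$''; this does not affect the argument but the ``almost maximal'' phrasing suggests unnecessary complication.
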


Let $p$ be a fixed prime. We start by considering the $\tO(U)(\Z_p)$-orbits. A
self-adjoint operator $T\in V_f(\Q_p)$ is {\it integral} if it
stabilizes the self-dual
lattice $$M_0=\tSpan_{\Z_p}\{e_1,\ldots,e_{n+1},f_{n+1},\ldots,f_1\}.$$
In other words, $T$ is integral if and only if when expressed in the
standard basis \eqref{eq:stdbasis}, its entries are in $\Z_p$. In
general, a lattice $M$ is self-dual if the bilinear form restricts to
a non-degenerate bilinear form: $M\times M\rightarrow \Z_p.$ Since
genus theory implies that any two self-dual lattices are
$\tO(U)(\Q_p)$-conjugate, the rational orbit of $T$ contains an
integral representative if and only if $T$ stabilizes a self-dual
lattice.

The action of $T$ on $U$ gives $U$ the structure of a $\Q_p[x]$-module,
where $x$ acts via $T$. Since $T$ is regular, we have an isomorphism
of $\Q_p[x]$-modules: $U\simeq \Q_p[x]/f(x)=L.$ Suppose $T$ is
integral, stabilizing the self-dual lattice $M_0$. The action of $T$
on $M_0$ realizes $M_0$ as a $\Z_p[x]/f(x)$-module. Write $R$ for
$\Z_p[x]/f(x).$ Since $M_0$ is a lattice, we see that after the
identification $U\simeq L,$ $M_0$ becomes a fractional ideal $I$ for
the order $R$. The split form $Q$ on $U$ gives a split form of discriminant 1 on $L$ for
which multiplication by $\be$ is self-adjoint. Any such form on $L$
is of the form $<,>_\al$ for some $\al\in L^\times$ with
$N_{L/k}(\al)\in k^{\times2}.$ The condition that $M_0$ is self-dual
translates to saying $\al\cdot I^2\subset R$ and $N(I)^2=N(\alpha^{-1})$.

The identification $U\simeq L$ is unique up to multiplication by some
element $c\in L^\times,$ which transforms the data $(I,\al)$ to $(c\cdot I,
c^{-2}\al).$ We call two pairs $(I,\al),(I',\al')$ equivalent if there
exists $c\in L^\times$ such that $I'=c\cdot I$ and $\al'=c^{-2}\al.$
Choosing a different integral representative $T$ in an integral orbit
amounts to pre-composing the map $U\simeq L$ by an element of
$\tO(U)(\Z_p)$ which does not change the equivalence class of the pair
$(I,\al).$ Hence we have a well-defined map
\begin{equation}\label{eq:intorbit}
\tO(U)(\Z_p)\backslash V_f(\Z_p)\rightarrow \mbox{ equivalence classes of pairs }(I,\al).
\end{equation}

\begin{theorem}\label{thm:intorbits}
  There is a bijection between $\tO(U)(\Z_p)$-orbits and equivalence
  classes of pairs $(I,\al)$ such that $<,>_\al$ is split, $\al\cdot I^2
  \subset R$, and $N(I)^2=N(\alpha^{-1})$. The image of $\al$ in $(L^\times/L^{\times2})_{N=1}$
  determines the rational orbit.
\end{theorem}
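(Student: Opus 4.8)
The plan is to show that the map \eqref{eq:intorbit} is a well-defined bijection onto the set of equivalence classes of pairs $(I,\al)$ satisfying the three stated conditions, and then to identify which rational orbit a given pair lies in. Well-definedness of the map was essentially already arranged in the paragraphs preceding the statement; what remains is to check that the image actually lands in the claimed set, to construct an inverse, and to verify the two maps are mutually inverse.

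First I would verify the image lands in the claimed set. Given an integral $T\in V_f(\Z_p)$ stabilizing the self-dual lattice $M_0$, we use the regularity of $T$ to get $U\simeq L$ as $\Q_p[x]$-modules, under which $M_0$ becomes a fractional $R$-ideal $I$ with $R=\Z_p[x]/f(x)$; the transported form is $<,>_\al$ for some $\al\in L^\times$ with $N(\al)\in (\Q_p^\times)^2$ (so that, after scaling by $\Q_p^\times$, we may normalize $N(\al)=1$, landing in $(L^\times/L^{\times 2})_{N=1}$), and it is split with discriminant $1$ because it is isometric to $(U,\quu)$. The self-duality of $M_0$ under $<,>_\al$ is exactly the statement that $I$ equals its own dual lattice for this form, and a direct computation with the trace pairing $<\lambda,\mu>_\al=\tTr_{L/k}(\al\lambda\mu/f'(\be))$ shows the dual of $I$ is $(\al\cdot\mathfrak d^{-1}\cdot I)^{-1}$ where $\mathfrak d=(f'(\be))$ is the different of $R/\Z_p$; unwinding, self-duality becomes the pair of conditions $\al\cdot I^2\subset R$ together with the normalization of norms $N(I)^2 = N(\al^{-1})$. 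Conversely, given a pair $(I,\al)$ with these properties, the form $<,>_\al$ is split of discriminant $1$ by hypothesis, hence isometric to $(U,\quu)$; under such an isometry the fractional ideal $I$ becomes a lattice $M\subset U$ which is self-dual by the same computation run backwards, and multiplication by $\be$ becomes a self-adjoint operator $T$ with characteristic polynomial $f$ stabilizing $M$. Since all self-dual lattices are $\tO(U)(\Q_p)$-conjugate (genus theory, as recalled in the text), $M$ can be moved to $M_0$, producing an integral representative; the residual ambiguity in the isometry (post-composition by $\tO(U)(\Z_p)$) and in the identification $U\simeq L$ (scaling by $c\in L^\times$, which sends $(I,\al)\mapsto(c I, c^{-2}\al)$) is precisely the equivalence relation, so the construction descends to a well-defined inverse map on equivalence classes. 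Checking that the two constructions compose to the identity in both directions is then a bookkeeping exercise tracking the lattice and the form through each identification.

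For the last sentence, I would note that the composite $\tO(U)(\Z_p)\backslash V_f(\Z_p)\to \tO(U)(\Q_p)\backslash V_f(\Q_p)$ sends the class of $(I,\al)$ to the class of $\al$, since after extending scalars to $\Q_p$ the ideal $I$ becomes a free rank-one $\Q_p[x]/f(x)$-module and can be absorbed into the choice of isometry, leaving only the form $<,>_\al$; by Theorem \ref{thm:Oorbits} that orbit is recorded by the image of $\al$ in $(L^\times/L^{\times 2})_{N=1}$. I expect the main obstacle to be the differential/duality computation: pinning down exactly how self-duality of the lattice relative to the twisted trace form $<,>_\al$ translates into the two integrality conditions $\al\cdot I^2\subset R$ and $N(I)^2=N(\al^{-1})$, including getting the normalization of norms right and confirming that the different contribution is absorbed by the $1/f'(\be)$ in the definition of the form (so that $R$ itself is self-dual when $\al=1$), and making sure the discriminant-$1$ condition is consistent with $N(\al)$ being a square rather than merely the weaker statement that $<,>_\al$ is split. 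Everything else is a matter of carefully transporting structure along the isomorphisms already set up in the section.
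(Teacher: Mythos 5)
Your proposal follows the paper's route: build the inverse from $(I,\al)$ via an isometry of $(L,\langle\,,\,\rangle_\al)$ with $(U,\quu)$ carrying $I$ to the self-dual lattice $M_0$, observe the residual ambiguity is exactly $\tO(U)(\Z_p)$, and identify the rational orbit from $\al$ alone --- the paper does the last step by writing out the explicit cocycle $\sigma\mapsto\gl{\sqrt{\al}}{\sigma}/\sqrt{\al}$ while you invoke Theorem \ref{thm:Oorbits} after extending scalars to $\Q_p$, but these are two readings of the same unwinding. One bookkeeping slip worth fixing: the dual of $I$ with respect to $\langle\,,\,\rangle_\al=\tTr_{L/\Q_p}(\al\lambda\mu/f'(\be))$ is $\al^{-1}I^{-1}$, since the $1/f'(\be)$ in the pairing exactly cancels the different $\mathfrak{d}=(f'(\be))$ of $R=\Z_p[x]/f(x)$; your intermediate formula $(\al\cdot\mathfrak{d}^{-1}\cdot I)^{-1}$ has a stray $\mathfrak{d}$, though as you anticipate the different does get absorbed, so that self-duality becomes the equality $\al I^2=R$, which is equivalent to the stated pair of conditions $\al I^2\subset R$ and $N(I)^2=N(\al^{-1})$.
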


\begin{proof}
  Given a pair $(I,\al)$ such that $<,>_\al$ is split and $\al.I^2 =
  R$, there exists an isometry over $\Q_p$ from $(L,<,>_\al)$ to
  $(U,<,>_Q)$ that sends $I$ to the self-dual lattice $M_0$. The image
  of the multiplication by $\be$ operator lies in $V_f(\Z_p).$ Any two
  such isometries differ by an element in $\tO(U)(\Z_p),$ hence we get a
  well-defined $\tO(U)(\Z_p)$-orbit. Along with \eqref{eq:intorbit},
  we have proved the first statement.

For the second statement, from the sequence of isometries
\eqref{eq:isoG}, we see that since $<,>_\al$ is split, there exists
$g\in\tO(U)(\Q_p^s)$ such
that $$\gl{\sqrt{\al}}{\sigma}/\sqrt{\al}=g^{-1}\gl{g}{\sigma},\quad\forall
\sigma\in\Gal(k^s/k).$$ Here, the left hand side is viewed as an
element of $\tStab_{\tO(U)}(T_f).$ The rational orbit corresponding
the pair $(I,\al)$ is therefore the rational orbit of $T=gT_fg^{-1}.$
The rest follows formally from unwinding definitions.
\end{proof}

Suppose the $\tO(U)(\Z_p)$-orbit of some $T\in V_f(\Z_p)$ corresponds
to an equivalence class of pair $(I,\al).$ Then the stabilizer of $T$
in $\GL(U)(\Z_p)$ is $\tEnd_R(I)^\times.$ Moreover, just as the proof
of Proposition \ref{prop:geoorbit}, we have
\begin{eqnarray*}
\tStab_{\tO(U)}(T)(\Z_p) &=& \tEnd_R(I)^\times[2],\\
\tStab_{\SO(U)}(T)(\Z_p) &=& (\tEnd_R(I)^\times[2])_{N=1}.
\end{eqnarray*}
The stabilizers in the group $\PO(U)(\Z_p)$ (and $\PSO(U)(\Z_p)$) are slightly complicated because $\PO(U)(\Z_p)$ contains $\tO(U)(\Z_p)$ as a subgroup with quotient $\Z_p^\times/\Z_p^{\times2}.$ We have the following exact sequences.
$$1\rightarrow \tEnd_R(I)^\times[2]/\mu_2 \rightarrow \tStab_{\PO(U)}(T)(\Z_p) \rightarrow (R^{\times2}\cap\Z_p^\times)/\Z_p^{\times2}\rightarrow 1.$$
\begin{equation}\label{eq:stabZp}
1\rightarrow (\tEnd_R(I)^\times[2])_{N=1}/\mu_2 \rightarrow \tStab_{\PSO(U)}(T)(\Z_p) \rightarrow (R^{\times2}\cap\Z_p^\times)/\Z_p^{\times2}\rightarrow 1.
\end{equation}

\vspace{.1 in}
\noindent\textbf{Proof of Proposition \ref{prop:soluble->integral}:} First note
that it suffices to show that the $\PO(U)(\Q_p)$-orbit of $T$ contains
an integral representative. Since $T$ is soluble, there exists some
$[D]\in J(\Q_p)/2J(\Q_p)$ such that
$\w{\al}=\delta'([D])\in(L^\times/L^{\times2}\Q_p^\times)_{N=1}$
corresponds to the $\PO(U)(\Q_p)$-orbit of $T$. By
\cite[Lemma 3.8]{Jerrythesis}, there exists non-Weierstrass non-infinity
points $Q_1,\ldots,Q_m\in C(\Q_p^s)$, with $m\leq n+1$, such that
\begin{equation}\label{eq:Dm}
[D]=(Q_1)+\cdots+(Q_m)-m(\infty)\mod{2J(\Q_p)\cdot\langle(\infty')-(\infty)\rangle}.
\end{equation}
Write each $Q_i=(x_i,y_i)\in C(\sco_{\Q_p^s})$ then $\al =
(x_1-\be)\cdots(x_m-\be)$ is a lift of $\w{\al}$ to $L^\times.$ We
claim that either the $\tO(U)(\Q_p)$-orbit of $T$ corresponding to the
image of $\al$ in $L^\times/L^{\times2}$ has an integral
representative, or $[D]$ can be expressed in the form \eqref{eq:Dm}
with $m$ replaced by $m-2$. Applying induction on $m$ completes the
proof.

The claim follows verbatim from the proof of \cite[Proposition 8.5]{BG3}. We give a quick sketch here. Let $r(x)\in \Q_p[x]$ be a
polynomial of degree at most $m-1$ such that for all $i$, $r(x_i)=y_i$ and
let $$p(x)=(x-x_1)\cdots(x-x_m)\in \Z_p[x].$$ Now $p(x)$ divides
$r(x)^2 - f(x)$ in $\Q_p[x]$ and let $q(x)$ denote the quotient. By definition, $\al = (-1)^mP(\be).$
If the polynomial $r(x)\in \Z_p[x],$ then the ideal $I =
(1,r(\be)/\al)$ does the job. Note $\al I^2 = (\al, r(\be),q(\be)).$
The integrality assumption of $r(x)$ is used to show that
$r(\be),q(\be)\in R.$ A computation of ideal norms shows that $N(I)^2=N(\al)^{-1}.$

When $r(x)$ is not integral, a Newton polygon analysis on $f(x) -
r(x)^2$ shows that $\tdiv(y - r(x))-[D]$ has the form $D^*+E$ with
$D^*,E\in J(\Q_p)$ where $D^*$ can be expressed in \eqref{eq:Dm} with
$m$ replaced by $m-2$ and the $x$-coordinates of the non-infinity
points in $E$ have negative valuation. The condition of divisibility
on the coefficients of $f(x)$ ensures that $E\in
2J(\Q_p).((\infty')-(\infty)),$ or equivalently $(x - \be)(E)\in
L^{\times2}\Q_p^\times.$ $\Box$\vspace{.2 in}

\noindent\textbf{Proof of Proposition \ref{prop:soluble=integral}:} Once again,
it suffices to work with $\PO(U)$-orbits instead of $\PSO(U)$-orbits
directly. The assumption on $\Delta(f)$ implies that $R$ is the
maximal order. Hence there is a bijection between
$\tO(U)(\Z_p)$-orbits and $(R^\times/R^{\times2})_{N=1}.$ Note over
non-archimedean local fields, the splitness of the quadratic form is
automatic from the existence of a self-dual lattice. Taking flat
cohomology over $\text{Spec}(\Z_p)$ of the
sequence $$1\rightarrow\mu_2\rightarrow\text{O}(U)\rightarrow
\PO(U)\rightarrow1$$
gives: $$1\rightarrow\text{O}(U)(\Z_p)/\pm1\rightarrow\PO(U)(\Z_p)\rightarrow
\Z_p^\times/\Z_p^{\times2}\rightarrow 1.$$ Hence $\PO(U)(\Z_p)$-orbits
correspond bijectively to $(R^\times/R^{\times2}\Z_p^\times)_{N=1}.$

On the other hand, the assumption on $\Delta(f)$ implies that the
projective closure $\mathcal{C}$ of the hyperelliptic curve $C$ defined by affine
equation $y^2 = f(x)$ over $\tSpec(\Z_p)$ is regular. Since the special fiber of $\mathcal{C}$ is geometrically reduced and irreducible, the Neron model $\scj$ of its Jacobian $J_{\Q_p}$ is fiberwise connected (\cite[\S9.5 Theorem 1]{BLR}) and its
2-torsion $\scj[2]$ is isomorphic to
$(\text{Res}_{R/\Z_p}\mu_2)_{N=1}/\mu_2.$ Using diagram
\eqref{eq:HUGEdiagram} after replacing $L,k,J$ by $R,\Z_p,\scj$, we
see that the vertical maps are all isomorphisms and $\delta'$ maps
$\scj(\Z_p)/2\scj(\Z_p)$ surjectively to
$(R^\times/R^{\times2}\Z_p^\times)_{N=1}$. The Neron mapping property
implies that $\scj(\Z_p)/2\scj(\Z_p)=J(\Q_p)/2J(\Q_p).$

% Stabilizers
Suppose the $\tO(U)(\Z_p)$-orbit of some $T\in V_f(\Z_p)$ corresponds
to an equivalent class of pair $(I,\al)$. Since $R$ is
maximal, $\tEnd_R(I)=R$. Since $R^\times[2] = L^\times[2],$ we see
from \eqref{eq:stabZp} that it remains to compare $(R^{\times2}\cap \Z_p^\times)/\Z_p^{\times2}$ with $(L^{\times2}\cap \Q_p^\times)/\Q_p^{\times2}.$
These two sets are only nonempty when $L$ contains a quadratic extension $K'$ of $\Q_p$. The condition $p^2\nmid\Delta(f)$ implies that $K'=\Q_p(\sqrt{u})$ can only be the unramified quadratic extension of $\Q_p$. In other words, $u\in\Z_p^\times.$ Hence in this case $(L^{\times2}\cap \Q_p^\times)/\Q_p^{\times2}$ and $(R^{\times2}\cap \Z_p^\times)/\Z_p^{\times2}$ both are equal to the group of order 2 generated by the class of $u$.
$\Box$\vspace{.1 in}

\section{Interpretation using pencils of quadrics}
\label{sec:pencil}

In this section, we give geometric meanings to the notion of
distinguished and soluble. For the proof of all the statements below,
see \cite[Section 2.2]{Jerry}. These geometric interpretations are not
necessary if one wants only the average size of the 2-Selmer groups.

Let $k$ be a field of characteristic not 2 and let $f(x)$ be a
monic separable polynomial of degree $2n+2$. Let $T$ be a self-adjoint
operator in $V_f(k)$ and let $C$ denote the hyperelliptic curve
$y^2=f(x)$. Let $\infty$ and $\infty'$ denote the two points above
infinity. One has a pencil of quadrics in $U$ spanned by the following
two quadrics:
\begin{eqnarray*}
Q(v) &=& \qu{v}{v}\\
Q_T(v) &=& \qu{v}{Tv}.
\end{eqnarray*}
This pencil is generic in the sense that there are precisely $2n+2$
singular quadrics among $x_1Q - x_2Q_T$ for $[x_1,x_2]\in \bbp^1,$ and
that they are all simple cones. Its associated hyperelliptic curve
$C'$ is the curve parameterizing the rulings of the quadrics in the
pencil. A ruling of a quadric $Q_0$ is a connected component of the
Lagrangian variety of maximal isotropic subspaces. When $Q_0$ is a
simple cone, there is only one ruling. When $Q_0$ is non-degenerate,
there are two rulings defined over $k(\sqrt{\disc(Q_0)}).$ To give a
point on $C'$ is the same as giving a quadric in the pencil along with
a choice of ruling. Therefore, the curve $C'$ is isomorphic
non-canonically to the hyperelliptic curve
$$y^2 = \text{disc}(xQ - Q_T) = \text{disc}(Q)\det(xI - T)=f(x).$$
Hence $C'$ is isomorphic to $C$ over $k$. We fix an isomorphism
$C'\simeq C$ and denote by $Y_0$ the ruling on $Q$ that corresponds to
$\infty\in C(k)$. Since $C$ has a rational point, the Fano variety
$F_T$ of $n$-planes isotropic with respect to both quadrics is a
torsor of $J$ of order dividing 2. In fact, it fits inside a
disconnected algebraic group $$J\dcup F_T\dcup \pico(C)\dcup F'_T$$ where
$F'_T\simeq F_T$ as varieties. Using the point $\infty,$ one obtains a
lift of $F_T$ to a torsor of $J[2]$ by taking
\begin{eqnarray*}
F_T[2]_\infty &=& \{X\in F_T|X + X = (\infty)\}\\
&=&\{X\,\,n\mbox{-plane}|\tSpan\{X,TX\}\mbox{ is an isotropic }n+1\mbox{ plane in the ruling }Y_0\}.
\end{eqnarray*}
The second equality is \cite[Proposition 2.32]{Jerry}.

The group scheme $G=\PSO(U)$ acts on the $k$-scheme $$W_f=\{(T,X)|T\in
V_f, X\in F_T[2]_\infty\}$$ via $g.(T,X)=(gTg^{-1},gX).$ Let $W_T$
denote the fiber above any fixed $T\in V_f(k)$. This action is
simply-transitive on $k$-points (\cite{Jerry} Corollary 2.36). Hence for any $T\in V_f(k)$, the above action induces a simply-transitive action of
$J[2]\simeq\tStab_G(T)$ on the fiber $W_T=F_T[2]_\infty.$

\begin{theorem}\label{thm:theoremfrompencils} {\rm (\cite[Proposition 2.38]{Jerry}, \cite[Lemma 2.19]{Jerrythesis})}
These two actions of $J[2]$ coincide and as elements of $H^1(k, J[2]),$
\begin{equation}\label{eq:FTcT}
[F_T[2]_\infty] = [W_T] = c_T.
\end{equation}
\end{theorem}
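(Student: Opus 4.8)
The plan is to show that the two simply-transitive actions of $J[2]$ on $W_T = F_T[2]_\infty$ — one coming from the group-scheme identification $J[2]\simeq\tStab_G(T)$ via the action of $G$ on $W_f$, the other being the natural action of $J[2]$ on the torsor $F_T[2]_\infty$ of the disconnected group $J\dcup F_T\dcup\pico(C)\dcup F'_T$ — agree, and then to read off the cohomological consequence. The strategy follows \cite{Jerry} and \cite{Jerrythesis}: one reduces to a statement over $k^s$, where both actions become concrete, and checks compatibility there; Galois-equivariance is then automatic since both constructions are defined over $k$.

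First I would recall from \S\ref{sec:pencil} that $W_f$ is defined over $k$, that $G$ acts on $W_f$ with the fiber $W_T$ over any $T\in V_f(k)$ a single $\tStab_G(T)(k^s)$-orbit (simple transitivity on $k^s$-points, \cite[Cor.\ 2.36]{Jerry}), and that the identification $\tStab_G(T)\simeq J[2]$ in \eqref{eq:Sentence1} is the one whose explicit description on points is given in \cite[Remark 2.6]{Jerry}. On the other side, $F_T[2]_\infty = \{X\in F_T : X+X = (\infty)\}$ inherits a $J[2]$-action from the group law of $J\dcup F_T\dcup\pico(C)\dcup F'_T$: for $P\in J[2]$ and $X\in F_T[2]_\infty$ one forms $P+X\in F_T$, and $(P+X)+(P+X) = 2P + (X+X) = (\infty)$, so $P+X\in F_T[2]_\infty$ again. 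The content of the theorem is that, under $\tStab_G(T)\simeq J[2]$, these two actions are literally the same map $J[2]\times W_T\to W_T$.

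The key steps, in order: (1) base-change to $k^s$ and fix a point $X_0\in W_T(k^s)$, trivializing both torsors; this turns each action into a group isomorphism $J[2](k^s)\xrightarrow{\sim}\tStab_G(T)(k^s)$, and it suffices to show the two isomorphisms coincide. (2) Using the standard-basis description (Proposition \ref{prop:distexplicit}) and the pencil-of-quadrics dictionary, identify $X_0$ with an explicit $n$-plane — e.g.\ one of the Weierstrass-type isotropic planes — and compute how a generator $(P_i)+(P_j)-D_0$ of $J[2]$ moves it, matching this against the action of the corresponding element of $\tStab_G(T_f)(k^s)\simeq(\Res_{L/k^s}\mu_2)_{N=1}/\mu_2$, i.e.\ a sign vector on the eigenbasis of $T$. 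The two computations agree because the eigenline decomposition of $U$ under $T$ corresponds to the Weierstrass points $P_1,\dots,P_{2n+2}$, and flipping signs on a pair of eigenlines is exactly translation by $(P_i)+(P_j)-D_0$. (3) Having checked agreement over $k^s$, invoke that both actions and the identification \eqref{eq:Sentence1} are Galois-equivariant, so the maps agree over $k$ as well; this is "these two actions of $J[2]$ coincide." (4) For the cohomology statement \eqref{eq:FTcT}: by construction $c_T$ is the class of the $\tStab_G(T_f)$-torsor $g^{-1}\,\gl{g}{\sigma}$ where $g$ conjugates $T_f$ to $T$; transporting along $G$ to $W_f$, this same cocycle describes the torsor $W_T$ under $\tStab_G(T_f)$, so $[W_T]=c_T$ in $H^1(k,\tStab_G(T_f))=H^1(k,J[2])$. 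And by Step (3), $W_T = F_T[2]_\infty$ with its $J[2]$-action, so $[F_T[2]_\infty]=[W_T]$. Chaining these gives \eqref{eq:FTcT}.

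The main obstacle is Step (2): pinning down the explicit match between the sign-flip action on $T$-eigenlines and the translation action of the divisor-class generators of $J[2]$ on $F_T[2]_\infty$, i.e.\ verifying that the bijection of \cite[Remark 2.6]{Jerry} intertwines the geometry of the Fano variety of the pencil with the linear algebra of self-adjoint operators. This is precisely where \cite[Proposition 2.32]{Jerry} (the second description of $F_T[2]_\infty$ as $\{X : \tSpan\{X,TX\}$ isotropic in the ruling $Y_0\}$) and the careful choice of $Y_0$ matching $\infty$ enter; getting the ruling-versus-point bookkeeping right — which of the two rulings of each nondegenerate quadric corresponds to $\infty$ rather than $\infty'$ — is the delicate part, and it is exactly the point at which the even-degree case differs from the Weierstrass-point case of \cite{BG3}. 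Since this verification is carried out in \cite{Jerry} and \cite{Jerrythesis}, in the paper it suffices to cite those references and record the conclusion.
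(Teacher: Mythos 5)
The paper does not give its own proof of Theorem~\ref{thm:theoremfrompencils}: it is imported wholesale from the cited references, and Section~\ref{sec:pencil} opens with ``For the proof of all the statements below, see \cite[Section 2.2]{Jerry}.'' So there is nothing internal to compare against. Your sketch is a reasonable reconstruction of the standard strategy one would use (and that the references use): trivialize both simply-transitive actions over $k^s$ by choosing a basepoint, so that each becomes an isomorphism $J[2](k^s)\xrightarrow{\sim}\tStab_G(T)(k^s)$; check agreement on the Weierstrass generators $(P_i)+(P_j)-D_0$ by matching sign-flips on pairs of $T$-eigenlines with translations on $F_T[2]_\infty$; invoke Galois-equivariance of all the maps to descend the identification to $k$; and then read off the equality of cohomology classes by observing that the cocycle $c_T = g^{-1}\,{}^{\sigma}g$ (for $g$ conjugating $T_f$ to $T$) is exactly the class of the torsor $W_T$ under $\tStab_G(T_f)\simeq J[2]$. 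You also correctly identify the delicate part: pinning down which ruling of $Q$ corresponds to $\infty$ rather than $\infty'$, which is precisely the bookkeeping that distinguishes the even-degree case from the marked-Weierstrass-point case of \cite{BG3} and which \cite[Proposition 2.32]{Jerry} handles. Your closing remark — that in this paper it suffices to cite and record the conclusion — is exactly what the paper does.

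One small point worth tightening if you were to write the argument out fully: in Step (4), $c_T$ a priori lives in $H^1(k,\tStab_G(T_f))$ while the $J[2]$-torsor structure on $W_T$ is via $\tStab_G(T)$, so one must transport along the inner twist by $g$ and check that this is compatible with the isomorphism \eqref{eq:Sentence1}; this is routine but is the kind of step that gets silently glossed in a sketch.
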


For hyperelliptic curves with a rational Weierstrass point, one can
obtain all torsors of $J[2]$ using pencils of quadrics
(\cite[Proposition 2.11]{Jerrythesis}). For hyperelliptic curves with
no rational Weierstrass point but with a rational non-Weierstrass
point, we do not recover all torsors of $J[2]$ using pencils of
quadrics but we recover enough to study $\PSO(U)(k)$-orbits.

Suppose $T\in V_f(k)$. From \eqref{eq:FTcT}, we see that there exists
a $k$-rational $n$-plane $X$ such that $\tSpan\{X,TX\}$ is an
isotropic $n+1$ plane if and only if either $[F_T[2]_\infty]$ or
$[F_T[2]_{\infty'}]$ is trivial. Again by \eqref{eq:FTcT}, this is
equivalent to $c_T$ being in the image of the subgroup generated by
$(\infty')-(\infty)\in J(k)/2J(k)$ under the descent map
$J(k)/2J(k)\hookrightarrow H^1(k,J[2]).$ Commutativity of the top left
square in \eqref{eq:HUGEdiagram} implies that this is in turn
equivalent to $c_T$ mapping to $0$ in $H^1(k, \tStab_{\PO(U)}(T)).$
Finally, this is equivalent to $T$ being distinguished. We have
therefore proved Proposition \ref{prop:Wt}.

Since $[F_T[2]_\infty]$ maps to $[F_T]$ under the canonical map
$H^1(k, J[2])\rightarrow H^1(k,J)[2],$ we see that $T$ is soluble if
and only if $F_T(k)\neq\emptyset$. This equivalence of solubility and
the existence of rational points is the main reason why the name
``soluble'' is used. Likewise, $T$ is locally soluble if and only if
$F_T(k_\nu)\neq\emptyset$ at all places~$\nu.$

We now give a complete proof for the claim that if
$\al\in(L^\times/L^{\times2}k^\times)_{N=1}$ lies in the image of
$\delta',$ then $<,>_\al$ is split. Consider instead the pencil of
quadrics in $L$ spanned by the following two quadrics:
\begin{eqnarray*}
Q_\al(\lambda)&=&<\lambda,\lambda>_\al\\
Q'_\al(\lambda)&=&<\lambda,\be\lambda>_\al.
\end{eqnarray*}
This pencil is once again generic, its associated hyperelliptic curve $C_\al$ is smooth of genus $n$ isomorphic non-canonically to the hyperelliptic curve defined by affine equation
$$y^2 = \text{disc}(xQ_\al-Q'_\al) = N_{L/k}(\al)f(x).$$
Since $N_{L/k}(\al)\in k^{\times2},$ the curve $C_\al$ is isomorphic
to $C$ over $k$. Fix any isomorphism $C'_\al\simeq C.$ The Fano
variety $F_\al$ of $n$-planes isotropic with respect to both quadrics
is a torsor of $J$ of order dividing $2$. There are two natural lifts
of $F_\al$ to torsors of $J[2]$ by taking
$$F_\al[2]_\infty = \{X\in F|X + X = (\infty)\}\quad\mbox{or}\quad F_\al[2]_{\infty'} = \{X\in F|X + X = (\infty')\}.$$
As elements of $H^1(k, J[2]),$ these two lifts map to the same class
in $H^1(k, \Rlk\mu_2/\mu_2).$ The class $\al$ also maps to a class in
$H^1(k, \Rlk\mu_2/\mu_2)$ as in \eqref{eq:HUGEdiagram}. By \cite[Proposition 2.27]{Jerrythesis}, these two classes coincide. When
$\al=\delta'([D])$ comes from $J(k)/2J(k),$ one of these two lifts
recovers $[D]$ and hence $F_\al(k)\neq\emptyset.$ Pick any $X\in
F_\al(k).$ If $X + X = (\infty),$ then $[D]=0,\al=1$ and $<,>$ is
split. Otherwise, $\tSpan\{X, (\infty)-X\}$ is a $k$-rational $n+1$
plane isotropic with respect to $<,>_\al.$

\section{Orbit counting}
\label{sec:orbitcounting}

In this section, we let the monic polynomial $f$ vary and count the
average number of locally soluble orbits of the action of $G(\Q)$ on
$V_f(\Q)$.
%More precisely, after fixing an integral model for
%$(U,Q)$ using the standard basis \ref{eq:stdbasis},
We redefine $V$ to be %Recall that $V$ was defined to be
the following scheme over $\Z$:
$$V = \{T:U\rightarrow U|T = T^*, \text{Trace}(T)=0\}\simeq\A^{2n^2 +
  5n + 2}_\Z.$$ For any ring $R$, we shall think of elements in $V(R)$
as $B=AT$, where $A$ is the matrix with $1$'s on the anti-diagonal and $0$'s elsewhere and where $T$ is a
$(2n+2)\times(2n+2)$ matrix with coefficients in $R$ such that
Trace($T)=0$ and $T = T^*$. Thus, elements $B\in V(R)$ are symmetric
matrices with anti-trace $0$. This change of perspective is only to
simplify notation in what follows. The group scheme $G=\PSO_{2n+2}$
acts on $V$ by $g\cdot B:=gBg^t$. The ring of polynomial
invariants for this action is generated by the coefficients
$c_2,\ldots,c_{2n+2}$ of the polynomial $\det(Ax-By)$. We
define the scheme $S$ to be:
$$S=\tSpec\,\Z[c_2,\ldots,c_{2n+2}].$$ The map $\pi:V\rightarrow S$ is
given by the coefficients of the characteristic polynomial; we call
$\pi(B)$ the invariant of $B$.

A point $c=(c_2,\ldots,c_{2n+2})\in S(\R)$ corresponds to a monic
polynomial $$f_c(x):=f(x) = x^{2n+2}+c_2x^{2n}+\cdots+c_{2n+2}.$$ We define
its height $H(f)$ by $$H(f):=H(c):=\max\{|c_k|^{d/k}\}_{k=2}^{2n+2},$$
where $d=(2n+2)(2n+1)=\deg H$ is the ``total degree'' of the discriminant of
$f$. The height of $B\in V(\R)$ is defined to be the height of
$\pi(B)$, and the height of the hyperelliptic curve $C(c)$ given by
$y^2=f(x)$ is defined to be $H(c)$.

For each prime $p$, let $\Sigma_p$ be a closed subset of
$S(\Z_p)\backslash\{\Delta=0\}$ whose boundary has measure $0$. Let
$\Sigma_\infty$ be the set of all $c\in S(\R)\backslash\{\Delta=0\}$
such that the corresponding polynomial $f$ has $m$ distinct pairs of
complex conjugate roots, where $m$ belongs to a fixed subset of
$\{0,\ldots,n+1\}$. To such a collection $(\Sigma_\nu)_\nu$, we
associate the family $F=F_\Sigma$ of hyperelliptic curves (with a
marked rational non-Weierstrass point), where $C(c)\in F$ if and only
if $c\in\Sigma_\nu$ for all places $\nu$. Such a family is said to be
{\it defined by congruence conditions}.

Given a family $F$ that is defined by congruence conditions, let
$\Inv(F)\subset S(\Z)$ denote the set $\{c(C):C\in F\}$ of invariants. We denote the
$p$-adic closure of $\Inv(F)$ in $S(\Z_p)\backslash\{\Delta=0\}$ by
$\Inv_p(F)$. We say that a family $F$ defined by congruence conditions
is {\it large at $p$} if $\Inv_p(F)$ contains every element $c\in
S(\Z_p)$ such that $p^2\nmid\Delta(c)$. Finally, we say that $F$ and
$\Inv(F)$ are {\it large} if $F$ is large at all but finitely many
primes.  An example of a large subset of $S(\Z)$ is the
set $$F_0=\{(c_2,\ldots,c_{2n+2})\in S(\Z)|p^k\nmid c_k,\forall
k=2,\ldots,2n+2\}.$$ Another example is the set of elements in $S(\Z)$
having squarefree discriminant.
%Given a family $F$ of
%hyperelliptic curves, we let $\Inv(F)$ denote $\{c\in\s(\Z):C(c)\in
%F\},$ the set of invariants of elements in $F$.  Let $\Inv_p(F)$
%denote the closure of $\Inv(F)$ in $S(\Z_p)$ and let $\Inv_\infty(F)$
%denote the subset of all elements $c\in S(\R)$ such that the
%associated polynomial $f(x)$ has $m$ distinct pairs of complex
%conjugate roots (and $2n+2-2m$ distinct real roots), where $m$ is
%contained in a fixed subset of $\{0,1,\ldots,n+1\}$.  A family $F$ is
%said to be {\it defined by congruence conditions} if the following two
%conditions are satisfied:
%\begin{enumerate}
%\item We have $C(c)\in F$ if and only if $c\in\Inv_\nu(F)$ for all places $\nu$ of $\Q$.
%\item The boundary of $\Inv_p$ in $S(\Z_p)$ has measure $0$.
%\end{enumerate}

%We may also impose congruence conditions at infinity by insisting that,
%  for all $C\in F$, the number of pairs of complex conjugate roots of
%  $f^{\Inv(C)}$ is contained in a fixed subset
%  $\Sigma_\infty\subset\{0,\ldots,n+1\}$.

Our goal is to prove the following theorem:
\begin{theorem}\label{thm:largegood}
  The average number of locally soluble orbits for the action of
  $G(\Q)$ on $V_f(\Q)$ as $f$ runs through any large subset of
  $S(\bbz),$ when ordered by height, is 6.
\end{theorem}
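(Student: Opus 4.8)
The plan is to combine the orbit-count over the compact part of a fundamental domain, the explicit treatment of the cusp, and a sieve onto locally soluble integral orbits, following the framework of Bhargava--Gross \cite{BG3} and Bhargava--Shankar \cite{BS}, \cite{BS3}, with the uniformity inputs from \cite{geosieve}. First I would set up the count of integral orbits: by Corollary \ref{cor:imp} (applied after restricting to a family that is large), together with Proposition \ref{prop:soluble=integral} at the odd primes of good reduction, each locally soluble $G(\Q)$-orbit on $V_f(\Q)$ with $f$ in the large family has a representative in $V(\Z)$, and at almost all primes the integral orbit count matches the local soluble count exactly. So the problem reduces to counting $G(\Z)$-orbits on $\bigcup_{f} V_f(\Z)$ of bounded height that are everywhere locally soluble, weighted by the reciprocal of the size of the integral stabilizer (which, by \eqref{eq:stabZp} and the Hasse principle for $G=\PSO_{2n+2}$, controls the passage between orbit counts and point counts).

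Next I would carry out the main geometry-of-numbers estimate. Using a fundamental domain $\mathcal{F}$ for $G(\Z)$ acting on $G(\R)$, one expresses the number of $G(\Z)$-orbits on $V(\Z)$ with height $<X$ and real invariant in $\Sigma_\infty$ as an integral, via Bhargava's averaging trick, of the number of lattice points of $V(\Z)$ in $\mathcal{F}\cdot R_X$ over a bounded region $R_X$ of representatives. I would split $\mathcal{F}$ into a compact part and the cusp. On the main body the count is a volume computation: $\Vol(\mathcal{F}\cdot R_X)$ grows like a constant times $X^{\deg H}$, and dividing by the number of curves of height $<X$ (also $\asymp X^{\deg H}$) yields a finite constant. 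The key point, exactly as in \cite{BG3}, is that after accounting for the number of connected components of $V_f(\R)$ and the number of $\PSO(U)(\R)$-orbits inside each, the compact region contributes on average $4$; the extra factor compared with the odd-degree case of \cite{BG3} (where the compact part gave $2$) comes from the two distinguished classes and the extra real orbits of $\PSO$ versus $\PO$. The cusp requires a separate argument: I would show, adapting the cusp estimates of \cite{BS}, \cite{BG3} and using the explicit shape \eqref{eq:distinguished} of distinguished elements from Proposition \ref{prop:distexplicit}, that the only integral orbits with a representative in the cuspidal region are the ``obvious'' ones corresponding to $0$ and to $(\infty')-(\infty)$, contributing $2$ on average, while all genuinely cuspidal points lie in reducible loci that are negligible. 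Summing, the compact part gives $4$ and the cusp gives $2$, for a total of $6$.

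The last step is the sieve. The count above is over all integral orbits soluble at $\R$; I must cut down to those that are soluble at every prime. This is done by a standard inclusion--exclusion over congruence conditions mod powers of primes, using Proposition \ref{prop:soluble=integral} to identify, at each odd prime $p$ with $p^2\nmid\Delta$, the $G(\Z_p)$-orbits with the soluble $G(\Q_p)$-orbits, so that the local densities are computed from the $p$-adic mass of locally soluble data. To interchange the sum over primes with the limit in $X$, I need the uniformity estimate bounding the number of $G(\Z)$-orbits on $V(\Z)$ of height $<X$ whose discriminant is divisible by $p^2$ for some large $p$; this is supplied by the geometric sieve of \cite{geosieve}. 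A Jacobian/mass-formula computation (of the type in \cite{BG3}, relating the product of local orbit densities to the global average via the Tamagawa number of $G$) then shows the product of all local factors equals the average $6$ obtained from the archimedean count divided by the curve count. I expect the cusp analysis to be the main obstacle: one must verify that the cuspidal region of $V(\Z)$ contributes nothing beyond the two obvious classes, which requires a careful stratification of the boundary of the fundamental domain and a proof that the non-distinguished cuspidal lattice points have negligible count --- this is where the structure specific to even-degree models (two points at infinity, two distinguished orbits) must be tracked precisely, and it is the step most sensitive to the genus-$n\ge 2$ hypothesis.
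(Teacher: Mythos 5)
Your proposal follows essentially the same outline as the paper: reduce to integral orbits via Corollary~\ref{cor:imp} and Proposition~\ref{prop:soluble=integral}, count integral orbits soluble at $\R$ via Bhargava-style averaging over a fundamental domain split into a main body and a cusp, show irreducible cuspidal points are negligible (Proposition~\ref{prop:irredcuspsmall}) while reducible main-body points are negligible (Proposition~\ref{prop:redmainsmall}), then sieve to all locally soluble orbits using the uniformity estimate of \cite{geosieve}, and finally compute the resulting constant via the Jacobian change of variables (Proposition~\ref{propjacone}) and the product formula $a_\infty\prod_p a_p=1$.

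The one place your explanation goes astray is in the claim that ``the compact region contributes on average $4$'' because of ``the two distinguished classes and the extra real orbits of $\PSO$ versus $\PO$.'' The two distinguished classes are precisely what lives in the cusp, not the main body; by design they are excluded from the count $N(V(\Z)^{(m)};X)$ of irreducible orbits and then added back at the end (and one needs Proposition~\ref{lem:100distinguish} to know that there are exactly two of them for $100\%$ of curves). The factor $4$ from the main body is not an artifact of real orbit counts at all: the archimedean and finite local factors $a_\nu$ cancel by the product formula, and what survives in \eqref{eq:twoselcompfinal} is exactly $\tau_G=\Vol(\FF)\prod_p\Vol(G(\Z_p))$, the Tamagawa number of $G=\PSO_{2n+2}$, which equals $4$ by Langlands' computation \cite{Langlands}. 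This is a genuine nontrivial input rather than a bookkeeping of connected components, and it is the analogue of the Tamagawa number $2$ for $\SO_{2n+1}$ producing the ``$1+2=3$'' in the odd-degree case of \cite{BG3}.
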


In view of the correspondence (in Theorem \ref{theorem:imp}) between locally
soluble orbits and 2-Selmer elements, the above result immediately
implies the following strengthening of Theorem \ref{thm:evenmainssss}:

\begin{theorem}\label{thm:largegoodcurve}
When all hyperelliptic curves over $\bbq$ of genus $n$ with a marked
rational non-Weierstrass point in any large family are ordered by
height, the average size of the 2-Selmer group of their Jacobians is
6.
\end{theorem}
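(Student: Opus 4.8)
The plan is to prove Theorem~\ref{thm:largegood} via the orbit-counting machinery of Bhargava, following the outline sketched in the introduction: reduce the count of locally soluble $G(\Q)$-orbits on the varying $V_f(\Q)$ to a lattice-point count in a fundamental domain, carefully analyze the cusp, and then perform a sieve. First I would set up the group-theoretic framework: since $G = \PSO_{2n+2}$ acts on $V(\R)$ with the invariant map $\pi\colon V \to S$, I would fix a fundamental set $\FF$ for the action of $G(\Z)$ on $V(\R)$ lying over a fundamental domain for $G(\Z)$ acting on $G(\R)$, using Iwasawa coordinates $g = nak$ on $G(\R)$. By Corollary~\ref{cor:imp}, every locally soluble $G(\Q)$-orbit of bounded height with invariants in the large family has an integral representative, so the count of such orbits is controlled by the number of $G(\Z)$-orbits on the relevant integral points $V(\Z)^{\text{irr}}$ of bounded height, weighted by reciprocals of stabilizer sizes. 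The averaging identity then expresses $\sum_{H(f) < X} \#\{\text{locally soluble orbits}\}$ as (essentially) a sum over $B \in \FF \cap V(\Z)$ of bounded height, up to the archimedean and $p$-adic densities of the sublocus cut out by local solubility and the congruence conditions defining the large family.

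Next I would carry out the geometry-of-numbers count. The fundamental domain $\FF$ is a union of translates $g \cdot B_0$; I would break $G(\R)$'s fundamental domain into a \emph{bounded part} and a \emph{cusp}. On the bounded part, a standard application of Davenport's lemma yields that the number of integral points of height $< X$ is asymptotic to the volume, giving the ``main term''; the computation of this volume against the product of local densities $\prod_\nu \mathrm{vol}(\text{locally soluble part of }V_f(\Z_\nu))$ is where the number $6$ must emerge, and this is a variant of the mass-formula computation in \cite{BG3} adapted to the split even-degree setting. The cusp requires separate treatment: I expect that, just as in \cite{BS}, \cite{BS3}, \cite{BG3}, the integral points $B$ in the cusp with $B$ not ``irreducible'' (i.e.\ corresponding to the reducible/distinguished orbits) dominate, and one must show that the \emph{irreducible} integral points in the cusp are negligible, while the reducible points in the cusp account for exactly the two obvious classes $0$ and $(\infty') - (\infty)$ — contributing, on average, $2$ to the count, with the bounded part contributing $4$. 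Isolating which coordinates vanish on the cusp and showing the count of irreducible points there is $o(X^{\dim V / d})$ is the technical heart of the counting step.

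Finally I would perform the sieve to cut from all integral orbits down to the locally soluble ones lying in the large family. This means averaging over $B \in V(\Z)$ satisfying, for each prime $p$, the condition that $B$ lies over $\Inv_p(F)$ and is soluble at $p$; by Proposition~\ref{prop:soluble=integral}, at all but finitely many primes (those with $p^2 \mid \Delta$) solubility at $p$ is automatic for integral $B$ and the $G(\Z_p)$-orbit structure matches the soluble $G(\Q_p)$-orbits exactly, so the local factors at good primes are clean. To justify interchanging the sum over $p$ with the limit $X \to \infty$ — i.e.\ to control the tail contribution from large primes — I would invoke the uniformity estimates of \cite{geosieve} bounding the number of $B \in V(\Z)$ of height $< X$ with $p^2 \mid \Delta(B)$ uniformly in $p$; this is precisely the ingredient that makes the infinite sieve converge. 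The main obstacle, as in all arguments of this type, is the cusp analysis in the counting step: correctly identifying the cuspidal regions, proving that irreducible points there contribute negligibly, and verifying that the reducible cuspidal points contribute exactly the mass corresponding to the classes $0$ and $(\infty')-(\infty)$ — so that the total average is $4 + 2 = 6$ rather than $3$ as in the Weierstrass-point case. Once the count of locally soluble orbits is shown to be $6$ on average, Theorem~\ref{thm:largegoodcurve} follows immediately from the bijection in Theorem~\ref{theorem:imp} between locally soluble $G(\Q)$-orbits on $V_f(\Q)$ and $\Sel_2(\mathrm{Jac}(C))$.
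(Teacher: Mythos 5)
Your proposal is correct and follows essentially the same route as the paper: reduce to counting integral orbits via Corollary~\ref{cor:imp} (after rescaling so that $2^{4i}\mid c_i$), count by geometry of numbers with a separate cusp analysis, sieve to locally soluble orbits with invariants in the large family using the uniformity estimate of~\cite{geosieve}, and deduce Theorem~\ref{thm:largegoodcurve} from Theorem~\ref{thm:largegood} via the bijection in Theorem~\ref{theorem:imp}. The accounting you describe---main term $4$ from the Tamagawa number $\tau_{\PSO_{2n+2}}$ plus $2$ from the distinguished classes---is exactly the paper's, though the paper realizes the ``$+2$'' by showing directly (Theorem~\ref{thmisimp}) that $\sum_{C\in F,\,H(C)\le X}(\#\Sel_2(J(C))-2)$ equals the weighted irreducible orbit count rather than by tallying cuspidal reducible points.
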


\subsection{Outline of the proof}
We now give an outline of the proof of Theorem
\ref{thm:largegood}. Let $F$ be a large subset of $S(\bbz).$ Since the
curve $C(c_2,\ldots,c_{2n+2})$ is isomorphic to
$C(u^2c_2,\ldots,u^{2n+2}c_{2n+2})$, for any $u\in\Q$, we may assume
that $2^{4i}\mid c_i$ for every $(c_2,\ldots,c_{2n+2})\in\Inv(F).$
%Note first there is a bijection between locally soluble orbits
%having invariants $(c_2,\ldots,c_{2n+2})$ and locally soluble orbits
%having invariants $(u^2c_2,\ldots,u^{2n+2}c_{2n+2})$ given by
%$T\mapsto uT.$ Therefore we may assume $2^{4i}|c_i$ for any
%$(c_2,\ldots,c_{2n+2})\in F.$
Hence by Corollary \ref{cor:imp}, it suffices to
determine the average number of locally soluble $G(\Q)$-equivalence
classes on $V_f(\Z)$, as $f$ runs through $F$.

As a first step, we count the number of $\R$-soluble $G(\Z)$-orbits of
$V(\Z)$ having bounded height and non-zero discriminant.  An element
in $V(\bbz)$ is \textit{reducible} if either the discriminant of its
characteristic polynomial is 0 or it is distinguished; otherwise it is
called \textit{irreducible}. Apart from a negligible number of
invariants $c\in S(\Z)$ (Proposition \ref{lem:100distinguish}), there will
always be 2 distinguished orbits having invariant $c$.  Let
$V(\R)^\sol$ denote the set of $\R$-soluble elements of $V(\bbr).$ To
estimate the number of irreducible orbits having bounded height, we
construct in Section \ref{sec:fundomain} a fundamental domain for the
action of $G(\bbz)$ on $V(\R)^\sol$.

The difficulty in estimating the number of lattice points in this
fundamental domain is that it is not compact, but rather has cusps
going to infinity. We handle these cusps by averaging this fundamental
domain over a bounded subset of $G(\R)$, and breaking it up into two
pieces, namely, the main body and the cusp region.
We show in
Section \ref{sec:cusp} that the cusp region has small volume and
negligibly many irreducible elements while the main body has a small number of
reducible elements. Hence, using Proposition \ref{davlem}, we obtain:
\begin{equation}\label{eqoutlinefirst}
\#(V(\Z)^\irr\cap V(\R)^\sol_{<X})\sim\Vol(G(\Z)\backslash V(\R)^\sol_{<X}),
\end{equation}
where $V(\Z)^\irr$ denotes the set of irreducible elements,
$V(\R)^\sol_{<X}$ denotes the set of points in $V(\R)$ that are
$\R$-soluble and have height less than $X$, and the above volume is
taken with respect to Euclidean measure $\nu$ on $V$ normalized so that
$V(\Z)\subset V(\R)$ has co-volume $1$. In other words, the number of
irreducible integral orbits that are soluble at $\bbr$ of height less
than $X$ is asymptotic to the volume of a fundamental domain for the
action of $G(\Z)$ on $V(\R)^\sol_{<X}$.

Fix $\tau$ and $\mu$ to be Haar measures on $G(\bbr)$ and $S(\bbr)$,
respectively, induced from left-invariant differential top forms over
$\Q$ where $\mu$ is normalized such that $S(\Z)\subset S(\bbr)$ has
co-volume $1$. For suitably ``nice'' morphisms $\delta: G\times S\to V$, there exists a
fixed rational constant $\J$ such that
\begin{equation}\label{eqmeasurechange}
\delta^*d\nu=\J\cdot d\tau\wedge d\mu.
\end{equation}
Here, $\J$ is independent of $\delta$.

Let $S(\R)_{<X}$ denote
the set of invariants $c\in S(\bbr)$ of height less than $X$.
For
any place $\nu$ of $\bbq$, let $a_\nu$ be the ratio
\begin{equation}\label{eq:a_nu}
a_\nu=\frac{|J(\bbq_\nu)/2J(\bbq_\nu)|}{|J(\bbq_\nu)[2]|}.
\end{equation}
Here $J$ is the Jacobian of any hyperelliptic curve of genus $n$. The
above quotient depends only on $\Q_\nu,n$ (\cite[Lemmas 5.7,
5.14]{Stoll01}) and satisfies the product formula $\prod_\nu a_\nu=1$.
We use (\ref{eqmeasurechange}) to compute the right hand side of
(\ref{eqoutlinefirst}) obtaining:
\begin{equation}\label{eqoutlinesecond}
\begin{array}{rcl}
\#(V(\Z)^\irr\cap V(\R)^\sol_{<X})&\sim&\Vol(G(\Z)\backslash V(\R)^\sol_{<X})\\[0.1in]&=&
|\J|\cdot a_\infty\cdot\tau(G(\Z)\backslash G(\R))\cdot\mu(S(\R)_{<X}).
\end{array}
\end{equation}

To prove Theorem \ref{thm:largegood}, we need to instead count $G(\Q)$-equivalence
classes of locally soluble elements of $V(\Z)$ having invariants in
$\Inv(F)$. We accomplish this via a sieve in Section \ref{sieve}. The vital
ingredient for this sieve is a uniformity estimate proved in
\cite{geosieve}. The sieving factor at the finite places are computed
in Section \ref{secvol} to be
\begin{equation}\label{eq:localfactor}
|\J|_p\cdot a_p\cdot\tau(G(\bbz_p))\cdot \mu_p,
\end{equation}
where $\mu_p$ is the local density of $\Inv(F)$ at $p$, namely,
$\mu_p:=\mu(\Inv_p(F))/\mu(S(\Z_p))$. The analogous local density at
$\infty$ is given by
$\mu_\infty:=\mu(\Inv_\infty(F)_{<X})/\mu(S(\R)_{<X})$.

Therefore, we finally obtain:
\begin{equation}
\begin{array}{rcl}
\#(G(\Q)\backslash V_F^{{\rm ls, irr}}(\Q)_{<X})&\sim&|\J|\cdot
a_\infty\cdot\tau(G(\Z)\backslash
G(\R))\cdot\mu(S(\R)_{<X})\cdot\mu_\infty\displaystyle\prod_p
\left(|\J|_p\cdot a_p\cdot\tau(G(\bbz_p))\cdot \mu_p\right)\\
&\sim&\tau_G\mu(S(\R)_{<X})\mu_\infty\prod_p\mu_p,
\end{array}
\end{equation}
where $G(\Q)\backslash V_F^{{\rm l s,\irr}}(\Q)_{<X}$ denotes a set of
representatives for the $G(\Q)$-equivalence classes of locally soluble irreducible
elements in $V(\Q)$ having invariants in $\Inv(F)$ and height bounded
by $X$.

We will show that up to a negligible quantity, the number of
hyperelliptic curves $C:y^2=f(x)\in F$ is equal to
$\mu(S(\R)_{<X})\mu_\infty\prod_p\mu_p$. Furthermore, for $100\%$ of
these curves, the set $V_f(\Q)$ contains two distinct distinguished
orbits.  Thus, the average number of locally soluble orbits for the
action of $G(\Q)$ on $V_f(\Q)$ is equal to $2+\tau_G=6$.

\subsection{Construction of fundamental domains}\label{sec:fundomain}
Let $V(\R)^\sol$ denote the set of $\R$-soluble elements in $V(\R)$
having nonzero discriminant. We partition $V(\R)^\sol$ into $n+2$
sets as follows,
$$
V(\R)^\sol=\bigcup_{m=0}^{n+1}V(\R)^{(m)},
$$
where $V(\R)^{(m)}$ consists of elements $B\in V(\R)^\sol$ such that
the polynomial corresponding to $\pi(B)$ has $m$ pairs of complex
conjugate roots (and $2n+2-2m$ real roots). In this section, our goal
is to describe convenient fundamental domains for the action of
$G(\Z)$ on $V(\R)^{(m)}$ for $m\in\{0,\ldots,n+1\}$.

\subsubsection*{Fundamental sets for the action of $G(\R)$ on $V(\R)^\sol$}
First, we construct convenient fundamental sets for the action of
$G(\R)$ on $V(\R)^{(m)}$. Let $S(\R)^{(m)}$ denote the set of elements
$c\in S(\R)\backslash\{\Delta =0\}$ such that the corresponding
polynomial has $m$ pairs of complex conjugate roots. There exists an
algebraic section $\kappa:S\to V$ defined over $\Z[1/2]$ such that
every element in the image of $S(\R)\backslash\{\Delta=0\}$ under
$\kappa$ is distinguished \cite[Section 3.1]{Jerrythesis}.  The number
of $\R$-soluble $G(\R)$-orbits in $V_{f_c}(\R)$, for $c\in S(\R)^{(m)}$,
depends only on $m$. We denote it by $\tau_m$. There exist elements
$g_1,\ldots g_{\tau_m}\in \GL(U)(\R)$ such that the set
\begin{equation}\label{eqfirstfundset}
R'^{(m)}:=\bigcup_i g_i\kappa(S(\R)^{(m)})g_i^{-1}
\end{equation}
is a fundamental set for $G(\R)\backslash V(\R)^{(m)}$.
%The proof of
%the existance of the $g_i$'s follows from an argument identical to
%that of \cite[Section 9.1]{BG}.
Indeed, since $L:=\R[x]/f_c(x)$ is independent of $c\in S(\R)^{(m)}$,
an element $g\in \GL(U)(\R)$ that conjugates $\kappa(c_0)$ to a
$G(\R)$-orbit corresponding to a class $\al\in (L^\times/L^{\times
  2}\R^\times)_{N=1}$ does so for every $c\in S(\R)^{(m)}$.

We now construct our fundamental set $R^{(m)}$ for $G(\R)\backslash V(\R)^{(m)}$ to be
\begin{equation}\label{eqfundsetfinal}
  R^{(m)}:=\R_{>0}\cdot\{B\in R'^{(m)}:H(B)=1\}.
\end{equation}
The reason we use the set $R^{(m)}$ instead of $R'^{(m)}$ is that the
size of the coefficients of each element in $R^{(m)}$ having height
$X$ is bounded by $O(X^{1/d})$, where $d=(2n+2)(2n+1)$ is the degree
of the height function. This follows because the elements in
$R'^{(m)}$ having height 1 lie in a bounded subset of $V(\R)$.

\subsubsection*{Fundamental domains for the action of $G(\Z)$ on $G(\R)$}
We now describe Borel's construction \cite{Borel} of a fundamental
domain $\FF$ for the left action of $G(\Z)$ on $G(\R)$.  Let $G(\R)=NTK$ be
the Iwasawa decomposition of $G(\R)$. Here, $N\subset G(\R)$ denotes
the set of unipotent lower triangular matrices, $T\subset G(\R)$
denotes the set of diagonal matrices, and $K\subset G(\R)$ is a
maximal compact subgroup. Then a fundamental domain $\FF$ for the action of
$G(\Z)$ on $G(\R)$ may be expressed in the following form:
$$
\FF:=\{utk:u\in N'(t),\;t\in T',\;k\in K\}\subset N'T'K
$$
where $N'\subset N$ is a bounded set, $N'(t)\subset N'$ is a measurable set
depending on $t\in T'$, and $T'\subset T$ is given by
$$
T':=\{{\rm diag}(t_1^{-1},t_2^{-1},\ldots,t_{n+1}^{-1},t_{n+1},\ldots,t_1):t_1/t_2>c,\ldots,t_n/t_{n+1}>c,t_nt_{n+1}>c\},
$$
for some constant $c>0$.

\subsubsection*{Fundamental domains for the action of $G(\Z)$ on $V(\R)^\sol$}
For $h\in G(\R)$, we regard $\FF h\cdot R^{(m)}$ as a multiset, where
the multiplicity of $B$ in $\FF h\cdot R^{(m)}$ is given by
$\#\{g\in\FF:B\in gh\cdot R^{(m)}\}$.  The $G(\Z)$-orbit of any $B\in
V(\R)$ is represented $\#\Stab_{G(\R)}(B)/\#\Stab_{G(\Z)}(B)$ times in
this multiset $\FF h\cdot R^{(m)}$.

The group $\Stab_{G(\Z)}(B)$ is nontrivial only for a measure $0$ set
in $V(\R)^{(m)}$. Indeed, $G(\Z)$ is countable and every element $g\in
G(\Z)$ only fixes a measure 0 set in $V(\R)$. (Later on, in Proposition
\ref{prop:bigstabsmall}, we will show that the number of $G(\Z)$-orbits
on $V(\Z)$ having a nontrivial stabilizer in $G(\Z)$ is negligible.)
The size $\#\Stab_{G(\R)}(B)$ is constant over $B\in V(\R)^{(m)}$. We
denote it by $\#J^{(m)}[2](\R)$.  Therefore, the multiset $\FF h\cdot
R^{(m)}$ is a cover of a fundamental domain for $G(\Z)$ on
$V(\R)^{(m)}$ of degree $\#J^{(m)}[2](\R)$.

\subsection{Averaging, cutting off the cusp, and estimation in the main body}\label{sec:cusp}
An element $B\in V(\Q)$ is said to be {\it irreducible} if it has
nonzero discriminant and it is not distinguished.  For any
$G(\Z)$-invariant set $S\subset V(\Z)^{(m)}:=V(\R)^{(m)}\cap V(\Z)$,
let $N(S;X)$ denote the number of irreducible $G(\Z)$-orbits of $S$
that have height bounded by $X$, where each orbit $G(\Z)\cdot B$ is weighted by $1/\#\Stab_{G(\Z)}(B)$. The result of the previous section
shows that we have
$$
N(S;X)=\frac{1}{\#J^{(m)}[2](\R)}\#\{\FF hR^{(m)}(X)\cap S^\irr\}
$$
for any $h$ in $G(\R)$, where $R^{(m)}(X)$ denotes the elements in
$R^{(m)}$ having height bounded by $X$ and $S^\irr$ denotes the set of
irreducible elements in $S$.  Let $G_0$ be a bounded open
$K$-invariant ball in $G(\R)$. Averaging the above equation over $h\in
G_0$ we obtain:
\begin{equation}\label{eqavgone}
  N(S;X)= \displaystyle\frac{1}{\#J^{(m)}[2](\R)\Vol(G_0)}\displaystyle\int_{h\in G_0}\#\{\FF hR^{(m)}(X)\cap S^\irr\}dh,
\end{equation}
for any Haar-measure $dh$ on $G(\R)$, and where the volume of $G_0$ is
computed with respect to $dh$. We use \eqref{eqavgone} to define
$N(S;X)$ when $S$ is not $G(\Z)$-invariant.

By an argument identical to the proof of \cite[Theorem 2.5]{BS}, we
obtain
\begin{equation}\label{eqavgtwo}
N(S;X)=\frac{1}{\#J^{(m)}[2](\R)\Vol(G_0)}\displaystyle\int_{h\in\FF}\#\{hG_0R^{(m)}(X)\cap S^\irr\}dh.
\end{equation}

To estimate the number of integral points in the bounded region
$hG_0R^{(m)}(X)$, we use the following result of
Davenport~\cite{Davenport1}.
\begin{proposition}\label{davlem}
  Let $\mathcal R$ be a bounded, semi-algebraic multiset in $\R^n$
  having maximum multiplicity $m$, and that is defined by at most $k$
  polynomial inequalities each having degree at most $\ell$.
%Let $\RR'$
%  denote the image of $\RR$ under any $($upper or lower$)$ triangular,
%  unipotent transformation of $\R^n$.
  Then the number of integral lattice points $($counted with
  multiplicity$)$ contained in the region $\mathcal R$ is
\[\Vol(\mathcal R)+ O(\max\{\Vol(\bar{\mathcal R}),1\}),\]
where $\Vol(\bar{\mathcal R})$ denotes the greatest $d$-dimensional
volume of any projection of $\mathcal R$ onto a coordinate subspace
obtained by equating $n-d$ coordinates to zero, where
$d$ takes all values from
$1$ to $n-1$.  The implied constant in the second summand depends
only on $n$, $m$, $k$, and $\ell$.
\end{proposition}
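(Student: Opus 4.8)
This is the classical lattice-point estimate of Davenport \cite{Davenport1} (the formulation here, with the multiset and projection-volume refinements, is the one used in \cite{BS}); the plan is an induction on the ambient dimension $n$ that reduces the count to a sum over integer hyperplane slices, with the uniform tameness of semi-algebraic sets supplying all the needed bounds.

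\emph{Base case $n=1$.} A bounded subset of $\R$ defined by at most $k$ polynomial inequalities of degree at most $\ell$ is a union of at most $C(k,\ell)$ intervals, since every endpoint is a real zero of a defining polynomial and there are at most $k\ell$ of these. A bounded interval of length $L$ contains $L+O(1)$ integers, so the count is $\Vol(\mathcal R)+O(1)$, with multiplicities $\le m$ merely rescaling by $m$. For $n=1$ there are no coordinate projections to take, so the asserted error $O(\max\{\Vol(\bar{\mathcal R}),1\})$ is just $O(1)$, and the base case holds.

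\emph{Inductive step.} Write $x=(x',x_n)$ with $x'\in\R^{n-1}$, and for $t\in\R$ put $\mathcal R_t=\{x'\in\R^{n-1}:(x',t)\in\mathcal R\}$. Substituting $x_n=t$, each $\mathcal R_t$ is a bounded semi-algebraic multiset in $\R^{n-1}$ defined by at most $k$ inequalities of degree at most $\ell$ with multiplicity at most $m$, all uniformly in $t$, so the inductive hypothesis yields
\[
\#(\mathcal R\cap\Z^n)=\sum_{t\in\Z}\#(\mathcal R_t\cap\Z^{n-1})=\sum_{t\in\Z}\Bigl(\Vol_{n-1}(\mathcal R_t)+O\bigl(\max\{\Vol(\bar{\mathcal R_t}),1\}\bigr)\Bigr).
\]
Since $\int_\R\Vol_{n-1}(\mathcal R_t)\,dt=\Vol(\mathcal R)$ by Fubini, it then suffices to prove that $\sum_{t\in\Z}\Vol_{n-1}(\mathcal R_t)$ differs from this integral by $O(\max\{\Vol(\bar{\mathcal R}),1\})$ and that $\sum_{t\in\Z}\max\{\Vol(\bar{\mathcal R_t}),1\}=O(\max\{\Vol(\bar{\mathcal R}),1\})$. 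Both would follow from the elementary comparison $\bigl|\sum_{t\in\Z}g(t)-\int_\R g\bigr|\le\mathrm{Var}(g)$ for compactly supported $g$ of bounded variation, applied to $V(t):=\Vol_{n-1}(\mathcal R_t)$ and to the slice-volumes $t\mapsto\Vol_d(\mathrm{proj}_I(\mathcal R_t))$ for coordinate subsets $I\subseteq\{1,\dots,n-1\}$: each such function is supported on $O_{k,\ell,n}(1)$ intervals and has $O_{k,\ell,n}(1)$ local extrema — the latter because it belongs to a definable family in an o-minimal structure (or, effectively, by cylindrical algebraic decomposition) — so its total variation is $O_{k,\ell,n}(1)$ times its supremum, which is in turn at most a coordinate-projection volume of $\mathcal R$ of one higher dimension and hence at most $\Vol(\bar{\mathcal R})$. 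Accounting for the $O(\Vol(\bar{\mathcal R})+1)$ integers $t$ with $\mathcal R_t\ne\emptyset$ (bounded because $\mathrm{proj}_{\{n\}}(\mathcal R)$ is one of the projections in $\bar{\mathcal R}$), everything is absorbed into $O(\max\{\Vol(\bar{\mathcal R}),1\})$, completing the induction.

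\textbf{Main obstacle.} The only genuinely nontrivial input is the \emph{uniformity} of the tameness estimates: that the number of intervals supporting $\mathcal R_t$, and the number of monotone pieces of the volume functions $t\mapsto\Vol_d(\mathrm{proj}_I(\mathcal R_t))$, are bounded in terms of $n,k,\ell$ alone, independently of the particular defining polynomials (whose coefficients range over an algebraic parameter space). Granting this — which is where quantitative semi-algebraic geometry, or uniform o-minimality, enters — the remainder is routine Fubini-and-Riemann-sum bookkeeping.
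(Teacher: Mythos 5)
The paper does not prove this proposition; it is quoted as a known result of Davenport \cite{Davenport1}, in the form adapted for semi-algebraic regions used by Bhargava--Shankar. So there is no ``paper's own proof'' to compare against, and one must judge your argument on its own terms.

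Your overall skeleton---induction on the ambient dimension, slicing along $x_n$, and comparing $\sum_{t\in\Z}$ with $\int_\R$---is the same as Davenport's. The problem is the comparison step. You reduce $\bigl|\sum_t g(t)-\int g\bigr|$ to the total variation of $g$, where $g(t)=\Vol_{n-1}(\mathcal R_t)$ (and likewise for the projection-volume functions), and claim $\mathrm{Var}(g)=O_{n,k,\ell}(\sup g)$ because $g$ has $O_{n,k,\ell}(1)$ local extrema ``by cylindrical algebraic decomposition'' or because it lies in a definable family in an o-minimal structure. This is where the gap is. The slice-volume function $t\mapsto\Vol_{n-1}(\mathcal R_t)$ of a semi-algebraic set is \emph{not} in general semi-algebraic: already $\Vol(\{(x,y):1\le x\le t,\ 0\le y\le 1/x\})=\log t$. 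So cylindrical algebraic decomposition does not apply to $g$, and o-minimality of the real field (the structure in which your defining data live) says nothing about it. What is true is that such fiberwise integrals are definable in the larger o-minimal structure $\R_{\mathrm{an,exp}}$ by the theorem of Comte--Lion--Rolin on integration of subanalytic functions, which does give the uniform finiteness you want---but that is a deep result, and invoking it turns a two-page elementary lemma into something resting on nontrivial model theory. As written, the justification is incorrect, and the correct justification is much heavier than the statement deserves.

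Davenport's actual argument avoids the issue entirely by swapping the order of Fubini. Rather than estimating $\sum_t\Vol_{n-1}(\mathcal R_t)-\int_\R\Vol_{n-1}(\mathcal R_t)\,dt$ via the variation of a slice-volume function, write
\[
\sum_{t\in\Z}\Vol_{n-1}(\mathcal R_t)=\int_{\R^{n-1}}\Bigl(\#\{t\in\Z:(x',t)\in\mathcal R\}\Bigr)\,dx',
\qquad
\Vol(\mathcal R)=\int_{\R^{n-1}}\Bigl(\text{length of }\{t:(x',t)\in\mathcal R\}\Bigr)\,dx'.
\]
For each fixed $x'$, the fibre $\{t:(x',t)\in\mathcal R\}$ is a union of $O_{k,\ell}(1)$ intervals (a purely one-variable real-root bound), so the integrand in the first integral differs from that in the second by $O_{k,\ell}(1)$ pointwise, and the difference is $O_{k,\ell}\bigl(\Vol_{n-1}(\mathrm{proj}_{\{1,\dots,n-1\}}\mathcal R)\bigr)\le O_{k,\ell}\bigl(\Vol(\bar{\mathcal R})\bigr)$. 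The same manoeuvre, applied inside each coordinate projection $\mathrm{proj}_{I\cup\{n\}}(\mathcal R)$ (which is again semi-algebraic of complexity $O_{n,k,\ell}(1)$ by quantifier elimination), handles the sums $\sum_t\Vol_d(\mathrm{proj}_I\mathcal R_t)$ that arise in your term (ii). This keeps the whole proof elementary and the constants visibly dependent only on $n,k,\ell$. You should replace the bounded-variation/o-minimality step with this Fubini swap; everything else in your write-up (base case, accounting for multiplicity $\le m$, the $+1$ coming from the number of nonempty slices) is fine.
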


We can express any $h\in \FF$ as $h=utk$, where $u\in N'$, $t\in T'$, and
$k\in K$. Since $G_0$ is $K$-invariant, we have for any $h\in\FF$,
$$hG_0R^{(m)}(X) =
utkG_0R^{(m)}(X) = t(t^{-1}ut)G_0R^{(m)}(X).$$ By the descriptions of
$N'$ and $T'$, we see that the set $t^{-1}N't$ is bounded independent
of $t\in T'$.  (The coordinates of elements in $N'$ are scaled by
either $(t_i/t_{i+1})^{-1}$ for $i=1,\ldots,n,$ or
$(t_nt_{n+1})^{-1}.$ which are bounded above by $1/c'$.)  Therefore
$(t^{-1}ut)G_0R^{(m)}(X)$ is a compact region where the
coefficients of the elements inside are growing homogeneously in
$X$. It is the action of $t\in T'$ that stretches and compresses
different coordinates.

As $t$ grows in $T'$, the estimates on the number of integral points
in $hG_0R^{(m)}(X)$ obtained from Proposition \ref{davlem} gets worse
and worse. Indeed when $t$ gets high enough (in the cusp of $T'$), the
top left entry $b_{11}$ of every element in $hG_0R^{(m)}(X)$ will be
less than 1 in absolute value, at which point the error term in
Proposition \ref{davlem} dominates the main term. As $t$ gets bigger,
other entries start becoming less than 1 in absolute value and we get
even worse estimates. To deal with this problem, we break $V(\bbr)$ up
into two pieces: the main body, which contains all elements $B\in
V(\bbr)$ with $|b_{11}|\ge1$; and the cusp region, which contains all
elements $B\in V(\bbr)$ with $|b_{11}|<1.$ As $t$ gets bigger, more and
more coefficients of the integral elements of $hG_0R^{(m)}(X)$ will
become 0. Using Proposition \ref{prop:distexplicit}, we know that once
enough entries of $B$ are 0, it will become distinguished and thus
reducible. In Proposition \ref{prop:irredcuspsmall}, we compute the
number of irreducible integral points in the cusp region and in
Proposition \ref{prop:redmainsmall}, we compute the number of
reducible integral points in the main body. They are both negligible
when compared to the number of integral points in the main region and
as a result, we will prove the following result.

\begin{theorem}\label{thmainzcount}
$$N(V(\Z)^{(m)};X) = \frac{1}{\# J^{(m)}[2](\R)}\Vol(\FF\cdot R^{(m)}(X)) + o(X^{\frac{\dim V}{d}}).$$
\end{theorem}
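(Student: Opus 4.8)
The plan is to combine the averaging identity \eqref{eqavgtwo} with Davenport's lattice-point estimate (Proposition \ref{davlem}), and then to show that all the error contributions coming from the cusp are negligible. First I would fix $m$ and start from
$$
N(V(\Z)^{(m)};X)=\frac{1}{\#J^{(m)}[2](\R)\,\Vol(G_0)}\int_{h\in\FF}\#\{hG_0R^{(m)}(X)\cap V(\Z)^{(m),\irr}\}\,dh.
$$
Writing $h=utk$ with $u\in N'$, $t\in T'$, $k\in K$, and using $K$-invariance of $G_0$, the region $hG_0R^{(m)}(X)$ equals $t\cdot\big((t^{-1}ut)G_0R^{(m)}(X)\big)$, where the inner region is a bounded semialgebraic multiset whose coefficients scale like $X^{1/d}$. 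For each fixed $t$ I would apply Proposition \ref{davlem} to this region: the main term is its volume, which integrates over $\FF$ to $\Vol(\FF\cdot R^{(m)}(X))$ after a change of variables, and the error terms are controlled by lower-dimensional coordinate projections, each of which picks up negative powers of the entries of $t$ in the cusp directions.

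The second and main step is to bound the total error. I would split $V(\R)$ into the \emph{main body} $|b_{11}|\ge 1$ and the \emph{cusp region} $|b_{11}|<1$, following the outline preceding the statement. For the main body, the relevant part of $T'$ is a bounded region shifted by $O(\log X)$ in each cusp direction, so the Davenport error is $O(X^{(\dim V-1)/d}\log^{O(1)}X)=o(X^{\dim V/d})$; here one also has to discard the reducible points, which by Proposition \ref{prop:redmainsmall} contribute negligibly. For the cusp region the main term coming from the volume is itself small, but more importantly one must show that $hG_0R^{(m)}(X)$ contains negligibly many \emph{irreducible} integral points there: this is exactly Proposition \ref{prop:irredcuspsmall}, whose proof rests on Proposition \ref{prop:distexplicit} --- once sufficiently many of the top-left entries of an integral $B$ in $hG_0R^{(m)}(X)$ are forced to vanish (which happens deep in the cusp), $B$ becomes distinguished, hence reducible and not counted. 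Summing the geometric series in the cusp parameters $t_i/t_{i+1}$ and $t_nt_{n+1}$ over $T'$ then gives a total contribution of $o(X^{\dim V/d})$.

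Finally I would assemble the pieces: the integral over $\FF$ of the Davenport main terms equals $\Vol(\FF\cdot R^{(m)}(X))$ (up to the normalizing factor $\#J^{(m)}[2](\R)\Vol(G_0)$, which cancels because $\FF$ is a fundamental domain of covolume $\Vol(G_0)^{-1}$ in the averaged picture, exactly as in \cite[Theorem 2.5]{BS}), and every remaining term is $o(X^{\dim V/d})$ by the main-body and cusp estimates. I expect the genuine obstacle to be the cusp analysis: identifying precisely which coordinates of $B$ are forced below $1$ in absolute value as $t$ ranges over the various sub-cones of $T'$, checking via Proposition \ref{prop:distexplicit} that the surviving irreducible configurations live on a subvariety of large enough codimension, and then verifying that the resulting sums over $T'$ converge with room to spare --- this is where the condition $n\ge 2$ and the precise shape \eqref{eq:distinguished} of distinguished elements really get used, and it is the step most likely to require careful bookkeeping rather than a soft argument.
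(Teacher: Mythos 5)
Your proposal follows the paper's proof essentially step for step: start from the averaged formula \eqref{eqavgtwo}, split into the main body $|b_{11}|\ge 1$ and the cusp $|b_{11}|<1$, dispatch the cusp with Proposition \ref{prop:irredcuspsmall} and the reducible points in the main body with Proposition \ref{prop:redmainsmall}, and apply Davenport (Proposition \ref{davlem}) on what remains. The only cosmetic difference is in how the main-body Davenport error is bounded --- the paper notes that $b_{11}$ has minimal weight, so the error is dominated by the projection onto $b_{11}=0$ and $\int_{\FF\backslash\FF'} 1/w(b_{11})\,dh = O(1)$ gives $O(X^{(\dim V-1)/d})$ cleanly, whereas your $\log$ factor is slightly looser but still $o(X^{\dim V/d})$.
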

In \S4.5, we show that $\Vol(\FF\cdot R^{(m)}(X))$ grows on the order
of $X^{\frac{\dim V}{d}}$ so the error term is indeed smaller than the
main term.

Let $V(\Z)(b_{11}= 0)$ denote the set of points $B\in V(\Z)$ such
that $b_{11}=0$. Then we have the following proposition:
\begin{proposition}\label{prop:irredcuspsmall}
  With notation as above, we have $N(V(\Z)(b_{11}=0);X)=O_\epsilon(X^{\frac{\dim V-1}{d}+\epsilon})$.
\end{proposition}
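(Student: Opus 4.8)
The plan is to bound the number of integral points $B\in V(\Z)^{(m)}$ with $b_{11}=0$ of height at most $X$ by counting them in the averaged fundamental domain, as in \eqref{eqavgone}, and showing this count is one power of $X^{1/d}$ smaller than the main term. Concretely, after cutting the locus $\{b_{11}=0\}$, one works in $\FF h R^{(m)}(X)$ with $h=utk\in N'T'K$; since $G_0$ is $K$-invariant, the region $hG_0R^{(m)}(X)$ equals $t(t^{-1}ut)G_0R^{(m)}(X)$, a box whose $i$-th coordinate direction is scaled by the appropriate product of the $t_j$'s and grows homogeneously like $X^{1/d}$ in the transverse directions. The hyperplane $\{b_{11}=0\}$ is a coordinate subspace of dimension $\dim V-1$, so Davenport's estimate (Proposition \ref{davlem}), applied to the slice, gives that the number of integral points with $b_{11}=0$ in $hG_0R^{(m)}(X)$ is $O(\Vol_{\dim V-1}(\text{slice}) + (\text{lower-dimensional projections}))$. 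Integrating this over $h\in\FF$ against the Iwasawa Haar measure $dh = \prod(t_i/t_{i+1})^{-w_i}\,\cdots\,d^\times t\,du\,dk$ and tracking the torus weights, the dominant contribution comes from the bottom of $T'$ (the $t_i$ bounded), where the slice has $(\dim V-1)$-volume on the order of $X^{(\dim V-1)/d}$.

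The key steps, in order, are: (i) reduce to estimating $\int_{h\in\FF}\#\{hG_0R^{(m)}(X)\cap V(\Z)(b_{11}=0)\}\,dh$ via \eqref{eqavgone}; (ii) write $h=utk$, replace $hG_0R^{(m)}(X)$ by $t(t^{-1}ut)G_0R^{(m)}(X)$ using $K$-invariance of $G_0$, and note $t^{-1}N't$ is bounded independent of $t\in T'$, so the region is a compact box $\mathcal B(t,X)$ with explicit coordinate-wise scaling factors $\lambda_i = \lambda_i(t)\,X^{1/d}$; (iii) apply Proposition \ref{davlem} to the coordinate slice $b_{11}=0$ of $\mathcal B(t,X)$, obtaining a bound $\ll \prod_{i\neq 11}\lambda_i + (\text{projections})$ which, because $b_{11}$ is the coordinate with the largest scaling weight as one goes up the cusp, is comparable to $\lambda_{11}^{-1}\Vol(\mathcal B(t,X))$; (iv) integrate over $t\in T'$ against $d\tau$ — since the weight of $b_{11}$ in the integrand is strictly positive throughout $T'$ (it is the ``deepest'' entry), the integral converges and is dominated by the region where all $t_i$ are $O(1)$; (v) there $\lambda_{11}\asymp X^{1/d}$ and $\Vol(\mathcal B(t,X))\asymp X^{\dim V/d}$, giving the claimed bound $O_\epsilon(X^{(\dim V-1)/d+\epsilon})$, with the $\epsilon$ absorbing the logarithmic factor from the volume of $\FF\cap\{t_i = O(1)\}$ and from lower-order projection terms.

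The main obstacle is the bookkeeping in step (iii)–(iv): one has to verify that among all the coordinate-subspace projections produced by Davenport's estimate applied to the slice $\{b_{11}=0\}$, none beats $X^{(\dim V-1)/d}$ after integration over the cusp. This requires knowing the precise torus weights with which each matrix entry $b_{ij}$ scales under conjugation by $t = \mathrm{diag}(t_1^{-1},\ldots,t_{n+1}^{-1},t_{n+1},\ldots,t_1)$, and checking that the entry $b_{11}$ has the \emph{maximal} such weight, so that deleting it (rather than some other entry) is the cheapest projection and that the remaining projections of the slice are never heavier than the slice itself once the $T'$-integral is taken into account. This is exactly the same weight computation that underlies the cusp analysis in \cite[Theorem 2.5]{BS} and \cite[Proposition 4.3ff.]{BG3}, and I would import it verbatim, pointing out that the extra coordinate subspace $\{b_{11}=0\}$ behaves like intersecting the existing cuspidal regions with one more hyperplane. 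The remaining estimates (convergence of the $d^\times t$-integral, homogeneity in $X$) are then routine.
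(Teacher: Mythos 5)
Your outline matches the paper's strategy up to the point where the real work begins, and that is exactly where it stops. Steps (i)--(ii) are the paper's reduction via \eqref{eqavgtwo} to $\int_{t\in T'}\#\{tG_0R^{(m)}(X)\cap V(\Z)(b_{11}=0)\}\,\delta(s)\,d^\times s$, and the observation that the region is a box with coordinate scalings $\lambda_{ij}=CX^{1/d}w(b_{ij})$ is correct. But your steps (iii)--(v) do not actually close the estimate. First, you cannot apply Davenport once and declare the projection terms ``lower order'': deep in the cusp many $\lambda_{ij}$ drop below $1$, the slice volume $\prod_{(i,j)\neq(1,1)}\lambda_{ij}$ is no longer the dominant term, and the bound from Proposition \ref{davlem} is controlled by the projection dropping \emph{all} small coordinates, not just $b_{11}$. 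Second, your claim that the $T'$-integral ``converges and is dominated by $t_i=O(1)$'' is false for those projections: e.g.\ the projection dropping all coordinates to the left of the off-anti-diagonal ($U_0$ in the paper's notation) gives $\int_{T'}\prod_{(i,j)\in U_0}w(b_{ij})^{-1}\delta(s)\,d^\times s$, whose integrand has \emph{positive} exponents in the $s_j$'s and diverges without a cutoff. The paper handles this with two ingredients you omit: (a) the reduction to the cutoff region $T_X$ of \eqref{eq:domainfors}, justified by the fact that once the top-left $i_0\times(2n+2-i_0)$ block vanishes the discriminant is $0$; and (b) the observation (Proposition \ref{prop:distexplicit}) that once the coordinates in $U_0$ all vanish the element is distinguished, hence reducible and excluded from the count. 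Only after those reductions is the remaining, genuinely nontrivial content the combinatorial induction of Lemma \ref{lem:combinatoric}, showing $I(U_1,X)=O_\epsilon(X^{-1/d+\epsilon})$ for every proper nonempty $U_1\subset U_0$ containing $b_{11}$.

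You flag this combinatorial estimate as ``the main obstacle'' and propose to ``import it verbatim'' from the cusp analyses of \cite{BS} and \cite{BG3}. That is not a proof: the representation here is $\PSO_{2n+2}$ acting on symmetric $(2n+2)\times(2n+2)$ matrices with anti-trace $0$, the torus weights $w(b_{ij})$ differ from those cases, and the shape of the reducibility locus $U_0$ (which provides the crucial termination of the induction) is specific to the even orthogonal group via Proposition \ref{prop:distexplicit}. The paper's proof actually carries out the induction on $n$: it splits $U_1'=U_0\setminus U_1$ into the first-row part $U_2'$ and the rest $U_3'$, factors $I_n'(U_1',X)=J_n(U_2',X)\cdot I_{n-1}'(U_3',X)$, and then runs two nested inductions plus a separate treatment of the boundary case $U_1=\{b_{11},\ldots,b_{1,2n}\}$. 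Without writing out (or at least sketching the inductive scheme and verifying the boundary cases), the proof is incomplete at precisely the step that carries all the content.
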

\begin{proof}
  It will be convenient to use the following parameters for $T$: $s_i
  = t_i/t_{i+1}$ for $i=1,\ldots,n$; and $s_{n+1}=t_nt_{n+1}.$ The
  condition for $t\in T'$ translates to $s_i>c$ for all $i$. We pick
  the following Haar measure $dh$ on $G(\R)=NTK$:
  \begin{equation}
    \label{eqhaarmeasure}
    \begin{array}{rcl}
dh&=&du\displaystyle\prod_{j=1}^{n-1}s_j^{j(j-2n-1)}\cdot(s_ns_{n+1})^{-\frac{n(n+1)}{2}}d^\times s_jdk\\[0.2in]
&=&du\,\delta(s)d^\times s\,dk,
    \end{array}
  \end{equation}
  where $du$ is a Haar measure on the unipotent group $N$, $dk$ is
  Haar measure on $K$ normalized so that $K$ has volume $1$,
  $\delta(s)$ denotes
  $\prod_{j=1}^{n-1}s_j^{j(j-2n-1)}\cdot(s_ns_{n+1})^{-\frac{n(n+1)}{2}}$,
  and $d^\times s$ denotes $\prod_{j=1}^{n-1}d^\times s_k$.

Then, since $G_0$ is $K$-invariant, \eqref{eqavgtwo} implies that
\begin{equation}\label{eq:firstestimate}
  \begin{array}{rcl}
N(V(\Z)(b_{11}=0);X)&=&O\Bigl(\displaystyle\int_{h\in\FF}\#\{hG_0R^{(m)}(X)\cap V(\Z)(b_{11}=0)\}dh\Bigr)\\[0.2in]
&=&O\Bigl(\displaystyle\int_{u\in N'}\int_{t\in T'}\#\{utG_0R^{(m)}(X)\cap V(\Z)(b_{11}=0)\}\,\delta(s)d^\times s\,du\Bigr)\\[0.2in]
&=&O\Bigl(\displaystyle\int_{t\in T'}\#\{tG_0R^{(m)}(X)\cap V(\Z)(b_{11}=0)\}\,\delta(s)d^\times s\Bigr),
  \end{array}
\end{equation}
where the final equality follows because $N'$ has finite measure,
$utG_0R^{(m)}(X)=t(t^{-1}ut)G_0R^{(m)}(X)$, and the coefficients of
$t^{-1}ut$ are bounded independent of $t\in T'$ and $u\in N'$.

Let $b_{ij},i\le j, (i,j)\neq (n+1,n+2)$ be the system of coordinates
on $V(\R)$, where $b_{ij}$ is the $(i,j)$'th entry of the symmetric
matrix $B$. To each coordinate $b_{ij},$ we associate the weight
$w(b_{ij})$ which records how an element $s\in T$ scales $b_{ij}.$
For example,
\begin{eqnarray*}
  w(b_{11})&=&s_1^{-2}\cdots s_{n-1}^{-2}s_n^{-1}s_{n+1}^{-1}\\
  w(b_{i\,2n+3-i})&=&1,\hspace{96pt}\mbox{coordinates on the anti-diagonal}\\
  w(b_{i\,2n+2-i})&=&s_i^{-1},\quad i=1,\ldots,n,\hspace{10pt}\mbox{coordinates above the anti-diagonal}\\
  w(b_{n+1\,n+1})&=&s_ns_{n+1}^{-1}.
\end{eqnarray*}
Let $C$ be an absolute constant such that $CX^{\frac{1}{d}}$ bounds
the absolute value of all the coordinates of elements $B\in
G_0R^{(m)}(X)$. If, for $(s_1,\ldots,s_{n+1})\in T'$, we have
$CX^{\frac1d}\,w(b_{i_0\,2n+2-i_0})<1$ for some
$i_0\in\{1,\ldots,n+1\}$, then $CX^{\frac1d}\, w(b_{ij})<1$ for all
$i\leq i_0,j\leq 2n+2-i_0.$ Hence the top left $i_0\times(2n+2-i_0)$
block of any integral $B\in t G_0R^{(m)}(X)$ is 0. Just as \cite[Lemma
10.3]{BG3} shows, any such $B$ has zero discriminant. Therefore, to
prove Proposition \ref{prop:irredcuspsmall}, we may assume
\begin{equation}\label{eq:domainfors}
s_i<\frac{X^{1/d}}{C},i=1,\ldots,n;\quad s_{n+1}<\frac{X^{2/d}}{C^2}.
\end{equation}
We use $T_X$ to denote the set of $t=(s_1,\ldots,s_{n+1})\in T'$
satisfying these bounds.

Let $U_1$ denote any subset of the coordinates $b_{ij}$. Let
$V(\R)(U_1)$ denote the subset of $V(\R)$ consisting of elements $B$
whose $(i,j)$ entry is less than 1 in absolute value when $b_{ij}\in
U_1$ and whose $(i,j)$ entry is greater than 1 when $b_{ij}\notin
U_1.$ Let $V(\Z)(U_1)$ denote the set of integral points in
$V(\R)(U_1)$. Then to prove Proposition \ref{prop:irredcuspsmall}, it
suffices to show that
\begin{equation}\label{eq:cuspsmall}
N(V(\Z)(U_1);X)=O_\epsilon(X^{\frac{\dim V-1}{d}+\epsilon}),
\end{equation}
for every set $U_1$ containing $b_{11}$.

Proposition \ref{davlem} in conjunction with the argument used to justify \eqref{eq:firstestimate} implies
\begin{equation*}
  \begin{array}{rcl}
N(V(\Z)(U_1);X)&=&O\Bigl(\displaystyle\int_{t\in T_X}\Vol(tG_0R^{(m)}(X)\cap V(\R)(U_1))\,\delta(s)d^\times s\Bigr)\\[0.2in]
&=&O\Bigl(X^{\frac{\dim V-\#U_1}{d}}\displaystyle\int_{t\in T_X}\prod_{b_{ij}\not\in U_1}w(b_{ij})\,\delta(s)d^\times s\Bigr).
  \end{array}
\end{equation*}
Therefore to prove \eqref{eq:cuspsmall}, we need to estimate:
\begin{equation}\label{eq:cuspsmalltwo}
\w{I}(U_1,X) := X^{\frac{\dim V-\#U_1}{d}}\displaystyle\int_{t\in T_X}\prod_{b_{ij}\not\in U_1}w(b_{ij})\,\delta(s)d^\times s,
\end{equation}
for every set $U_1$ containing $b_{11}$.

%Let $S(U_1,\lambda)$ denote the domain for $s=(s_1,\ldots,s_{n+1})$ satisfying \eqref{eq:domainfors} such that $C\, w(b_{ij})>1$ for all $(i,j)\notin U_1.$ For $s\in S(U_1,\lambda),$ Proposition \ref{prop:Davenport} gives
%$$\# (tL(u,t,X) \cap V(\Z)(U_1))=O\left(\prod_{(i,j)\notin U_1} CX^{\frac{1}{d}}\,w(b_{ij})\right).$$
%Since the product of all the weights $w(b_{ij})$ is 1, we have,
%\begin{eqnarray*}
%\text{LHS}\eqref{eq:cuspsmall}&=&O\left(\int_{s_1,\ldots,s_n=c'}^{\frac{X^{\frac{1}{d}}}{C}}\int_{s_{n+1}=c'}^{\frac{X^{\frac{2}{d}}}{C^2}}X^{\frac{\dim V-\#U_1}{d}}\prod_{b_{ij}\in U_1}w(b_{ij})^{-1}\,d_h^\times s\right)
%\end{eqnarray*}

Note that if $i'\leq i$ and $j'\leq j$, then $w(b_{i'j'})$ has smaller
exponents in all the $s_k$'s than $w(b_{ij})$. Thus, if a set $U_1$
contains $b_{ij}$ but not $b_{i'j'}$,
then $$\w{I}(U_1\backslash\{b_{ij}\}\cup \{b_{i'j'}\},X) \geq \w{I}(U_1,X).$$
Hence for the purpose of obtaining an upper bound for $\w{I}(U_1,X),$ we may assume that if $b_{ij}\in U_1$,
then $b_{i'j'}\in U_1$ for all $i'\leq i$ and $j'\leq j$.  If such a
set $U_1$ contains any element on, or to the right of, the
off-anti-diagonal, then every element in $V(\Z)(U_1)$ has discriminant
$0$ and by definition $N(V(\Z)(U_1);X)=0$. Let $U_0$ denote the set of coordinates $b_{ij}$ such that $i\leq
j$ and $i+j\leq 2n+1.$ In other words, $U_0$ contains every coordinate to the left of the
off-anti-diagonal. Since every element in $V(\Z)(U_0)$ is distinguished (by Proposition \ref{prop:distexplicit}), hence reducible, it suffices to consider $\w{I}(U_1,X)$ for all $U_1\subsetneq U_0.$

To this end, as the product of the weights over all coordinates is $1$, we define
\begin{equation}
  \label{eqlemmazero}
I(U_1,X)=X^{-\frac{\#U_1}{d}}\int_{s_1,\ldots,s_n=c}^{X^{\frac{1}{d}}}\int_{s_{n+1}=c}^{X^{\frac{2}{d}}}\prod_{(i,j)\in U_1}w(b_{ij})^{-1}\,\prod_{k=1}^{n-1}s_k^{k(k-2n-1)}\cdot(s_ns_{n+1})^{-\frac{n(n+1)}{2}}d^\times s.
\end{equation}
To complete the proof of Proposition \ref{prop:irredcuspsmall}, it suffices to prove the following lemma:%$I(U_1,X)=O_\epsilon(X^{-1/d+\epsilon})$
\begin{lemma}\label{lem:combinatoric}
Let $U_1$ be nonempty proper subset of $U_0$. Then
$$I(U_1,X)=O_\epsilon(X^{-\frac{1}{d}+\epsilon}).$$
If $U_1=U_0$ or $U_1=\emptyset$, then $I(U_1,X)=O(1)$.
\end{lemma}
\begin{proof}
  The proof of this lemma is a combinatorial argument using
  induction on $n$.  We first compute
  \begin{equation}\label{eqlemmaone}
I(U_0,X)=X^{-\frac{n(n+1)}{d}}\int_{s_1,\ldots,s_n=c}^{X^{\frac{1}{d}}}\int_{s_{n+1}=c}^{X^{\frac{2}{d}}}s_1s_2^3\cdots s_{n-1}^{2n-3}s_n^{n-1}s_{n+1}^nd^\times s=O(1).
  \end{equation}
This is expected since $V(\Z)(U_0)$ contains all but negligibly many distinguished orbits (see Proposition \ref{prop:redmainsmall}).

Let $U'_1$ denote $U_0\backslash U_1$, and define $I'_n(U'_1,X)$ to equal $I(U_1,X)$. Combining \eqref{eqlemmazero} with \eqref{eqlemmaone}, we obtain
$$I_n'(U'_1,X)=I(U_1,X)=X^{\frac{\#U'_1-n(n+1)}{d}}\int_{s_1,\ldots,s_n=c}^{X^{\frac{1}{d}}}\int_{s_{n+1}=c}^{X^{\frac{2}{d}}}\prod_{(i,j)\in U'_1}w(b_{ij})\cdot s_1s_2^3\cdots s_{n-1}^{2n-3}s_n^{n-1}s_{n+1}^nd^\times s.$$
%We prove that $I'(U'_1,X)=O_\epsilon(X^{-\frac{1}{d}+\epsilon})$ for
%any nonempty $U'_1\subsetneq U_0$ by induction on $n$.
Write
$U'_1=U'_2\cup U'_3$ where $U'_2$ is the set of coordinates $b_{1j}$
in $U'_1$ and $U'_3=U'_1\backslash U'_2$. Then we may express $I_n'(U'_1,X)$ as the following product:
\begin{equation*}
\Bigl(X^{\frac{\#U'_2-2n}{d}}\int\,\prod_{b_{1j}\in U'_2}w(b_{1j})\,s_1s_2^2\cdots s_{n-1}^2s_ns_{n+1}d^\times s\Bigr)
\Bigl(X^{\frac{\#U'_3-(n-1)n}{d}}\int\,\prod_{b_{ij}\in U'_3}w(b_{ij})\,s_2s_3^3\cdots s_{n-1}^{2n-5}s_n^{n-2}s_{n+1}^{n-1}d^\times s\Bigr).
\end{equation*}
Note that the second term in the above expression is equal to
$I'_{n-1}(\{b_{ij}:b_{i+1,j+1}\in U_3'\},X)$ (which we denote by
$I'_{n-1}(U_3',X)$) and we may estimate it using induction. Denote the
first term in the above expression by $J_n(U_2',X)$. A similar, but
much simpler, induction argument implies
\begin{equation}\label{lem:lem:combin}
  J_n(U'_2,X)=O(X^{-\frac{1}{d}}),
\end{equation}
unless $U'_2=\emptyset,$ in which
  case it is $O(1).$

  Therefore, if $U_2'$ is not empty, then the lemma follows by
  induction on $n$ (used to bound $I'_{n-1}(U_3',X)$ by $O(1)$).  If
  $U_2'$ is empty, then $U_3'$ must be nonempty since $U_1'$ is
  nonempty. If further $U'_3\neq
  U_0\backslash\{b_{11},\ldots,b_{1\,2n}\},$ then by induction, we
  have $I_{n-1}'(U'_3,X)=O_\epsilon(X^{-1/d+\epsilon}).$ The only
  remaining case is when $U_1=\{b_{11},\ldots,b_{1\,2n}\}$, for which
  a direct computation
  yields the result.
\end{proof}

This concludes the proof of Proposition \ref{prop:irredcuspsmall}.
\end{proof}
%\textbf{Proof of Lemma \ref{lem:lem:combin}:} The proof is again by
%induction on $n$. If $b_{11}\in U'_2,$ then we are done immediately as
%$w(b_{11})=s_1^{-2}\cdots s_{n-1}^{-2}s_n^{-1}s_{n+1}^{-1}.$ Suppose
%$b_{11}\notin U'_2$ from now on. The key observation for the inductive
%argument is that for any $b_{1j}\in U'_2,$ $s_1w(b{1j})$ has no $s_1$
%in it and appears as $w(b_{1\,j-1})$ if we start with $s_2.$

%If $b_{1\,2n-1}\in U'_2,$ we let $U''_2=U'_2\backslash\{b_{1\,2n-1}\}$ be the compliment. Since $w(b_{1\,2n-1})=s_1^{-1}s_2^{-1}$, we have
%\begin{eqnarray*}
%I''(U'_2,X)&=&X^{\frac{\#U''_2-(2n-1)}{d}}\int\,\prod_{b_{1j}\in U''_2}w(b_{1j})\cdot s_2s_3^2\cdots s_{n-1}^2s_ns_{n+1}d^\times s\\
%&\leq&X^{-\frac{1}{d}}\cdot X^{\frac{\#U''_2-(2n-2)}{d}}\int\,\prod_{b_{1j}\in U''_2}s_1w(b_{1j})\cdot s_2s_3^2\cdots s_{n-1}^2s_ns_{n+1}d^\times s\\
%&=&O(X^{-\frac{1}{d}}),\qquad\mbox{by induction on }n
%\end{eqnarray*}

%If $b_{1\,2n-1}\notin U'_2,$ then
%\begin{eqnarray*}
%I''(U'_2,X)&=&X^{-\frac{2}{d}}\int_{c'}^{X^{\frac{1}{d}}} s_2d^\times s_2\cdot X^{\frac{\#U'_2-(2n-2)}{d}}\int\,\prod_{b_{1j}\in U''_2}s_1w(b_{1j})\cdot s_2s_3^2\cdots s_{n-1}^2s_ns_{n+1}d^\times s\\
%&=&O(X^{-\frac{1}{d}})\cdot O(X^{-\frac{1}{d}}),\qquad\mbox{by induction on }n\\
%&=&O(X^{-\frac{2}{d}}).\hfill
%\end{eqnarray*}

We now have the following two propositions, whose proofs follow that
of \cite[Lemma 14]{dodpf}.
\begin{proposition}\label{prop:redmainsmall}
  Let $V(\Z)(\phi)^\red$ denote the set of elements in $V(\Z)$ with $b_{11}\neq 0$ that are not irreducible. Then
$$
\int_{G_0}\#\{V(\Z)(\phi)^\red\cap\FF g\cdot R^{(m)}(X)\}dg=o(X^{\frac{\dim V}{d}}).
$$
\end{proposition}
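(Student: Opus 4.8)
The plan is to bound the number of reducible integral points with $b_{11}\neq 0$ in the averaged fundamental domain, following the structure of \cite[Lemma 14]{dodpf} and the cusp-counting argument just carried out for Proposition \ref{prop:irredcuspsmall}. Recall that a reducible element $B\in V(\Z)^{(m)}$ with nonzero discriminant is exactly a distinguished one. By Proposition \ref{prop:distexplicit}, after $\PSO(U)(k)$-conjugation a distinguished element has $AM$ of the shape \eqref{eq:distinguished}, i.e.\ it vanishes on the block to the left of the off-anti-diagonal; in the $B=AT$ coordinates this says exactly that all coordinates $b_{ij}$ with $(i,j)\in U_0$ vanish (where $U_0=\{(i,j):i\le j,\ i+j\le 2n+1\}$ as in the previous proof). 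So the reducible locus inside $V(\Z)^{(m)}$ is contained in a union of $G(\Z)$-translates of the affine subspace $V(\Z)(U_0\text{ all }=0)$, together with the discriminant-zero locus which is negligible. The point is that this subspace has a large codimension (namely $\#U_0 = n(n+1)/2 + \text{(off-anti-diagonal terms)}$, in any case $\geq n$), so generically there should be far fewer than $X^{\dim V/d}$ such points.

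**First I would** set up the averaging exactly as in \eqref{eqavgtwo}, writing
$$\int_{G_0}\#\{V(\Z)(\phi)^\red\cap\FF g\cdot R^{(m)}(X)\}dg = O\Bigl(\int_{h\in\FF}\#\{hG_0R^{(m)}(X)\cap V(\Z)(\phi)^\red\}\,dh\Bigr),$$
and then, using $h=utk$, $K$-invariance of $G_0$, boundedness of $t^{-1}N't$, and the Haar measure \eqref{eqhaarmeasure}, reduce to an integral over $t\in T'$ of $\#\{tG_0R^{(m)}(X)\cap V(\Z)(\phi)^\red\}\,\delta(s)\,d^\times s$, exactly as in \eqref{eq:firstestimate}. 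Next I would bound the count of reducible integral points in the box $tG_0R^{(m)}(X)$. Since $b_{11}\neq 0$, the relevant coordinate is forced to have absolute value $\geq 1$, which (via its weight $w(b_{11})=s_1^{-2}\cdots s_{n-1}^{-2}s_n^{-1}s_{n+1}^{-1}$) confines $t$ to a bounded region of $T'$ — this is the same mechanism that cut off the cusp before, but now applied to the \emph{main-body} side, and it is precisely what makes the reducible count here small. Within that bounded range of $t$, an integral point of $tG_0R^{(m)}(X)$ that is reducible must, after $G(\Z)$-translation, lie on the subspace cutting out $U_0$; counting such points amounts to a product of one-variable estimates analogous to $I(U_1,X)$ and $J_n(U_2',X)$ in Lemma \ref{lem:combinatoric}, and the product picks up a saving of at least $X^{-1/d+\epsilon}$ from the codimension, so the whole integral is $o(X^{\dim V/d})$.

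**The main obstacle** I expect is handling the $G(\Z)$-translation carefully: Proposition \ref{prop:distexplicit} gives the distinguished shape only after conjugation by \emph{some} element of $G(\Z)$ (or even $G(\bbq)$), so to count reducible integral points in the fixed box $tG_0R^{(m)}(X)$ I need to control the translates of the relevant subspace that can intersect that box, exactly as in the treatment of the ``distinguished'' contribution in \cite[Lemma 14]{dodpf}. Concretely, one argues that for each $t$ in the bounded region, only boundedly many (or suitably few) $G(\Z)$-translates of $V(\Z)(U_0\text{-}\mathrm{vanishing})$ meet $tG_0R^{(m)}(X)$, reducing to the affine-subspace count above; this is the step where one must reuse the combinatorial/inductive machinery of Lemma \ref{lem:combinatoric} rather than invent something new. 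Everything else — the averaging, the reduction to $T'$, the application of Proposition \ref{davlem}, and the final summation of the weight integrals — is routine and parallels the cusp computation already completed, so I would keep those steps brief and refer to \cite[Lemma 14]{dodpf} for the translation bookkeeping.
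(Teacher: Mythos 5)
Your proposal takes a genuinely different route from the paper, and moreover the step you yourself flag as the ``main obstacle'' is a real gap, not just bookkeeping.

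The paper does not attempt any geometric characterization of the reducible locus inside the fundamental domain. Instead, in the combined proof of Propositions~\ref{prop:redmainsmall} and~\ref{prop:bigstabsmall}, it argues purely by local densities: it observes that if $B\in V(\Z)$ is reducible over $\Z$ then its image in $V(\F_p)$ is reducible for every prime $p$, so reducibility is cut out by infinitely many mod-$p$ congruence conditions. It then shows that for all sufficiently large $p$ the mod-$p$ density $\phi_p$ of reducible elements is $\le 1-\tfrac12 r_n<1$ (by exhibiting a positive proportion of $f\bmod p$ with a linear factor and a degree-$2n$ irreducible factor, using Lang's theorem to see every orbit is soluble, and counting $\#J(\F_p)/2J(\F_p)=\#J[2](\F_p)\ge 2$ orbits of which exactly one is distinguished). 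Hence $\prod_p\phi_p=0$, and the finitely-many-congruence-conditions version of the main-body count (Proposition~\ref{davlem} plus the structure already set up for Theorem~\ref{thmainzcount}) gives the claimed $o(X^{\dim V/d})$. No attempt is made to put reducible elements on a fixed thin subspace.

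Your approach tries instead to show that reducible points with $b_{11}\neq0$ lie on a union of $G(\Z)$- (really $G(\Q)$-) translates of the codimension-$\#U_0$ subspace $V(U_0=0)$, and to save a power of $X$ from that codimension. The gap is precisely what you identified: distinguishedness is a $G(\Q)$-invariant, so the reducible set is a union of \emph{many} translates of this subspace, and there is no a priori bound on how many of those translates meet a single box $tG_0R^{(m)}(X)$, nor is it clear that the translates are uniformly ``thin'' in the sense needed to apply Proposition~\ref{davlem}. Asserting that ``only boundedly many (or suitably few)'' translates intersect the box is exactly the claim that would need a real argument, and it is not the argument used in \cite[Lemma~14]{dodpf} (which is also a density argument). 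If you want to rescue the geometric approach, you would have to argue at the level of $G(\Z)$-orbits directly (not points in a box), using that a distinguished $G(\Z)$-orbit contains a representative of the form \eqref{eq:distinguished}, and then do a separate count of such representatives; but that is a different and substantially longer argument than what you sketched. The paper's local-density route is both shorter and sidesteps the translate-bookkeeping entirely.
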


\begin{proposition}\label{prop:bigstabsmall}
  Let $V(\Z)^{{\rm bigstab}}$ denote the set of elements in $V(\Z)$
  which have a nontrivial stabilizer in~$G(\Z)$. Then $$N(V(\Z)^{{\rm
      bigstab}};X)=o(X^{\frac{\dim V}{d}}).$$
\end{proposition}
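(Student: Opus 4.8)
The plan is to reduce the statement to a bound over \emph{reducible} characteristic polynomials and then dispatch it by geometry of numbers. First I would note that, by Proposition \ref{prop:geoorbit}, the stabilizer scheme $\Stab_G(T)$ is isomorphic to $(\Res_{L/\Q}\mu_2)_{N=1}/\mu_2$ for \emph{every} $T\in V_f(\Q)$ (not only for $T_f$), where $L=\Q[x]/f(x)$; in particular $\Stab_{G(\Q)}(B)\cong J_{\pi(B)}[2](\Q)$. When $f=\pi(B)$ is irreducible, $L$ is a field, so $\mu_2(L)=\{\pm1\}$ and $N(-1)=(-1)^{2n+2}=1$, whence $(\Res_{L/\Q}\mu_2)_{N=1}(\Q)=\{\pm1\}$ and, after quotienting by the central $\mu_2$, the stabilizer is trivial. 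Since $\Stab_{G(\Z)}(B)\subseteq\Stab_{G(\Q)}(B)$, an element $B\in V(\Z)$ with nontrivial integral stabilizer must therefore have $\pi(B)$ reducible over $\Q$ (see Proposition \ref{prop:poonenover2} and \cite[Lemma 11.2]{PoonenShaefer} for the sharper criterion, in terms of factors of even degree). So it suffices to bound the weighted number of $G(\Z)$-orbits of height less than $X$ lying over $\Q$-reducible invariants.

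The second step is to show that such invariants are sparse. If $H(f)<X$ then $|c_k|^{1/k}<X^{1/d}$ for all $k$, so every complex root of $f$ has absolute value $\ll X^{1/d}$; consequently, in any factorization $f=f_1f_2$ over $\Q$ with $f_1,f_2$ monic of degrees $k$ and $2n+2-k$, the $j$-th coefficient of each $f_i$ is $\ll_n X^{j/d}$. Counting coefficient vectors of $f_1$ and $f_2$ gives
\[
\#\{f : H(f)<X,\ f\text{ reducible over }\Q\}\ \ll_{n}\ \sum_{k=1}^{2n+1}X^{(\binom{k+1}{2}+\binom{2n+3-k}{2})/d}\ \ll_{n}\ X^{(\dim V-2n)/d},
\]
where the exponent is convex in $k$, hence maximized at $k=1$, at which $\binom{2}{2}+\binom{2n+2}{2}=1+(n+1)(2n+1)=\dim V-2n$.

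The third step bounds the number of orbits over a fixed invariant. For $f$ with $\Delta(f)\neq0$ put $R=\Z[x]/f(x)$; by the integral-orbit description (cf.\ Theorem \ref{thm:intorbits}) together with $G$ having class number one over $\Q$, the number of $G(\Z)$-orbits in $V_f(\Z)$ is $\ll_n \#(R^\times/(R^\times)^2)\cdot\#\Cl(R)[2]$. The first factor is $O_n(1)$ since $R^\times$ has rank at most $2n+1$, and the second is $X^{o(1)}$ since the $2$-rank of $\Cl(R)$ is $O(\omega(\Delta(f)))=o(\log X)$ while $|\Delta(f)|\ll_n X$. Combining the three steps,
\[
N(V(\Z)^{{\rm bigstab}};X)\ \le\!\!\sum_{\substack{H(f)<X\\ f\ \text{reducible}}}\!\!\#\bigl(G(\Z)\backslash V_f(\Z)\bigr)\ \ll_{n}\ X^{(\dim V-2n)/d+o(1)}=o\Bigl(X^{\frac{\dim V}{d}}\Bigr),
\]
using $n\geq2$, so that $2n\geq4>0$.

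The hard part is genuinely the first step: recognizing that $\Stab_G(T)$ is the constant group scheme $J_f[2]$ for arbitrary $T$, and extracting from this that a nontrivial \emph{integral} stabilizer forces $\Q$-reducibility of $f$; once that reduction is in hand, Steps 2 and 3 are routine, and the argument parallels \cite[Lemma 14]{dodpf}. A variant that stays inside the averaged fundamental-domain framework of \S\ref{sec:cusp} is also available — one bounds $\int_{\FF}\#\{hG_0R^{(m)}(X)\cap V(\Z)^{{\rm bigstab}}\}\,dh$ by the number of integral points in the box whose characteristic polynomial is reducible, handling the cusp exactly as in Lemma \ref{lem:combinatoric} — but the global count above is cleaner.
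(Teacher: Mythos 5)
Your proposal takes a genuinely different route from the paper: you attempt a direct geometry-of-numbers count of integral points with ``bad'' invariants, whereas the paper dispatches both this and Proposition \ref{prop:redmainsmall} by the local-density sieve method of \cite[Lemma 14]{dodpf} — showing that for all large $p$ a positive proportion $r'_n$ of factorization types over $\F_p$ force $\#J[2](\Q_p)=1$ (degree $1$ plus irreducible of degree $2n+1$), hence $\phi_p\le 1-r'_n<1$ and $\prod_p\phi_p=0$.

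However, Step 1 of your argument has a genuine gap that invalidates the reduction. You claim that $f$ irreducible over $\Q$ forces $\Stab_{G(\Q)}(B)=1$ by computing $(\Res_{L/\Q}\mu_2)_{N=1}(\Q)=\{\pm1\}$ and then ``quotienting by the central $\mu_2$.'' But $\Stab_G(T)\simeq J[2]$ as a \emph{group scheme}, and the $\Q$-points of the quotient scheme $(\Res_{L/\Q}\mu_2)_{N=1}/\mu_2$ are not the naive quotient of $\Q$-points. The exact sequence
\[
1\to\mu_2(\Q)\to(\Res_{L/\Q}\mu_2)_{N=1}(\Q)\to J[2](\Q)\to\ker\bigl(H^1(\Q,\mu_2)\to H^1(\Q,(\Res_{L/\Q}\mu_2)_{N=1})\bigr)\to 1
\]
shows that $J[2](\Q)$ can be nontrivial even when $L$ is a field: the last kernel is controlled by $(\Q^\times\cap L^{\times 2})/\Q^{\times 2}$, which is nontrivial precisely when $L$ contains a quadratic subfield $\Q(\sqrt d)$. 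Equivalently, in the $\{S,S^c\}$ description of $J[2]$, an irreducible $f$ whose splitting field has an index-$2$ subgroup $H\supset\Gal(\widetilde L/L)$ gives a Galois-stable pair $\{S,S^c\}$ with $|S|=n+1$ and hence a nontrivial $\Q$-rational $2$-torsion class. This is exactly the subtlety the paper handles with the extra $(R^{\times 2}\cap\Z_p^\times)/\Z_p^{\times 2}$ factor in \eqref{eq:stabZp} and Proposition \ref{prop:soluble=integral}, and exactly the reason the paper's sieve chooses the factorization type ``linear $\times$ irreducible of degree $2n+1$'' (so that the residue ring has no quadratic subextension). So the set of $B\in V(\Z)$ with nontrivial stabilizer is \emph{not} contained in the set with $\Q$-reducible characteristic polynomial, and Steps 2--3, as stated, do not cover the missing cases.

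The approach could in principle be repaired by replacing ``reducible over $\Q$'' with ``reducible over $\Q$, or $L$ contains a quadratic subfield,'' and then invoking an effective van der Waerden/Gallagher-type bound for the number of monic integer polynomials of bounded height whose Galois group is not $S_{2n+2}$, together with your per-invariant orbit bound. But that is substantial extra input not present in the proposal (and your per-$f$ orbit bound in Step 3, while plausible, is itself asserted rather than derived — Theorem \ref{thm:intorbits} is a local statement over $\Z_p$, and passing to a global count of $G(\Z)$-orbits requires more care). As written, the proof does not go through.
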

\begin{proof}
  Observe that if $B\in V(\Z)$ is reducible over $\Z$, then the image
  of $B$ in $V(\F_p)$ is reducible for all $p$. For any prime $p$, let
  $\phi_p$ denote the $p$-adic density of the set of elements of
  $V(\Z_p)$ that are reducible mod $p$. Then to prove
  Proposition \ref{prop:redmainsmall}, it suffices to show $$\prod_p \phi_p = 0.$$
  We show this by proving that $\phi_p$ is bounded above by some
  constant less than 1 when $p$ is large enough. For large enough $p$, there is a positive proportion $r_n$ (depending only on $n$) of polynomials of degree $2n+2$ over $\F_p$ that factors into two linear terms and an irreducible polynomial of degree $2n$. Suppose $f(x)\in \Z_p[x]$ with this reduction type over $\F_p$. Since it has a linear factor, Proposition \ref{prop:poonenover2} implies that there is one distinguished orbit. Since $H^1(\F_p,J)=0$ by Lang's theorem, every orbit is
  soluble. The number of orbits $\#J(\F_p)/2J(\F_p)$ is equal to the
  size of the stabilizer $\#J[2](\F_p).$ Since $f(x)$ has a degree two factor, $\#J[2](\F_p)\geq2.$ Therefore at least $1/2$ of the elements
  in $V_f(\F_p)$ are not distinguished. Hence for $p$ large enough,
$\phi_p \leq 1 - \frac{1}{2}r_n<1.$

% Proposition 4.7

We use the same technique to prove Proposition
\ref{prop:bigstabsmall}. For $p$ large enough, there is a positive
proportion $r'_n$ (depending only on $n$) of polynomials of degree
$2n+2$ over $\F_p$ that factors into a linear term and an irreducible
polynomial of degree $2n+1$. If $B\in V_f(\Z_p)$ where $f(x)$ has this
reduction type mod $p$, then $p$ does not divide the discriminant of
$f(x)$. As a consequence, the hyperelliptic curve $y^2=f(x)$ is smooth
over $\rm{Spec}(\Z_p)$ and the 2-torsion of its Jacobian $J[2]$ is a
finite \'{e}tale group scheme over $\rm{Spec}(\Z_p).$ From the
reduction type of $f(x)$ over $p$, we see that
$\#J[2](\Q_p)=\#J[2](\F_p) = 1.$ Denote by $\phi_p$ the $p$-adic
density of the set of elements of $V(\Z_p)$ with non-trivial
stabilizer in $G(\Q_p).$ Then we have shown that $\phi_p \leq 1 - r'_n
< 1$ for $p$ sufficiently large. This completes the proof.
\end{proof}

We may now prove the main result of this section, which we state again for the convenience of the reader.

\begin{theorem}%\label{thmainzcount}
$$N(V(\Z)^{(m)};X) = \frac{1}{\# J^{(m)}[2](\R)}\Vol(\FF\cdot R^{(m)}(X)) + o(X^{\frac{\dim V}{d}}).$$
%In \S4.5, we show that $\Vol(\FF\cdot R^{(m)}(X))$ grows on the order of $X^{\dim V/d}.$
\end{theorem}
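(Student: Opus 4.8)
The plan is to start from the averaged identity \eqref{eqavgtwo}, which rewrites $\#J^{(m)}[2](\R)\cdot\Vol(G_0)\cdot N(V(\Z)^{(m)};X)$ as $\int_{h\in\FF}\#\{hG_0R^{(m)}(X)\cap V(\Z)^{(m),\irr}\}\,dh$, the point of this form being that for each fixed $h\in\FF$ the region $hG_0R^{(m)}(X)$ is bounded. I would then decompose $V(\R)$ into the main body $\{|b_{11}|\ge1\}$ and the cusp $\{|b_{11}|<1\}$. Since $b_{11}$ is an integer, an integral point in the cusp has $b_{11}=0$, so Proposition \ref{prop:irredcuspsmall} shows that the irreducible integral points of $V(\Z)^{(m)}$ lying in the cusp contribute only $O_\epsilon(X^{(\dim V-1)/d+\epsilon})=o(X^{\dim V/d})$ to $N(V(\Z)^{(m)};X)$. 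Thus it remains to count, averaged over $h\in\FF$, the irreducible integral points of $hG_0R^{(m)}(X)$ with $|b_{11}|\ge1$.

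Next I would apply Davenport's estimate (Proposition \ref{davlem}) to the bounded semialgebraic set $hG_0R^{(m)}(X)\cap\{|b_{11}|\ge1\}$, for each $h=utk\in\FF$ with $u\in N'$, $t\in T'$, $k\in K$: its number of integral points equals its volume plus an error bounded by the volumes of its lower-dimensional coordinate projections. Using that $t^{-1}N't$ and the $K$-invariant set $G_0$ are bounded independently of $t$, this region is $t$ applied to a bounded set with coordinates of size $O(X^{1/d})$, rescaled coordinate-by-coordinate by the weights $w(b_{ij})$. The restriction $|b_{11}|\ge1$ forces $w(b_{11})\gg X^{-1/d}$, i.e.\ it confines $t$ to the sub-cone of $T'$ on which $s_1^2\cdots s_{n-1}^2s_ns_{n+1}\ll X^{1/d}$; on that sub-cone one checks, by a combinatorial integral estimate of the same flavour as Lemma \ref{lem:combinatoric} (this is precisely the computation underlying \cite[Theorem 2.5]{BS}), that the Davenport error integrates against $\delta(s)\,d^\times s$ over $u\in N'$ and $t$ to $o(X^{\dim V/d})$. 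The leading terms, after integrating over $h\in\FF$ and invoking the Fubini-type identity that produced \eqref{eqavgtwo}, sum to $\Vol(G_0)\cdot\Vol\bigl(\FF\cdot R^{(m)}(X)\cap\{|b_{11}|\ge1\}\bigr)$; and the volume of the cusp piece $\FF\cdot R^{(m)}(X)\cap\{|b_{11}|<1\}$ is itself $o(X^{\dim V/d})$, since as $X$ grows the relevant part of $T'$ is pushed into the tail of the convergent integral $\int_{T'}\delta(s)\,d^\times s$, so the truncated volume may be replaced by $\Vol(\FF\cdot R^{(m)}(X))$.

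Finally I would clean up the two discrepancies between the raw main-body lattice-point count and $\tfrac{1}{\#J^{(m)}[2](\R)}\Vol(\FF\cdot R^{(m)}(X))$. First, to pass from all integral points of the main body to the irreducible ones, Proposition \ref{prop:redmainsmall} shows that the reducible integral points with $b_{11}\neq0$---the distinguished orbits, the discriminant-zero locus being negligible in any case---contribute $o(X^{\dim V/d})$. Second, the multiset $\FF h\cdot R^{(m)}$ covers a fundamental domain for $G(\Z)$ on $V(\R)^{(m)}$ with multiplicity exactly $\#J^{(m)}[2](\R)$ away from the orbits with nontrivial $G(\Z)$-stabilizer, and Proposition \ref{prop:bigstabsmall} shows those are $o(X^{\dim V/d})$, so dividing the main-body count by $\#J^{(m)}[2](\R)$ is legitimate up to the allowed error. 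Assembling these pieces gives $N(V(\Z)^{(m)};X)=\tfrac{1}{\#J^{(m)}[2](\R)}\Vol(\FF\cdot R^{(m)}(X))+o(X^{\dim V/d})$. The main obstacle is the estimation in the middle paragraph: verifying that the Davenport error on the main body, after the $T'$-rescaling and integration, really does stay of strictly lower order than $X^{\dim V/d}$. This is the only step requiring genuine estimation rather than formal manipulation or an appeal to the earlier propositions, and it is the counterpart, in the non-compact direction of $\FF$, of the bookkeeping already carried out for the cusp in Lemma \ref{lem:combinatoric}.
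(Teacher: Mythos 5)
Your proposal is correct and follows essentially the same route as the paper's proof: cut by $b_{11}$, dispose of the cusp using Proposition~\ref{prop:irredcuspsmall}, dispose of the reducibles in the main body using Proposition~\ref{prop:redmainsmall}, apply Davenport (Proposition~\ref{davlem}) to the remaining bounded region, and show the accumulated error and the volume of the discarded cusp piece are both $o(X^{\dim V/d})$.

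Two small remarks. First, your cut is made on the $V(\R)$ variable (the slab $\{|b_{11}|\ge 1\}$), while the paper cuts on the $h\in\FF$ variable: it defines $\FF'\subset\FF$ to be the set of $h$ for which every point of $hG_0R^{(m)}(X)$ has $|b_{11}|<1$, applies Davenport to the whole box $hG_0R^{(m)}(X)$ for $h\in\FF\setminus\FF'$, and observes that $b_{11}$ has minimal weight so that \emph{all} lower projections are dominated by the $b_{11}=0$ projection, giving the clean error $O\bigl(\Vol(hG_0R^{(m)}(X))/(X^{1/d}w(b_{11}))\bigr)$, which integrates to $O(X^{(\dim V-1)/d})$ since $\int_{\FF\setminus\FF'}w(b_{11})^{-1}dh = O(1)$. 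Your slab cut introduces an extra boundary but is equivalent in effect: nonemptiness of $hG_0R^{(m)}(X)\cap\{|b_{11}|\ge1\}$ forces $h\in\FF\setminus\FF'$ up to a constant, so your ``combinatorial integral estimate'' reduces to exactly that $O(1)$ integral. You should, however, make the minimal-weight observation explicit, since it is the one fact that reduces Davenport's error to a single controllable term.

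Second, the invocation of Proposition~\ref{prop:bigstabsmall} at the end is unnecessary (and hints at a slight confusion). The quantity $N(V(\Z)^{(m)};X)$ is \emph{defined} to weight each orbit $G(\Z)\cdot B$ by $1/\#\Stab_{G(\Z)}(B)$, so the covering multiplicity $\#J^{(m)}[2](\R)/\#\Stab_{G(\Z)}(B)$ of the multiset $\FF h\cdot R^{(m)}$ divides out exactly; identity~\eqref{eqavgone} is already exact, with no error term attributable to nontrivial stabilizers. Proposition~\ref{prop:bigstabsmall} is used later, in Theorem~\ref{thmisimp}, to pass from the weighted count $N$ to an honest count of Selmer elements---not here.
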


%\vspace{0.1in}
%\noindent{\bf Proof of Theorem \ref{thmainzcount}:}
\begin{proof}
Let $\FF'\subset \FF$ be the set consisting of $h\in\FF$ such that the
$b_{11}$-coefficient of any $B\in hG_0R^{(m)}(X)$ is less than 1 in
absolute value. From \eqref{eqavgtwo}, we see that $N(V(\Z)^{(m)};X)$ is equal to
\begin{equation*}
  \begin{array}{rcl}
&&\displaystyle\frac{1}{\#J^{(m)}[2](\R)\Vol(G_0)}\displaystyle\int_{h\in\FF}\#\{hG_0R^{(m)}(X)\cap V(\Z)^\irr\}dh\\[0.2in]
&=&\displaystyle\frac{1}{\#J^{(m)}[2](\R)\Vol(G_0)}\left(\displaystyle\int_{h\in\FF\backslash\FF'}\#\{hG_0R^{(m)}(X)\cap V(\Z)^\irr\}dh+\displaystyle\int_{h\in\FF'}\#\{hG_0R^{(m)}(X)\cap V(\Z)^\irr\}dh\right).
  \end{array}
\end{equation*}
From Propositions \ref{prop:irredcuspsmall} and
\ref{prop:redmainsmall}, we obtain:
\begin{equation}
  N(V(\Z)^{(m)};X)=\frac{1}{\#J^{(m)}[2](\R)\Vol(G_0)}\displaystyle\int_{h\in\FF\backslash\FF'}\#\{hG_0R^{(m)}(X)\cap V(\Z)\}dh+o(X^{\frac{\dim V}{d}}).
\end{equation}

Note that $b_{11}$ has minimal weight among all the $b_{ij}$. Furthermore,
the length of the projection of $hG_0R^{(m)}(X)$ onto the
$b_{11}$-line is greater than $1$ for $h\in\FF\backslash\FF'$ (by the
definition of $\FF'$). Therefore, for $h\in\FF\backslash\FF'$, all
smaller dimensional projections of $hG_0R^{(m)}(X)$ are bounded by a
constant times its projection onto the $b_{11}=0$
hyperplane. Proposition \ref{davlem} thus implies that
$$
N(V(\Z)^{(m)};X)=\frac{1}{\#J^{(m)}[2](\R)\Vol(G_0)}\displaystyle\int_{h\in\FF\backslash\FF'}\Vol(hG_0R^{(m)}(X))+O\Bigl(\frac{\Vol(hG_0R^{(m)}(X)}{X^{1/d}w(b_{11})}\Bigr)dh+o(X^{\frac{\dim V}{d}}).
$$
Recall $\FF'$ is defined by the condition $CX^{\frac{1}{d}}w(b_{11}) < 1$. Therefore to be in $\FF'$, one of the $s_i$ must be at least $C^{\frac{1}{2n}}X^{\frac{1}{2nd}}.$ Hence the volume of $\FF'$ is bounded by $o(1)$. Moreover, since $\int_{h\in\FF\backslash\FF'}1/w(b_{11})dh=O(1)$, we obtain
\begin{equation}
  \begin{array}{rcl}
    N(V(\Z)^{(m)};X)&=&\displaystyle\frac{1}{\#J^{(m)}[2](\R)\Vol(G_0)}\displaystyle\int_{h\in\FF}\Vol(hG_0R^{(m)}(X))dh+o(X^{\frac{\dim V}{d}})\\[0.2in]
&=&\displaystyle\frac{1}{\#J^{(m)}[2](\R)\Vol(G_0)}\displaystyle\int_{h\in G_0}\Vol(\FF h\cdot R^{(m)}(X))dh+o(X^{\frac{\dim V}{d}})\\[0.2in]
&=&\displaystyle\frac{\Vol(\FF \cdot R^{(m)}(X))}{\#J^{(m)}[2](\R)\Vol(G_0)}\displaystyle\int_{h\in G_0}dh+o(X^{\frac{\dim V}{d}})\\[0.2in]
&=&\displaystyle\frac{\Vol(\FF \cdot R^{(m)}(X))}{\#J^{(m)}[2](\R)}+o(X^{\frac{\dim V}{d}}),
  \end{array}
\end{equation}
where %the second equality follows from Proposition \ref{propjacone} and
the third equality follows because the volume of $\FF h\cdot
R^{(m)}(X)$ is independent of $h$. This concludes the proof of Theorem
\ref{thmainzcount}.
\end{proof}

\subsection{A squarefree sieve}\label{sieve}
In this section, we present versions of Theorem \ref{thmainzcount},
where we count elements (and weighted elements) of $V(\Z)$ satisfying
certain sets of congruence conditions.
\begin{theorem}
  Let $L$ be a subset of $V(\Z)$ defined by finitely many congruence
  conditions on the coefficients of elements in $V(\Z)$. Then
  \begin{equation*}
    N(L\cap V(\Z)^{(m)};X)=N(V(\Z)^{(m)};X)\prod_p\nu_p(L)+o(X^{\frac{\dim V}{d}}),
  \end{equation*}
  where $\nu_p(L)$ denotes the $p$-adic density of $L$ in $V(\Z)$ and
  is equal to $1$ for all but finitely many primes $p$.
\end{theorem}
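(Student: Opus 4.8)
The plan is to reduce to a single congruence class and then re-run the lattice-point count of Sections~\ref{sec:fundomain}--\ref{sec:cusp} with $V(\Z)$ replaced by a finite-index sublattice. Since $L$ is cut out by finitely many congruence conditions, there is a positive integer $m$ such that $L\cap V(\Z)^{(m)}$ is a disjoint union $\bigsqcup_{j=1}^{r}\bigl((v_j+mV(\Z))\cap V(\Z)^{(m)}\bigr)$ of cosets of $mV(\Z)$. The quantity $N(\,\cdot\,;X)$ was defined for arbitrary subsets by the averaged expression \eqref{eqavgone}--\eqref{eqavgtwo}, in which form it is manifestly additive over disjoint unions, so it suffices to show, for each fixed translate, that
\begin{equation*}
N\bigl((v+mV(\Z))\cap V(\Z)^{(m)};X\bigr)=\frac{1}{m^{\dim V}}\,N(V(\Z)^{(m)};X)+o\bigl(X^{\frac{\dim V}{d}}\bigr).
\end{equation*}
Summing over $j$ and observing that $r/m^{\dim V}$ equals the density $\nu(L)$, which by the Chinese Remainder Theorem factors as $\prod_p\nu_p(L)$ with $\nu_p(L)=1$ for every $p\nmid m$, then gives the theorem. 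Note that because only finitely many congruence conditions are imposed, no uniformity estimate in the spirit of \cite{geosieve} is needed here; such estimates enter only later, when one sieves down to the infinite set of locally soluble orbits.

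For a fixed coset $v+mV(\Z)$ the entire apparatus of Section~\ref{sec:cusp} carries over almost verbatim: the fundamental set $R^{(m)}$, the Borel fundamental domain $\FF$, and the averaging identities \eqref{eqavgone}--\eqref{eqavgtwo} (now for the subset $(v+mV(\Z))\cap V(\Z)^{(m)}$) are exactly as before. The single new bookkeeping point is that when Davenport's estimate (Proposition~\ref{davlem}) is applied to count points of the translated lattice $v+mV(\Z)$ inside a bounded semialgebraic region $\mathcal R$, one first writes $v+mV(\Z)$ as the image of $\Z^{\dim V}$ under an affine-linear map of bounded complexity; this replaces the main term $\Vol(\mathcal R)$ by $\Vol(\mathcal R)/m^{\dim V}$ and leaves the error a sum of lower-dimensional projection volumes, with an implied constant now permitted to depend on $m$ (but not on $X$). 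Propagating this factor $m^{-\dim V}$ through the proof of Theorem~\ref{thmainzcount} yields the displayed asymptotic.

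It therefore remains to check that each error estimate entering the proof of Theorem~\ref{thmainzcount} survives. The cusp bound of Proposition~\ref{prop:irredcuspsmall} (hence Lemma~\ref{lem:combinatoric}) and the big-stabilizer bound of Proposition~\ref{prop:bigstabsmall} are obtained purely from volume and projection estimates for subsets of $V(\R)$ and are therefore insensitive to passing to a finite-index sublattice; they continue to hold with the same exponents, the implied constant enlarged by at most a factor of $r$. For Proposition~\ref{prop:redmainsmall}, whose proof reduces to showing $\prod_p\phi_p=0$ for the $p$-adic densities $\phi_p$ of reducible-mod-$p$ elements, one notes that imposing the extra congruence condition only at the finitely many primes dividing $m$ alters those finitely many densities by a bounded factor and leaves all other $\phi_p$ unchanged, so the product still vanishes. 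I expect no genuine difficulty here: the main, and essentially only, obstacle is the disciplined tracking of the covolume factor $m^{-\dim V}$ together with the verification that every $o(X^{\dim V/d})$ term retains an implied constant independent of $X$ after intersecting with $L$ --- precisely the routine adaptation carried out in the analogous sieve steps of \cite{BS} and \cite{BG3}.
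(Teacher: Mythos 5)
Your proposal is correct and follows the same route that the paper (and its cited analogue, \cite[Theorem 2.11]{BS}) indicates: decompose $L$ into cosets of a sublattice, use the additivity of the averaged counting integral \eqref{eqavgone}--\eqref{eqavgtwo}, re-run the Davenport-style lattice-point count with the sublattice in place of $V(\Z)$ to pick up the covolume factor, and note that the cusp, reducibility, and big-stabilizer error bounds are preserved since passing to a coset of a finite-index sublattice can only decrease each relevant point count while the volume and projection bounds are translation-invariant. The paper itself leaves all of this implicit, saying only that the statement follows immediately from the proof of Theorem~\ref{thmainzcount}, so your write-up is a faithful expansion rather than a different argument.

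One small caution on notation: you reuse the symbol $m$ for the common modulus of the congruence conditions, but the paper already uses $m$ in $V(\Z)^{(m)}$ to index the number of pairs of complex conjugate roots. You should rename the modulus (say $M$) to avoid writing $N\bigl((v+mV(\Z))\cap V(\Z)^{(m)};X\bigr)$ with two unrelated meanings of $m$ in the same expression. Also, your sentence about Proposition~\ref{prop:redmainsmall} slightly overcomplicates matters: since $v+MV(\Z)\subset V(\Z)$, the number of reducible points of the coset in any region is at most the number of reducible points of $V(\Z)$, so the $o(X^{\dim V/d})$ bound of Proposition~\ref{prop:redmainsmall} applies directly as an upper bound without needing to re-examine the local densities $\phi_p$; the density discussion is fine but unnecessary for what is claimed.
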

This theorem follows immediately from the proof of Theorem
\ref{thmainzcount}. (See \cite[Theorem 2.11]{BS} for an analogous situation.)

The following weighted version of Theorem \ref{thmainzcount} also follows immediately:
\begin{theorem}\label{cong3}
  Let $p_1,\ldots,p_k$ be distinct prime numbers. For $j=1,\ldots,k$,
  let $\phi_{p_j}:V(\Z)\to\R$ be a $G(\Z)$-invariant function on
  $V(\Z)$ such that $\phi_{p_j}(B)$ depends only on the congruence
  class of $B$ modulo some power $p_j^{a_j}$ of $p_j$.  Let
  $N_\phi(V^{(m)}(\Z);X)$ denote the number of irreducible
  $G(\Z)$-orbits of $V^{(m)}(\Z)$ having height bounded by $X$, where
  each orbit $G(\Z)\cdot B$ is counted with weight
  $\phi(B)/\#\Stab_{G(\Z)}(B)$; here $\phi$ is defined by
  $\phi(B):=\prod_{j=1}^k\phi_{p_j}(B)$. Then we have
\begin{equation}
N_\phi(V^{(m)}(\Z);X)
  = N(V^{(m)}(\Z);X)
  \prod_{j=1}^k \int_{B\in V(\Z_{p_j})}\tilde{\phi}_{p_j}(B)\,dB+o(X^{\frac{\dim V}{d}}),
\end{equation}
where $\tilde{\phi}_{p_j}$ is the natural extension of ${\phi}_{p_j}$
to $V(\Z_{p_j})$, $dB$ denotes the additive
measure on $V(\Z_{p_j})$ normalized so that $\int_{B\in
  V(\Z_{p_j})}dB=1$, and where the implied constant in the error term
depends only on the local weight functions ${\phi}_{p_j}$.
\end{theorem}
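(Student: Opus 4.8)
The plan is to reduce the claim to a finite linear combination of the congruence counts furnished by the preceding refinement of Theorem~\ref{thmainzcount}, and then to repackage the resulting finite average as the asserted product of local integrals; no analytic input beyond that theorem is needed.

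First I would set $M:=\prod_{j=1}^{k}p_j^{a_j}$ and observe that $\phi=\prod_{j=1}^{k}\phi_{p_j}$ depends only on the reduction of $B$ modulo $M$, since each $\phi_{p_j}$ depends only on $B\bmod p_j^{a_j}$. Hence $\phi$ is constant on each residue class modulo $M$: writing $L_{B_0}:=\{B\in V(\Z):B\equiv B_0\pmod{M}\}$ for $B_0\in V(\Z/M\Z)$, we have $\phi=\sum_{B_0\in V(\Z/M\Z)}\phi(B_0)\,\mathbf{1}_{L_{B_0}}$. Because $\phi$ is $G(\Z)$-invariant, its value is constant on each $G(\Z)$-orbit, so inside the averaging integral \eqref{eqavgone} — which also defines $N(S;X)$ for sets $S$ that are not $G(\Z)$-stable — the weight $\phi(B)$ factors out of the lattice-point count; splitting $\phi$ as above then gives
$$N_\phi(V^{(m)}(\Z);X)=\sum_{B_0\in V(\Z/M\Z)}\phi(B_0)\,N\bigl(L_{B_0}\cap V(\Z)^{(m)};X\bigr).$$

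Next I would apply the congruence form of Theorem~\ref{thmainzcount} proved just above to each $L_{B_0}$, which is cut out of $V(\Z)$ by a single congruence modulo $M$. Its $p$-adic density equals $p_j^{-a_j\dim V}$ at $p=p_j$ and $1$ at all other $p$, so $\prod_p\nu_p(L_{B_0})=M^{-\dim V}$, and therefore
$$N\bigl(L_{B_0}\cap V(\Z)^{(m)};X\bigr)=M^{-\dim V}\,N(V^{(m)}(\Z);X)+o\!\left(X^{\frac{\dim V}{d}}\right)$$
with absolute implied constant. Summing over the $M^{\dim V}$ residue classes $B_0$ (a count depending only on the $\phi_{p_j}$) and using that the coefficients $\phi(B_0)$ are bounded in terms of the $\phi_{p_j}$, the error terms coalesce into one $o(X^{\dim V/d})$ whose implied constant depends only on the local weight functions, whence
$$N_\phi(V^{(m)}(\Z);X)=N(V^{(m)}(\Z);X)\Bigl(M^{-\dim V}\sum_{B_0\in V(\Z/M\Z)}\phi(B_0)\Bigr)+o\!\left(X^{\frac{\dim V}{d}}\right).$$
To finish, I would recognise the bracketed average as an integral over $\Z_M:=\prod_{j=1}^{k}\Z_{p_j}$, equipped with the product of the normalised measures $dB$ on the $V(\Z_{p_j})$: this is a probability measure assigning mass $M^{-\dim V}$ to each residue class modulo $M$, and since $\tilde\phi=\prod_j\tilde\phi_{p_j}$ is constant on such classes, $M^{-\dim V}\sum_{B_0}\phi(B_0)=\int_{V(\Z_M)}\tilde\phi(B)\,dB=\prod_{j=1}^{k}\int_{V(\Z_{p_j})}\tilde\phi_{p_j}(B)\,dB$, the last equality being Fubini for the product measure together with the factorisation of $\tilde\phi$. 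Combining the last two displays yields the theorem.

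The argument is essentially bookkeeping, and I do not expect a genuine obstacle. The one point worth stating carefully is that the number of residue classes modulo $M$ is bounded independently of $X$, so that summing finitely many error terms preserves the bound $o(X^{\dim V/d})$; relatedly, no uniformity estimate is needed here because only finitely many congruence conditions are imposed, in contrast with the infinite sieve carried out in the next section.
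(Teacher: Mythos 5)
Your proof is correct and is essentially the argument the paper has in mind: the paper simply states that Theorem~\ref{cong3} ``follows immediately'' from (the congruence refinement of) Theorem~\ref{thmainzcount}, and what you have written is exactly the bookkeeping that fills in that claim---linearize in $\phi$ over the finitely many residue classes modulo $M=\prod_j p_j^{a_j}$, apply the congruence theorem to each (using the extended definition of $N(S;X)$ from \eqref{eqavgone} for the non-$G(\Z)$-invariant sets $L_{B_0}$, justified by the $G(\Z)$-invariance of $\phi$), sum the finitely many $o(X^{\dim V/d})$ errors, and identify the resulting finite average with the product of normalized $p$-adic integrals. No gaps.
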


However, in order to prove Theorem \ref{thm:largegood}, we shall need a version of
Theorem \ref{cong3} in which we allow weights to be defined by certain
infinite sets of congruence conditions. The technique for proving such
a result involves using Theorem \ref{cong3} to impose more and more
congruence conditions. While doing so, we need to uniformly bound the
error term. To this end, we have the following proposition proven in
\cite{geosieve}.
\begin{proposition}\label{propunif}
  For each prime $p$, let $W_p$ denote the set of elements $B\in
  V(\Z)$ such that $p^2\mid \Delta(B)$. Then there exists
  $\delta>0$ such that, for any $M>0$, we have
$$
\displaystyle\sum_{p>M}N(W_p;X)=O(X^{\frac{\dim V}{d}}/M^\delta),
$$
where the implied constant is independent of $X$ and $M$.
\end{proposition}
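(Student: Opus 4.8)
The plan is to recognise this as an instance of the geometric sieve of Bhargava \cite{geosieve}, and to isolate the one geometric input that has to be supplied for the pair $(G,V)$. Since $\Delta(B)$ is the discriminant of the characteristic polynomial $\pi(B)$, it is the pullback to $V$ of a single nonzero squarefree polynomial $\Delta$ on $S$. First I would set $Y\subset V_{\Z}$ to be the reduced subscheme supported on the singular locus of the hypersurface $\{\Delta=0\}\subset V$; the geometric fact to check is that $Y$ has codimension at least $2$ in $V$, which holds because $\Delta$ is squarefree and $\pi$ is generically smooth along $\{\Delta=0\}$, each of which can be read off from the description of $\pi$ and of the fibres $V_f$ in Section \ref{sec:orbitparameterization}. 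For each $p$ I would then split $W_p=W_p^{(1)}\sqcup W_p^{(2)}$, where
$$W_p^{(2)}=\{B\in W_p:\ B\bmod p\in Y(\F_p)\},\qquad W_p^{(1)}=W_p\setminus W_p^{(2)},$$
and bound $\sum_{p>M}N(W_p^{(i)};X)$ for $i=1,2$ separately. In each case, after the averaging of Section \ref{sec:cusp}, one is reduced to counting integral points of $\FF h\cdot R^{(m)}(X)$ subject to a congruence condition modulo $p$ or $p^2$, with reducible contributions (those with $\Delta=0$ or distinguished) discarded.

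For $W_p^{(2)}$, the reduction $\bar B$ lies in the codimension-$\geq 2$ subscheme $Y$; on the main body $\{|b_{11}|\geq 1\}$, where the region is essentially homogeneously expanding, this congruence saves a factor $\asymp p^{-2}$, while deep in the cusp the low-weight coordinates of $B$ are already pinned to $0$, so by Proposition \ref{prop:distexplicit} such $B$ are distinguished, hence reducible, and contribute nothing. Carrying the extra congruence through the combinatorial weight bookkeeping of Lemma \ref{lem:combinatoric} should then give, uniformly for $p$ up to a fixed power of $X$, an estimate $N(W_p^{(2)};X)=O(X^{\dim V/d}/p^2)$ up to acceptable error, together with a crude bound for larger $p$; summing over $p>M$ yields $\sum_{p>M}N(W_p^{(2)};X)=O(X^{\dim V/d}/M)$.

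For $W_p^{(1)}$, the reduction $\bar B$ is a smooth point of $\{\Delta\equiv 0\}\subset V_{\F_p}$, so $p^2\mid\Delta(B)$ is a single congruence on $B$ modulo $p^2$ transverse to that hypersurface, of $p$-adic density $O(p^{-2})$. For a fixed $p$ one can read the count off from Theorem \ref{cong3}, but the resulting error depends on $p$ and is too lossy to survive summation over $p$ up to $\asymp X^{1/(2d)}$, and a term-by-term use of Proposition \ref{davlem} fails for the same reason. Here I would instead run the Ekedahl-type argument of \cite{geosieve}: picking a coordinate $b_i$ along which $\Delta$ is non-tangent at $\bar B$, the condition $p^2\mid\Delta(B)$ pins $b_i$ modulo $p^2$ in terms of the other coordinates, reducing the count to a sum over the points of a hypersurface in one fewer variable and producing a power-saving error; the cusp is handled as before. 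This should give $\sum_{p>M}N(W_p^{(1)};X)=O(X^{\dim V/d}/M^{\delta_1})$ for some $\delta_1>0$, and combining the two estimates proves the Proposition with $\delta=\min\{1,\delta_1\}$ and implied constant depending only on $V$.

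The hard part is uniformity in $p$: one must show that the mod-$p$ and mod-$p^2$ conditions cutting out $W_p^{(2)}$ and $W_p^{(1)}$ genuinely remove the expected density \emph{inside every stratum} of the cusp decomposition indexed by the torus $T'$, not merely on the main body, and that the accumulated error over all $p>M$ still decays as a power of $M$. This is exactly what \cite{geosieve} establishes for coregular representations of the present type, so in practice the proof amounts to checking that $(G,V)$ meets its hypotheses, the essential one being the codimension bound on $Y$ noted above.
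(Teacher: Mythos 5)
The paper does not prove this proposition; it simply cites \cite{geosieve} for it (``we have the following proposition proven in \cite{geosieve}''). Your proposal correctly identifies the geometric sieve as the relevant tool and gives a faithful outline of its strategy -- splitting on whether $B\bmod p$ lies in the singular or smooth locus of $\{\Delta=0\}$, using the codimension-$\geq 2$ bound for the singular part and an Ekedahl-type pinning argument for the smooth part, with care about uniformity in $p$ throughout the cusp -- so it takes essentially the same approach as the paper, just with more of the cited reference's internals spelled out.
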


To describe which weight functions on $V(\Z)$ are allowed, we need the
following definition:
\begin{defn}
  A function $\phi:V(\Z)\to[0,1]$ is said to be defined by congruence
  conditions if there exist local functions $\phi_p:V(\Z_p)\to[0,1]$ satisfying the following conditions:
  \begin{enumerate}
  \item For all $B\in V(\Z)$, the product $\prod_p\phi_p(B)$ converges to $\phi(B)$.
  \item For each prime $p$, the function $\phi_p$ is locally constant
    outside some closed set $S_p$ of measure $0$.
  \end{enumerate}
  Such a function is said to be acceptable if, for all sufficiently
  large $p$, we have $\phi_p(B)=1$ whenever $p^2\nmid \Delta(B)$.
\end{defn}

Then we have the following theorem.
\begin{theorem}\label{thinfcong}
  Let $\phi:V(\Z)\to[0,1]$ be an acceptable function that is defined
  by congruence conditions via local functions
  $\phi_p:V(\Z_p)\to[0,1]$. Then, with notation as in Theorem
  \ref{cong3}, we have
$$
N_{\phi}(V^{(m)}(\Z);X)=N(V^{(m)};X)\prod_p\int_{B\in V(\Z_p)}\phi_p(B)dB+o(X^{\frac{\dim V}{d}}).
$$
\end{theorem}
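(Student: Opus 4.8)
The plan is to follow the standard technique (cf.\ \cite{BS}) for upgrading a statement about finitely many congruence conditions to one about infinitely many, using the uniformity estimate of Proposition \ref{propunif} to control the tail. First I would fix a cutoff $M$ and set $\phi^{(M)}:=\prod_{p\le M}\phi_p$; this is almost a function depending only on the reduction of $B$ modulo a fixed integer, the only caveat being that each $\phi_p$ is merely locally constant away from a measure-zero set $S_p$. To deal with this I would, for each $p\le M$, squeeze $\phi_p$ between two genuinely locally constant $G(\Z_p)$-invariant functions $\phi_p^{-}\le\phi_p\le\phi_p^{+}$ agreeing with $\phi_p$ outside a small neighbourhood of $S_p$ and satisfying $\int_{V(\Z_p)}(\phi_p^{+}-\phi_p^{-})\,dB<\varepsilon$, which is possible since $S_p$ has measure $0$. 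Applying Theorem \ref{cong3} to the finite products $\prod_{p\le M}\phi_p^{\pm}$ and letting $\varepsilon\to 0$ (all the local integrals are at most $1$, so by a telescoping estimate the two outer bounds differ by at most $O(M\varepsilon)$ times the main term) yields
\[
N_{\phi^{(M)}}(V^{(m)}(\Z);X)=N(V^{(m)}(\Z);X)\prod_{p\le M}\int_{V(\Z_p)}\phi_p(B)\,dB+o_M(X^{\dim V/d}),
\]
where the implied constant is allowed to depend on $M$.

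The key step is to bound the discrepancy between $N_{\phi^{(M)}}$ and $N_\phi$. Pointwise $0\le\phi\le\phi^{(M)}\le 1$, and $\phi^{(M)}(B)-\phi(B)=\phi^{(M)}(B)\bigl(1-\prod_{p>M}\phi_p(B)\bigr)$. Since $\phi$ is acceptable there is a finite set of primes outside of which $\phi_p(B)=1$ whenever $p^2\nmid\Delta(B)$; taking $M$ larger than all of these, the factor $1-\prod_{p>M}\phi_p(B)$ is nonzero only when $p^2\mid\Delta(B)$ for some $p>M$, i.e.\ only when $B\in\bigcup_{p>M}W_p$. As this difference of weights is bounded by $1$ on each orbit, its contribution to the count is at most $\sum_{p>M}N(W_p;X)$, which by Proposition \ref{propunif} is $O(X^{\dim V/d}/M^{\delta})$ with implied constant independent of $M$ and $X$. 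Hence
\[
\bigl|N_\phi(V^{(m)}(\Z);X)-N_{\phi^{(M)}}(V^{(m)}(\Z);X)\bigr|=O\bigl(X^{\dim V/d}/M^{\delta}\bigr).
\]

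Finally I would assemble these estimates. Given $\eta>0$, the convergence of $\prod_p\int_{V(\Z_p)}\phi_p$ lets me choose $M$ — also larger than the finitely many primes excluded by acceptability — so that $\bigl|\prod_{p\le M}\int\phi_p-\prod_p\int\phi_p\bigr|<\eta$ and, since the implied constant in Proposition \ref{propunif} is independent of $M$, so that the tail estimate of the previous paragraph is at most $\eta\,X^{\dim V/d}$. Combining the three displayed estimates with the trivial bound $N(V^{(m)}(\Z);X)=O(X^{\dim V/d})$ gives
\[
N_\phi(V^{(m)}(\Z);X)=N(V^{(m)}(\Z);X)\prod_p\int_{V(\Z_p)}\phi_p(B)\,dB+O\bigl(\eta\,X^{\dim V/d}\bigr)+o_M\bigl(X^{\dim V/d}\bigr).
\]
Letting $X\to\infty$ and then $\eta\to0$ yields the theorem. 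The entire difficulty of the argument is concentrated in the uniformity estimate of Proposition \ref{propunif}; granting that, the rest is routine bookkeeping, the only genuinely delicate points being the handling of the discontinuity loci $S_p$ and keeping careful track of which error terms are permitted to depend on $M$.
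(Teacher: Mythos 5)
Your argument is correct and is precisely the one the paper has in mind: the paper proves Theorem \ref{thinfcong} by deferring to the argument of \cite[Theorem 2.21]{BS}, which is exactly the cut-off-at-$M$, sandwich-the-local-weights, apply-the-tail-estimate scheme you spell out, with Proposition \ref{propunif} playing the role of \cite[Theorem 2.13]{BS}. You have simply made explicit the details that the paper leaves as a citation.
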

Theorem \ref{thinfcong} follows from Theorems \ref{cong3} and
Proposition \ref{propunif} just as \cite[Theorem 2.21]{BS} followed from
\cite[Theorem 2.12]{BS} and \cite[Theorem 2.13]{BS}.

\subsection{Compatibility of measures and local computations}\label{secvol}
Let $F$ be a large family of hyperelliptic curves. Throughout this section
and the next, we assume without loss of generality that
$\Inv_\infty(F)=S(\R)^{(m)}$ for some fixed integer
$m\in\{0,\ldots,n+1\}$.  To prove Theorem \ref{thm:largegood} we
need to weight each locally soluble element $B\in V(\Z)$ (having
invariants in $\Inv(F)$) by the reciprocal of the number of
$G(\Z)$-orbits in the $G(\Q)$-equivalence class of $B$ in
$V(\Z)$. However, in order for our weight function to be defined by
congruence conditions, we instead define the following weight function
$w:V(\Z)\to [0,1]$:
\begin{equation}\label{eqdefm}
  w(B):=
  \begin{cases}
    \Bigl(\displaystyle\sum_{B'}\frac{\#\Stab_{G(\Q)}(B')}{\#\Stab_{G(\Z)}(B')}\Bigr)^{-1} \qquad&\text{if $B$ is locally soluble and $\Inv(B)\in \Inv(F)$,}\\[.1in]
    \qquad\qquad 0 \qquad&\text{otherwise},
  \end{cases}
\end{equation}
where the sum is over a complete set
of representatives for the action of $G(\Z)$ on the
$G(\Q)$-equivalence class of $B$ in $V(\Z)$.
We then have the following theorem:
\begin{theorem}\label{thmisimp}
Let $F$ be a large family of hyperelliptic curves. Then
\begin{equation}\label{eqtheoremimpone}
\sum_{\substack{C\in F\\H(C)\leq X}}(\#\Sel_2(J(C))-2)=N_w(V(\Z)^{(m)};X)+o(X^{\frac{\dim V}{d}}),
\end{equation}
where $V(\Z)^{(m)}$ is the set of all elements in $V(\Z)$
whose invariants belong to $\Sigma_\infty=S(\R)^{(m)}$.
\end{theorem}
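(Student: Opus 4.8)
The plan is to reduce the left-hand side of \eqref{eqtheoremimpone} to a weighted count of irreducible integral orbits, using the orbit-counting machinery already developed. First I would use Theorem \ref{theorem:imp} to rewrite $\#\Sel_2(J(C))$ as the number of locally soluble $G(\Q)$-orbits on $V_{f}(\Q)$, where $f$ is the polynomial attached to $C$. For $100\%$ of curves in $F$ there are exactly two distinguished orbits (Proposition \ref{prop:poonenover2} together with Proposition \ref{lem:100distinguish}), so subtracting $2$ leaves, up to a negligible error, the number of locally soluble \emph{irreducible} $G(\Q)$-orbits. By Corollary \ref{cor:imp}, after the harmless rescaling that makes $2^{4i}\mid c_i$ on $\Inv(F)$, every such orbit has an integral representative; hence the count of locally soluble irreducible $G(\Q)$-orbits equals the count of $G(\Q)$-equivalence classes of locally soluble irreducible elements of $V(\Z)$ with invariant in $\Inv(F)$.

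Next I would pass from $G(\Q)$-classes to $G(\Z)$-orbits by exactly the bookkeeping that the weight $w$ in \eqref{eqdefm} is designed for: a single $G(\Q)$-equivalence class of $B$ breaks into several $G(\Z)$-orbits, and $\sum_{B'} \#\Stab_{G(\Q)}(B')/\#\Stab_{G(\Z)}(B')$ counts those $G(\Z)$-orbits with the stabilizer weighting built into $N(\,\cdot\,;X)$. Thus, when each $G(\Z)$-orbit $G(\Z)\cdot B$ is counted with weight $w(B)/\#\Stab_{G(\Z)}(B)$ as in the definition of $N_w$, the total over one $G(\Q)$-class is exactly $1$ if that class is locally soluble with invariant in $\Inv(F)$, and $0$ otherwise. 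So $N_w(V(\Z)^{(m)};X)$ is precisely the number of locally soluble irreducible $G(\Q)$-classes in $V(\Z)$ with invariant in $\Inv(F)$ and height at most $X$, restricted to the archimedean type $\Sigma_\infty = S(\R)^{(m)}$. Combined with the previous paragraph, this matches the sum on the left of \eqref{eqtheoremimpone}, up to negligibly many curves.

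The remaining points are all accounting for error terms. I would invoke Proposition \ref{lem:100distinguish} to discard the curves with fewer than two distinguished orbits; their contribution to the sum is $o(X^{\dim V/d})$ because the number of such invariants is negligible and $\#\Sel_2$ is controlled on average (indeed the full count $N(V(\Z)^{(m)};X)$ is $O(X^{\dim V/d})$ by Theorem \ref{thmainzcount}, which dominates any genuinely negligible subfamily). I also need to check that $w$ is a legitimate weight for $N_w$: it is $G(\Z)$-invariant and takes values in $[0,1]$, and its dependence on local solubility and on $\Inv(B)\in\Inv(F)$ is exactly the kind of congruence data that $N_w$ accommodates (this is where the identification of $\Inv_p(F)$ with a set cut out mod bounded powers of $p$ for all but finitely many $p$, together with Proposition \ref{prop:soluble=integral}, is used to see that $w$ is built from local functions). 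Finally, for curves with $p^2\mid\Delta$ for large $p$ one does not have the clean bijection of Proposition \ref{prop:soluble=integral}; these are handled by the uniformity estimate, but at this stage only the statement \eqref{eqtheoremimpone} relating the curve-sum to $N_w$ is being asserted, so the actual evaluation of $N_w$ (which is where that estimate enters) is postponed.

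The main obstacle, I expect, is the passage between $G(\Q)$-equivalence classes and $G(\Z)$-orbits with the correct multiplicities --- i.e.\ verifying that the sum defining $w(B)$ really does collapse the stabilizer-weighted $G(\Z)$-orbit count over a $G(\Q)$-class to $1$, and simultaneously that this $w$ only involves congruence conditions (so that Theorem \ref{thinfcong} will later apply). The subtlety is that a priori a $G(\Q)$-class could meet $V(\Z)$ in infinitely many $G(\Z)$-orbits; one needs that $G$ has class number $1$ and that the relevant stabilizers over $\Z_p$ and $\Q_p$ agree for all but finitely many $p$ (Proposition \ref{prop:soluble=integral}) to see the sum is finite and behaves $p$-adically as required. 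Everything else is a routine combination of Theorem \ref{theorem:imp}, Corollary \ref{cor:imp}, Proposition \ref{prop:poonenover2}, and Proposition \ref{lem:100distinguish}.
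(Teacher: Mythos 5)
Your overall plan matches the paper's: reduce the Selmer count to a count of locally soluble $G(\Q)$-classes of integral matrices via Theorem \ref{theorem:imp} and Corollary \ref{cor:imp}, subtract the two distinguished orbits using Proposition \ref{lem:100distinguish}, and then show that $N_w$ counts these $G(\Q)$-classes via the stabilizer bookkeeping built into $w$.

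However, there is a concrete error in the bookkeeping step, and it is exactly at the place you flagged as the main obstacle. You assert that, with the weight $w$ of \eqref{eqdefm}, the total over one $G(\Q)$-class is ``exactly $1$.'' It is not. Since $\#\Stab_{G(\Q)}(B')$ is constant over the $G(\Q)$-class, if $B_1,\ldots,B_k$ are representatives of the $G(\Z)$-orbits inside the class of $B$, then
\begin{equation*}
\sum_{i=1}^k \frac{w(B_i)}{\#\Stab_{G(\Z)}(B_i)}
= \frac{1}{\#\Stab_{G(\Q)}(B)}\Bigl(\sum_{i} \tfrac{1}{\#\Stab_{G(\Z)}(B_i)}\Bigr)^{-1}\sum_{i}\tfrac{1}{\#\Stab_{G(\Z)}(B_i)}
= \frac{1}{\#\Stab_{G(\Q)}(B)},
\end{equation*}
which equals $1$ only when $\Stab_{G(\Q)}(B)$ is trivial. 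The factor $\#\Stab_{G(\Q)}(B')$ is deliberately placed in the numerator of \eqref{eqdefm} precisely so that $w$ factors into the local weights $w_p$ (compare \eqref{eqmxp}); the price is that $N_w$ counts $G(\Q)$-classes weighted by $1/\#\Stab_{G(\Q)}(B)$ rather than with weight $1$. To close the gap you must invoke Proposition \ref{prop:bigstabsmall}, which says that the $B\in V(\Z)$ with nontrivial $G(\Z)$-stabilizer (hence nontrivial $G(\Q)$-stabilizer) contribute $o(X^{\dim V/d})$. You never cite this proposition, so as written your argument silently drops a second source of error that must be absorbed into the $o(X^{\dim V/d})$ term alongside the curves with a single distinguished orbit. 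With that citation added, your proof is essentially the paper's.
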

\begin{proof}
  It follows from Proposition \ref{lem:100distinguish} that for a $100\%$ of
  hyperelliptic curves $C(c)\in F$, the set $V_c(\Q)$ has two
  distinguished orbits. Thus, Theorem \ref{theorem:imp} and Corollary
  \ref{cor:imp} show that, up to an error of $o(X^{\frac{\dim V}{d}})$, the left hand side of \eqref{eqtheoremimpone} is equal to
$$
\#(G(\Q)\backslash V_F(\Z)^{\rm ls}_{H<X}),
$$
the number of $G(\Q)$-equivalence classes of elements in $V(\Z)$ that are locally soluble, have invariants in $\Inv(F)$, and have
height bounded by $X$.
Given a locally soluble element $B\in V(\Z)$ such that $\Inv(B)\in F$,
let $B_1\ldots B_k$ denote a complete set of representatives for the action of $G(\Z)$
on the $G(\Q)$-equivalence class of $B$ in $V(\Z)$. Then
\begin{equation}
  \label{eq:qtozimpcomp}
\sum_{i=1}^k \frac{w(B_i)}{\#\Stab_{G(\Z)}(B_i)}=\frac{1}{\#\Stab_{G(\Q)}(B)}\Bigl(\sum_{i=1}^k \frac{1}{\#\Stab_{G(\Z)}(B_i)}\Bigr)^{-1}\sum_{i=1}^k \frac{1}{\#\Stab_{G(\Z)}(B_i)}=\frac{1}{\#\Stab_{G(\Q)}(B)}.
\end{equation}

Therefore, the right hand side of
\eqref{eqtheoremimpone} counts the number of $G(\Q)$-equivalence
classes of elements in $V(\Z)$ that are locally soluble, have
invariants in $F$, and have height bounded by $X$, such that the
$G(\Q)$-orbit of $B$ is weighted with $1/\#\Stab_{G(\Q)}(B)$ for all orbits. The
theorem now follows since $\Stab_{G(\Q)}(B)=1$ for all but negligible
many $B\in V(\Z)$ by Proposition \ref{prop:bigstabsmall}.
\end{proof}

In order to demonstrate that $w$ is defined by congruence conditions, we need to express it
as a local product of weight functions on $V(\Z_p)$. To this end, we define $w_p:V(\Z_p)\to[0,1]$:
\begin{equation}\label{eqmxp}
  w_p(B):=
  \begin{cases}
    \Bigl(\displaystyle\sum_{B'}\frac{\#\Stab_{G(\Q_p)}(B')}{\#\Stab_{G(\Z_p)}(B')}\Bigr)^{-1} \qquad&\text{if $B$ is $\Q_p$-soluble and $\Inv(B)\in \Inv_p(F)$,}\\[.1in]
    \qquad\qquad 0 \qquad&\text{otherwise},
  \end{cases}
\end{equation}
where the sum is over a set of representatives for the action of
$G(\Z_p)$ on the $G(\Q_p)$-equivalence class of $B$ in $V(\Z)$. Our
next aim is to show that $w$ is an acceptable function that is defined
by congruence conditions via the local functions $w_p$.
\begin{proposition}
  If $B\in V(\Z)$ has nonzero discriminant, then
  $w(B)=\prod_pw_p(B)$. Furthermore, $w(b)$ is an acceptable function.
\end{proposition}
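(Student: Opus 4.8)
The plan is to establish both statements place by place and then glue using the class number one property of $G=\PSO_{2n+2}$ over $\Q$. Fix $B\in V(\Z)$ with $\Delta(B)\neq 0$; since $w$ is only ever applied to elements of $V(\Z)^{(m)}$, which are $\R$-soluble with invariant in $\Sigma_\infty$, I would first reduce to $B\in V(\Z)^{(m)}$, where one checks directly from the definitions — using that $F$ is defined by congruence conditions — that $w(B)$ and $\prod_p w_p(B)$ both vanish unless $B$ is locally soluble and $\Inv(B)\in\Inv(F)$; so one may assume this, in which case $\prod_p w_p(B)$ turns out to be a finite product. Write $\mathcal{O}=G(\Q)\cdot B$, $\mathcal{O}_p=G(\Q_p)\cdot B$, and $H=\Stab_G(B)$. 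By Proposition \ref{prop:geoorbit} and \eqref{eq:Sentence1}, $H$ is a finite commutative group scheme isomorphic to $J[2]$ (the twisting cocycle $c_T$ takes values in the commutative group $\Stab_G(T_f)$, so the twist is inner and trivial); hence $\#\Stab_{G(\Q)}(B')=\#H(\Q)$ for every $B'\in\mathcal{O}$ and $\#\Stab_{G(\Q_p)}(B')=\#H(\Q_p)$ for every $B'\in\mathcal{O}_p$.

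The core step is a mass-formula bookkeeping converting the stabilizer-weighted orbit sums of \eqref{eqdefm} and \eqref{eqmxp} into cardinalities. For $v=\Q$ with $\Lambda=\Z$, or $v=\Q_p$ with $\Lambda=\Z_p$, set $Y_v=\{g\in G(v):g\cdot B\in V(\Lambda)\}$. This set is left $G(\Lambda)$-stable (as $G(\Lambda)$ preserves $V(\Lambda)$) and right $H(v)$-stable, and $G(\Lambda)\backslash Y_v/H(v)$ is in bijection with $G(\Lambda)\backslash(\mathcal{O}_v\cap V(\Lambda))$ via $g\mapsto g\cdot B$. An orbit--stabilizer computation for the right $H(v)$-action on $G(\Lambda)\backslash Y_v$ identifies the stabilizer of $G(\Lambda)g$ with a conjugate of $\Stab_{G(\Lambda)}(g\cdot B)$ inside $H(v)$, so that
\[
\#\bigl(G(\Lambda)\backslash Y_v\bigr)=\sum_{B'}\frac{\#\Stab_{G(v)}(B')}{\#\Stab_{G(\Lambda)}(B')},
\]
the sum being over $G(\Lambda)\backslash(\mathcal{O}_v\cap V(\Lambda))$. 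Thus $w(B)^{-1}=\#(G(\Z)\backslash Y_\Q)$ and $w_p(B)^{-1}=\#(G(\Z_p)\backslash Y_{\Q_p})$, while $Y_\Q=G(\Q)\cap\prod'_p Y_{\Q_p}$, the restricted product being taken with respect to the subgroups $G(\Z_p)\subseteq Y_{\Q_p}$. The product formula is therefore equivalent to
\[
\#\bigl(G(\Z)\backslash Y_\Q\bigr)=\prod_p\#\bigl(G(\Z_p)\backslash Y_{\Q_p}\bigr).
\]
I would prove this by noting that reduction at all primes gives a map $G(\Z)\backslash Y_\Q\to\prod_p G(\Z_p)\backslash Y_{\Q_p}$ that is injective because $G(\Z)=G(\Q)\cap\prod_p G(\Z_p)$, and surjective because $G$ has class number one: any $(y_p)_p$ in the restricted product can be written as $\kappa\gamma$ with $\kappa\in\prod_p G(\Z_p)$ and $\gamma\in G(\Q)$, and left $G(\Z_p)$-stability of $Y_{\Q_p}$ then places $\gamma$ in $Y_\Q$ with $G(\Z_p)\gamma=G(\Z_p)y_p$ for all $p$.

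To finish I would verify the two remaining conditions for acceptability. That each $w_p$ is locally constant outside a closed measure-zero set is routine: away from the union of $\{\Delta=0\}$ with the boundary of $\Inv_p(F)$, the decomposition of $\mathcal{O}_p\cap V(\Z_p)$ into $G(\Z_p)$-orbits, the stabilizers occurring, and membership in $\Inv_p(F)$ all depend only on $B$ modulo a sufficiently large power of $p$. For the final condition, let $p$ be an odd prime outside the finite set at which $F$ fails to be large, with $p^2\nmid\Delta(B)$, equivalently $p^2\nmid\Delta(f)$ where $f=\pi(B)$. Then $\Inv(B)\in\Inv_p(F)$ by largeness, and Proposition \ref{prop:soluble=integral} gives that every soluble $G(\Q_p)$-orbit in $V_f(\Q_p)$ meets $V_f(\Z_p)$ in exactly one $G(\Z_p)$-orbit and that $\Stab_{G(\Z_p)}(T)=\Stab_{G(\Q_p)}(T)$ for $T\in V_f(\Z_p)$; in particular $B$ is $\Q_p$-soluble and the sum defining $w_p(B)^{-1}$ reduces to the single term $\#\Stab_{G(\Q_p)}(B)/\#\Stab_{G(\Z_p)}(B)=1$. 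This shows $w$ is acceptable and in particular that all but finitely many factors of $\prod_p w_p(B)^{-1}$ are $1$, so the product above is finite.

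The main obstacle is the bookkeeping of the middle paragraph: one has to pull the place-dependent stabilizer factors out of the weighted orbit counts — which is precisely why $w$ and the $w_p$ are defined with those factors — so that class number one applies without an $H^1$-defect, and one has to match up the archimedean place, which is why the reduction to $B\in V(\Z)^{(m)}$ is needed. This parallels \cite[\S3]{BG3}; the one new point is that here the stabilizer $J[2]\cong(\Res_{L/\Q}\mu_2)_{N=1}/\mu_2$ is a quotient, not a subgroup, of a Weil restriction of $\mu_2$, which is immaterial since only the orders $\#H(\Q)$, $\#H(\Q_p)$ enter.
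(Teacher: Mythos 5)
Your proof is correct and follows the same route the paper takes: invoke class number one of $G = \PSO_{2n+2}$ over $\Q$ for the product identity, and Proposition~4.5 (soluble $=$ integral away from $p^2\mid\Delta$) for acceptability. The only real difference is in presentation: the paper simply cites \cite[Proposition 3.6]{BS} for the class-number-one bookkeeping, whereas you unwind it explicitly via the sets $Y_v=\{g\in G(v):g\cdot B\in V(\Lambda)\}$, the orbit--stabilizer identity $\#\bigl(G(\Lambda)\backslash Y_v\bigr)=\sum_{B'}\#\Stab_{G(v)}(B')/\#\Stab_{G(\Lambda)}(B')$, and surjectivity of $G(\Z)\backslash Y_\Q\to\prod_p G(\Z_p)\backslash Y_{\Q_p}$ from $G(\A_f)=G(\hat\Z)G(\Q)$; that expansion is exactly what the cited argument does. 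Your observation that the literal statement requires restricting to $B\in V(\Z)^{(m)}$ — otherwise $\prod_p w_p(B)$ could fail to see the archimedean solubility and signature conditions hidden in ``locally soluble and $\Inv(B)\in\Inv(F)$'' — is a fair reading of an imprecision the paper glosses over, and you handle it correctly. One minor remark: the parenthetical about the twisting cocycle being inner is not actually needed for the mass-formula step, since for $B'=g\cdot B$ with $g\in G(v)$ one has $\Stab_{G(v)}(B')=g\,\Stab_{G(v)}(B)\,g^{-1}$, so the cardinalities agree trivially; the group-scheme identification with $J[2]$ is a true but superfluous strengthening here.
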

\begin{proof}
  The first assertion of the proposition follows from the fact that
  $G$ has class number $1$ over $\Q$.; the proof is identical to that
  of \cite[Proposition 3.6]{BS}. In order to prove that $m$ is acceptable, it
  therefore suffices to check that, for sufficiently large primes $p$,
  we have $w_p(B)=1$ whenever $p^2\nmid\Delta(B)$. This follows from
  Proposition \ref{prop:soluble=integral}.
\end{proof}

From Theorems \ref{thmainzcount} and \ref{thinfcong}, we have the following equality:
\begin{equation}\label{eq:selfirstref}
  N_w(V(\Z)^{(m)};X)=\frac1{\#J^{(m)}[2](\R)}\Vol(\FF\cdot R^{(m)}(X))\prod_p\int_{V(\Z_p)}w(B)dB+o(X^{\frac{\dim V}{d}}).
\end{equation}
For the rest of the section, our aim is to express $\Vol(\FF\cdot
R^{(m)}(X))$ and $\int_{V(\Z_p)}w(B)dB$ in more convenient
forms. To this end, we have the following result that allows us to
compute volumes of multisets in $V(K)$, for $K=\R$ and $\Z_p$. This
result follows from \cite[Proposition 3.11]{BS} and \cite[Proposition 3.12]{BS}.

\begin{proposition}\label{propjacone}
  Let $K$ be $\R$ or $\Z_p$ for some prime $p$, let $|.|$ denote the usual valuation on $K$, and let $s:S(K)\to
  V(K)$ be a continuous section. Then there exists a rational nonzero
  constant $\J$, independent of $K$ and $s$, such that for any
  measurable function $\phi$ on $V(K)$, we have
\begin{equation}
  \begin{array}{rcl}
    \displaystyle\int_{G(K)\cdot
      s(S(K))}\phi(B)d\nu(B)&=&  |\J|\displaystyle\int_{c\in S(K)}\displaystyle\int_{g\in G(K)}
    \phi(g\cdot s(c))d\tau(g) d\mu(c),\\[0.2in]
    \displaystyle\int_{V(K)}\phi(B)d\nu(B)&=&|\J|\displaystyle\int_{\substack{c\in S(K)\\\Delta(c)\neq 0}}\Bigl(\displaystyle\sum_{B\in\textstyle{\frac{V_c(K)}{G(K)}}}\frac{1}{\#\Stab_{G(K)}(B)}\int_{g\in G(K)}\phi(g\cdot B)d\tau(g)\Bigr)d\mu(c).
  \end{array}
\end{equation}
where we regard $G(K)\cdot s(R)$ as a multiset, and
$\frac{V_c(K)}{G(K)}$ denotes a set of representatives for the action
of $G(K)$ on $V_c(K)$.
\end{proposition}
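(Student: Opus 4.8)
The plan is to deduce both displayed identities from the single change-of-variables statement \eqref{eqmeasurechange}, following \cite[Propositions 3.11 and 3.12]{BS} essentially verbatim; the only points that need to be checked in the present setting are a dimension count and the existence of a section $S\to V$, both of which are already in hand (the section $\kappa$ of \cite[Section 3.1]{Jerrythesis}, or any analytic local section over the locus $\Delta\neq 0$ where $\pi$ is submersive).

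First I would establish \eqref{eqmeasurechange}. Fix a section $s\colon S\to V$ and let $\delta\colon G\times S\to V$ be the morphism $(g,c)\mapsto g\cdot s(c)$. Since $\dim G+\dim S=(n+1)(2n+1)+(2n+1)=(n+2)(2n+1)=2n^2+5n+2=\dim V$ and, by Proposition \ref{prop:geoorbit}, $G$ acts transitively on each geometric fibre $V_c$ with $\Delta(c)\neq 0$, the morphism $\delta$ is dominant, hence generically \'etale. Now $d\nu$ is $G$-invariant (the conjugation action of $G$ on $V$ preserves a volume form since $G$ is semisimple), $d\tau$ is left-invariant, and $\delta(hg,c)=h\cdot\delta(g,c)$; therefore the regular function $\mathcal{J}(c):=\delta^*d\nu/(d\tau\wedge d\mu)$ on $G\times S$ is independent of the $G$-coordinate and is thus a polynomial in $c_2,\dots,c_{2n+2}$. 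Under the $\bbg_m$-action $c_k\mapsto\lambda^k c_k$ on $S$, lifted to $B\mapsto\lambda B$ on $V$ and trivially on $G$, the forms $d\nu$, $d\tau$, $d\mu$ scale by $\lambda^{\dim V}$, $1$, and $\lambda^{2+3+\cdots+(2n+2)}$; since $\dim V=2+3+\cdots+(2n+2)$, the polynomial $\mathcal{J}(c)$ is homogeneous of weighted degree $0$ with strictly positive weights, hence is a constant $\J$. It is nonzero because $\delta$ is generically \'etale, rational because it is a constant polynomial with rational coefficients, independent of $K$ because $\delta^*d\nu=\J\cdot d\tau\wedge d\mu$ is an algebraic identity over $\Q$, and independent of the choice of section because the argument never referred to which geometric orbit $s(c)$ lies in.

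Granting \eqref{eqmeasurechange}, the first identity is immediate: the multiset $G(K)\cdot s(S(K))$ is by convention the pushforward of $d\tau\wedge d\mu$ under $\delta$, so integrating $\phi$ against it and changing variables by $\delta$ produces exactly $|\J|\int_{S(K)}\int_{G(K)}\phi(g\cdot s(c))\,d\tau(g)\,d\mu(c)$. For the second identity, stratify $V(K)\setminus\{\Delta=0\}$ over $S(K)\setminus\{\Delta=0\}$ via $\pi$ and write $d\nu=d\mu(c)\otimes d\nu_c$, where $d\nu_c$ is the Leray (residue) measure on $V_c(K)$ determined by $d\nu$ and $\pi^*d\mu$; taking the same residue of \eqref{eqmeasurechange} along $\pi\circ\delta$ shows that $d\nu_c$ pulls back to $\J\cdot d\tau$ along the orbit map $G(K)\to G(K)\cdot B$, for every $B\in V_c(K)$ (use a local section with $s(c)=B$). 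Since $B$ is regular semisimple, $\Stab_{G(K)}(B)$ is finite by Proposition \ref{prop:geoorbit} and this orbit map is $\#\Stab_{G(K)}(B)$-to-one, whence $\int_{G(K)\cdot B}\psi\,d\nu_c=\frac{|\J|}{\#\Stab_{G(K)}(B)}\int_{G(K)}\psi(g\cdot B)\,d\tau(g)$; summing over the finitely many $G(K)$-orbits in each fibre $V_c(K)$ and integrating over $c$ yields the formula, with the $|\J|$ (rather than $\J$) accounting for orientation.

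The main obstacle is the proof of \eqref{eqmeasurechange}, and inside it the step showing that $\mathcal{J}(c)$ is genuinely a constant rather than merely a weighted-degree-$0$ rational function on $S$: this relies on the fact that $d\nu$ extends to a regular $G$-invariant form (so $\mathcal{J}(c)$ is a polynomial) together with the numerical coincidence $\dim V=2+3+\cdots+(2n+2)$. All of this is carried out in the cited propositions of \cite{BS} in an identical situation, and transcribes here once the dimension count and the existence of the section are recorded.
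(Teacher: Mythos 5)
Your proof is correct and reconstructs, in full detail, the argument behind the cited Propositions 3.11 and 3.12 of \cite{BS}, which the paper invokes without reproducing. The two ingredients you isolate as new to this setting are exactly the right ones: the dimension count $\dim G + \dim S = (n+1)(2n+1) + (2n+1) = 2n^2+5n+2 = \dim V$, and the existence of the algebraic section $\kappa$ over $\Z[1/2]$ from \cite[Section 3.1]{Jerrythesis}. The scaling argument with weights $(2,3,\dots,2n+2)$, together with the identity $2+3+\cdots+(2n+2) = 2n^2+5n+2 = \dim V$, forces $\mathcal{J}(c)$ to be a constant, and your observation that $\mathcal{J}$ is independent of the choice of section (so one may compute it with the algebraic section $\kappa$ to see it is a rational polynomial, then use any continuous section) is a genuine point that is sometimes glossed over. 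One small slip of terminology: you refer to ``the conjugation action of $G$ on $V$'' when showing $d\nu$ is $G$-invariant, but the action on $V$ (once elements are written as symmetric matrices $B = AT$) is the congruence action $B\mapsto gBg^t$; the intended conclusion is still correct, since $\det(g)=1$ for $g\in\PSO$ implies the linear map $B\mapsto gBg^t$ has determinant $1$. This matches the route the paper (implicitly, via citation) takes.
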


We use Proposition \ref{propjacone} to compute $\Vol(\FF\cdot
R^{(m)}(X))$. If $c\in R^{(m)}$ and $J=J(C(c))$ is the Jacobian of the
corresponding hyperelliptic curve, then the number of $\R$-soluble
$G(\R)$-orbits of $V_c(\R)$ is $\#(J(\R)/2J(\R))$. This number is a
constant independent of $c\in V(\R)^{(m)}$, and we denote it by
$\#(J^{(m)}(\R)/2J^{(m)}(\R))$. Thus, $R^{(m)}$ contains
$\#(J^{(m)}(\R)/2J^{(m)}(\R))$ elements having invariant $c$ for every
$c\in S(\R)^{(m)}$. Therefore, using the first equation of Proposition
\ref{propjacone}, we obtain:
\begin{equation}\label{eq:realvolcomp}
  \begin{array}{rcl}
  \displaystyle\frac1{\#J^{(m)}[2](\R)}\Vol(\FF\cdot R^{(m)}(X))&=&|\J|\,\displaystyle\frac{\#(J^{(m)}(\R)/2J^{(m)}(\R))}{\#J^{(m)}[2](\R)}\Vol(\FF)\Vol(S(\R)^{(m)})\\[0.2in]
&=&|\J|\,a_\infty\Vol(\FF)\Vol(S(\R)^{(m)}),
  \end{array}
\end{equation}
where $a_\nu$ was defined in \eqref{eq:a_nu} for every place $\nu$ of $\Q$.

Next we compute $\int_{V(\Z_p)}w_p(B)d\nu(B)$. Note that since $w_p$ is $G(\Z_p)$-invariant, we have
\begin{equation}\label{eq:wpcomp}
  \begin{array}{rcl}
    \displaystyle\int_{V(\Z_p)}w_p(B)d\nu(B)&=&|\J|_p\Vol(G(\Z_p))\displaystyle\int_{c\in\Inv_p(F)}\Bigl(\displaystyle\sum_{B\in\textstyle{\frac{V_c(\Z_p)}{G(\Z_p)}}}\frac{w_p(B)}{\#\Stab_{G(\Z_p)}(B)}\Bigr)d\mu(c)\\[0.2in]
&=&|\J|_p\,a_p\Vol(G(\Z_p))\Vol(\Inv_p(F)).
  \end{array}
\end{equation}
The final equality follows from a computation similar to
\eqref{eq:qtozimpcomp}; namely, if $J=J(C(c))$ and
$B_c$ is any element in $V_c(\Q_p),$ we have by Proposition
\ref{prop:soluble->integral},
$$\sum_{B\in\frac{V_c(\Z_p)}{G(\Z_p)}}\frac{w_p(B)}{\#\Stab_{G(\Z_p)}(B)}=\frac{\# (G(\Q_p)\backslash V_c^{\rm{sol}}(\Q_p))}{\#\Stab_{G(\Q_p)}(B_c)}=\frac{\#(J(\Q_p)/2J(\Q_p))}{\#J[2](\Q_p)}=a_p.$$

Combining Theorem \ref{thmisimp} with
\eqref{eq:selfirstref}, \eqref{eq:realvolcomp}, and \eqref{eq:wpcomp}, we obtain
\begin{equation}\label{eq:twoselcompfinal}
  \begin{array}{rcl}
    \displaystyle\sum_{\substack{C\in F\\H(C)\leq X}}(\#\Sel_2(J(C))-2)&=&|\J|\,a_\infty\Vol(\FF)\Vol(S(\R)^{(m)})\displaystyle\prod_p|\J|_p\,a_p\Vol(G(\Z_p))\Vol(\Inv_p(F))+o(X^{\frac{\dim V}{\deg H}})\\[0.1in]
&=&\Vol(\FF)\Vol(S(\R)^{(m)})\displaystyle\prod_p\Vol(G(\Z_p))\Vol(\Inv_p(F))+o(X^{\frac{\dim V}{\deg H}}),
  \end{array}
\end{equation}
since $a_{\infty}\prod_pa_p=1$ by \cite[Lemmas 5.7, 5.14]{Stoll01}, and $|\J|\prod_p |\J|_p=1.$

\section{Proof of the main results}
In this section, we prove Theorem \ref{thm:largegood}. Let $F$ be a
large family of hyperelliptic curves. As in the previous section, we
assume without loss of generality that $\Inv_\infty(F)$
is $S(\R)^{(m)}$ for a fixed integer $m\in\{0,\ldots,n+1\}$.

\subsection{The number of hyperelliptic curves in a large family having bounded height}
For any subset $U$ of $S(\Z)$, let $N(U;X)$ denote the number of elements in $U$ having height bounded by $X$.
Our purpose
in this section is to determine asymptotics for $N(\Inv(F);X)$ as $X$ goes
to infinity. To this end, we have the following uniformity estimate proved in \cite{geosieve}.
\begin{proposition}\label{unifhyper}
  For each prime $p$, let $U_p$ denote the set of elements
  $c\in S(\Z)$ such that $p^2\mid\Delta(c)$.  Then there exists
  $\delta>0$ such that, for any $M>0$, we have
$$
\displaystyle\sum_{p>M}N(U_p;X)=O(X^{\frac{\dim V}{d}}/M^\delta),
$$
where the implied constant is independent of $X$ and $M$.
\end{proposition}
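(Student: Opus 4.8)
The plan is to deduce this from the quantitative geometric sieve of \cite{geosieve}, applied to the discriminant $\Delta=\disc(f_c)$ regarded as a weighted-homogeneous polynomial of weighted degree $d=(2n+2)(2n+1)$ on the affine space $S=\tSpec\Z[c_2,\ldots,c_{2n+2}]$, where $c_k$ has weight $k$. Two structural facts are the input. First, the total count $N(S(\Z);X)$ of invariants of height less than $X$ is $\asymp X^{\dim V/d}$; this is immediate from the definition of $H$, since $\sum_{k=2}^{2n+2}k=2n^2+5n+2=\dim V$, so the error term asserted in the proposition is genuinely a power saving over the main count. Second, for all sufficiently large $p$ the reduction modulo $p$ of the hypersurface $\{\Delta=0\}\subset\A^{2n+1}$ is geometrically integral with singular locus of codimension at least $2$; this is the reduction mod $p$ of the corresponding characteristic-zero statement for the discriminant hypersurface of monic polynomials of degree $2n+2$.

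For a fixed $p$, I would write $U_p=U_p^{\mathrm{sm}}\sqcup U_p^{\mathrm{sing}}$ according to whether the reduction $\bar c$ of $c\in U_p$ is a smooth or a singular point of $\{\Delta=0\}_{\F_p}$. On $U_p^{\mathrm{sm}}$ some partial $\partial_{c_j}\Delta$ is a unit at $\bar c$, and a one-variable Taylor expansion in $c_j$ shows that, for fixed $\bar c$, the condition $p^2\mid\Delta(c)$ determines $c_j$ modulo $p^2$ uniquely; hence $U_p^{\mathrm{sm}}$ is defined by a congruence modulo $p^2$ on a set of $\ll p^{2\dim S-2}$ residues, and counting lattice points in the box $H(c)<X$ gives $N(U_p^{\mathrm{sm}};X)\ll X^{\dim V/d}/p^2$ up to boundary terms. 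On $U_p^{\mathrm{sing}}$ the codimension bound gives $\ll p^{\dim S-2}$ admissible residues modulo $p$, leading again to $N(U_p^{\mathrm{sing}};X)\ll X^{\dim V/d}/p^2$ up to boundary terms. Summing the main terms over $p>M$ already yields $O(X^{\dim V/d}/M)$.

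The hard part, and the reason the statement is attributed to \cite{geosieve}, is making these bounds uniform in $p$ --- in particular for primes larger than a small power of $X$, where $p^2$ exceeds the shorter side lengths of the height box and the crude ``main term plus boundary'' lattice count is worthless. There one does not sieve box by box: one instead bounds directly the number of $c$ with $H(c)<X$ for which \emph{some} prime $p>M$ has $\bar c$ in the reduction of the (codimension $\geq 2$) singular locus, and separately the contribution of the large primes to the smooth part, obtaining a power saving $M^{-\delta}$ with an explicit $\delta>0$ depending only on $n$. This is exactly the argument already carried out for Proposition \ref{propunif}, with the space $V$ there replaced by $S$ here and the same discriminant polynomial, so the proof transfers essentially verbatim; we refer to \cite{geosieve} for the details.
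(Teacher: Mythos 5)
The paper does not prove this proposition: it is stated as a black-box input, with the single line ``the following uniformity estimate proved in \cite{geosieve},'' exactly parallel to how Proposition~\ref{propunif} is treated earlier. There is therefore no in-paper argument to compare against. That said, your sketch correctly lays out the shape of Bhargava's geometric sieve as it applies here: the weighted-homogeneity of $\Delta$ on $S$ (of weighted degree $d=(2n+2)(2n+1)$, with $c_k$ of weight $k$ and $\sum_{k=2}^{2n+2}k=\dim V$, so that $N(S(\Z);X)\asymp X^{\dim V/d}$ and the asserted bound is a genuine power saving); the decomposition of $U_p$ according to whether $\bar c$ is a smooth or singular point of $\{\Delta=0\}_{\F_p}$; the Hensel/Taylor argument giving local density $\asymp p^{-2}$ on the smooth part; the codimension-$\geq 2$ bound on the singular locus (valid for all sufficiently large $p$ by spreading out the characteristic-zero statement) giving $\asymp p^{-2}$ there as well; and, crucially, the recognition that the naive ``main term plus boundary'' lattice-point count breaks down once $p$ is comparable to a power of $X$, which is precisely the obstacle that \cite{geosieve} overcomes to obtain the stated $M^{-\delta}$ saving. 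You, like the authors, defer that last step to \cite{geosieve}, which is appropriate since the paper itself supplies no proof. The only caveat worth flagging is that your sketch asserts but does not verify geometric integrality and the codimension bound on the singular locus of the discriminant hypersurface over $\F_p$; both are standard, but a complete write-up would need to cite or prove them.
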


Then we have the following theorem which follows from Propositions \ref{davlem} and \ref{unifhyper} just as \cite[Theorem 2.21]{BS} followed from
\cite[Theorem 2.12]{BS} and \cite[Theorem 2.13]{BS}.
\begin{theorem}\label{thheccount}
Let $F$ be a large family of hyperelliptic curves such that $\Inv_\infty(F)=S(\R)^{(m)}$. Then the number of
hyperelliptic curves in $F$ having height bounded by $X$ is $\Vol(S(\R)^{(m)})\prod_p\Vol(\Inv_p(F))$ up to an error of $o(X^{\frac{\dim V}{d}})$.
\end{theorem}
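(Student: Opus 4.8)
The plan is to run the standard squarefree sieve of Bhargava in the same form already used in Section \ref{sieve} for the space $V$: reduce the problem to counting the integer points of $\Inv(F)\subset S(\Z)$ lying in the region $S(\R)^{(m)}\cap S(\R)_{<X}$, truncate the (infinitely many) congruence conditions defining $\Inv(F)$ to the primes $p\le M$, apply Davenport's lattice‑point estimate (Proposition \ref{davlem}) at that finite level, and then let $M\to\infty$, using Proposition \ref{unifhyper} to show the sieve tail is negligible. This is the exact analogue of how \cite[Theorem 2.21]{BS} is deduced from \cite[Theorem 2.12]{BS} and \cite[Theorem 2.13]{BS} (and, within this paper, of how Theorem \ref{thinfcong} follows from Theorem \ref{cong3} and Proposition \ref{propunif}).

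First I would observe that $c\mapsto C(c)$ is a bijection from $\Inv(F)$ onto the set of curves in $F$, the model \eqref{eqevenhyperformula} being unique, so the quantity to estimate is $N(\Inv(F);X)$. Fixing $M$: since $F$ is defined by congruence conditions, each $\Inv_p(F)\subset S(\Z_p)$ is, outside a measure‑zero set, a union of residue classes modulo some fixed power $p^{a_p}$. Let $\Inv^{(M)}(F)\subset S(\Z)$ be the set of $c$ satisfying these congruence conditions at every $p\le M$ and with archimedean invariant in $\Sigma_\infty=S(\R)^{(m)}$. This is a finite union of translates of a sublattice of $S(\Z)$ inside the bounded semialgebraic region $S(\R)^{(m)}\cap S(\R)_{<X}$, which is cut out by boundedly many inequalities of bounded degree and whose proper coordinate projections have volume $O(X^{(\dim V-2)/d})$ (the $c_2$‑extent being only $X^{2/d}$). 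Proposition \ref{davlem} then yields
\begin{equation}\label{eq:hecplanA}
N(\Inv^{(M)}(F);X)=\Vol(S(\R)^{(m)})\prod_{p\le M}\Vol(\Inv_p(F))+O_M\bigl(X^{(\dim V-1)/d}\bigr),
\end{equation}
where $\Vol(\Inv_p(F))\in[0,1]$ is the $p$‑adic density and the implied constant depends on $M$ but not on $X$.

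The tail is where the hypothesis ``large'' is used. Because $F$ is large there is a finite set $S_0$ of primes with $\Inv_p(F)\supseteq\{c\in S(\Z_p):p^2\nmid\Delta(c)\}$ for all $p\notin S_0$. Assume $M>\max(S_0)$. Any $c\in S(\Z)$ counted by $N(\Inv^{(M)}(F);X)$ but not by $N(\Inv(F);X)$ must violate the congruence condition defining $\Inv_p(F)$ for some prime $p>M$, and by largeness this forces $p^2\mid\Delta(c)$. Hence, with $U_p=\{c\in S(\Z):p^2\mid\Delta(c)\}$,
\begin{equation}\label{eq:hecplanB}
0\le N(\Inv^{(M)}(F);X)-N(\Inv(F);X)\le\sum_{p>M}N(U_p;X)=O\bigl(X^{\dim V/d}/M^{\delta}\bigr)
\end{equation}
by Proposition \ref{unifhyper}, with $\delta>0$ absolute and implied constant independent of $X$ and $M$. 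Combining \eqref{eq:hecplanA} and \eqref{eq:hecplanB}, using that $\sum_p\bigl(1-\Vol(\Inv_p(F))\bigr)=O\bigl(\sum_p p^{-2}\bigr)<\infty$ (so that $\prod_p\Vol(\Inv_p(F))$ converges), dividing through by $\Vol(S(\R)^{(m)})\asymp X^{\dim V/d}$, and sending $X\to\infty$ and then $M\to\infty$, gives $N(\Inv(F);X)=\Vol(S(\R)^{(m)})\prod_p\Vol(\Inv_p(F))+o\bigl(X^{\dim V/d}\bigr)$, as asserted.

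The only genuinely hard input is Proposition \ref{unifhyper} itself (proved in \cite{geosieve}); once it is granted, the argument is routine bookkeeping. The one point to watch is that largeness is precisely the hypothesis that identifies the sieve tail with the a priori controlled sums $\sum_{p>M}N(U_p;X)$, rather than with an uncontrolled count of curves failing arbitrary congruence conditions at large primes.
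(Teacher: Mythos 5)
Your proposal is correct and is exactly the argument the paper has in mind: the paper's ``proof'' of Theorem~\ref{thheccount} is the single sentence that it ``follows from Propositions~\ref{davlem} and~\ref{unifhyper} just as \cite[Theorem 2.21]{BS} followed from \cite[Theorem 2.12]{BS} and \cite[Theorem 2.13]{BS},'' and your truncate--Davenport--tail-bound chain, with the hypothesis of largeness used to identify the sieve tail with $\sum_{p>M}N(U_p;X)$, is precisely that deduction unpacked. One very minor imprecision worth tightening if you write it out in full: $\Inv_p(F)$ is only assumed closed with measure-zero boundary, so it need not literally be a union of residue classes modulo a fixed $p^{a_p}$ off a null set; the standard fix is to sandwich $\Inv_p(F)$ between finite unions of residue classes from inside and outside with $p$-adic measure discrepancy $<\epsilon$, apply Davenport to each, and then let $\epsilon\to 0$ after $M\to\infty$. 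This does not change the shape of the argument or the final bound.
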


Finally, we also need the following proposition:
\begin{proposition}\label{lem:100distinguish}
  Let $F$ be a large family of hyperelliptic curves. Then for a
  $100\%$ of elements $C\in F$, the class $(\infty')-(\infty)$ is not
  divisible by $2$ in $J(C)(\Q)$.
\end{proposition}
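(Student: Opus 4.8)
The plan is to reduce Proposition \ref{lem:100distinguish} to the statement that a $100\%$ of the defining polynomials $f_c$, when ordered by height, have full symmetric Galois group, and then to invoke the classical thin-set estimates behind that fact.

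First, recall from Proposition \ref{prop:poonenover2} together with the remark following diagram \eqref{eq:HUGEdiagram} that, writing $f_c$ for the monic polynomial attached to $c=(c_2,\dots,c_{2n+2})$ and $L=\Q[x]/f_c(x)$, the class $(\infty')-(\infty)$ lies in $2J(C(c))(\Q)$ if and only if $V_{f_c}(\Q)$ has a single distinguished $\PSO(U)(\Q)$-orbit, which in turn holds if and only if either (1) $f_c$ has a factor of odd degree over $\Q$, or (2) $n$ is even and $L$ contains a quadratic extension of $\Q$. Hence it suffices to show that a $100\%$ of $c$ with $C(c)\in F$ have $\Gal(f_c)=S_{2n+2}$: for such $c$ the polynomial $f_c$ is irreducible of even degree $2n+2$, so (1) fails, and since the point stabilizer $S_{2n+1}$ is a maximal subgroup of $S_{2n+2}$, the field $L$ has no proper nontrivial subfield, in particular no quadratic subfield, so (2) fails as well. (Only primitivity of $\Gal(f_c)$ is actually needed for the latter point, which one could use instead if one wishes to weaken the input.)

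By Theorem \ref{thheccount}, $\#\{C\in F:H(C)\le X\}\asymp X^{\dim V/d}$, which is the same order as the number of all integer tuples $c$ with $|c_k|\le X^{k/d}$ for $k=2,\dots,2n+2$; thus it is enough to show that the number of such tuples with $\Gal(f_c)\ne S_{2n+2}$ is $o(X^{\dim V/d})$, since $\Inv(F)$ is contained in the set of all such tuples. This is the classical fact, due in essence to van der Waerden, that a random monic integer polynomial has full symmetric Galois group, and its proof is a sieve. The reducible $f_c$ form a thin set whose count in the box $|c_k|\le X^{k/d}$ is $O(X^{\dim V/d-\delta})$ for some $\delta>0$ by an elementary factorization count. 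For an irreducible $f_c$ with $\Gal(f_c)\ne S_{2n+2}$, the reduction $f_c\bmod p$ has, for every prime $p$ of good reduction, a factorization type among the cycle types occurring in the proper subgroup $\Gal(f_c)\subseteq S_{2n+2}$, and sieving over such $p$ bounds the number of these $f_c$ by $O(X^{\dim V/d}/(\log X)^{c})$. The only features special to our setting are the anisotropic shape of the box and the vanishing of the sub-leading coefficient of $f_c$, and neither affects the sieve: for odd $p$ the reduction $f_c\bmod p$ still ranges essentially uniformly over monic trace-zero degree-$(2n+2)$ polynomials over $\F_p$, for which the proportion of each factorization type agrees with the proportion of the corresponding cycle type in $S_{2n+2}$ up to $O(1/p)$. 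Alternatively, one may sieve directly in the style of Section \ref{sieve}: requiring $f_c$ to be irreducible modulo one auxiliary prime, to factor as a linear times an irreducible of degree $2n+1$ modulo a second, and to have exactly one irreducible quadratic factor with all remaining factors of odd degree modulo a third, forces $\Gal(f_c)=S_{2n+2}$, and one then estimates the density of $c$ failing all such triples of conditions.

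Combining these, a $100\%$ of $C\in F$ have $\Gal(f_c)=S_{2n+2}$, hence satisfy neither (1) nor (2), hence have $(\infty')-(\infty)\notin 2J(C)(\Q)$. Given Proposition \ref{prop:poonenover2}, the remark after \eqref{eq:HUGEdiagram}, and Theorem \ref{thheccount}, everything in this argument is formal except for the thin-set/sieve estimate of the previous paragraph, which I expect to be the main obstacle.
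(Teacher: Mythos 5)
Your proposal follows the same approach as the paper, which gives exactly the same two-step argument but very tersely: it cites the proof of Theorem \ref{thm:Sentence2} for the equivalence with uniqueness of the distinguished orbit, cites the classical fact that $100\%$ of monic integral polynomials of degree $2n+2$ have Galois group $S_{2n+2}$, and then invokes Proposition \ref{prop:poonenover2}. Your write-up is correct and fills in the details the paper leaves implicit (why a full symmetric Galois group rules out both conditions (1) and (2) of Proposition \ref{prop:poonenover2}, and why the density-zero exceptional set remains negligible after restricting to the large family $F$ via Theorem \ref{thheccount}).
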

\begin{proof}
By the proof of Theorem \ref{thm:Sentence2}, the element
$(\infty')-(\infty)$ is divisible by $2$ in $J(C)(\Q)$ if and only if
$V_c(\Q)$ has a unique $G(\Q)$-distinguished orbit, where $c$ is the
invariant of $C$. Since $100\%$ of monic degree $2n+2$ integral
polynomials, when ordered by height, correspond to $S_n$-fields, the
result follows from Proposition~\ref{prop:poonenover2}.
\end{proof}

\subsection{The average size of the $2$-Selmer group}
Theorem \ref{thheccount}, Proposition \ref{lem:100distinguish} and
\eqref{eq:twoselcompfinal} imply that
\begin{equation}
  \begin{array}{rcl}
 \lim_{X\to\infty}\displaystyle\frac{\displaystyle\sum_{\substack{C\in
        F\\H(C)<X}}(\#\Sel_2(J(C))-2)}{\displaystyle\sum_{\substack{C\in
        F\\H(C)<X}}1}&=&\displaystyle\frac{\Vol(\FF)\Vol(S(\R)^{(m)})\displaystyle\prod_p\bigl(\Vol(G(\Z_p))\Vol(\Inv_p(F))\bigr)}{\Vol(S(\R)^{(m)})\displaystyle\prod_p\Vol(\Inv_p(F))}\\[.1in] &=&\tau_G,
  \end{array}
\end{equation}
the Tamagawa number of $G$. Since the Tamagawa number of $\PSO$ is $4$
(\cite{Langlands}), Theorem \ref{thm:largegood} follows.

\section{Most monic even hyperelliptic
  curves have only two rational points}
Poonen and Stoll used results from \cite{BG3} and
Chabauty's method to show that a positive proportion of hyperelliptic
curves over $\Q$ having genus $g\geq3$ and a marked rational Weierstrass point have only one
rational point, and that this proportion tends to one as $g$ tends to
infinity (\cite[Theorem 10.6]{PoSto}). In this section, we modify their argument to derive the
analogous result for the family of hyperelliptic curves having a marked rational
non-Weierstrass point, thereby proving Theorem \ref{thchab}.

Before starting the proof of Theorem \ref{thchab}, we sketch the proof
of \cite[Theorem 10.6]{PoSto}.  Given a hyperelliptic curve $C$ with a
marked Weierstrass point $\infty$ and Jacobian $J$, Poonen and Stoll considered the
following diagram (\cite[(6.1)]{PoSto}):
\begin{displaymath}
\xymatrix{
C(\Q)\ar@{^{(}->}[rr] \ar@{^{(}->}[d]
&&C(\Q_2)\ar@{^{(}->}[d]\\
J(\Q)\ar@{^{(}->}[r]\ar@{->>}[d]&\overline{J(\Q)}\ar@{^{(}->}[r]\ar@{->>}[d]&J(\Q_2)\ar@{->>}[r]^\log\ar@{->>}[d]\ar@/^5pc/@{-->}[rrd]^-{\rho\log}&\Z_2^g\ar@{->>}[d]\ar@{.>}[rd]^{\rho}\\
\displaystyle\frac{J(\Q)}{2J(\Q)}\ar@{->>}[r]\ar@{^{(}->}[rd]&\displaystyle\frac{\overline{J(\Q)}}{2\overline{J(\Q)}}\ar[r]
&\displaystyle\frac{J(\Q_2)}{2J(\Q_2)}\ar@{->>}[r]^{\log\otimes\F_2}&\F_2^g\ar@{.>}[r]^-{\P}&\P^{g-1}(\F_2)\\
&\Sel_2(J)\ar[ru] \ar@/_/[rru]_-{\sigma} \ar@(r,d)@{-->}[rrru]_-{\P\sigma}
}
\end{displaymath}

Above, $C(\Q)$ and $C(\Q_2)$ are embedded into $J(\Q)$ and $J(\Q_2)$
via the map $P\mapsto (P)-(\infty)$. The map $\rho$ is defined by
taking the reduction modulo $2$ of the primitive part of $v\in \Z_p^g$
and taking its image under $\P$. Note that the maps $\rho$ and $\P$
are only partially defined, and that the diagram is commutative on
elements for which both maps are defined.  Then the proof of
\cite[Theorem 10.6]{PoSto} follows from these four steps:
\begin{enumerate}
\item The image of $C(\Q_2)$ in $\P^{g-1}(\F_2)$ is usually
  small. More precisely, the average size of $\rho\log(C(\Q_2))$ is
  $6g+9$ (\cite[Corollary 9.10]{PoSto}).
\item If $\sigma$ is injective, then
  $\rho\log(\overline{J(\Q)})\subset \P\sigma(\Sel_2(J))$ (\cite[Lemma
  6.2]{PoSto}).
\item Restrict to a large family of hyperelliptic curves $C$
  such that the image $\rho\log(C(\Q_2))$ is constant in the
  family, say equal to $I$. (That most hyperelliptic curves with genus
  $g$ and a marked rational Weierstrass point are contained in a disjoint union
  of such large families is a consequence of \cite[Lemma
  8.3]{PoSto} and \cite[Propositions 8.5 and 8.7]{PoSto}.) Since
  \cite[Theorem 1.1]{BG3} implies that there are $2$ nontrivial elements of
  $\Sel_2(J)$, on average over $J$, and \cite[Theorem 12.4]{BG3} states that their images in $\F_2^g$ are
  equidistributed, a proportion of at most $\# I2^{1-g}$ curves $C$
  satisfy $$\rho\log(C(\Q_2))\cap \P\sigma \Sel_2(J)\neq \emptyset.$$
  This equidistribution further implies that the proportion of curves for
  which $\sigma$ is not injective is at most $2^{1-g}$.
\item Therefore, aside from a set of density at most $(1+\#I)2^{1-g}$, all curves $C$ in this large family satisfy
$$
C(\Q_2)\cap \overline{J(\Q)}\subset J(\Q_2)_{{\rm tors}},
$$
where $J(\Q_2)_{{\rm tors}}$ denotes the torsion elements in
$J(\Q_2)$. The proof is completed by showing that the density of curves $C$ with $J(\Q)_{{\rm
    tors}}\neq 0$ is zero (\cite[Proposition 8.4]{PoSto}).
\end{enumerate}

We also embed $C(\Q)$ and $C(\Q_2)$ into $J(\Q)$ and $J(\Q_2)$ via the
map $P\mapsto (P)-(\infty)$ and normalize the log map to be surjective
from $J(\Q_2)$ to $\Z_2^g$ as in \cite{PoSto}.  The main difficulty in
adapting their proof in our case is that the image of
$(\infty)-(\infty')$ in $\F_2^g$ does not get equidistributed. Let
$v\in \Z_2^g$ denote the image of $(\infty)-(\infty')$ under the
$\log$ map. Let $v_0$ denote its primitive part and let
$\overline{v_0}$ denote the reduction modulo $2$ of $v_0$. We use
$\langle\cdot\rangle$ to denote the subgroup generated by $\cdot$. We
now consider the following modified version of \cite[(6.1)]{PoSto}:
\begin{displaymath}
\xymatrix{
C(\Q)\ar@{^{(}->}[rr] \ar@{^{(}->}[d]
&&C(\Q_2)\ar@{^{(}->}[d]\\
J(\Q)\ar@{^{(}->}[r]\ar@{->>}[d]&\overline{J(\Q)}\ar@{^{(}->}[r]\ar@{->>}[d]
&J(\Q_2)\ar@{->>}[r]^\log\ar@{->>}[d]\ar@/^5pc/@{-->}[rrrd]^-{\rho'\log}&\Z_2^g\ar@{->>}[r]\ar@{->>}[d]&\Z_2^g/\Z_2\hspace{-2pt}\cdot\hspace{-2pt}v_0\ar@{->>}[d]\ar@{.>}[rd]^{\rho}\\
\displaystyle\frac{J(\Q)}{2J(\Q)}\ar@{->>}[r]\ar@{^{(}->}[rd]&\displaystyle\frac{\overline{J(\Q)}}{2\overline{J(\Q)}}\ar[r]
&\displaystyle\frac{J(\Q_2)}{2J(\Q_2)}\ar@{->>}[r]^{\log\otimes\F_2}&\F_2^g\ar@{->>}[r]&\F_2^g/\langle\overline{v_0}\rangle\ar@{.>}[r]^-{\P}&\P^{g-2}(\F_2)\\
&\Sel_2(J)\ar[ru] \ar@/_/[rru]^-{\sigma} \ar@/_/[rrru]_-{\sigma'}\ar@(r,d)@{-->}[rrrru]_-{\P\sigma'}
}
\end{displaymath}

Our version of Step 1 follows immediately from the proofs of
\cite[Proposition 5.4, Theorem 9.1]{PoSto}, with the only difference
being that in our case the expected size of ${\mathcal C}^{{\rm
    smooth}}(\F_2)$ is bounded above by $4$ instead of $3$, where
$\mathcal{C}$ denotes the minimal proper regular model of $C$. The
reason for this difference is that the $\Z_2$-model of a random
hyperelliptic curve has two smooth points $\infty$ and $\infty'$. The
bound of $2$ on the expected number of other smooth $\F_2$-points
follows from the arguments of \cite[Lemma 9.5]{PoSto}. This yields the
following proposition:
\begin{proposition}
  Let $C$ range over hyperelliptic curves corresponding to elements in
  $\Z_2^{2g+1}\backslash\{\Delta=0\}$ such that $(\infty)-(\infty')\notin J(\Q_2)[2]$. Then $\rho'\log(C(\Q_2))$ is locally constant and its average size is at most $6g+14$.
\end{proposition}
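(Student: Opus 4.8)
The plan is to adapt the arguments of \cite[Proposition 5.4, Theorem 9.1]{PoSto} with essentially no change, tracking the two differences between our setting and theirs: the target of $\rho'\log$ is $\P^{g-2}(\F_2)$ rather than $\P^{g-1}(\F_2)$, because we first quotient $\Z_2^g$ by the line $\Z_2\cdot v_0$, and the $\Z_2$-model of a typical curve carries two marked smooth points $\infty,\infty'$ rather than a single rational Weierstrass point. Let $\mathcal{C}$ denote the minimal proper regular model of $C$ over $\Z_2$. Every point of $C(\Q_2)$ reduces to a point of $\mathcal{C}^{\mathrm{smooth}}(\F_2)$, so $C(\Q_2)$ is the disjoint union of the residue disks $D_{\bar P}$ attached to the points $\bar P\in\mathcal{C}^{\mathrm{smooth}}(\F_2)$, whence $\rho'\log(C(\Q_2))=\bigcup_{\bar P}\rho'\log(D_{\bar P})$. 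On each $D_{\bar P}$, in a suitable local parameter, $P\mapsto\log((P)-(\infty))$ is given by a power series whose leading coefficients relevant to $\rho'\log$ depend only on $C$ modulo a bounded power of $2$; the vector $v=\log((\infty)-(\infty'))$, hence its primitive part $v_0$ and the subspace $\langle\overline{v_0}\rangle\subset\F_2^g$, likewise depend only on $C$ modulo a bounded power of $2$ (the hypothesis $(\infty)-(\infty')\notin J(\Q_2)[2]$ guarantees $v\ne 0$, so that $v_0$ and $\overline{v_0}\ne 0$ are well-defined). Thus $\rho'\log(C(\Q_2))$ is locally constant on $\Z_2^{2g+1}\setminus\{\Delta=0\}$ away from a closed subset of measure zero, exactly as in \cite[Proposition 5.4]{PoSto}; the additional projection $\F_2^g\to\F_2^g/\langle\overline{v_0}\rangle$ only collapses images further and affects nothing in that argument.

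For the average size, I would invoke the residue-disk estimate at the heart of \cite[Theorem 9.1]{PoSto}, which bounds $\#\rho\log(D_{\bar P})$ in terms of $g$ and the depth of $D_{\bar P}$. Composing with the surjection $\F_2^g\to\F_2^g/\langle\overline{v_0}\rangle$ can only decrease cardinalities, so the same bound holds for $\#\rho'\log(D_{\bar P})$. Summing over $\bar P\in\mathcal{C}^{\mathrm{smooth}}(\F_2)$ and averaging over $C$, the only input that changes is $\mathbb{E}[\#\mathcal{C}^{\mathrm{smooth}}(\F_2)]$: for all but a measure-zero set of curves the points $\infty$ and $\infty'$ reduce to distinct smooth $\F_2$-points of $\mathcal{C}$, accounting for $2$ of them, and the expected number of the remaining smooth $\F_2$-points is at most $2$ by the argument of \cite[Lemma 9.5]{PoSto}; hence $\mathbb{E}[\#\mathcal{C}^{\mathrm{smooth}}(\F_2)]\le 4$ in place of the value $3$ used in \cite{PoSto}. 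Carrying $4$ through the bookkeeping of \cite[Theorem 9.1]{PoSto} in place of $3$ then yields the bound $6g+14$ on the average of $\#\rho'\log(C(\Q_2))$.

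The hard part will be the behaviour of the two distinguished residue disks $D_\infty$ and $D_{\infty'}$ under $\rho'\log$; everything else is transcription of \cite{PoSto}. On $D_\infty$ the value $\log((P)-(\infty))$ vanishes at the centre, and on $D_{\infty'}$ it equals $v$, which is $\equiv 0$ modulo $\Z_2\cdot v_0$, so in both cases it is the leading \emph{nonconstant} term of the local power series, rather than a constant term, that controls $\rho'\log$. One must check that this leading term is generically primitive, so that each of these two disks contributes $O(1)$ to $\rho'\log(C(\Q_2))$ on average, and that quotienting by $v_0$ precisely along the $\infty'$ direction introduces no new degeneracy. Granting this, together with the standard facts that $\infty,\infty'$ reduce to distinct smooth points of $\mathcal{C}$ for all but a measure-zero set of curves and that most $\Z_2$-models fall into congruence families on which $\rho'\log(C(\Q_2))$ is literally constant (the analogues of the analysis behind \cite[Proposition 5.4]{PoSto}), the proposition follows.
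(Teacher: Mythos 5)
Your proposal follows the paper's proof essentially verbatim: both defer to the proofs of \cite[Proposition 5.4, Theorem 9.1]{PoSto}, changing only the bound on the expected size of $\mathcal{C}^{\rm smooth}(\F_2)$ from $3$ to $4$ on account of the two marked smooth points $\infty$ and $\infty'$, with \cite[Lemma 9.5]{PoSto} bounding the expected number of additional smooth $\F_2$-points by $2$. The extra discussion of the residue disks $D_\infty$, $D_{\infty'}$ and of the quotient by $\Z_2\cdot v_0$ is actually more than the paper itself offers at this point, and your observation that $\log((\infty')-(\infty))=v\equiv 0$ modulo $\Z_2\cdot v_0$ is precisely what makes $D_{\infty'}$ behave like the disk around $\infty$ already handled in \cite{PoSto}, so the concern you flag resolves as you expect.
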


For Step 2, we prove the analogous version of \cite[Lemma 6.2]{PoSto}:
\begin{lemma}\label{lemchabautycovered}
  Suppose $J(\Q)_{{\rm tors}}=0$ and the kernel of $\sigma'$ in
  $\Sel_2(J)$ is equal to the subgroup generated by the class of
  $d_0 = (\infty)-(\infty')$. Then $\rho'\log(\overline{J(\Q)})\subset \P\sigma'
  (\Sel_2(J))$. Furthermore, if $g\in J(\Q)$ has no image under
  $\rho'\log$, then there exist $m$ and $n$ such that $mg=nd_0$.
\end{lemma}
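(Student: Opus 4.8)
The plan is to adapt the proof of \cite[Lemma 6.2]{PoSto} to the present setting, where the map $\sigma$ has been replaced by $\sigma'$ and the target of $\rho'\log$ is $\Z_2^g/\Z_2\cdot v_0$ rather than $\Z_2^g$. First I would recall the source of the original argument: there one uses that $\sigma$ is injective on $\Sel_2(J)$ to transport an element of $\overline{J(\Q)}$ to a Selmer class via the commutativity of the big diagram, and then reads off its image under $\rho\log$. Here the kernel of $\sigma'$ is, by hypothesis, exactly $\langle d_0\rangle$ where $d_0=(\infty)-(\infty')$, so the transport argument still works up to this one-dimensional ambiguity; the key point is that $d_0$ maps to $v_0$ under $\log$, hence to $0$ in $\Z_2^g/\Z_2\cdot v_0$, so that ambiguity is invisible after passing to the quotient. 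This is precisely why the diagram was modified to quotient by $\langle\overline{v_0}\rangle$ in the bottom row.

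The key steps, in order: (1) Take $\bar g\in\overline{J(\Q)}$ and lift it to $g\in J(\Q)$ (using $J(\Q)_{\mathrm{tors}}=0$, so that $J(\Q)\hookrightarrow\overline{J(\Q)}$ and division issues are controlled as in \cite{PoSto}); assume $g$ has an image under $\rho'\log$, i.e. $\log(g)$ has nonzero image $w$ in $\Z_2^g/\Z_2\cdot v_0$. (2) Chase $g$ around the diagram: its image in $J(\Q)/2J(\Q)$ lands in $\Sel_2(J)$; call a preimage $\xi$. Commutativity of the modified diagram gives $\P\sigma'(\xi)=\rho'\log(\bar g)$, which is exactly the desired containment $\rho'\log(\overline{J(\Q)})\subset\P\sigma'(\Sel_2(J))$. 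The only subtlety is that $\xi$ is well-defined only modulo $\ker\sigma'=\langle[d_0]\rangle$; but since $[d_0]$ is the image of $d_0\in J(\Q)$ and $\log(d_0)=v\in\Z_2\cdot v_0$, altering $\xi$ by $[d_0]$ does not change $\P\sigma'(\xi)$. (3) For the last assertion, suppose $g\in J(\Q)$ has no image under $\rho'\log$; this means $\log(g)\in\Z_2\cdot v_0$, i.e. $\log(g)=\lambda v_0$ for some $\lambda\in\Z_2$. Since $\log(d_0)=v=\mu v_0$ for some nonzero $\mu\in\Z_2$ (as $v_0$ is the primitive part of $v$), we get $\log(\mu g-\lambda d_0)=0$, so $\mu g-\lambda d_0\in J(\Q_2)_{\mathrm{tors}}$. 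One then clears denominators: writing $\lambda=n/m'$, $\mu=m''/m'$ in lowest terms appropriately and using that $J(\Q)_{\mathrm{tors}}=0$ forces the torsion element to vanish after multiplying by a suitable integer, one obtains integers $m,n$ with $mg=nd_0$ in $J(\Q)$.

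The main obstacle I expect is Step (3): making the ``clear denominators'' argument rigorous. The issue is that $\lambda,\mu$ live in $\Z_2$, not $\Z$, and $\log$ is only an isomorphism $J(\Q_2)\to\Z_2^g$ modulo torsion, so producing an \emph{integer} relation $mg=nd_0$ in $J(\Q)$ (not merely in $J(\Q_2)$) requires care. The resolution should mirror \cite[Lemma 6.2]{PoSto}: the relation $\mu\log(g)=\lambda\log(d_0)$ in the rank-$\le 1$ $\Z_2$-module $\Z_2\cdot v_0$ forces $\log(g)$ and $\log(d_0)$ to be $\Z_2$-linearly dependent, hence (since both come from $J(\Q)$ and $J(\Q)$ has rank $\le g$ with trivial torsion and $\log$ is injective on it) they are already $\Z$-linearly dependent in $J(\Q)$, which is exactly an integer relation $mg=nd_0$. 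Everything else is a routine diagram chase identical in structure to \cite{PoSto}.
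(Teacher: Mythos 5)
Your general plan --- adapting \cite[Lemma 6.2]{PoSto}, using that $d_0$ maps into $\Z_2\cdot v_0$ so that quotienting by $\overline{v_0}$ kills the ambiguity --- is the right instinct, but both key steps as written have genuine gaps, and both are repaired in the paper by one structural move you omit: decomposing $J(\Q)/\langle d_0\rangle\cong F\oplus\Z^{r-1}$ (using $J(\Q)_{\mathrm{tors}}=0$) and passing to the \emph{primitive part} of the $\Z^{r-1}$-component.

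In Step (2), the equality $\P\sigma'(\xi)=\rho'\log(\bar g)$ is not a consequence of commutativity of the diagram in the generality you invoke. There is no ambiguity in $\xi$ --- it is the uniquely determined image of $g$ in $J(\Q)/2J(\Q)\hookrightarrow\Sel_2(J)$ --- so your remark about $\xi$ being ``well-defined modulo $\ker\sigma'$'' misses the real issue. The problem is that $\sigma'(\xi)$ can vanish while $\rho'\log(g)$ is defined. For instance if $g=2g'$ then $\xi=0$, so $\P\sigma'(\xi)$ is undefined, yet $\rho'\log(g)=\rho'\log(g')$ may be a perfectly good point of $\P^{g-2}(\F_2)$; the same happens whenever $\xi\in\langle[d_0]\rangle$. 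The paper circumvents this by first reducing via continuity to $J(\Q)$, then writing the image of $h\in J(\Q)$ in $F\oplus\Z^{r-1}$ as $(t,h')$ and replacing $h$ by the primitive part $h_0$ of $h'$; one then has $\rho'\log(h)=\rho'\log(h_0)$, and the hypothesis $\ker\sigma'=\langle[d_0]\rangle$ guarantees $\sigma'(h_0)\neq 0$ (since the class of $h_0$ in $J(\Q)/2J(\Q)$ is visibly not in $\langle[d_0]\rangle$), so $\P\sigma'(h_0)$ makes sense and equals $\rho'\log(h)$.

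In Step (3), the proposed resolution of the ``clear denominators'' obstacle does not work: $\Z_2$-linear dependence of $\log(g)$ and $\log(d_0)$ does \emph{not} imply $\Z$-linear dependence of $g$ and $d_0$ in $J(\Q)$. Injectivity of $\log$ on $J(\Q)$ and a rank bound are not enough; two $\Z$-independent points of $J(\Q)$ can certainly have $\Z_2$-dependent logarithms --- that is precisely the kind of coincidence Chabauty's method has to guard against, and it is exactly what the hypothesis on $\ker\sigma'$ is there to rule out. Your Step (3) never uses that hypothesis, which is a red flag. The paper's argument is entirely different: if $h$ has no image under $\rho'\log$, write its image in $F\oplus\Z^{r-1}$ as $(t,h')$; if $h'=0$ one is immediately done (take $m=|F|$); if $h'\neq 0$, its primitive part $h_0$ has $\log(h_0)\in\Z_2\cdot v_0$, hence $\sigma'(h_0)=0$, yet the class of $h_0$ in $\Sel_2(J)$ does not lie in $\langle[d_0]\rangle$ --- contradicting $\ker\sigma'=\langle[d_0]\rangle$. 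So the hypothesis is used to force $h'=0$, which is where the integer relation $mh=nd_0$ actually comes from.
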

\begin{proof}
  Since $\rho'\log$ is continuous and $\P^{g-2}(\F_2)$ is discrete,
  $\rho'\log(\overline{J(\Q)})=\rho'\log(J(\Q))$.  Since $J(\Q)_{{\rm
      tors}}=0$, we have $J(\Q)/\langle d_0\rangle\equiv
  F\oplus\Z^{r-1}$, where $r$ is the rank of $J(\Q)$ and $F$ is a
  finite abelian group such that any lift $g$ to $J(\Q)$ of an element
  in $F$ satisfies $m g=nd_0$ for some integers $m$ and $n$. This
  implies that such a $g$ has no image under the partially defined map
  $\rho'\log$.

Let $h\in J(\Q)$ be an element that does have an image under
$\rho'\log$. Then the image of $h$ in $F\oplus\Z^{r-1}$ is some
$(t,h')$, where $t\in F$ and $h'\in\Z^{r-1}$. Let $h_0$ denote the
primitive part of $h'$.  Then we have $\rho'\log(h)=\rho'\log(h_0)$
and furthermore, because the kernel of $\sigma'$ is equal to the
subgroup generated by the class of $d_0$, the element $h_0$ has
nonzero image under $\sigma'$. Therefore, we obtain
$\rho'\log(h)=\P\sigma'(h_0)$ which proves the first assertion of the
lemma.

For the second part, let $h\in J(\Q)$ be an element that does not have
an image under $\rho'\log$, and let the image of $h$ in
$F\oplus\Z^{r-1}$ be $(t,h')$, where $t\in F$ and $h'\in\Z^{r-1}$. If
$h'= 0$, then we are done. Otherwise, let $h_0$ denote the primitive
part of $h'$. Since $h$ has no image under $\rho'\log$, neither does
$h_0$, and we have $\log(h_0)\in\Z_2\cdot v_0$. This implies that the
class of $h_0$ in $Sel_2(J)$ maps to $0$ under $\sigma'$ contradicting
our assumption that the kernal of $\sigma'$ is generated by the class
of $d_0$.
%
%Suppose $P\in
 % J(\Q)$. Denote by $P+2J(\Q)$ its image in $\Sel_2(J)$. If $\sigma'(P
 % + 2J(\Q)) = 0,$ then by assumption either $P$ or $P-d_0$ is
  %divisible by 2 in $J(\Q)$. Let $P'$ be in $J(\Q)$ such that $2P'=P$
 % or $P-d_0$. Then $\rho'\log(P)$ and $\rho'\log(P')$ either are both
 % undefined or are both defined in which case they are equal. We
 % repeat this process if $\sigma'(P' + 2J(\Q))=0.$ Since $J(\Q)$ is a
 % free abelian group of finite rank, this process will end in finitely
%  many steps unless $P=nd_0'$ for some integer $n$ and some $d_0'\in
%  J(\Q)$ such that $md_0'=d_0$ for some odd integer $m$. In this case
%  $\rho'\log(P)$ is undefined. Hence we may assume the above process
%  ends in a point $Q\in J(\Q)$ such that $\sigma'(Q + 2J(\Q))\neq0$
%  and $\rho'\log(P)=\rho'\log(Q).$ Therefore,
%  $\rho'\log(P)=\P\sigma'(Q + 2J(\Q))\in \P\sigma'(\Sel_2(J)).$
\end{proof}

For Step 3, we start with the following
analogue of \cite[Theorem 12.4]{BG3}; the proof is identical.
%Finally we have the following theorem whose statement and proof is identical to that
%of \cite[Theorem 47]{BG3}.
\begin{theorem}\label{thm:equidistribution}
  Fix a place $\nu$ of $\Q$. Let $F$ be a large family of hyperelliptic curves $C$ with a marked non-Weierstrass point such that
  \begin{itemize}
  \item [{\rm (a)}] the cardinality of $J(C)(\Q_\nu)/2J(C)(\Q_\nu)$ is a constant $k$ for all $C\in F$; and
    \item [{\rm (b)}] the set $U_\nu(F)\subset V(\Z_\nu)$, defined to
        be the set of soluble elements in $V(\Z_\nu)$ having
        invariants in $\Inv_\nu(F)$, can be partitioned into $k$ open
        sets $\Omega_i$ such that:
        \begin{itemize}
        \item [{\rm (i)}] for all $i$, if two elements in $\Omega_i$ have the same invariants, then they are $G(\Q_\nu)$-equivalent; and
        \item[{\rm (ii)}] for all $i\neq j$, we have $G(\Q_\nu)\Omega_i\cap G(\Q_\nu)\Omega_j=\emptyset$.
        \end{itemize}
  \end{itemize}
  $($In particular, the groups $J(C)(\Q_\nu)/2J(C)(\Q_\nu)$ are
  naturally identified for all $C\in F$.$)$ Then when elements $C\in
  F$ are ordered by height, the images of the non-distinguished
  elements $($i.e., elements that do not corresponed to either the
  identity or the class of $(\infty')-(\infty)$ in $J(C)(\Q))$ under
  the map
$$
\Sel_2(J(C))\to J(C)(\Q_\nu)/2J(C)(\Q_\nu)
$$
are equidistributed.
\end{theorem}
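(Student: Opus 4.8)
The plan is to re-run the orbit-counting and sieving machinery of Section~\ref{sec:orbitcounting} once more, for each index $i$ separately, using a weight function that at the place $\nu$ is supported on the single piece $\Omega_i$ of the partition. The point will be that each of these restricted counts is asymptotically $1/k$ of the total count of non-distinguished locally soluble orbits, with the constant $1/k$ independent of $i$; this is exactly the asserted equidistribution. Concretely, I would fix $i$ and define $w^{(i)}\colon V(\Z)\to[0,1]$ by setting $w^{(i)}_p=w_p$ for every place $p\neq\nu$ and $w^{(i)}_\nu=w_\nu\cdot\mathbf 1_{\Omega_i}$ (for $\nu=\infty$, one instead intersects the fundamental set $R^{(m)}$ with $\Omega_i$). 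Since the sets $G(\Q_\nu)\Omega_j$ are pairwise disjoint and each $\Omega_j$ lies inside $U_\nu(F)=\bigsqcup_j\Omega_j$, one checks that each $\Omega_i$ is automatically $G(\Z_\nu)$-stable, so $w^{(i)}_\nu$ is $G(\Z_\nu)$-invariant; and because the $\Omega_j$ are open and partition $U_\nu(F)$, the indicator $\mathbf 1_{\Omega_i}$ is locally constant on $U_\nu(F)$, so $w^{(i)}_\nu$ is locally constant outside the same measure-zero set as $w_\nu$. Thus $w^{(i)}$ is again acceptable and defined by congruence conditions (we have modified the weight at only one place), Theorems~\ref{thmainzcount} and~\ref{thinfcong} and Proposition~\ref{propjacone} apply, and running through the derivation of~\eqref{eq:twoselcompfinal} verbatim yields an asymptotic formula for $N_{w^{(i)}}(V(\Z)^{(m)};X)$ differing from the one for $N_w$ only in the local factor at $\nu$.

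The heart of the matter is evaluating that local factor. For $\nu=p$ finite this is the computation following~\eqref{eq:wpcomp}: by Proposition~\ref{prop:soluble->integral} every $\Q_p$-soluble orbit with invariant in $\Inv_p(F)$ has an integral representative, so by Theorem~\ref{thm:Sentence2} over each such invariant there are exactly $k=\#(J(\Q_p)/2J(\Q_p))$ soluble $G(\Q_p)$-orbits; hypotheses (b)(i) and (b)(ii) force these $k$ orbits to be distributed one into each $\Omega_j$. Hence the inner sum $\sum_{B\in V_c(\Z_p)/G(\Z_p)}w^{(i)}_p(B)/\#\Stab_{G(\Z_p)}(B)$, which over the full soluble locus equals $a_p=k/\#J[2](\Q_p)$, collapses to the single term $1/\#\Stab_{G(\Q_p)}(B_c)=1/\#J[2](\Q_p)=a_p/k$, and this is independent of $i$. (For $\nu=\infty$ the same happens in~\eqref{eq:realvolcomp}: restricting $R^{(m)}$ to $\Omega_i$ replaces the constant $\#(J^{(m)}(\R)/2J^{(m)}(\R))=k$ by $1$.) Since $a_\infty\prod_p a_p=1$ and $|\J|\prod_p|\J|_p=1$, the factor $1/k$ survives unchanged into the final expression, so by Theorem~\ref{thmisimp}, the bijection of Theorem~\ref{theorem:imp}, and the fact that the $\nu$-component of a locally soluble orbit determines its image in $J(\Q_\nu)/2J(\Q_\nu)$, we obtain
$$\sum_{\substack{C\in F\\ H(C)\le X}}\#\{\xi\in\Sel_2(J(C))\colon \xi\text{ non-distinguished},\ \xi\mapsto(\text{$i$-th coset})\}=\frac1k\sum_{\substack{C\in F\\ H(C)\le X}}(\#\Sel_2(J(C))-2)+o(X^{\frac{\dim V}{d}}).$$
As the right-hand side is independent of $i$, the non-distinguished Selmer classes are equidistributed.

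I expect the only non-formal point — and the one place the argument genuinely departs from the bookkeeping of \cite[Theorem~12.4]{BG3} — to be the verification that $w^{(i)}$ fits the hypotheses of Theorem~\ref{thinfcong} (i.e.\ that the open partition $\{\Omega_i\}$ yields an acceptable weight defined by congruence conditions), together with the local identity $a_p/k$, which rests on Proposition~\ref{prop:soluble->integral} to guarantee integral representatives and on (b)(i)--(b)(ii) to guarantee that the $\Omega_i$ genuinely separate the soluble orbits. Everything else, including the disposal of the $B$ with nontrivial stabilizer via Proposition~\ref{prop:bigstabsmall}, is identical to the proof of Theorem~\ref{thm:largegood}.
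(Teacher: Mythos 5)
Your proposal is correct and follows the same route as the paper, which simply cites the proof of \cite[Theorem 12.4]{BG3}: restrict the local weight (or, at $\nu=\infty$, the fundamental set $R^{(m)}$) to a single piece $\Omega_i$ and re-run Theorems~\ref{thmainzcount}, \ref{thinfcong} and the volume computation, observing that the sole change is the replacement of the local factor $a_\nu$ by $a_\nu/k$. The one point worth making explicit --- which your phrase ``forces these $k$ orbits to be distributed one into each $\Omega_j$'' covers only implicitly --- is that over every $c\in\Inv_\nu(F)$ each of the $k$ pieces $\Omega_j$ actually meets $V_c(\Z_\nu)$: by Proposition~\ref{prop:soluble->integral} all $k$ soluble $G(\Q_\nu)$-orbits have integral representatives in $U_\nu(F)=\bigsqcup_j\Omega_j$, and (b)(i)--(ii) make the assignment ``nonempty fiber piece $\mapsto$ soluble orbit'' injective, so pigeonhole gives the required bijection fiberwise.
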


Let $F$ be a large family of hyperelliptic curves corresponding to an
open subset of $\Z_2^{2g+1}\backslash\{\Delta=0\}$ such that $F$
satisfies the hypothesis of Theorem \ref{thm:equidistribution} and the
image of $\rho'\log(C(\Q_2))$ in $\P^{g-2}(\F_2)$ is constant for
$C\in F$. We denote this image by $I$. We may further assume that the
log maps are normalized such that the image $v$ of
$(\infty)-(\infty')$ is constant throughout this family
(cf. \cite[Proposition 8.2]{PoSto}).

On average over the Jacobians $J$ of the curves in $F$, there are $4$
non-distinguished elements in $\Sel_2(J)$, and the images of these
elements under $\sigma$ are equidistributed in $\F_2^g$.
%(Theorem
%\ref{thm:equidistribution}, \cite[Theorem 8.10]{PoSto}).
Therefore, a
proportion of at least $1-\#I2^{3-g}$ curves $C$ in $F$ satisfy
$$
\rho'\log(C(\Q_2))\cap\P\sigma'(\Sel_2(J))=\emptyset.
$$ Furthermore, a proportion of at most $2^{3-g}$ curves fail to
satisfy the conditions of Lemma \ref{lemchabautycovered} (we need the
image of $\sigma$ to avoid both $0$ and $\overline{v_0}$).  Say that a
point $P\in C(\Q)\backslash\{\infty,\infty'\}$ is {\it bad} if there
exist integers $m$ and $n$, not both zero, such that
$$m((P)-(\infty)) = n((\infty)-(\infty')).$$
Therefore,
aside from a set of density at most $(1+\#I)2^{3-g}$, all curves $C\in F$ are such that every point $P\in C(\Q)\backslash\{\infty,\infty'\}$ is bad.
%$$
%C(\Q)\subset C(\Q_2)\cap \overline{J(\Q)}\subset \log^{-1}(\Z_2\hspace{-2pt}\cdot\hspace{-2pt} v_0).
%$$

%\begin{lemma}\label{lem:loginverse}
%Suppose $J(\Q)_{{\rm tors}}=0$ and $d_0=(\infty)-(\infty')\notin
%2J(\Q).$ If the kernel of $\sigma'$ in $\Sel_2(J)$ is equal to the
%subgroup generated by the class of $d_0$, then every element of
%$J(\Q)\cap\log^{-1}(\Z_2\hspace{-2pt}\cdot\hspace{-2pt} v_0)$ has the
%form $nd_0'$ for some integer $n$ and some $d_0'\in J(\Q)$ such that
%$md_0'=d_0$ for some odd integer $m$.
%\end{lemma}

%\begin{proof}
%The following proof is very similar to the reduction step of Lemma
%\ref{lemchabautycovered}.  Suppose $P \in J(\Q)$ such that $\log(P) =
%a\cdot v_0$ for some $a\in \Z_2$. Then $\sigma'(P) = 0$ and hence
%either $2\mid P$ or $2\mid (P+d_0)$ in $J(\Q)$. Let $P' = P/2$ or
%$(P+d_0)/2$ in $J(\Q)$, then $\log(P') = a'\cdot v_0$ for some $a'\in
%\Z_2$. Hence $\sigma'(P') = 0$ and we can keep doing this
%procedure. Since $J(\Q)$ is a free abelian group of finite rank, it
%follows that $P=nd_0'$ for some integer $n$ and some $d_0'\in J(\Q)$
%such that $md_0'=d_0$ for some odd integer $m$.
%\end{proof}

We summarize the above discussion in the following theorem.

\begin{theorem}\label{thm:chabauty}
Suppose $C$ is an even degree hyperelliptic curve of genus $g$ over $\Q$ satisfying the following three conditions:
\begin{enumerate}
\item[{\rm (1)}] $J(\Q)_{\rm tors}=0$,
\item[{\rm (2)}] $\ker\sigma' = \langle (\infty)-(\infty') \rangle$,
\item[{\rm (3)}] $\rho'\log(C(\Q_2))\cap \P\sigma'(\Sel_2(J)) = \emptyset$.
\end{enumerate}
Then every point $P\in C(\Q)\backslash\{\infty,\infty'\}$ is bad, i.e, there exist integers $m$ and
 $n$, not both $0$, such that $$m((P)-(\infty)) = n((\infty)-(\infty')).$$

Moreover, the proportion of even degree hyperelliptic curves
$C$ of genus $g$ over $\Q$ satisfying the above three conditions is at
least $1 - (48g + 120)2^{-g}.$
\end{theorem}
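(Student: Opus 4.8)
\textbf{The first assertion} is immediate from Lemma~\ref{lemchabautycovered}. Suppose $C$ satisfies (1), (2), (3), and embed $C(\Q)$ and $C(\Q_2)$ into $J(\Q)$ and $J(\Q_2)$ via $P\mapsto (P)-(\infty)$, so that $C(\Q)\subseteq C(\Q_2)\cap\overline{J(\Q)}$ inside $J(\Q_2)$. Fix $P\in C(\Q)\setminus\{\infty,\infty'\}$. If $(P)-(\infty)$ had an image under the partially defined map $\rho'\log$, that image would lie in $\rho'\log(C(\Q_2))\cap\rho'\log(\overline{J(\Q)})$; but Lemma~\ref{lemchabautycovered}, applied using (1) and (2), gives $\rho'\log(\overline{J(\Q)})\subseteq\P\sigma'(\Sel_2(J))$, while (3) makes $\rho'\log(C(\Q_2))$ disjoint from $\P\sigma'(\Sel_2(J))$ --- a contradiction. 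Hence $(P)-(\infty)$ has no image under $\rho'\log$, and the second part of Lemma~\ref{lemchabautycovered} (whose proof exhibits $(P)-(\infty)$ in the saturation of $\langle d_0\rangle$ inside the torsion-free group $J(\Q)$, where $d_0=(\infty)-(\infty')$) produces integers $m\ne 0$, $n$ with $m((P)-(\infty))=n\,d_0$. Since $P\mapsto(P)-(\infty)$ is injective on $C(\Q)$, one has $(P)-(\infty)\ne 0$, so $(m,n)\ne(0,0)$ and $P$ is bad.

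\textbf{For the density bound} I would first partition the even hyperelliptic curves parametrized by $\Z_2^{2g+1}\setminus\{\Delta=0\}$ into large families $F$ on which the $\Z_2$-model is constant, the normalization of $\log$ is fixed so that $v$ --- hence $\overline{v_0}$ --- is constant, the locally constant set $\rho'\log(C(\Q_2))$ equals a fixed set $I_F$, and the hypotheses of Theorem~\ref{thm:equidistribution} hold at $\nu=2$; this is the analogue of \cite[Lemmas 8.2, 8.3 and Propositions 8.5, 8.7]{PoSto}. On each $F$ I then bound the density of curves violating a given condition. Condition (1) fails on a set of density $0$, by the analogue of \cite[Proposition 8.4]{PoSto}. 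For (2), one always has $\langle d_0\rangle\subseteq\ker\sigma'$ because $\sigma(d_0)=\overline{v}\in\langle\overline{v_0}\rangle$; so (2) fails exactly when some non-distinguished class of $\Sel_2(J)$ maps into the two-element set $\langle\overline{v_0}\rangle\subseteq\F_2^g$. By Theorem~\ref{thm:largegoodcurve} and Proposition~\ref{lem:100distinguish} there are on average $6-2=4$ non-distinguished classes over $F$, and by Theorem~\ref{thm:equidistribution} their images in $\F_2^g$ equidistribute; hence the expected number landing in $\langle\overline{v_0}\rangle$ is at most $4\cdot 2\cdot 2^{-g}=2^{3-g}$, bounding this failure density by $2^{3-g}$.

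\textbf{The contribution of (3)} is handled the same way. The classes $0$ and $d_0$ add nothing to $\P\sigma'(\Sel_2(J))$ since both map to $0$ in $\F_2^g/\langle\overline{v_0}\rangle$, where $\P$ is undefined; so (3) fails exactly when some non-distinguished class has $\P\sigma'$-image in $I_F$. Over $\F_2$ the map $\P$ identifies $(\F_2^g/\langle\overline{v_0}\rangle)\setminus\{0\}$ with $\P^{g-2}(\F_2)$, so the $\#I_F$ points of $I_F$ pull back to $2\#I_F$ points of $\F_2^g$, and equidistribution bounds the expected number of non-distinguished classes landing there by $4\cdot 2\#I_F\cdot 2^{-g}=\#I_F\,2^{3-g}$. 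Thus the density of curves in $F$ failing (3) is at most $\#I_F\,2^{3-g}$; summing over families weighted by density and noting that $\sum_F(\text{density of }F)\,\#I_F$ is the average of $\#\rho'\log(C(\Q_2))$, which is at most $6g+14$ by the proposition preceding Lemma~\ref{lemchabautycovered}, the density of curves failing (3) is at most $(6g+14)2^{3-g}=(48g+112)2^{-g}$. Adding the three contributions, the density of curves failing at least one of (1), (2), (3) is at most $0+2^{3-g}+(48g+112)2^{-g}=(48g+120)2^{-g}$, which gives the stated bound.

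\textbf{The main obstacle} is not in these estimates but in setting up the large-family decomposition with all of its constancy properties and in checking that Theorem~\ref{thm:equidistribution} applies at $\nu=2$ with uniform error terms, so that the ``expected number'' heuristics above become genuine density bounds; the delicate point throughout is to keep the two distinguished Selmer classes $0$ and $d_0$ out of the equidistributing family, which is precisely why one replaces $\F_2^g$ by $\F_2^g/\langle\overline{v_0}\rangle$. Finally, deducing Theorem~\ref{thchab} from this requires the additional observation that a bad point $P$ satisfies $\log((P)-(\infty))\in\Z_2 v_0$, which confines the bad locus to a one-dimensional $2$-adic slice of $C(\Q_2)$ and leaves a Coleman-type point count as the last ingredient.
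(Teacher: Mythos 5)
Your proof takes essentially the same route as the paper's own argument: the first assertion comes from Lemma~\ref{lemchabautycovered} exactly as you describe, and the density bound is obtained by partitioning into large families $F$ on which $\rho'\log(C(\Q_2))=I_F$ is constant, using the equidistribution (Theorem~\ref{thm:equidistribution}) of the $4$ non-distinguished Selmer classes to bound the failure of (2) by $2^{3-g}$ and of (3) by $\#I_F\cdot 2^{3-g}$, then averaging $\#I_F$ (bounded by $6g+14$) to land on $(1+\#I_F)2^{3-g}\mapsto(6g+15)\cdot 8\cdot 2^{-g}=(48g+120)2^{-g}$. One small quibble: your justification that $(m,n)\neq(0,0)$ via injectivity of $P\mapsto(P)-(\infty)$ is not quite the right reason (that $(P)-(\infty)\neq 0$ does not by itself exclude the trivial pair); the nonzero $m$ comes directly from the lemma's proof, where $m$ is taken to be the order of the image of $(P)-(\infty)$ in the finite group $F$ when $h'=0$, and the case $h'\neq 0$ is excluded. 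This is a cosmetic point, not a gap.
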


We say that a monic even degree hyperelliptic curve $C$ over $\Q$ is good if $C(\Q)$ has no bad points.
Then we have the following theorem:
\begin{theorem}\label{thm:hilbert}
A proportion of $100\%$ of monic even degree hyperelliptic curves over $\Q$ having fixed genus $g\geq 4$ are good.
\end{theorem}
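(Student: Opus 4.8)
\emph{Proof strategy.} The plan is to reduce the assertion to a density-zero estimate about rational points on a $2$-covering of the universal curve, and then to prove that estimate by Hilbert's irreducibility theorem (whence the name).

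First I would carry out the reduction. By the even-degree analogue of \cite[Proposition 8.4]{PoSto} --- whose proof combines Hilbert irreducibility with the observation that rational torsion on $J$ injects into $J(\F_p)$ for a small prime $p$ of good reduction, which bounds its order --- the $C\in F$ with $J(C)(\Q)_{\rm tors}\neq 0$ form a density-$0$ family; by Proposition \ref{lem:100distinguish} so do the $C$ with $(\infty')-(\infty)\in 2J(\Q)$. So it suffices to treat curves with $J(\Q)$ torsion-free and $(\infty')-(\infty)\notin 2J(\Q)$. Suppose $P\in C(\Q)\setminus\{\infty,\infty'\}$ is bad, say $m((P)-(\infty))=n((\infty)-(\infty'))$ with $(m,n)\neq(0,0)$. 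Since $J(\Q)$ is torsion-free, $m$ and $n$ are both nonzero, so $(P)-(\infty)$ lies in the saturation $\Lambda$ of $\langle(\infty')-(\infty)\rangle$ in $J(\Q)$, which is infinite cyclic. Because $(\infty')-(\infty)$ is not $2$-divisible, a generator of $\Lambda$ has the same nonzero image as $(\infty')-(\infty)$ in $J(\Q)/2J(\Q)$, so the image of $(P)-(\infty)$ there lies in $\{0,\overline{(\infty')-(\infty)}\}$; that is, $(P)-(\infty)\in 2J(\Q)$ or $(P)-(\infty')\in 2J(\Q)$. Applying the hyperelliptic involution (which fixes the family and acts by $-1$ on $J$, interchanging $\infty$ and $\infty'$) and replacing $P$ by its conjugate if necessary, I may assume $(P)-(\infty)\in 2J(\Q)$. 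In the notation of \S\ref{sec:orbitparameterization} this says exactly that the descent class $\delta'((P)-(\infty))$ is trivial, i.e. (writing $P=(x_0,y_0)$) that $x_0-\beta\in L^{\times2}\Q^\times$; equivalently, $P$ lifts to a $\Q$-rational point of the finite étale $J[2]$-covering
$$Y_C:=[2]^{-1}\bigl(\{(Q)-(\infty):Q\in C\}\bigr)\subset J,$$
a smooth connected curve (connected since $C$ generates $J$) which is based so that $\infty$ itself lifts. Hence it is enough to show that the family of $C$ for which $Y_C$ has a $\Q$-point not lying over $\infty$ has density $0$.

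Now I would globalize. Over $\Q$, let $\mathcal C\to S^\circ$ be the universal hyperelliptic curve over $S^\circ:=S\setminus\{\Delta=0\}$, let $\mathcal J$ be its relative Jacobian, let $\iota\colon\mathcal C\hookrightarrow\mathcal J$ be $P\mapsto(P)-(\infty)$, and let $\psi\colon\mathcal Y:=[2]^{-1}(\iota(\mathcal C))\to\mathcal C$ be the universal version of the covering above. Then $\psi$ is finite étale of degree $2^{2n}\geq 2$ (genus $n$), and $\mathcal Y$ is geometrically irreducible: its generic fibre over $S^\circ$ is $Y_{C_\eta}$, geometrically connected because the generic even hyperelliptic curve $C_\eta$ generates its Jacobian, and $S^\circ$ is geometrically irreducible. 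By Hilbert's irreducibility theorem applied to the irreducible, degree-$\geq 2$ cover $\psi$, the set $T:=\psi(\mathcal Y(\Q))\subset\mathcal C(\Q)$ of points of $\mathcal C$ that lift to $\mathcal Y(\Q)$ is thin in $\mathcal C(\Q)$. Removing from $\mathcal Y$ the part $\psi^{-1}$ of the $\infty$-section, one gets an irreducible variety $\mathcal Y^\circ$ with $\dim\mathcal Y^\circ=\dim S^\circ+1$ whose generic fibre over $S^\circ$ has \emph{no} $\Q(S^\circ)$-rational point: the only $\Q(S^\circ)$-points of $C_\eta$ are $\infty$ and $\infty'$, the point $\infty'$ does not lift ($(\infty')-(\infty)\notin 2J_\eta(\Q(S^\circ))$ by Proposition \ref{prop:poonenover2}, since $f_\eta$ is generically $S_{2n+2}$), and $J_\eta[2](\Q(S^\circ))=0$, so $Y_{C_\eta}(\Q(S^\circ))$ is just the single lift of $\infty$, which has been removed. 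One then argues that the image of $T\setminus(\infty\text{-section})$ under $\mathcal C\to S^\circ$ is a thin subset of $S^\circ(\Q)\subset\A^{2n+1}(\Q)$: as $\mathcal Y^\circ$ has no rational section over $S^\circ$, every irreducible component $Z$ of this thin set either fails to dominate $S^\circ$ (hence maps into a proper subvariety) or maps onto $S^\circ$ finite of degree $\geq 2$ (degree $1$ being a section, excluded), so has thin image by Hilbert's irreducibility theorem again. Since a thin subset of $\A^{2n+1}(\Q)$ has natural density $0$, also with respect to the weighted height $H$ (by the usual comparison of $H$ with the naive height on each coordinate box), the curves $C\in F$ with a bad point form a density-$0$ family, and the theorem follows.

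The step I expect to be the main obstacle is the last one --- passing from thinness of $T$ in the total space $\mathcal C(\Q)$ to density $0$ of its image in $S^\circ(\Q)$. Because $\mathcal C$ and $\mathcal Y$ are not rational over $\Q$ one cannot invoke ``thin $\Rightarrow$ density $0$'' upstairs, and one must control the components of the thin set that dominate $S^\circ$ with positive-dimensional fibres, ruling out the phenomenon --- as in the family $y^2=x^6+t$, where $x=0$ is a point over $\Q(\sqrt t\,)$ --- whereby a fixed auxiliary divisor becomes rational (and lifts) on a positive-density set of fibres. Here this should be excluded because lifting a rational point $P$ to $\mathcal Y$ is equivalent to vanishing of its $2$-descent class $x_0-\beta$ in $L^\times/L^{\times2}\Q^\times$, a genuinely nonlinear condition not implied by $P$ (or a low-degree divisor) merely being rational over $\Q(S^\circ)$; making this uniform may require combining the argument with the Chabauty--Coleman control on bad points that is already established in the proof of Theorem \ref{thm:chabauty}. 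The other ingredients --- the density-$0$ statement for rational torsion on $J$ and the quantitative form of ``thin $\Rightarrow$ density $0$'' for the height $H$ --- are routine, following \cite{PoSto} and \cite{geosieve}.
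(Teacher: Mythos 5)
Your proposal attacks the problem by a route (Hilbert irreducibility applied to a $2$-covering of the universal curve) that is genuinely different from the paper's, and you correctly flag the place where it breaks down; unfortunately that flagged step is a real gap, not merely a detail to be filled, so the argument as written does not prove the theorem.

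The reduction is fine: assuming $J(\Q)_{\rm tors}=0$ and $(\infty')-(\infty)\notin 2J(\Q)$ (both hold for $100\%$ of curves), a bad point $P$ has, after replacing $P$ by $P^\tau$ if necessary, $(P)-(\infty)\in 2J(\Q)$, so $P$ lifts to the $\Q$-fibre of the connected \'etale $J[2]$-cover $\mathcal Y\to\mathcal C$. But the crucial last step --- that the set of $c\in S^\circ(\Q)$ with $Y^\circ_{C_c}(\Q)\neq\emptyset$ is thin in $S^\circ(\Q)$ --- does not follow from Hilbert's irreducibility theorem, because $\mathcal Y^\circ\to S^\circ$ is not finite: its fibres are curves. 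HIT controls images of rational points under \emph{finite} covers of degree $\ge 2$; it says nothing about the image of $\mathcal Z(\Q)$ for a dominant morphism $\mathcal Z\to S^\circ$ with $1$-dimensional fibres and no rational section. (For a morphism with positive-dimensional fibres, having no generic section is perfectly compatible with every fibre over $S^\circ(\Q)$ having a rational point.) Your ``every irreducible component $Z$ of this thin set\dots maps onto $S^\circ$ finite of degree $\ge 2$'' conflates the components of a thin subset of the $(2n+2)$-dimensional $\mathcal C$ with subvarieties that are finite over the $(2n+1)$-dimensional base, which is not the case here. The subsequent heuristic --- that lifting to $\mathcal Y$ is a ``genuinely nonlinear'' condition, perhaps controllable via Chabauty --- is an idea, not a proof, and Theorem~\ref{thm:chabauty} gives a positive-proportion bound $1-(48g+120)2^{-g}$, not the $100\%$ statement needed for Theorem~\ref{thm:hilbert}.

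The paper's proof sidesteps all of this by working $p$-adically for a fixed prime $p$: it redefines ``bad'' intrinsically over $\Q_p$ via the $p$-adic logarithm (so that $\Q$-badness implies $\Q_p$-badness away from a measure-zero degenerate locus), proves in Lemma~\ref{badpair} that a single curve over $\Q_p$ has only finitely many bad pairs --- this is a genuine input, using the explicit derivative formula~\eqref{eqder} and a $4\times 4$ determinant computation requiring $g\ge 4$ --- and then uses the dimension theory of $p$-adic subanalytic sets (Denef--van den Dries, \cite{DD}) to conclude in Theorem~\ref{thm:hilbertp} that the locus of $v\in\Z_p^{2g+1}$ whose curve has a bad $\Q_p$-point is subanalytic of dimension $<2g+1$, so its closure has measure zero; the global density-zero statement then follows by $p$-adic equidistribution of integral coefficient vectors. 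In short: the paper trades the arithmetic (Galois/thin-set) difficulty you ran into for a local $p$-adic analytic argument where measure and dimension can actually be controlled. If you want to salvage the covering approach, the missing ingredient is exactly a quantitative fibration theorem (``most fibres of a family of curves with no generic point have no rational point''), which is not available at the level of generality you need; the subanalytic route is what makes the $100\%$ statement provable.
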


We work $p$-adically for some fixed prime $p$. Suppose $C$ is an monic even degree
hyperelliptic curve with coefficients in $\Z_p$. Let $\ell:C(\Q_p)\rightarrow \Z_p^g$
denote the map sending $P\in C(\Q_p)$ to $\log((P)-(P^\tau))$ where
$\tau$ denotes the hyperelliptic involution and $\log$ is computed
with respect to the differentials
$$\{dx/y,xdx/y,\ldots,x^{g-1}dx/y\}.$$
We say a point $P\in
C(\Q_p)\backslash\{\infty,\infty'\}$ is \emph{bad} if the
$\Z_p$-lines spanned by $\ell(P)$ and $\ell(\infty)$ have nonzero
intesections.

%Let $M$ denote the ($p$-adically) open subset
%$\Z_p^{2g+1}\backslash\{\Delta=0\}$ consisting of tuples
%$(c_2,\ldots,c_{2g+2})$ such that the hyperelliptic curve $C:y^2 =
%x^{2g+2} + c_2x^{2g} + \cdots + c_{2g+2}$ is smooth over $\Q_p$. Let
%$U$ denote the subset of $M$ consisting of hyperelliptic curves with
%no bad points over $\Z_p$.

We thank Jacob Tsimerman for several conversations which led to the
proof of the following theorem, from which Theorem \ref{thm:hilbert}
follows immediately.
\begin{theorem}\label{thm:hilbertp}
Suppose $g\geq4$. The set $U$ of elements in $\Z_p^{2g+1}\backslash\{\Delta=0\}$ corresponding to hyperelliptic curves $C$ of genus $g$ such that $C(\Q_p)\backslash\{\infty,\infty'\}$ contains no bad points is dense. Furthermore, the $p$-adic closure of its complement has measure $0$.
\end{theorem}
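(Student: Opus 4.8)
The plan is to reduce the theorem to a dimension count for a ``bad locus'' inside the universal curve over the $(2g+1)$-dimensional parameter space, and to carry that count out by differentiating the $p$-adic logarithm maps along the family. Let $W\subset\Z_p^{2g+1}$ be the open locus $\Delta\neq 0$, let $\pi\colon\mathcal{C}\to W$ be the proper universal curve and $\mathcal{C}^\circ\subset\mathcal{C}$ its affine part $y^2=f_c(x)$; both have dimension $2g+2$, and $\mathcal{C}$ carries the two sections $\infty,\infty'$. For $c\in W$ write $\ell'_c(P)=\log_{J_c}((P)-(\infty_c))$ and $v_c=\log_{J_c}((\infty'_c)-(\infty_c))\in\Q_p^g$, taken with respect to $\{dx/y,\dots,x^{g-1}dx/y\}$; since $\log_{J_c}$ is a group homomorphism $J_c(\Q_p)\to\Q_p^g$ it is defined and analytic on all of $J_c(\Q_p)$, so $\ell'$ and $v$ depend analytically on $(c,P)$ and on $c$. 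Because both $(P)+(P^\tau)$ and $(\infty_c)+(\infty'_c)$ are linearly equivalent to the hyperelliptic class, $(P)-(P^\tau)\sim 2\bigl((P)-(\infty_c)\bigr)-\bigl((\infty'_c)-(\infty_c)\bigr)$, hence $\ell_c(P)=2\ell'_c(P)-v_c$ and $\ell_c(\infty_c)=-v_c$. Thus a point $P\notin\{\infty_c,\infty'_c\}$ is bad exactly when $\ell'_c(P)\wedge v_c=0$ while $\ell_c(P)=2\ell'_c(P)-v_c\neq 0$, i.e.\ when $P$ lies on the analytic locus $\mathcal{Z}:=\{(c,P)\in\mathcal{C}(\Q_p):\ell'_c(P)\wedge v_c=0\}$ but off $\{\ell_c=0\}$ and off the two sections. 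Consequently the complement of $U$ equals $\pi(\mathcal{B})$ with $\mathcal{B}=\mathcal{Z}\setminus(\{\ell_c=0\}\cup\{\infty\}\cup\{\infty'\})$, and since $\pi$ is proper it suffices to exhibit a closed analytic subset of $\mathcal{C}$ of dimension $\leq 2g$ containing $\mathcal{B}$: then $\overline{\pi(\mathcal{B})}$ is a closed analytic subset of $W$ of dimension $\leq 2g<2g+1$, hence has measure zero, which gives both halves of the theorem at once.

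\emph{The large components of $\mathcal{Z}$.} The sections $\{\infty\}$ and $\{\infty'\}$ lie in $\mathcal{Z}$ (there $\ell'_c=0$, resp.\ $\ell'_c=v_c$), and so does the Weierstrass locus $\{y=0\}$: for $P=(\alpha,0)$ one has $2\bigl((P)-(\infty_c)\bigr)\sim (P)+(P^\tau)-2(\infty_c)\sim(\infty'_c)-(\infty_c)$, so $\ell'_c(P)=\tfrac12 v_c$ and $\ell_c(P)=0$. The crux of the argument is that these three are \emph{all} of the components of $\mathcal{Z}$ of dimension $2g+1$; equivalently, away from $\{\infty\}\cup\{\infty'\}\cup\{\ell_c=0\}$ the locus $\mathcal{Z}$ has codimension $\geq 2$ in $\mathcal{C}$. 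Granting this, $\mathcal{B}$ is contained in the union of the components of $\mathcal{Z}$ of dimension $\leq 2g$, which is the required closed analytic set. (On any other component $\ell_c$ is not identically $0$, so a generic point of it is bad, and the claim is exactly the assertion that no such component dominates $W$.)

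\emph{The codimension estimate.} I would prove this residue-disc by residue-disc, after localizing $c$ to a small ball $B\subset W$ over which the reduction type is constant, so that $\mathcal{C}^\circ(\Q_p)$ splits over $B$ into finitely many families of polydiscs. On the disc around a point $Q$ with a chosen lift $Q_0$, setting $u_c=\log_{J_c}((Q_0)-(\infty_c))$ and letting $s$ be a local parameter, the homomorphism property gives $\ell'_c(P)=u_c+z_c(s)$, where $z_c(0)=0$ and $z_{c,i}$ is the Coleman integral of $x^{i-1}dx/y$ from $Q_0$; the jet of $z_c$ at $s=0$ is governed by the orders of vanishing of the $x^{i-1}dx/y$, so $z_{c,i}$ vanishes to order $1$ at a generic smooth non-infinity $Q_0$ but to order $g-i+1$ at $\infty$ and at $\infty'$ (where additionally $u_c=0$, resp.\ $u_c=v_c$). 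The bad condition $[\ell'_c(P)]=[v_c]$ in $\P^{g-1}(\Q_p)$ is then a system of $g-1$ analytic equations in $(c,s)$; matching the coordinate ratio of lowest order in $s$ (chosen so its leading coefficient is a $p$-adic unit) solves for $s=s^\ast(c)$ by Hensel's lemma, and the residual equations in $c$ are then shown to be independent off a proper analytic subset of $B$ by a Gauss--Manin / Kodaira--Spencer computation describing how $v_c$ and the integrals $z_c$ vary with the coefficients $c_2,\dots,c_{2g+2}$. This yields codimension $\geq 2$ once $g\geq 4$, which is exactly the hypothesis of the theorem. Summing over the finitely many residue discs over $B$ proves the claim locally; the global reduction above then needs only a countable cover of $W$ by such balls $B$, and because that reduction already passes through the proper map $\pi$, no Baire-category issue arises for the ``closure has measure zero'' conclusion.

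\emph{Main obstacle.} The one substantial point is the independence of the residual equations in $c$, i.e.\ the full-rank assertion for the Jacobian of $c\mapsto$ (the lowest-order coordinate-ratio conditions, evaluated at $s=s^\ast(c)$): one must check that the parameters $c_i$ move the ratios $v_{c,i}/v_{c,g}$ and the higher jets of $z_c$ in sufficiently many independent directions, which is a concrete but intricate computation with derivatives of Coleman integrals in a family. It is the presence of the second point $\infty'$ at infinity --- and the extra residue disc it creates, with no analogue in the odd-degree situation of \cite{PoSto}, \cite{BG3} --- that tightens the codimension bookkeeping and is the reason one needs $g\geq 4$ rather than $g\geq 3$. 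A secondary technical matter is to ensure that $\mathcal{Z}$ and its components really are closed analytic subsets of $\mathcal{C}$ over the whole of $W$ (not only over the good-reduction locus); this is handled by the standard theory of $p$-adic integration in families, or if necessary by invoking the tameness of $p$-adic subanalytic sets.
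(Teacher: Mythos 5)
Your proposal sketches a strategy rather than carrying out a proof. The load-bearing assertion --- that the only $(2g+1)$-dimensional components of $\mathcal{Z}$ are the two sections at infinity and the Weierstrass locus $\{y=0\}$, i.e.\ that the bad locus $\mathcal{B}$ has codimension $\geq 2$ in the universal curve --- is stated but never verified; you yourself flag it as ``the one substantial point'' and defer it to an unspecified Gauss--Manin / Kodaira--Spencer computation with derivatives of Coleman integrals in the family. That assertion is not a technicality: it is equivalent to the statement that the generic curve in the family has no bad points, which \emph{is} the theorem. Moreover, nothing in the proposal actually uses the hypothesis $g\geq 4$ --- it just appears in the sentence claiming the codimension bound --- which is a further sign that the hard analysis has been deferred. (A smaller issue: on the measure-zero locus $v_c=0$, your characterization of badness via $\ell'_c(P)\wedge v_c=0$ and $\ell_c(P)\neq 0$ declares every non-special point bad, whereas by the paper's definition none are; this is cosmetic but should be handled.)

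The paper's route is different and more concrete. It does not attempt a dimension count in the family. Instead it fixes the curve $C$ and proves in Lemma \ref{badpair} that the set of \emph{bad pairs} $(P,Q)\in C(\Q_p)^2$ --- pairs with $\ell(P),\ell(Q)$ spanning $\Q_p$-parallel lines and $P\neq Q,Q^\tau$ --- is finite, by an explicit argument: the derivative formula \eqref{eqder} for $\ell'_P$, the observation that two-coordinate projections of $\ell'_P(P')$ and $\ell'_Q(Q')$ are parallel only when $P'=Q'$ or $P'=Q'^\tau$, an implicit function theorem step, and a determinant computation on the first four coordinates, which is exactly where $g\geq 4$ enters. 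Density of $U$ then follows directly: near $(C,\infty)$ one moves the marked point to a nearby $P$ belonging to no bad pair, and $(C,P)\in U$. The measure-zero statement is not obtained from a codimension-two count in the total space at all; it comes from subanalyticity of the complement $V$ (via the local presentation $W\times\mathcal{C}^{\mathrm{smooth}}(\F_p)\times\Z_p$), the fact that $V$ has empty interior since $U$ is dense, and \cite[3.15, 3.26]{DD}, which give $\dim V<2g+1$ and $\dim(\bar V\setminus V)<\dim V$. So the paper's analytic work is per-curve finiteness, not a variational estimate in the family, and the dimension bound it proves for $V$ is only $\leq 2g$ --- it never establishes, nor needs, the codimension-two statement your plan rests on.
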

\begin{proof}
An element $v\in\Z_p^{2g+1}\backslash\{\Delta=0\}$ yields a
hyperelliptic curve $C$ along with a point $\infty$. Let $P\in
C(\Q_p)$ be a point such that $P\neq \infty$. The pair $(C,P)$ then
corresponds to an element $v'\in\Z_p^{2g+1}\backslash\{\Delta=0\}$
such that $v'\neq v$. Furthermore, as $P\to \infty$ in $C(\Q_p)$, we
have $v'\to v$.  We say that a pair of points $(P,Q)\in C(\Q_p)\times
C(\Q_p)$ is a {\it bad pair} if $P\neq Q$, $P\neq Q^\tau$, and the
$\Z_p$-lines spanned by $\ell(P)$ and $\ell(Q)$ have a nonzero
intersection. We show in Lemma \ref{badpair} that the number of bad
pairs $(P,Q)\in C(\Q_p)\times C(\Q_p)$ is finite for any monic even
degree hyperelliptic curve over $\Q_p$. From this it follows that
given a pair $(C,\infty)$ corresponding to
$v\in\Z_p^{2g+1}\backslash\{\Delta=0\}$, there exist points $P$
arbitrarily close to $\infty$ such that $P$ is not part of any bad
pair. It thus follows that there exist points
$v'\in\Z_p^{2g+1}\backslash\{\Delta=0\}$ (corresponding to such pairs
$(C,P)$), arbitrarily close to $v$, that correspond to hyperelliptic
curves containing no bad points. Therefore, $U$ is dense.

Let $V$ denote the complement of $U$ in
$\Z_p^{2g+1}\backslash\{\Delta=0\}$. We claim that $V$ is a $p$-adic
subanalytic subset of $M$. The theory of subanalytic sets is studied in great detail in \cite{DD}. We do not repeat the definition of subanalytic sets and instead remark that subanalytic sets are stable under projections onto coordinate hyperplanes and that sets defined by the vanishing and nonvanishing of analytic functions are subanalytic. Moreover, being subanalytic is a ($p$-adic) local property. The \emph{dimension} of a subanalytic set is defined to be the maximal dimension of a $p$-adic manifold contained in it (\cite[3.15]{DD}). This notion of dimension behaves as one expected: a $0$-dimensional subanalytic set is finite; the dimension of the boundary $\bar{A}\backslash A$ of a subanalytic set $A$ is less than the dimension of $A$ (\cite[3.26]{DD}).  

We now show that $V$ is a $p$-adic subanalytic subset of $M$. It suffices to
check this locally. Restrict to an open subset $W$ of
$\Z_p^{2g+1}\backslash\{\Delta=0\}$ such that $\mathcal{C}^{\rm smooth}(\F_p)$
is constant (having size $k$) for curves $C$ corresponding to
elements in $W$ where $\mathcal{C}$ denote the minimal proper regular model of $C$. Then the moduli space of pairs $(C,P)$, where $C$ is
a curve corresponding to an element in $W$ and $P$ is a point in
$C(\Q_p)$, is isomorphic to $W\times \mathcal{C}^{\rm smooth}(\F_p)\times
\Z_p$. The set of pairs $(C,P)$ corresponding to elements in this
moduli space such that $P$ is a bad point of $C(\Q_p)$ is a subanalytic set of $W\times \mathcal{C}^{\rm smooth}(\F_p)\times
\Z_p$ defined by $\ell(P),\ell(\infty)\neq0$ and $\ell(P)/\!/\ell(\infty)$. Since subanalytic sets are preserved by projections, this
implies that $V\cap W$ is subanalytic in $W$, as necessary. We have
already proven that $V$ does not contain any $p$-adic open ball of dimension $2g+1$ as its complement is dense. Hence
its dimension as a subanalytic set (\cite[3.15]{DD}) is less than
$\dim(\Z_p^{2g+1}\backslash\{\Delta=0\})=2g+1$.  Moreover, the
dimension of $\bar{V}\backslash V$ is less than the dimension of $V$
(\cite[3.26]{DD}), where $\bar{V}$ denotes the $p$-adic closure of
$V$. Therefore, the $p$-adic closure of $V$ has
measure $0$ as necessary.
\end{proof}

We now have the following lemma which was assumed in the proof of
Theorem \ref{thm:hilbertp}.
\begin{lemma}\label{badpair}
Let $C$ be a monic even degree hyperelliptic curve with coefficients
in $\Z_p$, having genus $g\geq 4$. Then the set of bad pairs $(P,Q)\in
C(\Q_p)\times C(\Q_p)$ is finite.
\end{lemma}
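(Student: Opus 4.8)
The plan is to realize the set $Z$ of bad pairs as a $0$-dimensional $p$-adic subanalytic subset of the compact analytic manifold $C(\Q_p)\times C(\Q_p)$; by \cite[3.15, 3.26]{DD} such a set is finite. That $Z$ is subanalytic follows exactly as in the proof of Theorem \ref{thm:hilbertp}: writing $\ell(P)=(\ell_1(P),\dots,\ell_g(P))$, a pair $(P,Q)$ is bad precisely when $\ell(P)\neq 0$, $\ell(Q)\neq 0$, $P\neq Q$, $P\neq Q^\tau$, and all the $2\times 2$ minors $\ell_i(P)\ell_j(Q)-\ell_j(P)\ell_i(Q)$ vanish, and these are (non)vanishing conditions on locally analytic functions of $(P,Q)$. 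So it suffices to prove that $Z$ contains no non-constant analytic arc $u\mapsto(P(u),Q(u))$ on a $p$-adic disk.

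The key local input is that on any residue disk of $C(\Q_p)$ there is a coordinate $t$ in which $\frac{d}{dt}\ell(P)=\rho(t)\,\bigl(1,x,x^2,\dots,x^{g-1}\bigr)$, where $x=x(P)$ and $\rho$ is analytic, vanishing only when $P\in\{\infty,\infty'\}$; near $\infty$ and $\infty'$ one uses $x=1/t$ and gets $\tilde\rho(t)\bigl(t^{g-1},\dots,t,1\bigr)$ instead, which is the same rational normal curve after reparametrization. This is because $\ell(P)=\Lambda(P)-\Lambda(P^\tau)$ with $d\Lambda_j=x^{j-1}dx/y$, and the contributions of $P$ and $P^\tau$ add. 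Consequently, along any arc $P(u)$ with $x(u):=x(P(u))$ non-constant one has $\frac{d}{du}\ell(P(u))=a(u)\,v(u)$ with $v(u)=\bigl(1,x(u),\dots,x(u)^{g-1}\bigr)$ and a scalar $a(u)$ which is generically nonzero; since $v(x_1)$ and $v(x_2)$ are non-proportional for $x_1\neq x_2$ (as $g\geq 2$), the image of $u\mapsto\ell(P(u))$ is not contained in any line of $\Q_p^g$.

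Now suppose $u\mapsto(P(u),Q(u))$ is a non-constant arc in $Z$. Since $Z$ is symmetric under $(P,Q)\mapsto(Q,P)$, after shrinking and relabelling we may assume $P(u)$ is non-constant, hence $x(u)=x(P(u))$ is non-constant; discarding finitely many residue-disk configurations we have the identities above. If $Q\equiv Q_0$, then $\ell(P(u))$ is a nonzero multiple of the fixed vector $\ell(Q_0)$ for all $u$, contradicting the previous paragraph. Otherwise $Q(u)$ is non-constant; writing $\xi(u)=x(Q(u))$, we have $\xi\neq x$ generically (else $Q\in\{P,P^\tau\}$, which bad pairs are not), so $\frac{d}{du}\ell(Q(u))=b(u)\,w(u)$ with $w(u)=\bigl(1,\xi(u),\dots,\xi(u)^{g-1}\bigr)$. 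Since $\ell(P(u))$ and $\ell(Q(u))$ are nonzero and proportional, write $\ell(Q(u))=\mu(u)\ell(P(u))$. If $\mu$ is constant, differentiating gives $b\,w=\mu\,a\,v$, which forces $\xi=x$, a contradiction. If $\mu$ is non-constant, differentiating $\ell(Q)=\mu\,\ell(P)$ twice and substituting $\ell(P)'=a\,v$, $\ell(Q)'=b\,w$ and their derivatives yields, at a generic $u$, a relation $b\dot\xi\,w'-\mu a\dot x\,v'\in\mathrm{span}(v,w)$ with $\mu a\dot x\neq 0$; that is, the four vectors $v,v',w,w'$ — the point and tangent of the degree-$(g-1)$ rational normal curve at the two distinct parameters $x$ and $\xi$ — are linearly dependent. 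For $g\geq 4$, so $g-1\geq 3$, Hermite interpolation by polynomials of degree $\leq g-1$ shows these four vectors are independent, a contradiction, which finishes the proof. The hypothesis $g\geq 4$ enters only here: for $g=3$ the two tangent lines to a plane conic are coplanar and the four vectors are genuinely dependent.

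I expect the main obstacle to be bookkeeping rather than conceptual: verifying the local identity $\frac{d}{dt}\ell(P)=\rho(t)\,(1,x,\dots,x^{g-1})$ \emph{uniformly} over all residue disks, including the Weierstrass disks (where $y$ vanishes) and the disks at $\infty,\infty'$ (where one must pass to the reversed rational normal curve), and checking that the exceptional parameters $u$ — those where $\rho$, $a$, or $x-\xi$ vanishes, or where $P(u)$ or $Q(u)$ meets $\{\infty,\infty'\}$ — form a finite set, so that the generic-point differentiations are legitimate.
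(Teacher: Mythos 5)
Your proposal is correct and follows essentially the same path as the paper: both cast the set of bad pairs as a subanalytic subset and rule out positive dimension by twice differentiating the proportionality $\ell(Q)=\mu\,\ell(P)$ along a one-parameter family, reducing to the linear independence of $v(x),v'(x),v(\xi),v'(\xi)$ for $x\neq\xi$ when $g\geq 4$. Your arc parametrization replaces the paper's explicit implicit-function-theorem setup, and your Hermite-interpolation phrasing replaces its determinant computation on the first four coordinates, but these are presentational variants of the identical argument.
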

\begin{proof}
Let $\Sigma$ denote the subset of $C(\Q_p)\times C(\Q_p)$ consisting of bad pairs $(P,Q)$. Then $\Sigma$ is subanalytic as it is defined by $\ell(P),\ell(Q)\neq0$ and $\ell(P)/\!/\ell(Q)$. We will show that the dimension of $\Sigma$ as a subanalytic set is zero which implies that $\Sigma$ is finite by \cite[3.26]{DD}.

Let $P\in C(\Q_p)$ be any
point. Restricting $\ell$ to a neighborhood $W_P$ around $P$ gives
an analytic function $\ell_P:\Z_p\to\Z_p^g$. The main difficulty in
proving this lemma is that it is difficult to explicitly compute the
function $\ell_P$. However, for any $P'$ in the residue disk around
$P$, $\ell_P(P')$ is the sum of $\ell_P(P)$ and twice a $p$-adic
integral. Hence we can compute the derivative of $\ell_P$ using the
fundamental theorem of calculus and obtain:
\begin{equation}\label{eqder}
\begin{array}{rclcl}
\ell_P'(P') &=&
\big(\displaystyle\frac{2}{y(P')},\displaystyle\frac{2x(P')}{y(P')},\ldots,\displaystyle\frac{2x(P')^{g-1}}{y(P')}\big),\quad&&\mbox{if
}P'\not\in\{\infty,\infty'\},\\[.2 in] \ell_P'(P') &=& (0,0,\ldots,\pm2),\quad&&\mbox{if
}P'\in\{\infty,\infty'\}.
\end{array}
\end{equation}
The second formula follows from applying a change of variable
$t=1/x,s=y/x^{g+1}$ and then using the fundamental theorem of
calculus. One key fact to notice is that the projections of
$\ell_P'(P')$ and $\ell_Q'(Q')$ onto any $2$-dimensional coordinate
hyperplane corresponding to two consecutive coordinates are $\Q_p$-parallel if and only if $P'=Q'$ or $P' =
Q'^\tau.$ This observation yields
the following lemma:

\begin{lemma}\label{lemmasimpler}
For a fixed point $P\in
C(\Q_p)$, the set of points $Q\in C(\Q_p)$ such that $(P,Q)$ is a bad
pair is finite.
\end{lemma}
\begin{proof}
Indeed, the intersection of $\Q_p\cdot \ell(P)$ and
$\ell(C(\Z_p))$ is a subanalytic set of dimension at most $1$. Hence
it is either finite or contains an open ball $B$. If it is finite,
then we are done. Otherwise, the derivatives $\ell'(Q)$ are all
parallel to $\ell(P)$ for $Q\in B$, which is a
contradiction.
\end{proof}

Let $(P,Q)\in C(\Q_p)\times C(\Q_p)$ be a bad pair. Since $\ell(P)$ and $\ell(Q)$ are $\Q_p$-parallel, there exists a
coordinate, say $j$, for which both $\ell(P)$ and $\ell(Q)$ are
nonzero and have the smallest $p$-adic valuation among all nonzero
coordinates. Hence there exist small neighborhoods $W_P$ and $W_Q$ of $P$
and $Q$, respectively, such that the $j$-th coordinates of
$\ell_P(P')$ and $\ell_Q(Q')$ are nonzero and have the smallest
$p$-adic valuation among all nonzero coordinates for any $P'\in W_P$,
and $Q'\in W_Q$. Moreover since $P\neq Q$ and $P\neq Q^\tau$, we may
further assume that $P'\neq Q'$ and $P'\neq Q'^\tau$ for any $(P',Q')\in
W_P\times W_Q$.  For any $i=1,\ldots,g$ and any vector $v\in \Q_p^g,$
we write $v_i$ for the $i$-th coordinate of $v$ and write $v^{(i)}$
for the vector in $\Q_p^{g-1}$ obtained from $v$ by removing the
$i$-th coordinate. For any $P'\in W_P$, write $f_P(P')\in\Q_p^{g-1}$
for the vector
$$f_P(P') =
\big(\frac{\ell_P(P')_1}{\ell_P(P')_j},\ldots,\frac{\ell_P(P')_g}{\ell_P(P')_j}\big)^{(j)}.$$
Similarly define $f_Q(Q')$ for $Q'\in W_Q$. Then $(P',Q')$ is a bad
pair if and only if $f_P(P')=f_Q(Q')$. Let $h:W_P\times W_Q\rightarrow
\Z_p^{g-1}$ denote the analytic function $h(P',Q')=f_P(P')-f_Q(Q')$
and let $S$ denote the vanishing locus of $h$. Then $S$ is an analytic
subset of $W_P\times W_Q$ and its projections to $W_P$ and $W_Q$ are
subanalytic. Computing the partial
derivatives of $h$ at $(P,Q)$ gives
$$h_P(P,Q) = \frac{1}{\ell_P(P)_j}\ell_P'(P)^{(j)} -
\frac{\ell_P'(P)_j}{\ell_P(P)^2_j}\ell_P(P)^{(j)},\quad h_Q(P,Q) = -
\frac{1}{\ell_Q(Q)_j}\ell_Q'(Q)^{(j)} +
\frac{\ell_Q'(Q)_j}{\ell_Q(Q)^2_j}\ell_Q(Q)^{(j)}.$$ Hence if both of these
partial derivatives are zero, then the vectors $\ell_P'(P)^{(j)}$,
$\ell_P(P)^{(j)}$, $\ell_Q(Q)^{(j)}$, $\ell_Q'(Q)^{(j)}$ are all
$\Q_p$-parallel which leads to a contradiction since $g\geq4$ and
$P\neq Q,Q^\tau$. Note $\ell_P(P)^{(j)}$, $\ell_Q(Q)^{(j)}$ are parallel
because $\ell_P(P)$ and $\ell_Q(Q)$ are parallel. We assume without loss of
generality that $h_P(P,Q)\neq 0$.

Let $S_P$ denote the image of $S$ under the projection map from
$W_P\times W_Q$ to $W_P$. If the dimension of $S_P$ as a subanalytic
set is $0$, then it is a finite set and Lemma \ref{badpair} follows from Lemma \ref{lemmasimpler}.
If the dimension of $S_P$ as a
subanalytic set is $1$, then it contains an open
ball. Replacing $W_P$ by this open ball, we can assume that
$S_P=W_P$. Since $h_P(P,Q)\neq 0$, the implicit function theorem
implies that there exists an analytic section $W_P\rightarrow S$ and
composing it with the second projection gives an analytic map
$s:W_P\rightarrow W_Q$ such that $(P',s(P'))$ is a bad pair for any
$P'\in W_P$. Let $\alpha:W_P\rightarrow \Q_p^\times$ denote the
analytic function such that
\begin{equation}\label{eq:d0}
\ell_Q(s(P'))=\alpha(P')\ell_P(P'),
\end{equation}
for any $P'\in W_P$. The vanishing set of the derivative $s'$ of $s$
is analytic and hence is either finite or contains an open ball. In
the latter case, $s$ is contant on this open ball which contradicts
Lemma \ref{lemmasimpler}. By replacing $W_P$ by an open ball inside it, we may assume
that $s'(P')\neq0$ for any $P'\in W_P$. Note that $W_P$ might not
contain $P$ anymore. Differentiating \eqref{eq:d0} gives
\begin{equation}\label{eq:d1}
\ell_Q'(s(P')) = \alpha_1(P')\ell_P(P') + \alpha_2(P')\ell_P'(P'),
\end{equation}
with $\alpha_1=\alpha'/s'$ and $\alpha_2=\alpha/s'$. Differentiating
\eqref{eq:d1} again shows that the vectors $\ell_Q''(s(P'))$, $\ell_P(P')$,
$\ell_P''(P')$, $\ell_P'(P')$ are linearly dependent over $\Q_p$ for any
$P'\in W_P$. Since $P'\neq Q'$ and $P'\neq Q'^\tau$ for any $P'\in
W_P$ and $Q'\in W_Q$ by assumption, we see that $\ell_Q'(s(P'))$ and
$\ell_P'(P')$ are not parallel and hence $\ell_P(P')$ can be written as a
linear combination of $\ell_Q'(s(P'))$ and $\ell_P'(P')$ by
\eqref{eq:d1}. Therefore, the vectors $\ell_Q''(s(P'))$, $\ell_Q'(s(P'))$,
$\ell_P''(P')$, $\ell_P'(P')$ are linearly dependent for any $P'\in W_P$.

Shrink $W_P$ if necessary so that $W_P$ does not contain $\infty$
or $\infty'$. This allows us to have a uniform formula for the
derivative of $\ell$. An elementary determinant computation (using the
first $4$ coordinates, which requires $g\geq4$) shows that the vectors
$\ell_Q''(Q')$, $\ell_Q'(Q')$, $\ell_P''(P')$, $\ell_P'(P')$ are linearly
dependent if and only if $P'=Q'$ or $P'=Q'^\tau$ neither of which is
true if $Q'=s(P')$ and $P'\in W_P$. This completes the proof of
Lemma \ref{badpair}.
\end{proof}

Theorem \ref{thchab} follows from Theorem \ref{thm:chabauty} and Theorem \ref{thm:hilbert}.

\subsection*{Acknowledgments}

We are very grateful to Manjul Bhargava and Benedict Gross for
suggesting this problem to us and for many helpful conversations. We
are also very grateful to Bjorn Poonen for explaining Chabauty's
method to us and for helpful comments on earlier versions of the
argument. We are extremely grateful to Cheng-Chiang Tsai, Jacob
Tsimerman, and Ila Varma for several helpful conversations. The first
author is grateful for support from NSF grant DMS-1128155. The second
author is grateful for support from a Simons Investigator Grant and
NSF grant~DMS-1001828.

\end{document}